\newcommand{\ol}{\overline}
\newcommand{\ZZ}{\mathbb{Z}}
\newcommand{\RR}{\mathbb{R}}
\newcommand{\QQ}{\mathbb{Q}}
\newcommand{\bdy}{\partial}
\DeclareMathOperator{\gr}{gr}
\DeclareMathOperator{\Int}{Int}
\newcommand{\x}{\mathbf{x}}
\theoremstyle{plain}
\numberwithin{equation}{section}
\newtheorem{theorem}[equation]{Theorem}
\newtheorem{lemma}[equation]{Lemma}
\newtheorem{corollary}[equation]{Corollary}
\newtheorem{conjecture}[equation]{Conjecture}
\newtheorem{definition}[equation]{Definition}
\theoremstyle{definition}
\theoremstyle{remark}
\newtheorem{example}[equation]{Example}
\newtheorem{remark}[equation]{Remark}
\definecolor{darkgreen}{rgb}{0,.25,0}
\definecolor{darkred}{rgb}{.25,0,0}
\definecolor{pink}{rgb}{1,.08,.575}
\providecommand\@dotsep{5}
\def\listtodoname{List of Todos}
\def\listoftodos{\@starttoc{tdo}\listtodoname}
\newcommand{\RN}[1]{%
  \textup{\uppercase\expandafter{\romannumeral#1}}%
}
\newcommand{\rom}[1]{\uppercase\expandafter{\romannumeral #1\relax}}
\newcommand*{\mycap}{\mathrel{\text{{\rotatebox[origin=c]{180}{$\mathsf{U}$}}}}}
\newcommand*{\mycup}{\mathsf{U}}
\newcommand{\A}{\mathcal{A}}
\newcommand{\pc}{\mathsf{p}}
\newcommand{\sing}{\mathsf{x}}
\newcommand{\sm}{\mathsf{sm}}
\newcommand{\cupplus}{{\setbox0\hbox{\large$\cup$}\rlap{\hbox to \wd0{\hss\raisebox{3pt}{\tiny$+$}\hss}}\box0}}
\newcommand{\cupminus}{{\setbox0\hbox{\large$\cup$}\rlap{\hbox to \wd0{\hss\raisebox{3pt}{\tiny$-$}\hss}}\box0}}
\begin{document}

\title[A link invariant related to Khovanov homology and HFK]{A link invariant related to Khovanov homology and knot Floer homology}

\author{Akram Alishahi}
\thanks{AA was supported by NSF grants DMS-1505798 and DMS-1811210.}
\address{Department of Mathematics, Columbia University, New York, NY 10027}
\email{\href{mailto:alishahi@math.columbia.edu}{alishahi@math.columbia.edu}}

\author{Nathan Dowlin}
\thanks{ND was supported by NSF grant DMS-1606421.}
\address{Department of Mathematics, Columbia University, New York, NY 10027}
\email{\href{mailto:ndowlin@math.columbia.edu }{ndowlin@math.columbia.edu}}

\keywords{}

\date{\today}

\begin{abstract}

In this paper we introduce a chain complex $C_{1 \pm 1}(D)$ where $D$ is a plat braid diagram for a knot $K$. This complex is inspired by knot Floer homology, but it the construction is purely algebraic. It is constructed as an oriented cube of resolutions with differential $d=d_{0}+d_{1}$. We show that the $E_{2}$ page of the associated spectral sequence is isomorphic to the Khovanov homology of $K$, and that the total homology is a link invariant which we conjecture is isomorphic to $\delta$-graded knot Floer homology. 

The complex can be refined to a tangle invariant for braids on $2n$ strands, where the associated invariant is a bimodule over an algebra $\A_{n}$. We show that $\A_{n}$ is isomorphic to $\overline{\mathcal{B}}'(2n+1, n)$, the algebra used for the $DA$-bimodule constructed by Ozsv\'{a}th and Szab\'{o} in their algebraic construction of knot Floer homology \cite{OS:Kauffman}.

\end{abstract}

\maketitle

\tableofcontents

%
%
%

\section{Introduction} The goal of this paper is to introduce a new homology theory for links in $S^{3}$ which is related to both the knot Floer homology of Ozsv\'{a}th-Szab\'{o} and Rasmussen (\cite{OS04:Knots}, \cite{Rasmussen03:Knots}) and Khovanov homology (\cite{Khovanov00:CatJones}). Knot Floer homology and Khovanov homology are defined using very different methods - the former is a Lagrangian Floer homology whose differential is often quite difficult to compute, while the latter is defined algebraically and has its roots in representation theory. Despite these differences, the two theories seem to contain a great deal of the same information and are conjectured to be related by a spectral sequence:

\begin{conjecture}[\cite{Rasmussen:KnotPolynomials}]\label{Conj1}

For any knot $K$ in $S^{3}$, there is a spectral sequence from the Khovanov homology of $K$ to the knot Floer homology of $K$.

\end{conjecture}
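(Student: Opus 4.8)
The plan is not to attack Conjecture~\ref{Conj1} head on, but to interpolate between the two theories by means of an explicitly algebraic model sitting between them. Fix a plat braid diagram $D$ for $K$ with $c$ crossings and build an oriented cube of resolutions: to each of the $2^c$ vertices $v$ assign a module $V_v$ over a polynomial ring recording the local data at that resolution, equipped with an internal differential $d_0$ that preserves the cube grading, and to each edge of the cube a map; the signed sum of the edge maps is a second differential $d_1$ raising the cube grading by one. Set $C_{1\pm 1}(D) = \bigl(\bigoplus_v V_v,\ d_0 + d_1\bigr)$. Filtering by the cube grading yields a spectral sequence whose $E_0$ page carries the differential $d_0$, whose $E_1$ page is the $d_0$-homology of the vertex modules, and whose $E_1$-differential is induced by $d_1$, so that $E_2$ is the homology of $(E_1, d_1)$.

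The first step is to compute this $E_2$ page. I would show that the $d_0$-homology of the module at a vertex $v$ is a tensor power $A^{\otimes |v|}$ of the Khovanov Frobenius algebra $A = \Ftwo[X]/(X^2)$, one factor for each circle of the resolution $v$, and that under this identification the maps induced by $d_1$ on $E_1$ are precisely Khovanov's merge (multiplication) and split (comultiplication) maps. This is a local, crossing-by-crossing verification together with a global sign check, identifying $(E_1,d_1)$ with the Khovanov chain complex (with its $\delta$-grading), hence $E_2$ with $\mathit{Kh}(K)$. The delicate points here are reconciling the conventions for the oriented versus unoriented cube, and verifying at the chain level that $(d_0+d_1)^2 = 0$, which by the grading splits into the three relations $d_0^2 = 0$, $d_0 d_1 + d_1 d_0 = 0$, and $d_1^2 = 0$.

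The next step is invariance of the total homology $H_*\bigl(C_{1\pm 1}(D),\, d_0 + d_1\bigr)$ under the choice of plat presentation of $K$. For this I would refine the construction to a tangle invariant: cut $D$ into elementary pieces --- cups, caps, and elementary braid generators on $2n$ strands --- and assign to each a bimodule over the algebra $\A_n$, with $C_{1\pm 1}(D)$ recovered by composing these bimodules under a $\DT$-type tensor product. Invariance then reduces to checking the bimodule identities corresponding to the moves that generate plat isotopy (the braid relations, far commutativity, a stabilization-type move, and the cup/cap zig-zag), each a finite homotopy-equivalence computation; functoriality of the tensor product handles everything global.

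The last step, and the one I expect to be the \emph{real} obstacle, is identifying this total homology with $\delta$-graded knot Floer homology --- precisely the point at which the paper stops and conjectures. The natural route is to compare $\A_n$ with Ozsv\'{a}th--Szab\'{o}'s algebra $\overline{\mathcal{B}}'(2n+1,n)$ from their algebraic cube-of-resolutions model of knot Floer homology, using the isomorphism $\A_n \cong \overline{\mathcal{B}}'(2n+1,n)$ established in this paper, and then to match the cup, cap, and crossing bimodules on the two sides up to quasi-isomorphism. The difficulty is that, although the two constructions live over isomorphic algebras, their differentials are assembled from different combinatorial recipes, so exhibiting an explicit bimodule quasi-isomorphism (or a compatible spectral-sequence comparison) respecting all of the structure maps is genuinely hard. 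Until that is done, the first two steps already produce, unconditionally, a spectral sequence $\mathit{Kh}(K) \Rightarrow H_*\bigl(C_{1\pm 1}(D),\, d_0 + d_1\bigr)$ of exactly the shape predicted by Conjecture~\ref{Conj1}, with a concrete and computable target.
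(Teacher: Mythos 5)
The statement you are asked about is a \emph{conjecture}, and neither you nor the paper proves it; what you have written is, almost verbatim, the paper's own program rather than a proof. Your first two steps --- building $C_{1\pm 1}(D)$ as an oriented cube of resolutions with $d=d_0+d_1$, identifying the $E_2$ page of the cube-filtration spectral sequence with $Kh(K)$, and proving invariance of the total homology by cutting $D$ into cups, caps, and braid generators and assigning $\A_n$-bimodules --- are exactly the paper's Theorems, and they yield only a spectral sequence from $Kh(K)$ to the invariant $H_{1-1}(K)$. The genuine gap is the one you yourself flag: the identification of $H_{1-1}(K)$ with ($\delta$-graded) knot Floer homology. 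In the paper this is Conjecture \ref{Conj2} (that $H_{1-1}(K)\cong \mathit{HFK}_2(K)$, which by the results quoted from \cite{Dowlin1} would give $\delta$-graded $\HFK$), and no argument is offered beyond the circumstantial evidence that $\A_n\cong\overline{\mathcal{B}}'(2n+1,n)$. Your proposed route --- matching the cup, cap, and crossing bimodules with the Ozsv\'ath--Szab\'o bordered bimodules up to quasi-isomorphism --- is a reasonable strategy, but you give no construction of such quasi-isomorphisms, no comparison of the differentials, and no mechanism for handling the fact that one side is a genuine bimodule while the other is a $DA$-bimodule coming from a different (large-genus) Heegaard diagram. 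Without that step, Conjecture \ref{Conj1} remains unproved; announcing a spectral sequence with a ``concrete and computable target'' is not the same as a spectral sequence converging to $\HFK$.

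One further technical caution: you take the Frobenius algebra to be $\mathbb{F}_2[X]/(X^2)$, but the paper's construction is carried out over $\QQ$, and this is not cosmetic. The identification of the vertex homology (Lemmas \ref{prop1} and \ref{prop2}, the $\mathfrak{sl}_1$ computations, and the proof of Theorem \ref{FullKhovIsoTheorem}) repeatedly uses homotopies and cancellations that divide by $2$, e.g.\ $dH+Hd=2U_{i(w_i^+)}+2U_{j(w_i^+)}$ and the acyclicity of $C_1(v)$ at a $4$-valent vertex via the arrows labelled $2$. Over $\mathbb{F}_2$ these arguments collapse, so if you intend a characteristic-$2$ version you would need a different proof that the $E_1$ page is the Khovanov complex; as written, your step one is only justified over $\QQ$ (or any ring in which $2$ is invertible).
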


\noindent
Our construction gives a candidate for this spectral sequence.

\begin{remark}
Although the construction is inspired by holomorphic disc counts, the actual complex is an algebraic construction, so the reader need not be familiar with the holomorphic geometry typically present in a paper involving knot Floer homology.
\end{remark}

Given a diagram $D$ for a knot $K$ which is the plat closure of a braid, we define a complex $C_{1 \pm 1}(D)$ as an oriented cube of resolutions for $D$. The differential includes only vertex maps and edge maps. We write $d=d_{0}+d_{1}$ where $d_{i}$ increases cube filtration by $i$.

\begin{definition}

Given a knot $K$ in $S^{3}$, we write the $E_{2}$ page of the spectral sequence induced by the cube filtration as $H_{1+1}(K)$ and the total homology as $H_{1-1}(K)$. 

\[H_{1+1}(K)=H_{*}(H_{*}(C_{1 \pm 1}(D), d_{0}), d_{1}^{*})\]
\[H_{1-1}(K)=H_{*}(C_{1 \pm 1}(D), d_{0}+d_{1})\]

\end{definition}

\noindent
The total homology $H_{1-1}(K)$ is singly graded, while $H_{1+1}(K)$ has a second grading coming from the cube filtration. 

\begin{theorem}

The homology $H_{1+1}(K)$ is isomorphic to Khovanov homology.

\end{theorem}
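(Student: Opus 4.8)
The plan is to show that the $E_1$ page of the cube-of-resolutions spectral sequence for $C_{1\pm1}(D)$, together with its induced differential $d_1^*$, coincides with the Khovanov chain complex (in its standard cube-of-resolutions form), after which $H_{1+1}(K) = E_2$ is Khovanov homology by definition. Concretely, the $E_1$ page is $H_*(C_{1\pm1}(D), d_0)$, which decomposes over the vertices of the cube; so the first step is to identify, at each vertex $v$ of the cube (i.e.\ each complete resolution of $D$ into a disjoint union of circles), the homology of the internal differential $d_0$ restricted to that vertex. I would compute this directly from the algebraic definition of $C_{1\pm1}(D)$: each resolved diagram is a union of embedded circles, and I expect the $d_0$-homology at a vertex with $k$ circles to be a free module of rank $2^k$, naturally identified with $V^{\otimes k}$ where $V$ is the rank-two Frobenius algebra $\Field[X]/(X^2)$ underlying Khovanov homology (possibly after collapsing a bigrading to the $\delta$-grading). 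This is the computational heart of the argument and I would organize it circle-by-circle, using whatever local description of the generators and $d_0$ near a single resolved crossing the construction provides.

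Second, having matched the $E_1$ page with the Khovanov cochain groups vertex-by-vertex, I would match the differential. The induced differential $d_1^*$ on $E_1$ is the map on $d_0$-homology induced by $d_1$, which consists of the edge maps of the cube of resolutions; each edge connects two vertices differing by a single $0$-resolution $\leftrightarrow$ $1$-resolution change, which either merges two circles into one or splits one circle into two. I would verify that under the identification of the preceding step, each edge map induces on homology exactly the Khovanov merge map $m\colon V\otimes V \to V$ or comultiplication $\Delta\colon V \to V\otimes V$ (up to sign and up to the overall grading conventions), again by a purely local computation near the crossing being resolved. Signs and the commuting-square structure of the cube should be handled by appealing to the fact that $C_{1\pm1}(D)$ is by hypothesis an \emph{oriented} cube of resolutions, so the edge-map signs already form a sign assignment; one only needs to check that this sign assignment is the standard Khovanov one (or is related to it by a reindexing automorphism of the cube, which does not change the homology).

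Third, I would address gradings. Khovanov homology is bigraded, whereas $H_{1-1}(K)$ is singly graded and $H_{1+1}(K)$ carries the single extra cube grading; so the isomorphism to be proved is an isomorphism of (at most) bigraded groups, where the relevant Khovanov grading is the $\delta = q - 2h$ (or analogous) collapse. I would pin down the grading shift on each vertex module coming from the oriented-resolution normalization (the $n_+$, $n_-$ bookkeeping in Khovanov's construction) and check it agrees with the internal grading on $H_*(C_{1\pm1}(D), d_0)$ at that vertex, so that the cube filtration matches the homological grading $h$ and the internal grading matches $\delta$.

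The main obstacle I anticipate is the first step: extracting a clean, functorial identification of $H_*(C_{1\pm1}(D), d_0)$ at each vertex with $V^{\otimes(\#\text{circles})}$, because the generators of $C_{1\pm1}(D)$ are defined via the plat-braid/cube-of-resolutions algebra rather than as an obvious tensor product over circles, and the decomposition into circles is a global feature of the resolved diagram. In practice this likely requires a local-to-global argument: show $d_0$ splits as a tensor product (or iterated mapping cone) of local pieces, compute each local piece's homology, and then check the gluing is compatible — with the subtlety that in a plat diagram the "circles" are assembled from arcs crossing many elementary tangles, so one must track how the local homology generators concatenate along an arc and close up into a circle. Once that identification is in hand, the edge-map computation and the sign/grading bookkeeping should be comparatively routine, essentially recovering Khovanov's original local TQFT description.
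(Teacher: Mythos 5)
Your high-level architecture is the same as the paper's: identify the $E_1$ page $H_*(C_{1\pm1}(D),d_0)$ vertex-by-vertex with the Khovanov chain groups $\mathcal{A}^{\otimes k_v}$, then show the induced $d_1^*$ consists of the Frobenius merge and comultiplication maps. But the execution of both substantive steps is quite different from the paper's, and your proposed execution of the first step has a real gap. You propose to show that $d_0$ at a vertex ``splits as a tensor product of local pieces'' organized circle-by-circle. It does not: the underlying module $M(S)$ is a direct sum over \emph{cycles} $Z$ of quotient rings $R_Z$, and the $U_i$-action moves one cycle to another with a coefficient $U(D(Z,e_i))$ determined by a global disc in the plane, so there is no tensor decomposition of $C_{1\pm1}(S)$ over the circles of $\sm(\pc(S))$. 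The paper gets around this with two separate devices: first, a filtration by basepoints in the bounded complementary regions, whose associated graded \emph{does} split over cycles, each summand being quasi-isomorphic to the $\mathfrak{sl}_1$ complex of the braid closure of $S-Z$; the composition product formula then identifies $\bigoplus_Z H_1^{\pm}(S-Z)\{\cdots\}$ with the $\mathfrak{sl}_2$ homology of $\sing(S)$, hence with $Kh(\sm(\pc(S)))$ as a graded vector space. Second, the module structure (free of rank one over $\mathcal{A}^{\otimes k}$) is established separately by proving MOY 0, I, II, III relations at the bimodule level and inducting on innermost circles via a tree-pruning argument on the graph $G(c)$ of interior regions. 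Without some substitute for these two ingredients, your step 1 does not go through.

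For the edge maps, your plan of a direct local verification that each edge map is $m$ or $\Delta$ is replaced in the paper by a much shorter indirect argument that you should be aware of: Lemma \ref{lem:comp} gives $d_1^+\circ d_1^- = d_1^-\circ d_1^+ = U_1-U_4$, which after the sign change $X_i=(-1)^{b(i)}U_i$ is $\pm(X_1+X_4)$, i.e.\ exactly the composite of the Khovanov merge and split maps. Since the induced edge maps are $\mathcal{A}^{\otimes k}$-module homomorphisms of known quantum degree, this composite determines each of them up to a nonzero scalar in $\QQ$, and a rescaling of the cube vertices then produces an isomorphism of complexes. This buys you freedom from any case analysis of merges versus splits and of concentric versus non-concentric circles, which is where a direct local computation becomes painful in the plat setting.
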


\begin{theorem}

The total homology $H_{1-1}(K)$ is a link invariant.

\end{theorem}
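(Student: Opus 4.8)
The plan is to prove invariance by showing that $H_{1-1}(K)$ is unchanged under the moves connecting any two plat braid diagrams of the same knot. Since we are restricted to diagrams that are plat closures of braids, the relevant moves are the \emph{Birman moves} (also called plat moves or Markov-type moves for plats): braid isotopy (i.e.\ relations in the braid group, including the far-commutativity and braid relations among the generators $\sigma_i$), stabilization/destabilization that adds or removes a trivial cup-cap pair together with a $\sigma$-generator, and the plat-closure analogue of conjugation. Concretely I would invoke the Birman--Reidemeister theorem: two plat braid diagrams represent the same link if and only if they differ by a finite sequence of such moves. So it suffices to exhibit, for each move, a quasi-isomorphism between the corresponding complexes $(C_{1\pm1}(D),d_0+d_1)$.

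The key steps, in order, are as follows. First I would establish the \emph{local} nature of the construction: because $C_{1\pm1}(D)$ is built as an oriented cube of resolutions with only vertex and edge maps, a change of $D$ supported in a disk changes $C_{1\pm1}(D)$ only in the corresponding tensor factor, so each move reduces to a computation with a small complex. Second, for braid-group relations I would check that the complex assigned to a product of generators depends only on the braid-group element, i.e.\ that the braid relation $\sigma_i\sigma_{i+1}\sigma_i=\sigma_{i+1}\sigma_i\sigma_{i+1}$ and far commutativity induce homotopy equivalences; this is the usual ``movie move'' or pentagon-type check at the chain level, and it is typically where one must produce explicit homotopies between the two composite cube-of-resolutions complexes. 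Third, for stabilization I would show that inserting a cup-cap pair with a single crossing produces a complex that deformation-retracts onto the original one — here one identifies an acyclic subcomplex or quotient complex (coming from the ``extra'' resolution of the new crossing, which contains a canceling differential) and applies the standard cancellation/Gaussian-elimination lemma for complexes. Fourth, the plat-conjugation move is handled similarly, or is subsumed by the combination of the previous two. Throughout, one should check that the filtration bookkeeping is consistent so that the quasi-isomorphisms respect $d_0+d_1$ (not merely $d_0$), which is exactly what is needed for the \emph{total} homology rather than the $E_2$-page.

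I expect the main obstacle to be the stabilization move. The subtlety is that a plat stabilization introduces not just a new crossing but also a new pair of cups/caps, so the ``extra'' part of the cube of resolutions is larger and more entangled with the rest of the diagram than in a naive Reidemeister-I picture; verifying that the resulting subcomplex (or quotient) is genuinely acyclic with respect to the full differential $d_0+d_1$ — and that the retraction is filtered appropriately — requires understanding precisely how the edge maps $d_1$ interact with the vertex maps near the new cup-cap. A secondary obstacle is the braid relation: one must rule out that the two composite complexes differ by a nontrivial extension, which in practice means writing down the homotopy explicitly on all resolutions. A clean way to organize both obstacles would be to first prove the refined tangle/bimodule invariance statement (invariance of the $\A_n$-bimodule associated to a braid on $2n$ strands up to homotopy equivalence of bimodules) and then deduce the knot statement by closing up with the standard cup and cap bimodules; this localizes all the work to checking that the bimodules assigned to $\sigma_i$ and to the elementary cup/cap tangles satisfy the braid and (de)stabilization relations up to homotopy equivalence, after which functoriality of $\DT$ (or the relevant tensor product) gives the result for $H_{1-1}(K)$ directly.
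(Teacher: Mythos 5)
Your overall architecture is sound and, for the braid moves, coincides with the paper's: the authors do exactly what you suggest at the end, proving that the $(A_n,A_n)$-bimodule of an open braid is invariant up to chain homotopy equivalence under braid-like Reidemeister II and III (via explicit maps built from the MOY~II/III splittings and the identities $d^\pm\circ d^\mp = U_i - U_{i+1}$), and then closing up with the cup and cap modules. Where you diverge is in the treatment of the closure moves (Reidemeister~I, the twists at top and bottom, and the cap/cup swaps). You propose identifying an acyclic subcomplex and applying Gaussian elimination, and you correctly flag as the main difficulty the verification that the relevant piece is acyclic for the \emph{total} differential $d_0+d_1$ rather than just $d_0$. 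The paper sidesteps precisely this difficulty: it writes down a map that is filtered with respect to the cube filtration, observes that on the $E_2$ page—already identified with Khovanov homology by Theorem~\ref{FullKhovIsoTheorem}—the induced map is the standard Khovanov quasi-isomorphism for the corresponding Reidemeister move, and then invokes the homological-algebra fact that a filtered map inducing an isomorphism on some $E_k$ induces one on the total homology (Lemma~\ref{SSLemma}). This buys a substantial saving: no explicit homotopies are needed for these moves, and all the hard local analysis is concentrated in the bimodule statements for R~II and R~III. Your route would also work but requires constructing the retractions by hand and checking compatibility with $d_1$ near the cups and caps, which is exactly the bookkeeping the spectral-sequence argument avoids. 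One caveat on your reduction to moves: make sure the list of plat moves you verify matches the generators of the relevant double coset (twists $\sigma_{2i-1}$ and the cap/cup swaps $\sigma_{2i}\sigma_{2i\pm1}\sigma_{2i\mp 1}\sigma_{2i}$), since these are not literally Reidemeister moves on the braid word and must be checked separately from braid isotopy.
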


The construction of $C_{1 \pm 1}(K)$ arose from an attempt to construct an oriented cube of resolutions for $\mathit{HFK}_{2}(K)$, a (singly graded) homology theory defined  by counting holomorphic discs which pass through basepoints which are typically blocked in knot Floer homology (\cite{Dowlin1}). The relevant results on $\mathit{HFK}_{2}$ from this paper are listed below:

I) The reduced homology $\widehat{\mathit{HFK}}_{2}(K)$ is isomorphic to $\delta$-graded knot Floer homology.

II) For a completely singular link $S$ (a singular diagram with no crossings) $\mathit{HFK}_{2}(S)$ is isomorphic to the Khovanov homology of $\sm(S)$, where $\sm(S)$ is the diagram obtained by replacing each singularization with the unoriented smoothing.

If there was an oriented cube of resolutions for $\mathit{HFK}_{2}$ and the edge maps corresponded to the Khovanov edge maps, this would give the desired spectral sequence. Unfortunately, the standard construction of the oriented cube of resolutions for knot Floer homology doesn't seem to work with the additional differentials.

The complex $C_{1 \pm 1}(K)$ aims to give an algebraic construction of this exact triangle. Let $D$ be a diagram for $K$ which is the plat closure of a braid. For each complete resolution $S$ of $D$, we replace $\mathit{CFK}_{2}(S)$ with an algebraically defined complex $(C_{1 \pm 1}(S), d_{0})$ which is conjectured to be chain homotopy equivalent. (It is certainly not isomorphic, as it is obtained by making some cancellations on $\mathit{CFK}_{2}(S)$ then guessing at the resulting differentials and module structure.) Then, for each $S_{1}$ and $S_{2}$ which differ at an edge, we define an edge map 
\[d_{1}: C_{ 1 \pm 1 }(S_{1}) \to C_{ 1 \pm 1 }(S_{2})   \]

\noindent
The resulting complex is $C_{ 1 \pm 1}(D)$. 

\begin{conjecture}\label{Conj2}
The total homology $H_{1-1}(K)$ is isomorphic to $\mathit{HFK}_{2}(K)$.
\end{conjecture}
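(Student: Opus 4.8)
The plan is to prove Conjecture~\ref{Conj2} by realizing $(C_{1\pm1}(D),d_0+d_1)$ as an iterated mapping cone matching, one crossing at a time, the singular skein exact triangle for $\mathit{HFK}_2$. Since the differential has only the terms $d_0$ and $d_1$, resolving a single crossing $c$ of $D$ exhibits
\[
(C_{1\pm1}(D),\,d_0+d_1)\;=\;\Cone\big(\,(C_{1\pm1}(D_0),\,d_0+d_1)\xrightarrow{\,d_1^{c}\,}(C_{1\pm1}(D_1),\,d_0+d_1)\,\big),
\]
where $D_0$ and $D_1$ are the oriented resolution and the singularization at $c$ and $d_1^{c}$ is the associated edge map. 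It therefore suffices to: \emph{(a)} treat the base case in which $D$ has no crossings, i.e.\ $D$ is a complete resolution $S$; \emph{(b)} establish a chain-level skein triangle $\mathit{CFK}_2(D)\simeq\Cone\big(\mathit{CFK}_2(D_0)\xrightarrow{f}\mathit{CFK}_2(D_1)\big)$ for $\mathit{HFK}_2$; and \emph{(c)} identify $d_1^{c}$ with $f$ under the homotopy equivalences supplied by \emph{(a)} and the inductive hypothesis. Conjecture~\ref{Conj2} then follows by induction on the number of crossings of $D$, since mapping cones of homotopic maps between homotopy-equivalent complexes are homotopy equivalent, while invariance of $H_{1-1}$ (already proved) and of $\mathit{HFK}_2$ (\cite{Dowlin1}) makes the resulting isomorphism independent of the chosen plat diagram.

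For step \emph{(a)}, recall that $C_{1\pm1}(S)$ is defined by performing cancellations on $\mathit{CFK}_2(S)$ and then positing the surviving differential and $\A_n$-module structure; the content is to verify that this description is correct, i.e.\ that $(C_{1\pm1}(S),d_0)\simeq\mathit{CFK}_2(S)$. For completely singular $S$ this is constrained by statement~(II), $\mathit{HFK}_2(S)\cong Kh(\sm(S))$, which determines the homology; one must promote this to a chain homotopy equivalence and then extend to resolutions containing oriented smoothings by a secondary induction on the number of double points, using merge and split maps. I would organize the whole argument through the bimodule refinement of $C_{1\pm1}$ and the identification $\A_n\cong\overline{\mathcal B}'(2n+1,n)$ proved later in the paper: it is enough to match the $C_{1\pm1}$ bimodules of the elementary cups, caps and singularized crossings with the corresponding $\mathit{HFK}_2$ bimodules, after which \emph{(a)}, \emph{(b)} and \emph{(c)} all reduce, by associativity of $\DT$, to a finite list of checks on elementary tangles.

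Steps \emph{(b)} and \emph{(c)} require the unoriented/singular skein triangle for $\mathit{HFK}_2$ at the chain level together with an identification of its connecting map with the hand-built edge map $d_1^{c}$. The triangle should follow from the Heegaard-triple and neck-stretching arguments that yield the cube of resolutions for $\widehat{\mathit{HFK}}$, adapted to the unblocked basepoints of $\mathit{HFK}_2$; identifying the map is the delicate point. The consistency constraints $H_{1+1}\cong Kh$ and $\widehat{\mathit{HFK}}_2\cong\widehat{\mathit{HFK}}$ (\cite{Dowlin1}) are useful here, as they pin the edge maps down modulo $d_0$-homology, and together with the elementary-tangle bimodule comparison they should force $f\simeq d_1^{c}$.

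The main obstacle is exactly the point flagged in the introduction: that the standard oriented cube of resolutions for knot Floer homology ``doesn't seem to work with the additional differentials'' of $\mathit{HFK}_2$. In the scheme above this resurfaces either as the need to construct directly a cube of resolutions for $\mathit{HFK}_2$ with no cube differentials beyond length one and with the prescribed edge maps, or as the need to lift $\mathit{HFK}_2$ into the bordered/algebraic framework so that step \emph{(b)} becomes finitely many bimodule identifications; the latter in turn first requires \emph{defining} the $\mathit{HFK}_2$ analogues of the Ozsv\'{a}th--Szab\'{o} $\overline{\mathcal B}'(2n+1,n)$-bimodules and proving, with holomorphic input, that their $\DT$-products compute $\mathit{HFK}_2$. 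Confirming the vertex description of step \emph{(a)} --- i.e.\ that the guess built into the definition of $C_{1\pm1}(S)$ is the right one --- is the other place where genuinely new work, rather than importation of existing results, seems unavoidable.
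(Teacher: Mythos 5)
The statement you are addressing is Conjecture~\ref{Conj2}, which the paper leaves open: there is no proof of it anywhere in the text, and the authors are explicit that the key input your plan relies on is itself only conjectural. In particular, your step \emph{(a)} --- that $(C_{1\pm1}(S),d_0)\simeq\mathit{CFK}_2(S)$ for each complete resolution $S$ --- is precisely what the paper says it cannot establish: $C_{1\pm1}(S)$ was obtained by cancelling part of $\mathit{CFK}_2(S)$ and then \emph{guessing} the surviving differential and module structure. Statement~(II) of \cite{Dowlin1} only identifies the homology of $\mathit{HFK}_2$ of a completely singular link with $Kh(\sm(S))$ as a (graded) group; it does not determine the $R$-module structure, the chain homotopy type, or the behaviour at resolutions containing oriented smoothings, all of which your cone-by-cone induction needs. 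So step \emph{(a)} is not a verification waiting to be written down; it is an open problem, and your proposal does not supply an argument for it.

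Steps \emph{(b)} and \emph{(c)} have the same character. A chain-level unoriented/singular skein triangle for $\mathit{HFK}_2$ with exactly one layer of edge maps is exactly what the introduction says does not seem to exist (``the standard construction of the oriented cube of resolutions for knot Floer homology doesn't seem to work with the additional differentials''), and saying it ``should follow from'' Heegaard-triple and neck-stretching arguments does not address why those arguments break for the unblocked basepoints. For \emph{(c)}, the constraints you cite ($H_{1+1}\cong Kh$, $\widehat{\mathit{HFK}}_2\cong\widehat{\mathit{HFK}}$) only pin the edge maps down on $d_0$-homology and only up to scale; agreement on the $E_2$ page does not by itself give the homotopy-commuting square one needs to conclude that the two mapping cones are equivalent, so ``mapping cones of homotopic maps between homotopy-equivalent complexes are homotopy equivalent'' is being invoked before its hypotheses are available. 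Finally, the bordered route you suggest would first require defining $\mathit{HFK}_2$-bimodules over $\overline{\mathcal B}'(2n+1,n)$ and proving a pairing theorem for them --- the paper's isomorphism $\A_n\cong\overline{\mathcal B}'(2n+1,n)$ is algebra-level evidence only, not a bimodule identification. In short, your text is an honest and reasonable research programme (and you flag the hard points yourself), but every step that would actually constitute the proof is deferred, so it cannot be counted as a proof of the conjecture.
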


\noindent
Note that this would prove Conjecture \ref{Conj1}.

The ground ring for $C_{ 1 \pm 1}(D)$ is a polynomial ring
\[ R = \mathbb{Q}[U_{1},...,U_{m}] \]

\noindent
where each $U_{i}$ corresponds to an edge $e_{i}$ in the diagram $D$. For each complete resolution $S$ of $D$, the complex $C_{ 1 \pm 1}(S)$ can be viewed as an $R$-module $\mathscr{M}(S)$ tensored with a Koszul complex $\mathsf{K}(D)$. 
\[C_{ 1 \pm 1}(S) = \mathscr{M}(S) \otimes \mathsf{K}(D) \]

The complex $\mathsf{K}(D)$ is the `closing off' factor - it depends only on the cups and caps in the plat closure, so it is the same for any resolution $S$. The edge map is the identity on $\mathsf{K}(D)$, we can write 
\[ d_{0} = 1 \otimes d_{\mathsf{K}} \hspace{20mm} d_{1} = d_{\mathscr{M}} \otimes 1 \]

Let $\mathscr{M}(D)$ denote the cube complex which has the module $\mathscr{M}(S)$ at the vertex in the cube corresponding to $S$, with differential $d_{\mathscr{M}}$. Then $C_{1 \pm 1}(D) = \mathscr{M}(D) \otimes \mathsf{K}(D) $. We are able to construct a local theory for $\mathscr{M}(D)$ which allows us to slice our diagram $D$ into tangles as in Figure \ref{FirstExample}. 

\begin{figure}[ht]
\centering
\def\svgwidth{6.5cm}
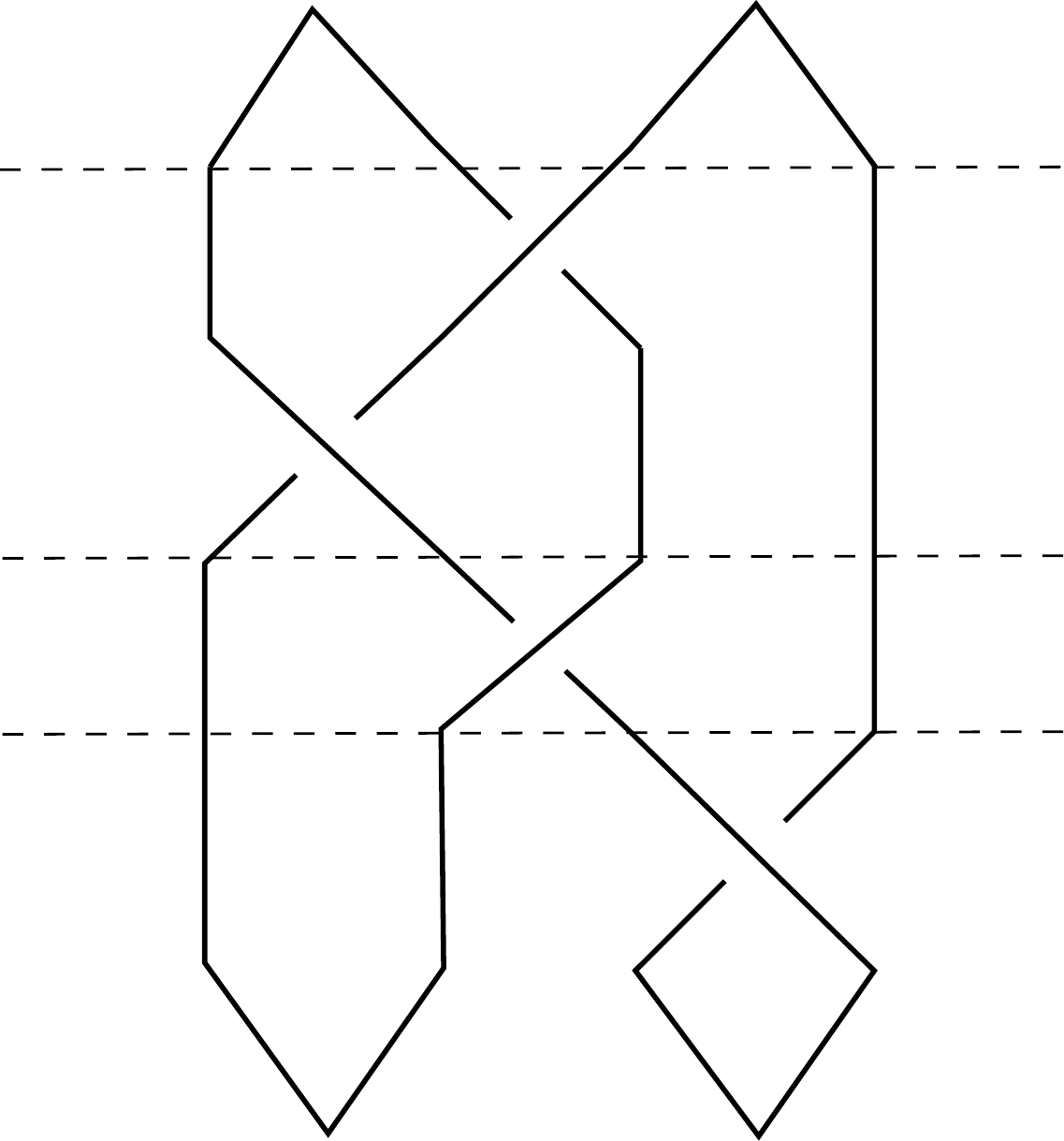
\caption{Breaking a plat diagram for the trefoil into four plat tangles}\label{FirstExample}
\end{figure}

The local theory assigns to each $(2m, 2n)$-tangle $T$ an $(\A_{m}, \A_{n})$-bimodule $\mathsf{M}[T]$. ($\A_{0}$ is defined to be $\mathbb{Q}$.) We orient our tangles from bottom to top, so a $(2m, 2n)$-tangle has $2m$ strands on the bottom and $2n$ strands at the top. Note that since our diagram is the plat closure of a braid, we either have $m=n$, $m=0$, or $n=0$. We will refer to tangles which are horizontal slices of a plat closure as \emph{plat tangles}.

\begin{theorem}

The local theory satisfies the following properties:

a) Let $T_{1}$ be a $(2l, 2m)$ plat tangle and $T_{2}$ a $(2m, 2n)$ plat tangle. Then 
\[ \mathsf{M}[T_{1} \circ T_{2}] \cong \mathsf{M}[T_{1}] \otimes_{\A_{m}} \mathsf{M}[T_{2}]  \]

b) The chain homotopy type of $[T]$ is an invariant for open braids, i.e. it is invariant under braid-like Reidemeister II and III moves.

\end{theorem}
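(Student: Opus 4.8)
\emph{Part (a).} The plan is to prove (a) first and then reduce (b) to local computations using it. Since the local theory involves only the cube complex $\mathscr{M}(-)$ — the Koszul factor $\mathsf{K}(D)$ is global and plays no role — part (a) is a pairing/gluing statement for cubes of resolutions. First I would observe that a complete resolution $S$ of $T_1 \circ T_2$ is exactly a pair $(S_1, S_2)$ of complete resolutions of $T_1$ and $T_2$ whose strand configurations agree along the gluing slice, and correspondingly that the cube of $T_1 \circ T_2$ is the product of the cubes of $T_1$ and $T_2$. The next step is to identify $\mathscr{M}(S_1 \circ S_2)$ with a balanced tensor product $\mathscr{M}(S_1) \otimes \mathscr{M}(S_2)$ taken over the ground ring of $\A_m$: the idempotents of $\A_m$ record the strand configuration along the slice, so tensoring over them selects precisely the matching pairs $(S_1,S_2)$, while the $U$-variables of the $2m$ gluing edges — which lie in that ground ring — implement the identification of the top edges of $T_1$ with the bottom edges of $T_2$. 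One then checks that the cube differential on the composite equals $d_{\mathscr{M}}\otimes 1 + 1\otimes d_{\mathscr{M}}$ together with the terms coming from edge maps internal to $T_1$ or $T_2$ but involving gluing-edge variables; after unwinding the $(\A_l,\A_m)$- and $(\A_m,\A_n)$-bimodule structures on $\mathsf{M}[T_1]$ and $\mathsf{M}[T_2]$, these are exactly the remaining terms in the differential of $\mathsf{M}[T_1]\otimes_{\A_m}\mathsf{M}[T_2]$. Finally, because each $\mathsf{M}[T_i]$ is free over the relevant one-sided copy of $\A_m$ (it has a basis of generators, each free over the ground ring $R$), the underived tensor product already carries the correct chain homotopy type, so no resolution of the bimodule is needed.

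\emph{Part (b).} By (a), the bimodule of a braid word is the tensor product over the intermediate algebras of the bimodules of its elementary pieces (single positive or negative crossings, and trivial strands), so it suffices to check move invariance locally. For braid-like Reidemeister II, I would write out the two-dimensional cube of resolutions of $\sigma_i\sigma_i^{-1}$ — its four vertex modules and four edge maps — locate the acyclic sub-bimodule given by the vertex whose resolution closes off a bigon together with the adjacent edge map that is an isomorphism onto it, cancel this pair, and then produce an explicit chain homotopy identifying the quotient with the identity bimodule $\mathsf{M}[\mathrm{id}_{2m}]$; the opposite-sign R2 is handled the same way. For braid-like Reidemeister III, the first approach I would try is to deduce it from R2 by the standard device of inserting a canceling pair of crossings and sliding, so that the two sides of the R3 relation are joined by a sequence of R2 moves and planar isotopies; failing that, the fallback is a direct computation with the three-dimensional cubes on the two sides, reducing both to a common complex by iterated acyclic cancellation.

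\emph{Main obstacle.} The Reidemeister III step is expected to be the crux: the cube is larger, the cancellations are not canonical, and one must track the $U$-weights and the internal grading shifts carefully as generators are eliminated. A subsidiary difficulty, already present in (a), is the bookkeeping for the gluing-edge variables and idempotents needed to make the algebraic tensor product match the geometric gluing on the nose rather than only up to a non-canonical isomorphism. One cannot simply quote the invariance of the Ozsv\'{a}th--Szab\'{o} $DA$-bimodules over $\overline{\mathcal{B}}'(2n+1,n)$, since $\mathsf{M}[T]$ is defined by a fresh guess at the differentials; verifying the moves by hand is presumably exactly how that guess was calibrated.
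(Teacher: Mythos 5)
Your part (a) is essentially the paper's argument: one writes down the explicit gluing map $h(x_Z)=x_{Z_1}\otimes x_{Z_2}$ with $Z_i=Z\cap T_i$ and checks it is a well-defined bijection respecting the actions. Be aware, though, that the real work is not in matching the cube differentials (the edge maps $d^\pm$ are defined as bimodule homomorphisms, so they glue for free) but in verifying the disk relations when $D(Z,e_i)$ crosses the gluing line: the disk splits as $D_1\cup D_2$, and one must check that the coefficient $U(D)$ factors as $U(D_1)\cdot U(D)/U(D_1)$ with the elements $\rho_{j,k}$, $\delta_{k,j}$ mediating, which is exactly what relations M7--M9 are designed for. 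Your Reidemeister II argument (MOY II delooping plus Gaussian elimination against the identity-like resolution, with an explicit homotopy) also matches the paper.

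The genuine gap is your primary route to Reidemeister III. The two sides of an R3 move are \emph{not} joined by a sequence of braid-like R2 moves and planar isotopies: R2 moves implement only the free reductions $\sigma_i^{\pm 1}\sigma_i^{\mp 1}=1$, and $\sigma_1\sigma_2\sigma_1\neq\sigma_2\sigma_1\sigma_2$ in the free group on the $\sigma_i$, so no such sequence exists and the reduction fails at the level of diagrams, before any homological algebra begins. What does work—and what the paper does—is the categorified Kauffman trick: resolve the \emph{middle} crossing on both sides, observe that the two $0$-resolutions each differ from a common diagram $D''$ by an R2 move, use the strong-deformation-retract form of R2 invariance together with Bar-Natan's mapping-cone lemma to replace both cones by cones on $C_{1\pm1}(D'')$, and then compare the $1$-resolutions directly. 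That last comparison is the part your fallback ("iterated acyclic cancellation") leaves unspecified, and it is where the content lies: one needs the MOY \rom{3} splitting $\mathsf{M}[S^i_{\RN{3}a}]\cong\mathsf{M}[\mathsf{X}_i]\oplus\mathsf{M}_2^a$ (and likewise for type $b$), the canonical isomorphism $j_{ab}:\mathsf{M}_2^a\to\mathsf{M}_2^b$ between the complementary summands on the two sides, and the lemmas showing the edge maps $d^\pm$ intertwine these splittings. Without identifying those structural inputs, the "direct computation" is not yet a proof.
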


\noindent
For example, if $D$ is the diagram for the trefoil in Figure \ref{FirstExample}, then 
\[\mathscr{M}(D)=\mathsf{M}[T_{1}] \otimes_{\A_{2}} \mathsf{M}[T_{2}] \otimes_{\A_{2}} \mathsf{M}[T_{3}] \otimes_{\A_{2}} \mathsf{M}[T_{4}] \]

\begin{remark}
Note that $\mathsf{M}[T]$ is not a tangle invariant when the cups and caps are involved - the Koszul complex $\mathsf{K}(D)$ is required.
\end{remark}

This construction is quite similar in spirit to the algebraic description of knot Floer homology by Oszv\'{a}th and Szab\'{o} in \cite{OS:Kauffman}. There are a few main differences: our tangle invariant is a bimodule while theirs is a $DA$-bimodule, and our complex comes from a planar Heegaard diagram while theirs comes from the large genus Kauffman states diagram. Especially in light of the second difference, the following theorem is quite surprising.

\begin{theorem}
The algebra $\A_{n}$ is isomorphic to the algebra $\overline{\mathcal{B}}'(2n+1, n)$ from \cite{OS:Kauffman}.
\end{theorem}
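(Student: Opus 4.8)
The plan is to produce an explicit isomorphism by first matching idempotents, then matching generators, and finally checking relations and bijectivity. I would begin by extracting from the local theory a presentation of $\A_n$: its ring of idempotents $\Idem_n$, a set of algebra generators coming from the edge maps in the cube of resolutions (the elementary ``left'' and ``right'' moves that change the local state of the $2n$ strands by one step, together with the action of the edge variables $U_i$), and the relations these generators satisfy. In parallel I would recall from \cite{OS:Kauffman} the definition of $\Blg(2n+1,n)$ and of its decorated variant $\ol{\Blg}'(2n+1,n)$: its idempotents are indexed by $n$-element subsets of a $(2n+1)$-element set (equivalently, by ``I-states'' placing $n$ markers on $2n+1$ points), its generators are the elementary moving operators and the polynomial variables $U_i$, and the bar and prime decorations record which variables and which ``long'' products have been set to zero.

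The key step is the bijection on idempotents. Each complete resolution of a plat tangle on $2n$ strands carries a local state which, read off from the $2n+1$ regions (or gaps) associated to the tangle, is naturally an $n$-element subset of a $(2n+1)$-element set; this is precisely the combinatorial datum indexing the idempotents of $\ol{\Blg}'(2n+1,n)$. Having fixed this correspondence, I would define the map on generators by sending the elementary edge map that moves a marker one step to the corresponding moving operator of $\ol{\Blg}'(2n+1,n)$, and sending the variable $U_i$ attached to the edge $e_i$ to the Ozsv\'ath--Szab\'o variable at the matching position. I would then check that every defining relation of $\A_n$ --- the idempotent relations, the far-commutation relations, the relations expressing a composite of opposite elementary moves as multiplication by a $U$ variable, and the relations that kill products of moves straying outside the allowed range --- maps to a relation holding in $\ol{\Blg}'(2n+1,n)$, and conversely that the images of the generators satisfy all the relations of $\ol{\Blg}'(2n+1,n)$, so that the map and its candidate inverse are both well defined.

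To conclude bijectivity I would compare the two algebras idempotent-pair by idempotent-pair: for each pair $(\imath,\jmath)$ the spaces $\imath\,\A_n\,\jmath$ and the corresponding subspace of $\ol{\Blg}'(2n+1,n)$ are finitely generated free $\QQ[U_1,\dots,U_m]$-modules, and it suffices to show the proposed map carries a basis to a basis, or equivalently that the two sides have equal graded ranks and the map is surjective. The grading by the polynomial variables together with an internal Alexander/Maslov-type grading reduces this to a finite check in each bidegree.

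The main obstacle I anticipate is bookkeeping that is routine in spirit but genuinely delicate: reconciling the two sign/normalization conventions for the polynomial variables and the two combinatorial pictures (regions of a planar Heegaard diagram versus Kauffman-state markers on an interval). In particular, matching the $U$ variable produced when a right move is followed by the opposite left move, and correctly identifying which generators of $\ol{\Blg}'(2n+1,n)$ are killed by the bar and prime operations against which edge-map composites vanish in $\A_n$, is where an error is most likely to hide. I would therefore pin down the dictionary by verifying the cases $n=1$ and $n=2$ explicitly --- where both algebras are small enough to write out completely --- before asserting the general pattern.
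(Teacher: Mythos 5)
Your overall strategy---an explicit dictionary matching idempotents, then generators, then relations---is the same as the paper's, and most of the bookkeeping you flag (the left/right reversal, the index shift, the role of the variable killed by the bar) is exactly what the paper's dictionary records: $h(I_{\x})=\iota_{S_{\x}}$ with $S_{\x}=\{i \mid i+1/2\notin\x\}$, $h(R'_i)=L_{i-1}$, $h(L'_i)=R_{i-1}$, $h(u_i)=u_{i-1}$, and $u_1=0$ accounting for the reduced component at infinity. One small correction: the idempotents of $\mathcal{B}'(2n+1,n)$ are indexed by $n$-element subsets of the $2n$ \emph{intervals} determined by $2n+1$ points, not of a $(2n+1)$-element set; that is why the count matches the $\binom{2n}{n}$ idempotents of the strands algebra.

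The genuine gap is that the isomorphism does not hold for the strands algebra $A_n$ presented by relations R1--R5; it holds only for the quotient $\mathcal{A}_n = A_n/\langle R_iR_{i-1},\, L_{i-1}L_i : 2\le i\le 2n\rangle$, and your proposal never identifies this quotient. The Ozsv\'ath--Szab\'o relation B3 ($R'_iR'_{i+1}=0$ and $L'_{i+1}L'_i=0$) has no counterpart among R1--R5: in $A_n$ the composite $R_iR_{i-1}$ is an honest nonzero monotone bijection (sending $i\mapsto i+1$, $i-1\mapsto i$, with trivial $u$-coefficient), whereas its preimage under the dictionary vanishes in $\overline{\mathcal{B}}'(2n+1,n)$. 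So the map you propose from the un-quotiented algebra is either not well defined in one direction or not injective in the other, and no amount of rank-counting per idempotent pair will rescue it; the ranks genuinely differ. You would need to (i) impose the quotient and prove the isomorphism there, and (ii) separately justify that this quotient is the ``right'' algebra, i.e.\ that the tangle bimodules factor through it (which is where the geometric content lives: the coefficient discs force $x_{Z}R_iR_{i-1}=0$ on the cup modules). Once the quotient is in place, bijectivity is immediate because the same dictionary read backwards is a two-sided inverse, so your graded-rank comparison, while valid, is unnecessary.
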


\noindent
This theorem gives some evidence for Conjecture \ref{Conj2}, and gives a construction of Khovanov homology using the same algebras present in Oszv\'{a}th and Szab\'{o}'s algebraic description of knot Floer homology.

\subsection*{Acknowledgments} We would like to thank Andy Manion, Ciprian Manolescu, Peter Ozsv\'{a}th, Ina Petkova, Robert Lipshitz, and Zoltan Szab\'{o} for helpful discussions.

\section{Background: Khovanov homology}

In this section we provide a definition of Khovanov's categorification of the Jones polynomial, known as Khovanov homology (\cite{Khovanov00:CatJones}). In order to make the relationship with our construction more clear, we will highlight the module structure of the Khovanov chain complex over the edge ring.

\subsection{Definition of the Khovanov Complex} Let $L$ be a link in $S^{3}$ with diagram $D \subset \mathbb{R}^{2}$. Let $\mathfrak{C}=\{c_{1}, c_{2}, ..., c_{n}\}$ denote the crossings in $D$, and viewing  $D$ as a 4-valent graph, let $E=\{e_{1}, e_{2}, ..., e_{m}\}$ denote the edges of $D$. The \emph{edge ring} is defined to be 

\[  R := \mathbb{Q}[X_{1}, X_{2},...,X_{m}]/\{X_{1}^{2}=X_{2}^{2}=...=X_{m}^{2}=0 \}   \]

\noindent
with each variable $X_{i}$ corresponding to the edge $e_{i}$. 

While our construction is an oriented cube of resolutions complex, the Khovanov complex is an \emph{unoriented} cube of resolutions. Each crossing $c_{i}$ can be resolved in two ways, the 0-resolution and the 1-resolution (see Figure \ref{resolutions}). For each $v \in \{0,1\}^{n}$, let $D_{v}$ denote the diagram obtained by replacing the crossing $c_{i}$ with the $v_{i}$-resolution. The diagram $D_{v}$ is a disjoint union of circles - denote the number of circles by $k_{v}$. The vector $v$ determines an equivalence relation on $E$, where $e_{p} \sim_{v} e_{q}$ if $e_{p}$ and $e_{q}$ lie on the same component of $D_{v}$.

\begin{figure}[ht]
\vspace{4mm}
\centering
\begin{overpic}[width = .8\textwidth]{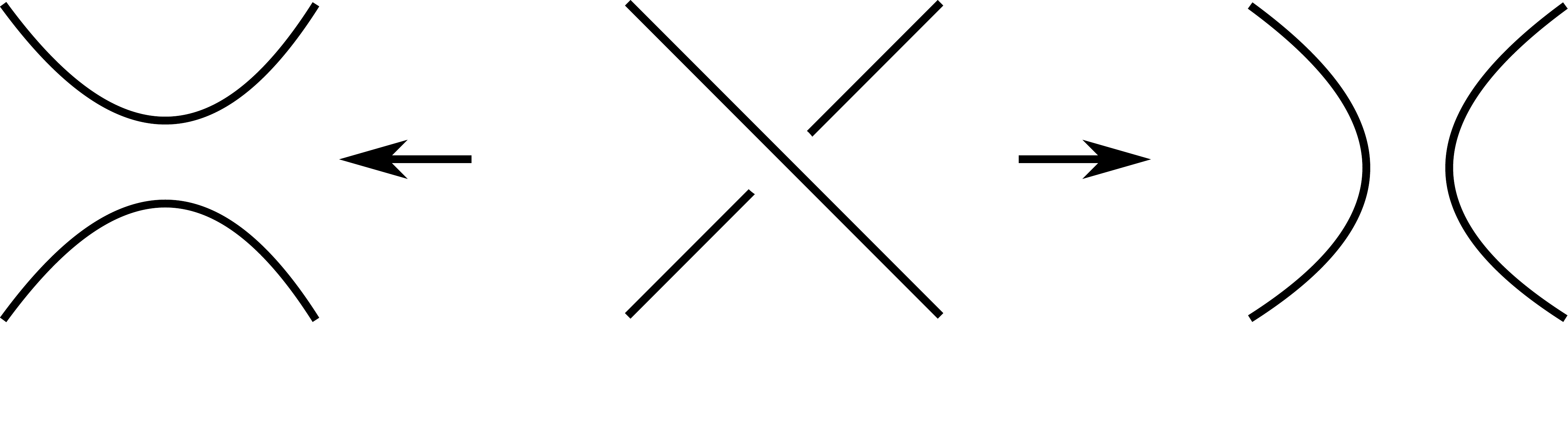}
\put(9.5, 19){$\bullet$}
\put(9.5, 13.75){$\bullet$}
\put(86.3,16.5){$\bullet$}
\put(91.7,16.5){$\bullet$}
\put(0,29){$e_{i}$}
\put(0,4){$e_{j}$}
\put(19, 29){$e_{k}$}
\put(19,4){$e_{l}$}
\put(0,-2){0-resolution}
\put(40,29){$e_i$}
\put(40,4){$e_j$}
\put(59,29){$e_k$}
\put(59,4){$e_l$}
\put(80,29){$e_i$}
\put(80,4){$e_j$}
\put(99,29){$e_k$}
\put(99,4){$e_l$}
\put(80,-2){1-resolution}
\end{overpic}
\caption{}\label{resolutions}
\end{figure}

The module $CKh(D_{v})$ is defined to be a quotient of the ground ring:

\[ CKh(D_{v}) := R / \{X_{p}=X_{q} \text{ if } e_{p} \sim_{v} e_{q} \}  \]

\noindent
we will denote this quotient by $R_{v}$.

There is a partial ordering on $\{0,1\}^{n}$ obtained by setting $u \le v$ if $u_{i} \le v_{i}$ for all $i$. We will write  $u \lessdot v$ if  $u \le v$ and they differ at a single crossing, i.e. there is some $i$ for which $u_{i}=0$ and $v_{i}=1$, and $u_{j}=v_{j}$ for all $j \ne i$. Corresponding to each edge of the cube, i.e. a pair $(u \lessdot v)$, there is an embedded cobordism in $\mathbb{R}^{2} \times [0,1]$ from $D_{u}$ to $D_{v}$ constructed by attaching a 1-handle near the crossing $c_{i}$ where $u_{i}<v_{i}$. This cobordism is always a pair of pants, either going from one circle to two circles (when $k_{u}=k_{v}-1$) or from two circles to one circle (when $k_{u}=k_{v}+1$). We call the former a \emph{merge} cobordism and the latter a \emph{split} cobordism.

For each vertex $v$ of the cube, the quotient ring $R_{v}$ is naturally isomorphic to $\mathcal{A}^{\otimes k_{v}}$, where $\mathcal{A}$ is the Frobenius algebra $\mathbb{Q}[X]/(X^{2}=0)$. Recall that the multiplication and comultiplication maps of $\mathcal{A}$ are given as:

\begin{displaymath}
m:\mathcal{A}\otimes_{\mathbb{Q}}\mathcal{A}\rightarrow\mathcal{A}:\begin{cases}
\begin{array}{lcc}
1 \mapsto 1&,&X_{1} \mapsto X\\
X_{1}X_{2} \mapsto 0&,&X_{2} \mapsto X
\end{array}
\end{cases}
\end{displaymath}
and 
\begin{displaymath}
\Delta:\mathcal{A}\rightarrow\mathcal{A}\otimes_{\mathbb{Q}}\mathcal{A}:\begin{cases}
1 \mapsto X_{1}+X_{2}\\
X \mapsto X_{1}X_{2}
\end{cases}
\end{displaymath}




The chain complex $CKh(D)$ is defined to be the direct sum of the $CKh(D_{v})$ over all vertices in the cube:

\[ CKh(D) := \bigoplus_{v \in \{0,1\}^{n}} CKh(D_{v}) \]

The differential decomposes over the edges of the cube. When $u \lessdot v$ corresponds to a merge cobordism, define 

\[   d_{u,v}: CKh(D_{u}) \to CKh(D_{v})    \]

\noindent
to be the Frobenius multiplication map, and when $u \lessdot v$ corresponds to a split cobordism, define $d_{u,v}$ to be the comultiplication map. In terms of the quotient rings $R_{u}$ and $R_{v}$, the map $m$ is projection, while $d$ is multiplication by $X_{j}+X_{k}$, where $e_{i}$, $e_{j}$, $e_{k}$, $e_{l}$ are the edges at the corresponding crossing as in Figure \ref{resolutions}. Note that $X_{j}+X_{k}=X_{i}+X_{l}$.

If $D_{u}$ and $D_{v}$ differ at crossing $c_{i}$, define $\epsilon_{u,v} = \sum_{j < i } u_{j} $. Then 

\[   d = \sum_{u \lessdot v} (-1)^{\epsilon_{u,v}} d_{u,v}   \]

The Khovanov complex is bigraded, with a homological grading and a quantum grading. Up to an overall grading shift, the homological grading is just the height in the cube. Setting $|v|= \sum_{i} v_{i}$, $n_{+}$ the number of positive crossings in $D$, an $n_{-}$ the number of negative crossings in $D$, we have 

\[ \mathrm{gr}_{h}(R_{v}) = |v|-n_{-}\]

For each vertex $v$ of the cube, the quantum grading of $1\in R_{v}$ is given by 

\[ \mathrm{gr}_{q}(1 \in R_{v}) = n_{+}-2n_{-}+|v|+k_{v} \]

\noindent
and each variable $X_{i}$ has quantum grading $-2$. With respect to the bigrading $(\mathrm{gr}_{h}, \mathrm{gr}_{q})$, the differential $d$ has bigrading $(1,0)$. The Khovanov homology $Kh(D)$ is the homology of this complex

\[ Kh(D) = H_{*}(CKh(D), d)       \]

There is a third grading which will be of interest called the $\delta$-grading. It is given by 
\[ \gr_{\delta} = \gr_{q} - 2 \gr_{h} \]

\noindent
The differential is homogeneous of degree $-2$ with respect to the $\gr_{\delta}$, and multiplication by $X_{i}$ lowers $\gr_{\delta}$ by 2.

The grading on our complex $C_{1 \pm 1}(D)$ will correspond to the $\delta$-grading on Khovanov homology.

\section{Construction of $C_{1 \pm 1}$ for singular knots}

The complex $C_{1 \pm 1}(D)$ is constructed as an oriented cube of resolutions. Each crossing can be resolved in one of two ways - with the oriented smoothing smoothing, or a singularization (see Figure \ref{OrientedResolutions}). 

\begin{figure}[ht]
\centering
\def\svgwidth{10cm}
\small
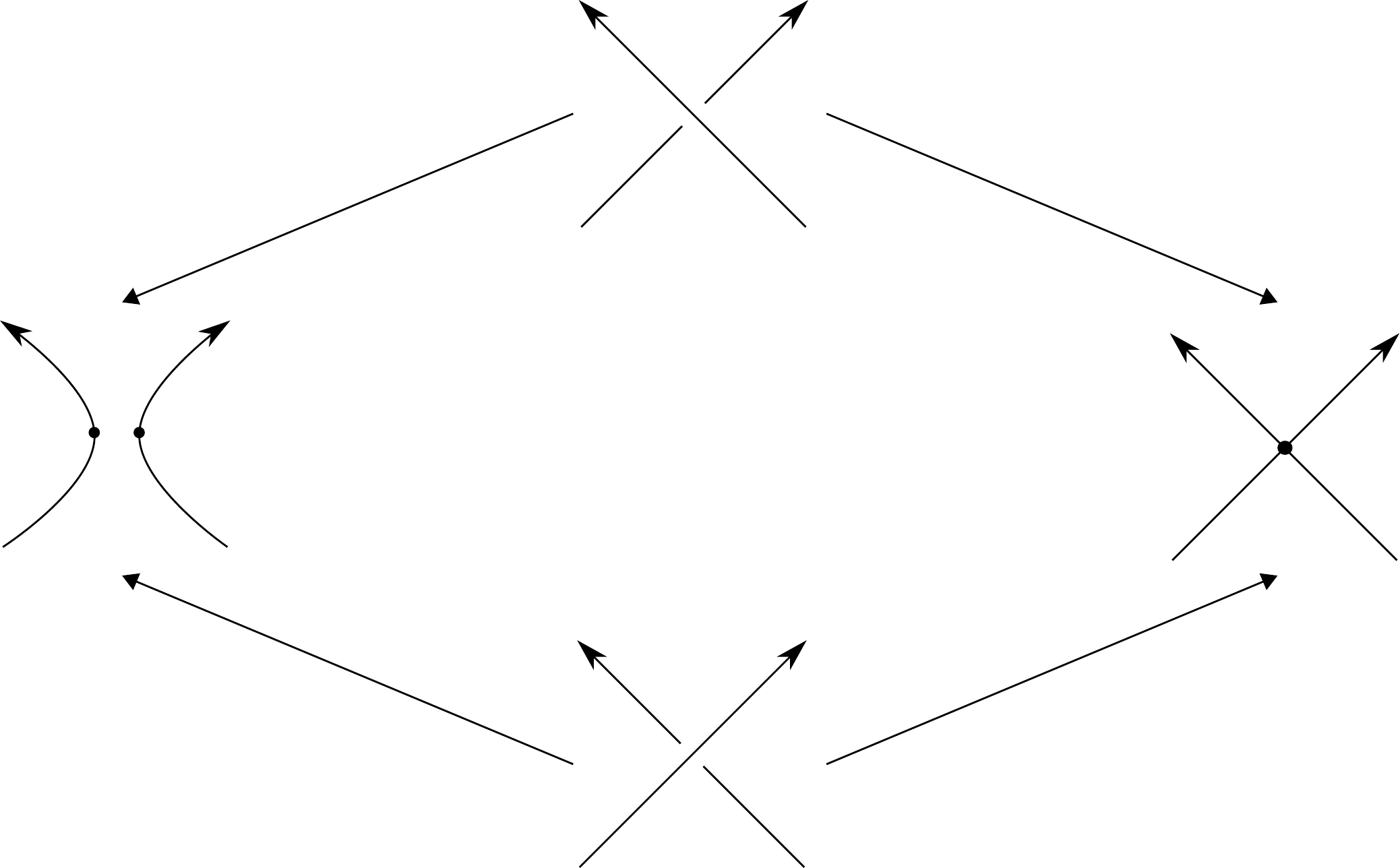
\caption{Resolutions for a positive and negative crossing}\label{OrientedResolutions}
\end{figure}

\begin{definition}

A diagram $S$ which has been obtained from $D$ by replacing every crossing with the oriented smoothing or singularization is called a \emph{complete resolution} of $D$.

\end{definition}

\begin{remark}

A complete resolution $S$ can also be viewed as a trivalent graph by replacing each 4-valent vertex with a wide edge, as in Figure \ref{WideEdge}.

\end{remark}

\begin{figure}[ht]
\centering
\def\svgwidth{10cm}
\small
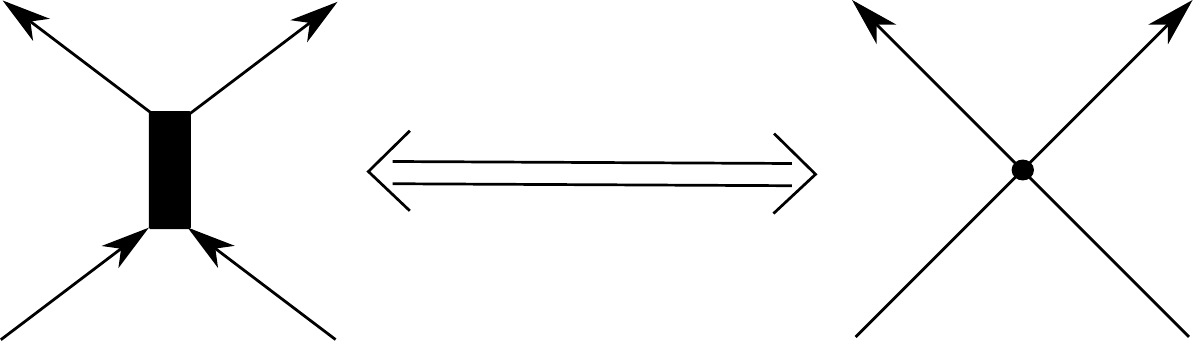
\caption{Wide edge vs singularization notation}\label{WideEdge}
\end{figure}

In this section we will describe the chain complex $C_{1\pm1}(S)$ for a complete resolution $S$. In the following section, we will give a local theory for the complex, which will make the construction of the edge differential $d_{1}$.

\subsection{Plat and Singular Closures of Braids} \label{closuresection}

Let $b\in Br_{2n}$ be a braid with $2n$ strands. The \emph{plat closure} of $b$, denoted by $\pc(b)$, is defined to be the link obtained by joining the pairs of strings consecutively at the top and bottom. More precisely, given tangles 
\[T_n^-=\cup\ \cup\  ...\ \cup\ \ \ \ \ \ \text{and}\ \ \ \ \ \ T_n^+=\cap\ \cap\  ...\ \cap\]
consisting of $n$ cups and $n$ caps, respectively, $\pc(b)=T_n^-bT_n^+$, as in Figure \ref{PlatClosure}. We say that $D$ is a plat diagram if $D=\pc(b)$ for some braid $b$. For plat closure we have the following well-known analogue of Alexander's theorem \cite{Alex:BraidClosure}.

\begin{figure}[ht]
\centering
\def\svgwidth{10cm}
\small
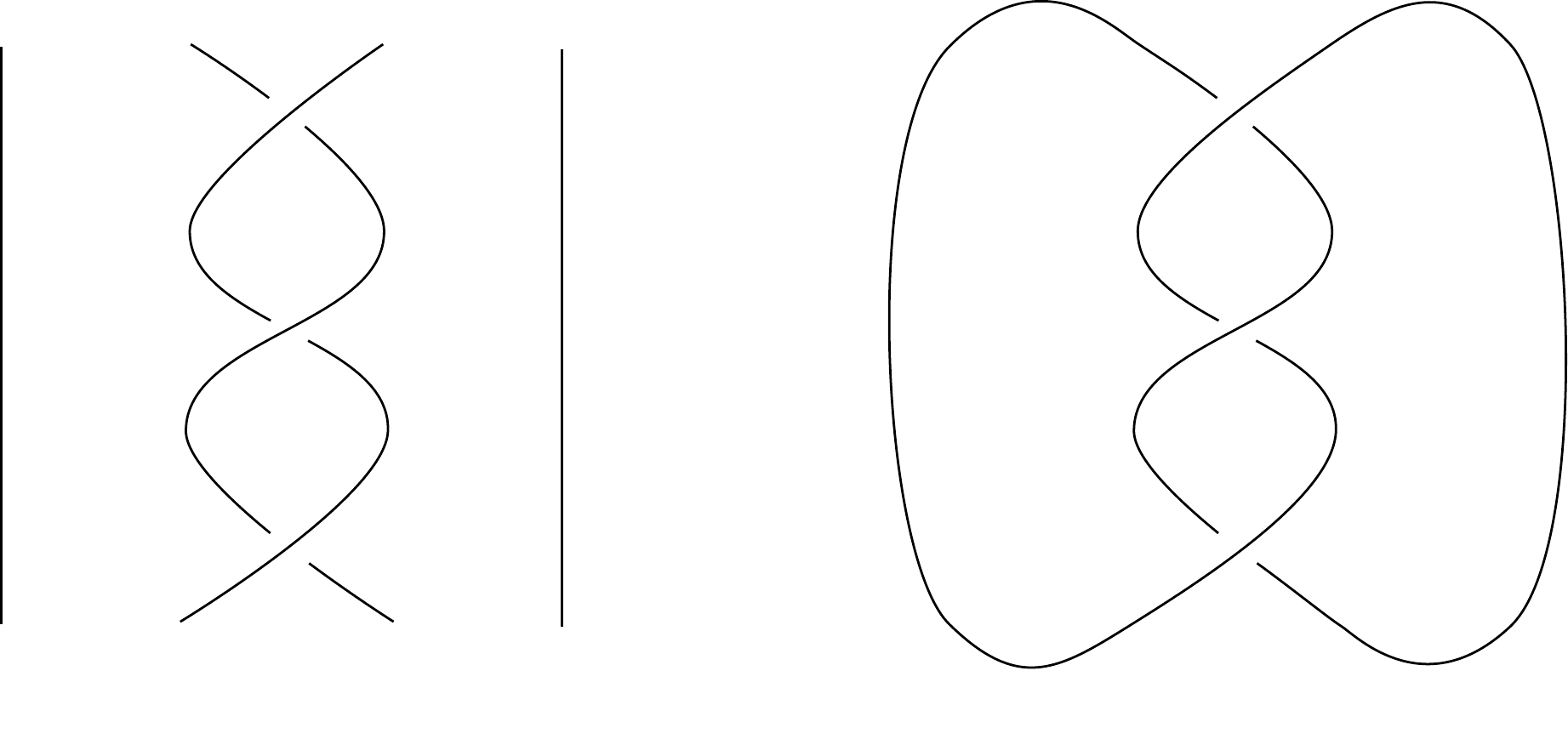
\caption{A braid $b$ and its plat closure $\pc(b)$}\label{PlatClosure}
\end{figure}

\begin{theorem}
For every link $L$, there is some braid $b$ with an even number of strands strands such that $L=\pc(b)$. 
\end{theorem}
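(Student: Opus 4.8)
The plan is to deduce the statement from Alexander's classical theorem that every link is the closure of a braid, via an explicit planar isotopy that converts a braid closure into a plat closure. By Alexander's theorem \cite{Alex:BraidClosure}, write $L=\widehat{\beta}$, the braid closure of some $\beta\in Br_n$, where $\widehat{\beta}$ is obtained by joining, for each $i$, the $i$-th top endpoint of $\beta$ to the $i$-th bottom endpoint by an arc running around the outside of the braid. First I would isotope these $n$ closure arcs into the plane of the diagram so that the $i$-th arc consists, read from the top of $\beta$, of a short horizontal segment, a single local maximum at height $1+\epsilon_i$, a descending segment down the right-hand side, a single local minimum at height $-\epsilon_i$, and a short horizontal segment returning to the bottom of $\beta$; choosing $\epsilon_1>\epsilon_2>\dots>\epsilon_n>0$ makes the arcs nested and disjoint, with all $n$ maxima above $\beta$ and all $n$ minima below $\beta$. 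The resulting diagram of $L$ then has exactly $n$ local minima, all lying below everything else, and $n$ local maxima, all lying above everything else.

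Next I would slice horizontally just above all the minima and just below all the maxima. The region in between contains no critical points of the height function, so each of its strands is monotone and the region is a braid $b_0\in Br_{2n}$ on the $2n$ strands coming from the $n$ strands of $\beta$ together with the $n$ descending segments of the closure arcs. By construction $L$ is recovered from $b_0$ by capping off, at the top, the $n$ pairs of endpoints that are the feet of the respective maxima and, at the bottom, the $n$ pairs that are the feet of the respective minima; since maximum $i$ and minimum $i$ lie at the same nesting level, both the top pairing and the bottom pairing realize the same nested pattern $P=\{(1,2n),(2,2n-1),\dots,(n,n+1)\}$. Finally I would pick a permutation braid $\tau\in Br_{2n}$ carrying $P$ to the consecutive pattern $\{(1,2),(3,4),\dots,(2n-1,2n)\}$ and set $b=\tau b_0\tau^{-1}\in Br_{2n}$; conjugation by $\tau$ relabels the $2n$ endpoints at the top and at the bottom by the same permutation, so $\pc(b)=T_n^-\,b\,T_n^+=L$, and $2n$ is even.

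The only place that needs genuine care is the first step: one must check that the closure arcs really can be simultaneously isotoped into the ``all minima below, all maxima above'' nested configuration without forcing any spurious critical point, so that the middle slice is an honest braid, and that the top and bottom reindexings are governed by the same pattern (they are, because the maximum and minimum of a given closure arc are joined through $b_0$ by the descending segment of that arc). These are routine planar bookkeeping points rather than real obstacles. As an alternative that sidesteps Alexander's theorem entirely, one can begin from an arbitrary Morse-position diagram of $L$ and repeatedly push each local maximum upward past whatever lies above it, cancelling a maximum--minimum pair whenever a zig-zag would otherwise block the slide; this terminates and directly produces a diagram with all maxima at the top and all minima at the bottom, after which the second paragraph applies verbatim.
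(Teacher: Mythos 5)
Your proof is correct. Note that the paper itself gives no proof of this statement at all---it is presented as the ``well-known analogue of Alexander's theorem''---so there is no argument in the text to compare against; what you have written is the standard derivation: pass from a braid closure to a plat by flattening the closure arcs into a nested configuration of $n$ maxima above and $n$ minima below, read off the middle slice as a braid $b_0\in Br_{2n}$ with nested cup/cap pattern, and then comb the nested pattern into the consecutive one. The only step I would spell out slightly more is the last: writing the nested cups as $T_n^-\tau$ and then checking that the nested caps are $\tau^{-1}T_n^+$ (reflection through a horizontal axis reverses word order and inverts each generator, and the nested matching is symmetric under this reflection), which is exactly what makes $b=\tau b_0\tau^{-1}$ rather than some $\tau b_0\sigma$ work; this does go through, so it is a presentational point, not a gap.
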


The construction of $C_{1 \pm 1}(D)$ will take as input a plat diagram $D$. However, it will often be useful to replace the plat closure $\pc(b)$ with a diagram $\x(b)$ which we will call the \emph{singular closure}. The singular closure of a braid $b$ is obtained by adding a singularization at the top of the braid between strands $2i-1$ and $2i$ for $i=1,...,n$, then taking the standard braid closure, as in Figure \ref{SingularClosure}.

\begin{figure}[ht]
\centering
\def\svgwidth{12cm}
\small
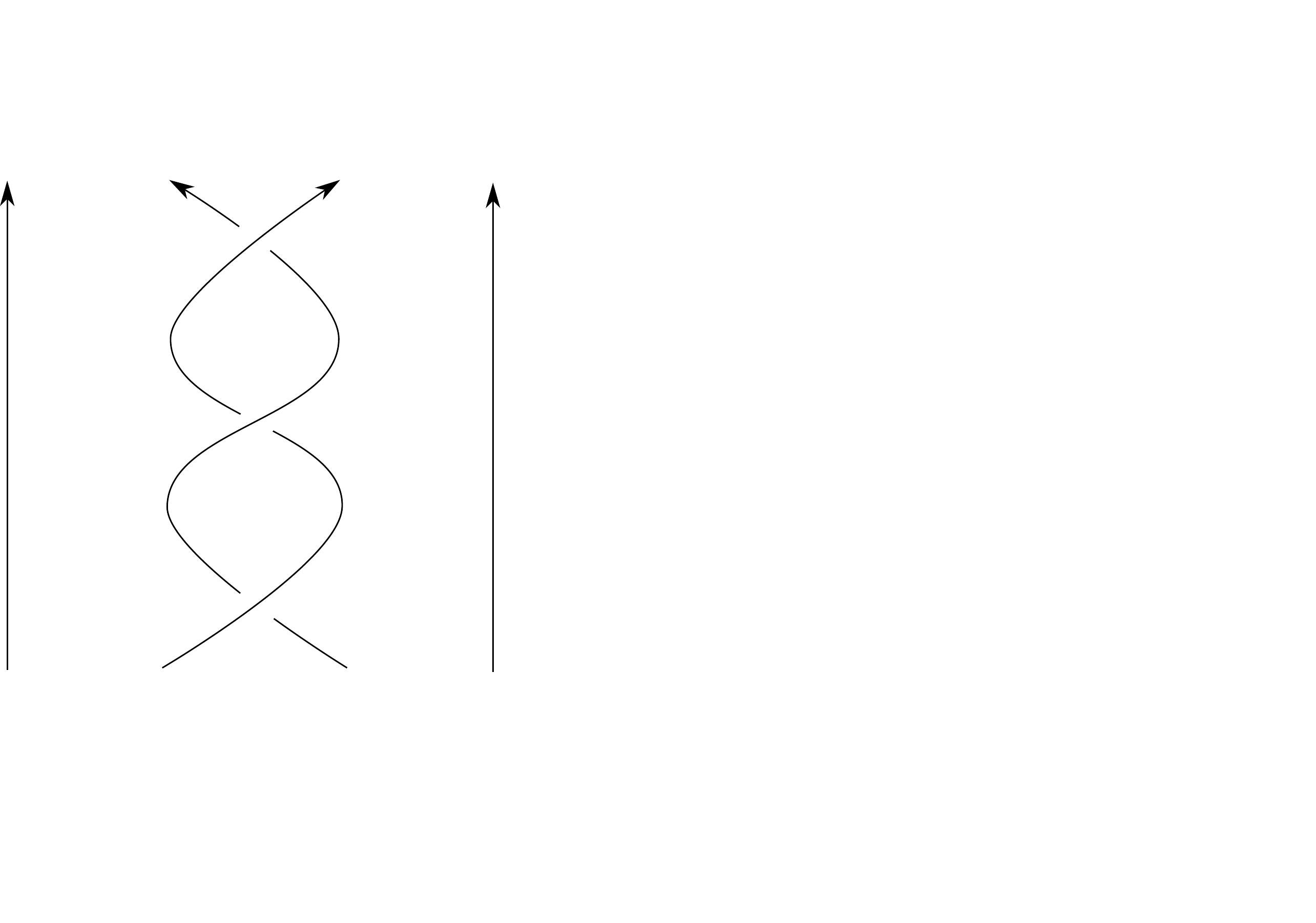
\caption{A braid $b$ and its singular closure $\sing(b)$}\label{SingularClosure}
\end{figure}

The reason we are interested in the singular braid is that in $\mathit{HFK}_{2}$, singularizations are closely related to the unoriented smoothing. Replacing the singularizations in $\sing(b)$ with unoriented smoothings gives the plat closure $\pc(b)$. However, the orientations are better behaved on $\sing(b)$, and it gives a pairing between the cups and caps in $\pc(b)$.


\subsection{Singular braids,  the edge ring, and cycles} \label{sub3.2}

\begin{definition}
A \emph{singular braid} is a properly embedded $4$-valent graph in $\RR\times [0,1]$ obtained by replacing all crossings of a braid diagram with singular points.  
\end{definition}

Let $S$ be a singular braid with $n$ strands. Then $S$ is an oriented graph with $n$ incoming edges, $n$ outgoing edges and all interior vertices have two incoming and two outgoing edges.  To each edge $e_i$ of $S$, we assign a variable $U_i$, and define the \emph{edge ring} of $S$, to be
\[R(S)=\QQ[U_1,...,U_m].\]
Here, $m$ is the number of edges of $S$. 

Let $S$ be a singular braid with $2n$ strands.  We define \emph{plat closure} of $S$, denoted by $\pc(S)$, to be the oriented graph obtained by attaching the $n$ oriented, singular caps (Figure~\ref{S-caps}) and $n$ oriented, singular cups (Figure~\ref{S-cups}), to the top and bottom of $S$, respectively. 

\begin{figure}[ht]
\centering
\def\svgwidth{6.5cm}
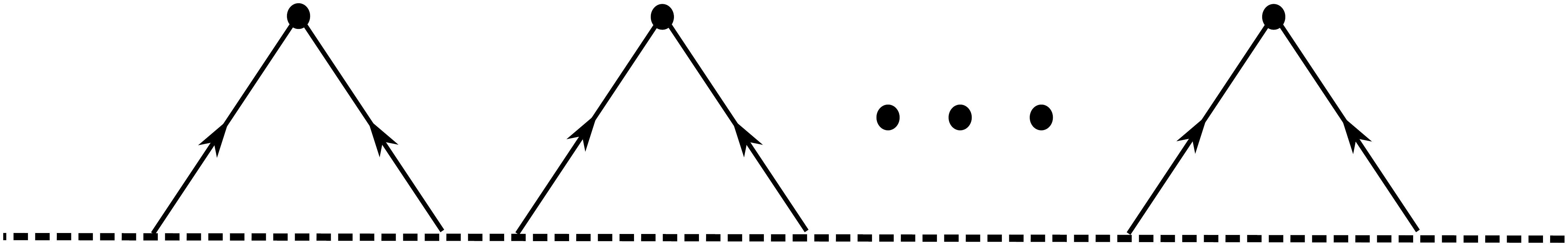
\caption{}\label{S-caps}
\end{figure}

\begin{figure}[ht]
\centering
\def\svgwidth{6.5cm}
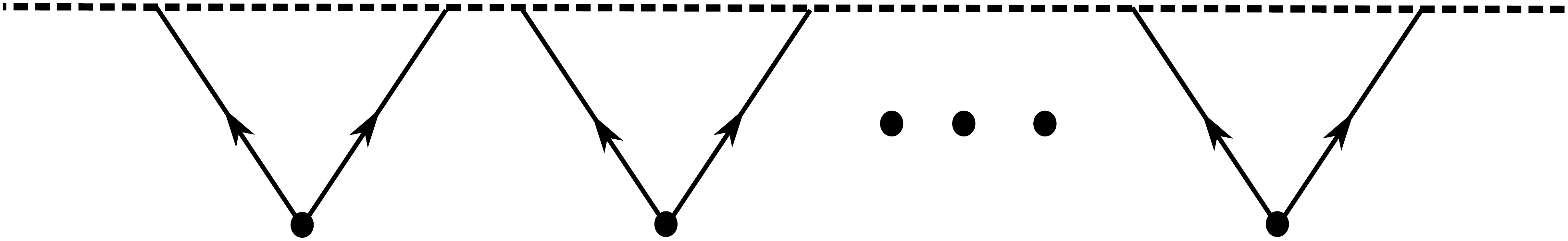
\caption{}\label{S-cups}
\end{figure}

The result is a singular oriented graph with some $4$-valent interior vertices, $n$ $2$-valent bottom vertices and $n$ $2$-valent top vertices. Bottom vertices have two outgoing edges and no incoming edges, while top vertices have two incoming edges and no outgoing edges.

\begin{definition}
A \emph{cycle} $Z$ in $\pc(S)$ is a subset of edges in $S$ such that their union is $n$ pairwise disjoint paths connecting bottom vertices to top vertices such that $\bdy Z=w_1^++...+w_n^+-w_1^--...-w_n^-$. Here, $w_1^+,...,w_n^+$ denote the top vertices and $w_1^-,...,w_n^-$ denote the bottom vertices.
\end{definition}

\begin{remark} Note that the graph $\pc(S)$ can be viewed as the singular closure of $S$ pulled apart at the $n$ singular points involved in the closure. If we were to reidentify each $w_{i}^{+}$ with $w_{i}^{-}$, then we can view cycles as subsets of $\sing(S)$ which are homeomorphic to $n$ disjoint circles.
\end{remark}


For any $4$-valent vertex $v$ of $S$,  assume $e_{i(v)}$ and $e_{j(v)}$ are the left and right incoming edges and $e_{k(v)}$ and $e_{l(v)}$ are the left and right outgoing edges.  Associated to $v$ we define a quadratic polynomial:
\[
Q_v=U_{i(v)}U_{j(v)}-U_{k(v)}U_{l(v)}
\]
in $R(S)$. The edges in $\pc(S)$ are in one-to-one correspondence with the edges in $S$. Considering this correspondence, any cycle $Z$ specifies two ideal
\[
\begin{split}
&I_Z=\langle U_i\ |\ \text{for any}\ e_i\subset Z\rangle\\
&Q_Z=\langle Q_v\ |\ v\ \text{is any $4$-valent vertex disjoint from $Z$} \rangle
\end{split}
\]
in $R(S)$ and we define the $\QQ$-module
 \[R_Z=\frac{\QQ[U_1,...,U_m]}{Q_Z+I_Z}.\]
Finally, associated to $S$ we define the $\QQ$-module
\[M(S)=\bigoplus _{Z\in c(S)}R_{Z}\]
where $c(S)$ denotes the set of cycles in $\pc(S)$. 

For any pair $(Z,e_i)$ of a cycle $Z$ in $\pc(S)$ and an edge $e_i$ in $Z$, let $\mathcal{D}(Z,e_i)$ denote the set of disks in the plane satisfying the followings:
\begin{enumerate}
\item Each $D\in \mathcal{D}(Z,e_i)$ is a bounded subset of $\RR^2$ homeomorphism to a disk so that $\bdy D\subset S$. 
\item There are vertices $v_{t}(D)$ and $v_{b}(D)$ on the boundary of $D$ so that they decompose the boundary as $\bdy D=\bdy_L D\cup_{\{v_t(D),v_b(D)\}}\bdy_RD$ where both $\bdy_L D$ and $\bdy_RD$ are oriented from $v_b(D)$ to $v_t(D)$. 
\item $\bdy_LD$ is a subset of $Z$ containing $e_i$.
\end{enumerate}
The intersection of any two disk in $\mathcal{D}(Z,e_i)$ is a disk in  $\mathcal{D}(Z,e_i)$. So, let $D(Z,e_i)$ denote the \emph{smallest} disk in this set i.e. the intersection of all disks. See Figure \ref{Discs} for an example. We allow the special case that $\mathcal{D}(Z,e_i)$ is the empty set - in this case, set $D(Z,e_i)$ to be the empty disc.

Similarly, for any vertex $v$ in $Z$, we define $\mathcal{D}(Z,v)$ to be the set of all disks satisfying the above conditions, just replace $e_i$ with $v$ in $(3)$ and assume $v_t(D)$ and $v_b(D)$ are distinct from $v$ in $(2)$ . Also, denote the intersection of all such disks with $D(Z,v)$.

For any edge $e$ in $S$, let $v_t(e)$ and $v_b(e)$ denote the top and bottom vertices of $e$. Let $D$ be a disk in $\mathcal{D}(Z,e_i)$ (or $\mathcal{D}(Z,v)$). We define $I(D)$ to be the set of edges $e$ in $S$ that intersect $D$ in $v_t(e)$, and $v_t(e)$ lies on the segment of $Z$ from $v_b(D)$ and $v_{b}(e_i)$ (respectively, $v$), excluding $v_b(D)$. In other words, $I(D)$ consists of incoming edges on the left boundary of $D$ from $v_b(D)$ to $v_{b}(e_i)$. Similarly, $O(D)$ is the set of edges, pointing to the left between $v_t(e_j)$ and $v_{t}(D)$. More precisely, each edge $e$ in $O(D)$ intersects $D$ in $v_b(e)$ and this vertex lies on the segment of $Z$ from $v_t(e_j)$ (respectively, $v$) to $v_{t}(D)$, excluding $v_{t}(D)$.

\begin{figure}[ht]
\centering
\def\svgwidth{8.5cm}
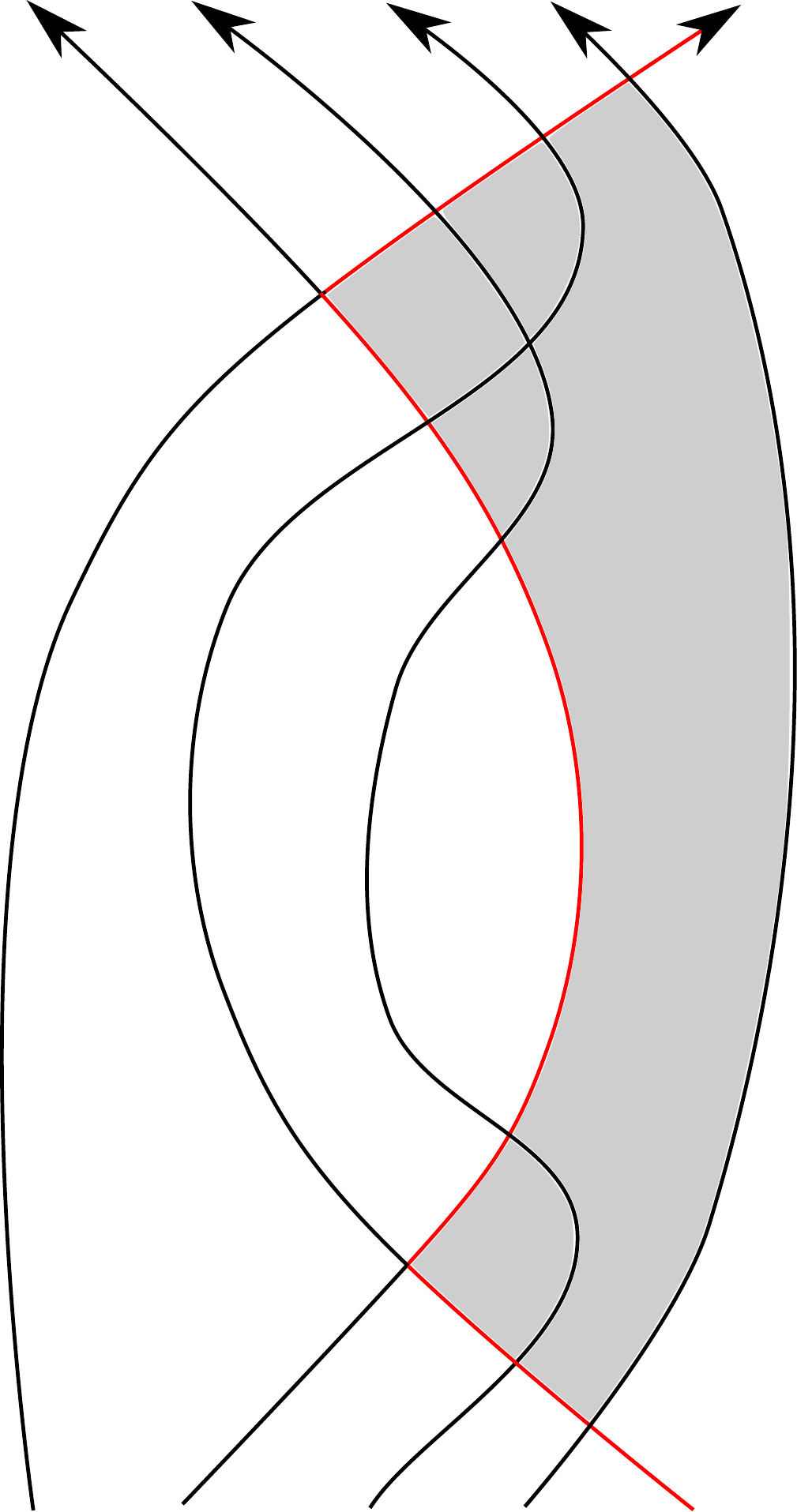
\caption{The grey region is the disc $D(Z,e_{6})$ where the edges in $Z$ are red. In this case, $I(D(Z,e_{6}))=\{e_{4},e_{5}\}$ and $O(D(Z,e_{6}))=\{e_{1},e_{2},e_{3}\}$, so $U(D(Z,e_{6})) = U_{1}U_{2}U_{3}U_{4}U_{5}$.}\label{Discs}
\end{figure}

For any disk $D$ in $\mathcal{D}(Z,\star)$, where $\star\subset Z$ is an edge or vertex, we define a monomial in $R(S)$, called its \emph{coefficient}, by setting 
 \[U(D)=\prod_{e_i\subset \left(I(D)\cup O(D)\right)}U_i \in R(S).\]
We define an $R(S)$-module structure on $M(S)$ by setting
\[
U_i x_{Z}=\begin{cases}
\begin{array}{ll}
U_ix_Z&\text{if}\ e_i\not\subset Z\\
U\left(D(Z,e_i)\right)x_{U_i(Z)}&\text{if}\ e_i\subset Z\ \text{and $\bdy_R(D(Z,e_i))$ only intersects Z} \\
&\text{at its endpoints}\\
0&\text{otherwise}
\end{array}
\end{cases}
\]
Here, $x_Z$ denotes $1$ in the summand $R_Z$ of $M(S)$, and $U_i(Z)$ denotes the cycle obtained from $Z$ by replacing $\bdy_LD(Z,e_i)$ with $\bdy_RD(Z,e_i)$ (see Figure \ref{CycleExample}). Note that when $\mathcal{D}(Z,e_i)=\emptyset$, $U_{i}x_{Z}=0$.

For $e_i\subset Z$, we refer to the equality $U_ix_Z=U\left(D(Z,e_i)\right)x_{U_i(Z)}$ as \emph{$U_i$ maps the cycle $Z$ to $U_i(Z)$ with coefficient $U(D(Z,e_i))$}.

\begin{figure}[ht]
\centering
\def\svgwidth{7.5cm}
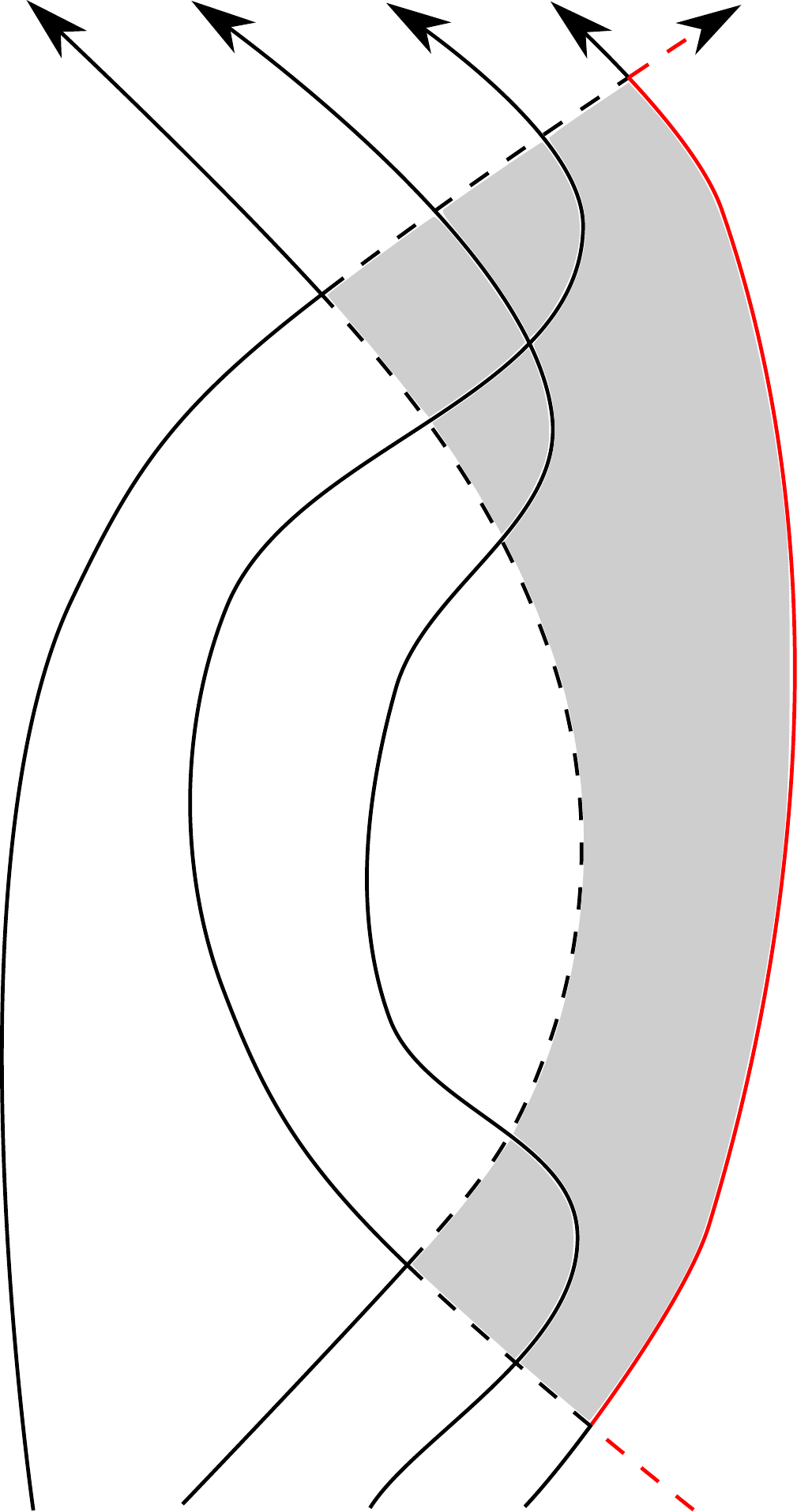
\caption{The cycle $e_{6}(Z)$ is in red, where the  cycle $Z$ consists of the dashed edges.}\label{CycleExample}
\end{figure}

Similarly, for any vertex $v$ in $Z$, if replacing $\bdy_LD(Z,v)$ by $\bdy_R D(Z,v)$ in $Z$ gives a cycle, denote it by $v(Z)$, and define  
\[f_v:M(S)\longrightarrow M(S)\]
by setting:
\[
f_v(x_Z)=
\begin{cases}
\begin{array}{ll}
U_{i(v)}U_{j(v)}x_Z=U_{k(v)}U_{l(v)}x_Z&\text{if}\ v\subset Z\\
U(D(Z,v))x_{v(Z)}&\text{if}\ v\subset Z\ \text{and $\bdy_R(D(Z,v))$ only intersects Z} \\
&\text{at its endpoints}\\
0&\text{otherwise}
\end{array}
\end{cases}
\]

In the next Section, we will show that the $R(S)$-module structure on $M(S)$ is well-defined, and $f_v=U_{i(v)}U_{j(v)}=U_{k(v)}U_{l(v)}$ for any vertex $v$ in $Z$.

%

%
%
%
%
%
%
%
%

\subsection{Well-definedness of Module Structure} \label{welldef}
 Let $v$ be a 4-valent vertex in $\pc(S)$ with incoming edges $e_{1}$ and $e_{2}$ and outgoing edges $e_{3}$ and $e_{4}$. When a cycle $Z$ is locally empty at $v$, the associated R-module $R_{Z}$ comes with the relation $Q_{v}=U_{1}U_{2}-U_{3}U_{4}$. However, it is possible that multiplication by some $U_{i}$ will map this cycle so that it is locally $e_{1}e_{3}$, $e_{1}e_{4}$, $e_{2}e_{3}$, or $e_{2}e_{4}$. Thus, we need to show that $U_{1}U_{2}=U_{3}U_{4}$ on these local cycles.

Consider the local cycle $e_{1}e_{3}$. As shown in Figures \ref{U1U2} and \ref{fv}, multiplication by $U_{1}$ followed by multiplication by $U_{2}$ maps to the same cycle as applying $f_{v}$. Moreover it does so with the same coefficient, as the right boundary of the yellow disc will not have any outgoing edges.

\begin{figure}[ht]
\centering
\def\svgwidth{6cm}
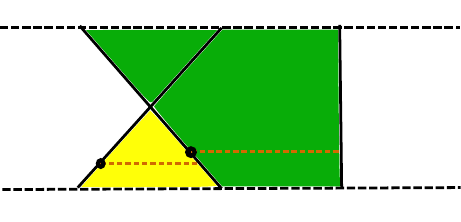
\caption{Multiplication by $U_{1}U_{2}$ on the local cycle $e_{1}e_{3}$}\label{U1U2}
\end{figure}

\begin{figure}[ht]
\centering
\def\svgwidth{6cm}
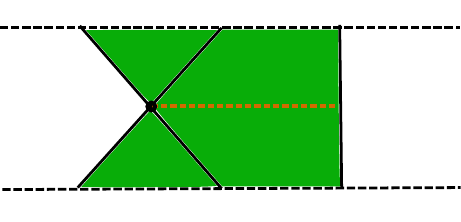
\caption{Applying $f_{v}$ to the local cycle $Ze_{1}e_{3}$}\label{fv}
\end{figure}

The same argument applies to multiplication by $U_{3}$ followed by multiplication by $U_{4}$. Thus, we have that on the local cycle $e_{1}e_{3}$, $U_{1}U_{2}=U_{3}U_{4}=f_{v}$.

\begin{figure}[ht]
\centering
\def\svgwidth{6cm}
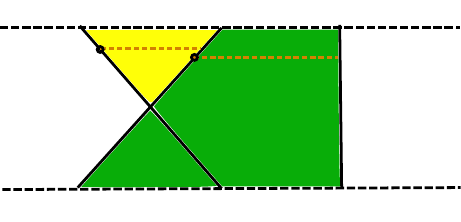
\caption{Multiplication by $U_{3}U_{4}$ on the local cycle $e_{1}e_{3}$}\label{U3U4}
\end{figure}

The other 3 local cycles behave in much the same way, with $U_{1}U_{2}=U_{3}U_{4}=f_{v}$. The only difference in the proof is that there is a local outgoing edge that appears as a coefficient in $f_{v}$ - fortunately, this edge also appears as a coefficient in both $U_{1}U_{2}$ and $U_{3}U_{4}$. 

\begin{remark}

A consequence of this argument is that for any 4-valent $v$ in $S$, $Q_{v}M(S)=0$.

\end{remark}

The second thing that we need to show is that the edge actions are actually commutative. More precisely, we need to show that for $x \in R_{Z}$, $U_{i}U_{j}x=U_{j}U_{i}x$ for any pair of edges $e_{i}$, $e_{j}$. There are 3 cases to consider: 

\vspace{2mm}

1) Both $e_{i}$ and $e_{j}$ are in $Z^{c}$.

2) One edge is in $Z$ (WLOG assume this is $e_{i}$) and the other edge ($e_{j}$) is in $Z^{c}$. 

3) Both $e_{i}$ and $e_{j}$ are in $Z$.

\vspace{2mm}

The first two cases follow directly from the definitions. However, the third case is fairly subtle, as there can be interactions between the two multiplications. Before proving this case, we will need a lemma inspired by Khovanov-Rozansky (\hspace{1sp}\cite{khovanov2008matrix}, \cite{KhovanovRozansky08:MatrixFactorizations}).

\begin{lemma} \label{ProductLemma}

Let $Z$ be a cycle in $S$, and let $D$ be an embedded disc in $\mathbb{R} \times [0,1]$ whose boundary is transverse to $S$. Suppose also that $Z \cap D = \emptyset$ and that $D$ doesn't contain any caps or cups from the closing off portions of $S$. Then 

\[   \prod_{ e_{i} \in In(D)}U_{i} = \prod_{e_{j} \in Out(D)}U_{j}      \]

\noindent
in $R_{Z}$, where $In(D)$ is the set of incoming edges in $D \cap S$ and $Out(D)$ is the set of outgoing edges in $D \cap S$.

\end{lemma}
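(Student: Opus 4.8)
The plan is to reduce to the case where $D$ is a very small disc straddling a single $4$-valent vertex (or a small disc containing a single edge, in which case the identity is trivial since $\mathrm{In}(D)=\mathrm{Out}(D)$), and then induct on the number of vertices of $S$ contained in $D$. The base case is exactly the defining quadratic relation: if $D$ contains a single $4$-valent vertex $v$ with incoming edges $e_{i(v)},e_{j(v)}$ and outgoing edges $e_{k(v)},e_{l(v)}$, then $\mathrm{In}(D)=\{e_{i(v)},e_{j(v)}\}$ and $\mathrm{Out}(D)=\{e_{k(v)},e_{l(v)}\}$, so the claim is $U_{i(v)}U_{j(v)}=U_{k(v)}U_{l(v)}$ in $R_Z$. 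Since $v$ is disjoint from $Z$ by hypothesis, $Q_v\in Q_Z$, so this holds by definition of $R_Z$. (One must also handle the degenerate cases where an edge of $S$ enters and leaves $D$, contributing to both $\mathrm{In}(D)$ and $\mathrm{Out}(D)$ — such an edge's variable appears on both sides and cancels, so it may be ignored; this is where transversality of $\bdy D$ to $S$ is used.)

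For the inductive step, suppose $D$ contains $N\ge 2$ vertices. Pick a vertex $v$ of $S$ lying in $D$ that is ``outermost'' in the sense that one can find a smaller embedded disc $D'\subset D$ with $\bdy D'$ transverse to $S$, containing exactly the one vertex $v$, and such that $D\setminus D'$ retracts onto a disc $D''$ containing the remaining $N-1$ vertices. Concretely, isotope a small arc of $\bdy D$ inward past $v$: the region swept out is $D'$, and the new disc is $D''$. Then $\mathrm{In}(D)$ and $\mathrm{Out}(D)$ are related to $\mathrm{In}(D')\cup\mathrm{In}(D'')$ and $\mathrm{Out}(D')\cup\mathrm{Out}(D'')$ by cancelling the edges that run between $D'$ and $D''$: each such connecting edge is an outgoing edge of whichever of $D',D''$ is ``below'' it at the shared boundary and an incoming edge of the other. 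Multiplying the base-case identity for $D'$ by the inductive identity for $D''$ (both valid in $R_Z$, since $Z$ is disjoint from all of $D$, hence from $D'$ and $D''$), and cancelling the shared variables, yields $\prod_{\mathrm{In}(D)}U_i=\prod_{\mathrm{Out}(D)}U_j$ in $R_Z$. The hypothesis that $D$ avoids the cups and caps guarantees that every interior vertex of $S$ met by $D$ is genuinely $4$-valent, so the quadratic relations $Q_v$ are available for all of them.

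The main obstacle I expect is the combinatorial bookkeeping in the inductive step: verifying that the decomposition $D = D' \cup D''$ can always be arranged so that $\bdy D'$ and $\bdy D''$ are transverse to $S$, that no vertex is ``split'' between the two pieces, and — most delicately — that the edges running between $D'$ and $D''$ contribute with opposite incoming/outgoing parity to the two sub-discs so that they cancel cleanly when the two polynomial identities are multiplied. This is essentially a planar-graph argument (one can order the vertices of $S\cap D$ by height, or use a Morse function on $D$ restricted so that $\bdy D$ stays transverse to $S$), and it is the analogue of the ``sliding'' arguments in the Khovanov--Rozansky matrix factorization setting cited in the statement. Once the geometry of the decomposition is pinned down, the algebra is immediate from the definition of $R_Z$.
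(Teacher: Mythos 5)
Your approach is essentially the paper's: the paper orders the vertices $v_1,\dots,v_k$ inside $D$ and applies the quadratic relations $Q_{v_i}$ sequentially, starting from $\prod_{In(D)}U_i$ and arriving at $\prod_{Out(D)}U_j$; your induction on the number of vertices, peeling off one outermost vertex at a time, is the same argument packaged recursively. One caution on your final step: you cannot literally ``multiply the two identities and cancel the shared variables,'' since $R_Z$ is a quotient with zero divisors and cancellation is not valid there. The argument should instead be a substitution: choose $v$ to be the \emph{bottommost} vertex of $S\cap D$, so that every edge connecting $D'$ to $D''$ is outgoing from $D'$ and incoming to $D''$ (never the reverse); then $\prod_{In(D)}U = \prod_{In(D')}U\cdot\prod_{In(D'')\setminus\mathrm{shared}}U$, and applying the relation $Q_v$ replaces $\prod_{In(D')}U$ by $\prod_{Out(D')}U$, whose shared factors regroup with $\prod_{In(D'')\setminus\mathrm{shared}}U$ to give exactly $\prod_{In(D'')}U$ times the outgoing edges of $D'$ that exit $D$; the inductive hypothesis then finishes. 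With that rephrasing (which your own choice of an outermost vertex already sets up), the proof is correct.
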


\begin{proof}

To see this, we just start with the product of the incoming edges. Let $v_{1}$,...,$v_{k}$ be the vertices in $S$ which are in the interior of $D$, ordered by their first coordinate in $\mathbb{R} \times I$. If we apply the quadratic relations $Q(v_{i})$ sequentially, the result is the product of the outgoing edges.

\end{proof}

For case 3, there are several subcases to consider:

\vspace{2mm}

a) $e_{i}$ and $e_{j}$ lie on the same strand in $Z$

b) $e_{i}$ and $e_{j}$ lie on adjacent strands in $Z$

c) $e_{i}$ and $e_{j}$ lie on different strands in $Z$ which are not adjacent.

\vspace{2mm}

For (a), assume $e_{i}$ is above $e_{j}$. If $\Int(D(Z, e_{i})) \cap \Int(D(Z, e_{j})) = \emptyset$, then again we can see right away that $U_{i}U_{j}x_{Z}=U_{j}U_{i}x_{Z}$. However, if this is not the case, then the two multiplications interact. There are 
4 possible ways that they can interact, based on whether $e_{j}$ is in $\partial D(Z,e_{i})$, and whether $e_{i}$ is in $\partial D(Z,e_{j})$. One of these cases is shown in Figures \ref{Commute1a} and \ref{Commute1b}. In Figure \ref{Commute1a}, we see that $U_{i}U_{j}x_{Z}$ and $U_{j}U_{i}x_{Z}$ end up on the same cycle. To see that the coefficients are the same, we apply Lemma \ref{ProductLemma} to the shaded discs in Figure \ref{Commute1b}. The cases where $e_{j} \in \partial D(Z,e_{i})$ or $e_{i} \in \partial D(Z,e_{j})$ are similar, so we leave them to the reader.

\begin{figure}[ht]
\centering
\large
\def\svgwidth{5cm}
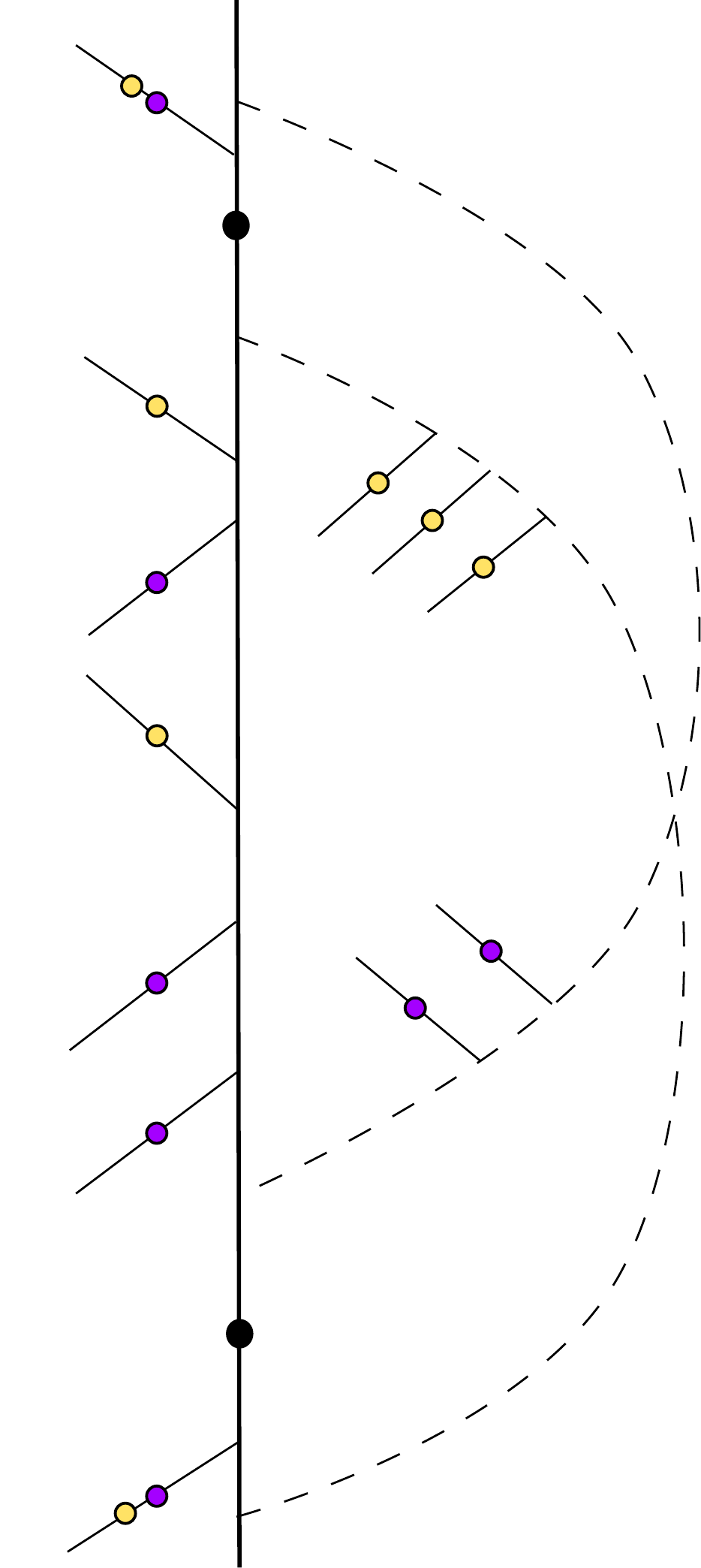
\caption{\textbf{Multiplication by} $U_{i}U_{j}$ \textbf{vs. multiplication by} $U_{i}U_{j}$\textbf{:} The vertical line is the cycle $Z$, and the dashed lines indicate how $Z$ changes under multiplication by $U_{i}$ and $U_{j}$. The coefficients from multiplication by $U_{i}U_{j}$ are indicated with yellow dots, and the coefficients from multiplication by $U_{j}U_{i}$ are indicated with purple dots.}\label{Commute1a}
\end{figure}

\begin{figure}[ht]
\centering
\large
\def\svgwidth{5cm}
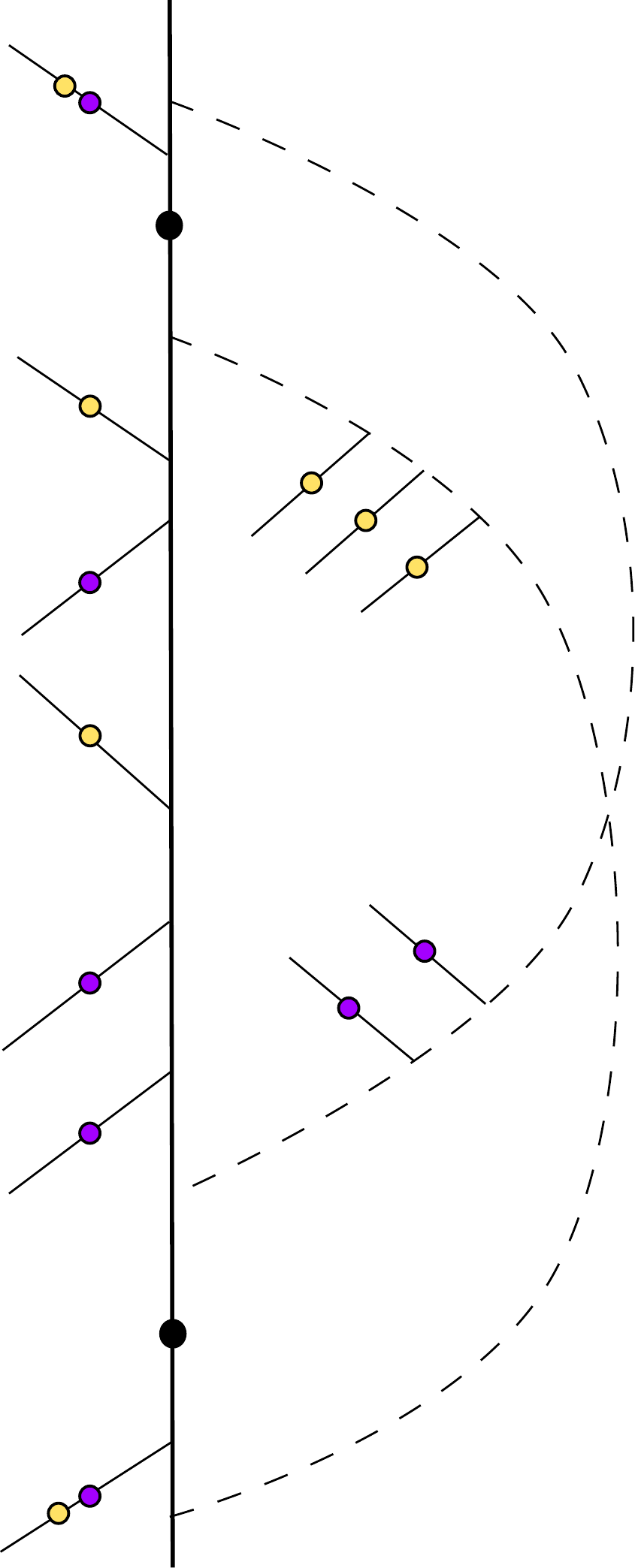
\caption{The product of the edges with yellow dots can be identified with the product of the edges with purple dots by applying Lemma \ref{ProductLemma} to the shaded disc.} \label{Commute1b}
\end{figure}

For (b), assume that $e_{i}$ lies on the strand in $Z$ to the left of the strand $e_{j}$ lies on. If $D(Z, e_{i}) \cap D(Z, e_{j}) = \emptyset$, then $U_{i}$ and $U_{j}$ clearly commute. However, if they intersect at all (even on the boundary this time) then we get interactions. In particular, $U_{i}x_{Z} = 0$, so $U_{j}U_{i}x_{Z} = 0$. Thus, to show commutativity, we need to show that $U_{i}U_{j}x_{Z}=0$. 

Suppose $v$ is the 4-valent vertex in $D(Z, e_{i}) \cap D(Z, e_{j})$ which is closest to the top of the braid. If $e_{a}$ is the left outgoing edge of $v$ and $e_{b}$ is the right outgoing edge of $v$, then $e_{b}$ is in $Z$, so $U_{a}$ is a coefficient of $U_{j}x_{Z}$. But $U_{a}$ lies in $U_{i}(U_{j}(Z))$, so $U_{i}U_{j}x_{Z}$ gets mapped farther to the right. Since $U_{b}$ lies in $U_{a}(U_{i}(U_{j}(Z)))$, we can travel along this strand in $Z$ to the point where $U_{a}(U_{i}(U_{j}(Z)))$ diverges from $Z$, and repeat this argument. After doing this some number of times, the cycle will hit the top vertex in $D(Z, e_{j})$, making $U_{i}U_{j}x_{Z}$ equal to zero.

For (c), there is no possible way for the two multiplications to interact, so they commute.

\subsection{Definition of the Complex} \label{defcomplex}
The goal of this section is to introduce a chain complex for any singular braid $S$. Let $R=R(S)$, the edge ring of $S$. To any $4$-valent vertex $v$ we associate the linear elements
\[L_v=U_{i(v)}+U_{j(v)}-U_{k(v)}-U_{l(v)}\in R.\]
\[L'_v=U_{i(v)}+U_{j(v)}+U_{k(v)}+U_{l(v)}\in R.\]
Recall, that indices $i(v), j(v), k(v)$ and $l(v)$ are defined so that $e_{i(v)}$ and $e_{j(v)}$ are the left and right incoming edges, respectively, while $k(v)$ and $l(v)$ are left and right outgoing edges, respectively. Let $L \subset R$ to be the ideal generated by the  $L_v$ corresponding to all $4$-valent vertices. We define the $R$-module $\mathscr{M}(S)$ as
\[\mathscr{M}(S):=M(S)\otimes_{R} R/L=M(S)/LM(S).\]

For any $1\le i\le n$, associated to the bottom and top vertices $w_i^-$ and $w_i^+$ in $\pc(S)$, we define the linear elements
\[
\begin{array}{ll}
L_{w_{i}^{\pm}}=U_{i(w_i^+)}+U_{j(w_i^+)}-U_{k(w_i^-)}-U_{l(w_{i}^-)}, &\text{and}\\
L'_{w_{i}^{\pm}}=U_{i(w_i^+)}+U_{j(w_i^+)}+U_{k(w_i^-)}+U_{l(w_{i}^-)}&
\end{array}
\]
in $R$ and the \emph{closing off} matrix factorization:

\begin{figure}[!h]
\centering
\begin{tikzpicture}
  \matrix (m) [matrix of math nodes,column sep=8em,minimum width=2em] {
     R &R\\};
  \path[-stealth]
    (m-1-1) edge [bend left=15] node [above] {$L_{w_i^\pm}$} (m-1-2)
    (m-1-2) edge [bend left=15] node [below] {$L'_{w_i^{\pm}}$} (m-1-1);
  \end{tikzpicture}
 \end{figure}
 
Let $\mathsf{K}(S)$ denote the Koszul complex
\[ \mathsf{K}(S)=
\big{(}
\xymatrix{R\ar@<1ex>[r]^{L_{w_1^\pm}}&R\ar@<1ex>[l]^{L'_{w_1^{\pm}}}}\big{)}\otimes\big{(}
\xymatrix{R\ar@<1ex>[r]^{L_{w_2^\pm}}&R\ar@<1ex>[l]^{L'_{w_2^{\pm}}}}\big{)}\otimes...\otimes\big{(}
\xymatrix{R\ar@<1ex>[r]^{L_{w_n^\pm}}&R\ar@<1ex>[l]^{L'_{w_n^{\pm}}}}\big{)}\]

\begin{definition}

We define $C_{1\pm 1}(S)$ to be the tensor product $\mathscr{M}(S)\otimes \mathsf{K}(S)$.

\end{definition}

 
\begin{lemma}
$C_{1+1}(S)$ is a chain complex i.e. $d^2=0$.
\end{lemma}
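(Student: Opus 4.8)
The plan is to turn the statement into a statement about a single element of the edge ring $R$. The differential on $C_{1\pm 1}(S)=\mathscr{M}(S)\otimes_{R}\mathsf{K}(S)$ is $1\otimes d_{\mathsf{K}}$, and $\mathsf{K}(S)$ is the $n$-fold tensor product over $R$ of the two-step matrix factorizations determined by the pairs $(L_{w_i^\pm},L'_{w_i^\pm})$. For such a tensor product the cross terms in the square of the total differential cancel against the Koszul signs, so $d_{\mathsf{K}}^{2}$ is multiplication by the potential $w:=\sum_{i=1}^{n}L_{w_i^\pm}L'_{w_i^\pm}\in R$. Hence $d^{2}=(1\otimes d_{\mathsf{K}})^{2}$ is multiplication by $w$ on $C_{1\pm1}(S)$, acting via the $R$-module $\mathscr{M}(S)=M(S)/LM(S)$, and the lemma is equivalent to the inclusion $wM(S)\subseteq LM(S)$.

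The key step is an identity in $R$. Expanding the squares in $L_{w_i^\pm}L'_{w_i^\pm}=(U_{i(w_i^+)}+U_{j(w_i^+)})^{2}-(U_{k(w_i^-)}+U_{l(w_i^-)})^{2}$ gives $w=T+2P$, where
\[
T=\sum_{i}\bigl(U_{i(w_i^+)}^{2}+U_{j(w_i^+)}^{2}-U_{k(w_i^-)}^{2}-U_{l(w_i^-)}^{2}\bigr),\qquad P=\sum_{i}\bigl(U_{i(w_i^+)}U_{j(w_i^+)}-U_{k(w_i^-)}U_{l(w_i^-)}\bigr).
\]
Summing the quantity $U_{i(v)}^{2}+U_{j(v)}^{2}-U_{k(v)}^{2}-U_{l(v)}^{2}$ over all $4$-valent vertices $v$ of $S$, every interior edge cancels (it is incoming to one $4$-valent vertex and outgoing from another), and only the boundary edges of $S$ survive, which gives $\sum_{v}\bigl(U_{i(v)}^{2}+U_{j(v)}^{2}-U_{k(v)}^{2}-U_{l(v)}^{2}\bigr)=-T$. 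Since moreover $U_{i(v)}^{2}+U_{j(v)}^{2}-U_{k(v)}^{2}-U_{l(v)}^{2}=L_{v}L'_{v}-2Q_{v}$ identically in $R$, we obtain
\[
w=-\sum_{v}L_{v}L'_{v}+2\sum_{v}Q_{v}+2P\qquad\text{in }R,
\]
the sums running over the $4$-valent vertices of $S$.

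It then remains to see that each of the three summands carries $M(S)$ into $LM(S)$. Each $L_{v}L'_{v}$ lies in the ideal $L$, so the first does. By the remark in Section~\ref{welldef} that $Q_{v}M(S)=0$ for every $4$-valent $v$, the operator $\sum_{v}Q_{v}$ annihilates $M(S)$, so the second does. For the third I would show that $P$ itself annihilates $M(S)$: multiplication by $P\in R$ is $R$-linear and $M(S)$ is generated over $R$ by the classes $x_{Z}$, so it suffices to check $Px_{Z}=0$ for every cycle $Z$. Fix $i$; since $\partial Z=w_{1}^{+}+\dots+w_{n}^{+}-w_{1}^{-}-\dots-w_{n}^{-}$, the top vertex $w_{i}^{+}$ is an endpoint of exactly one of the $n$ arcs of $Z$, so exactly one of its incoming edges $e_{i(w_i^+)},e_{j(w_i^+)}$ lies in $Z$; that edge is a top edge of $S$, for which $\mathcal{D}(Z,\cdot)=\emptyset$ (an admissible disc would need its upper vertex $v_{t}$ on the top boundary of the braid, where only one edge of $S$ is incident), so the corresponding variable kills $x_{Z}$ and hence $U_{i(w_i^+)}U_{j(w_i^+)}x_{Z}=0$. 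The same argument applied to bottom edges and $w_{i}^{-}$ gives $U_{k(w_i^-)}U_{l(w_i^-)}x_{Z}=0$, so $Px_{Z}=0$. Combining the three cases yields $wM(S)\subseteq LM(S)$, hence $d^{2}=0$.

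The main obstacle is this last step. The naive hope that $w$ already vanishes in $R$ is false, so one must genuinely pass to $\mathscr{M}(S)$ and use two separate inputs about $M(S)$: that $Q_{v}$ acts as zero, and that a variable attached to a closing-off edge annihilates any cycle class containing that edge. The second is the delicate one, since it rests on the precise definition of the discs $D(Z,e_{i})$ and on the twisted $R$-action near the cups and caps rather than on a formal identity; the point to verify carefully is that $\mathcal{D}(Z,e_{i})=\emptyset$ whenever $e_{i}$ is an edge of $S$ incident to a $2$-valent closing-off vertex and $e_{i}\subset Z$.
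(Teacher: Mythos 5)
Your overall route is the same as the paper's: identify $d^2$ with multiplication by the total potential $\sum_i L_{w_i^\pm}L'_{w_i^\pm}$ on $\mathscr{M}(S)$, expand into square terms and cross terms, telescope the square terms against the $4$-valent vertices, and dispose of them using $L_v$ and $Q_v$ (your bookkeeping via $U_{i(v)}^2+U_{j(v)}^2-U_{k(v)}^2-U_{l(v)}^2=L_vL'_v-2Q_v$ is just a rearrangement of the paper's use of $L_v=Q_v=0$ on $\mathscr{M}(S)$). The one step where you go beyond the paper's wording is the vanishing of the cross terms $U_{i(w_i^+)}U_{j(w_i^+)}x_Z$ and $U_{k(w_i^-)}U_{l(w_i^-)}x_Z$, which the paper asserts ``follows from the module structure'' -- and it is exactly there that your justification fails. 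The claim that $\mathcal{D}(Z,e)=\emptyset$ whenever $e\subset Z$ is an edge incident to a closing vertex is false: the discs of Section 3.2 are taken with respect to $\pc(S)$, whose top vertices $w_i^+$ are $2$-valent with \emph{two} incoming edges, and such a vertex routinely serves as $v_t(D)$. Indeed this is precisely how the action moves a strand from the left to the right edge of a cap; the computation of $\mathscr{M}(\mathcal{U}_2)$ in the MOY~0 subsection, where $U_1x_{Z_1}=x_{Z_2}\neq 0$ with $e_1$ the left edge at $w_1^+$, is a direct counterexample to ``the corresponding variable kills $x_Z$.''

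The conclusion of that step is nevertheless true, but it needs a two-stage argument rather than a one-shot vanishing. At $w_i^+$ exactly one incoming edge lies in $Z$. If it is the \emph{right} incoming edge $e_b$, then any admissible disc for $(Z,e_b)$ would have $v_t=w_i^+$ with $\bdy_R D$ entering $w_i^+$ through an edge to the right of $e_b$, which does not exist, so $U_bx_Z=0$ and the product vanishes. If it is the \emph{left} incoming edge $e_a$, then $U_ax_Z=U(D(Z,e_a))\,x_{Z'}$ where $Z'$ contains $e_b$, and now $U_bx_{Z'}=0$ by the previous case; using the commutativity established in Section \ref{welldef}, $U_aU_bx_Z=U_bU_ax_Z=0$. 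The same argument with outgoing edges handles $w_i^-$. With this substituted for your parenthetical, the rest of your proof is correct and coincides with the paper's.
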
 
\begin{proof}
For each $i$, the matrix factorization $\xymatrix{R\ar@<1ex>[r]^{ L_{w_i^\pm}}&R\ar@<1ex>[l]^{L'_{w_i^{\pm}}}}$ has potential $L_{w_i^\pm}L'_{w_i^\pm}$. Thus, to prove $d^2=0$, we need to show that \[(\sum_{i=1}^nL_{w_i^\pm}L'_{w_i^\pm})\mathscr{M}(S)=0.\]
For any $1\le i\le n$
\[\begin{split}
L_{w_i^\pm}L'_{w_i^\pm}&= (U_{i(w_i^+)}+U_{j(w_i^+)})^2-(U_{k(w_i^-)}+U_{l(w_i^-)})^2\\
&=U_{i(w_i^\pm)}^2+U_{j(w_i^+)}^2-U_{k(w_i^-)}^2-U_{l(w_i^-)}^2\\
&+2U_{i(w_i^+)}U_{j(w_i^+)}-2U_{k(w_i^-)}U_{l(w_i^-)}.
\end{split}
\]
Follows from the module structure on $\mathscr{M}(S)$ that \[U_{i(w_i^+)}U_{j(w_i^+)}x_Z=0\ \ \ \ \text{and}\ \ \ \ U_{k(w_i^-)}U_{l(w_i^-)}x_Z=0\]
for any cycle $Z$ and any $1\le i\le n$. Thus, it suffices to show that 
\[\left(\sum_{i=1}^n(U_{i(w_i^+)}^2+U_{j(w_i^+)}^2-U_{k(w_i^-)}^2-U_{l(w_i^-)}^2)\right)\mathscr{M}(S)=0.\]
On the other hand, 
\[\sum_{i=1}^n(U_{i(w_i^+)}^2+U_{j(w_i^+)}^2-U_{k(w_i^-)}^2-U_{l(w_i^-)}^2)=-\sum_{v}(U_{i(v)}^2+U_{j(v)}^2-U_{k(v)}^2-U_{l(v)}^2)
\]
where the second sum is over all $4$-valent vertices in $S$. For any 4-valent $v$ in $S$, we have that $L_{v}=Q_{v}=0$ on $\mathscr{M}(S)$. It follows that $U_{i(v)}^2+U_{j(v)}^2-U_{k(v)}^2-U_{l(v)}^2=0$ for any $v$, so the sum is zero as well.
\end{proof}

\subsection{Some Properties of the Edge Ring Action}

In this section we will give some relations among the $U_{i}$-actions on $C_{1 \pm 1}(S)$. Note that for a complete resolution $S$, $H_{1+1}(S) \cong H_{1-1}(S)$, as there are no edge maps. We will write the results in terms of $H_{1+1}(S)$, but they also apply to $H_{1-1}(S)$.

\begin{lemma} \label{prop1}

Suppose $e_{i}$ and $e_{j}$ are two incoming or outgoing edges at a vertex $v$ in $S$. Then $U_{i}=-U_{j}$ on $H_{1+1}(S)$.

\end{lemma}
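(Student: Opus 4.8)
The plan is to deduce everything from one observation: for each $i$, multiplication by $L_{w_i^\pm}$ and by $L'_{w_i^\pm}$ acts as zero on $H_{1+1}(S)$. To see this, look at the $i$-th tensor factor of $\mathsf{K}(S)$, namely $\xymatrix{R\ar@<1ex>[r]^{L_{w_i^\pm}}&R\ar@<1ex>[l]^{L'_{w_i^{\pm}}}}$, and let $h_i$ be the degree $-1$ operator that is the identity on one copy of $R$ and zero on the other. A one-line computation shows $d h_i + h_i d$ equals multiplication by $L'_{w_i^\pm}$ on that factor (and the reversed operator gives $L_{w_i^\pm}$). Since $h_i$ is mod-$2$ odd it graded-commutes with the differentials of the other Koszul factors, so $\mathit{id}_{\mathscr{M}(S)}\otimes\mathit{id}\otimes\cdots\otimes h_i\otimes\cdots$ is still a chain homotopy on $C_{1\pm1}(S)=\mathscr{M}(S)\otimes\mathsf{K}(S)$, the cross terms cancelling. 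Hence $L_{w_i^\pm}=L'_{w_i^\pm}=0$ on $H_{1+1}(S)$, and since we work over $\QQ$ we may take half-sum and half-difference to get $U_{i(w_i^+)}+U_{j(w_i^+)}=0$ and $U_{k(w_i^-)}+U_{l(w_i^-)}=0$ on $H_{1+1}(S)$. This already proves the lemma for the $2$-valent cup and cap vertices, and, by $U_{i(w_i^+)}U_{j(w_i^+)}x_Z=0$ from the chain-complex lemma, also yields $U_e^2=0$ on $H_{1+1}(S)$ for every edge $e$ adjacent to a cup or cap.

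For an interior $4$-valent vertex $v$, with left/right incoming edges $e_{i(v)},e_{j(v)}$ and left/right outgoing edges $e_{k(v)},e_{l(v)}$, I would combine the previous step with the two relations already available on $\mathscr{M}(S)$, namely $L_v=0$ (so $U_{i(v)}+U_{j(v)}=U_{k(v)}+U_{l(v)}$) and $Q_v=0$ (so $U_{i(v)}U_{j(v)}=U_{k(v)}U_{l(v)}=f_v$). I would then show that multiplication by $U_{i(v)}+U_{j(v)}$ is null-homotopic on $C_{1\pm1}(S)$: tracing the two incoming edges of $v$ downward through the braid to the closing-off cups, one propagates the identity $U_{i(v)}+U_{j(v)}=U_{k(v)}+U_{l(v)}$ across each intermediate $4$-valent vertex and then feeds the result into the cup homotopies $h_i$ above, using the cycle-changing formula for the $U_e$-action and the vertex maps $f_w$ to patch the pieces together into a single homotopy. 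An alternative, cleaner presentation would first identify $H_{1+1}(S)$ as a module over the edge ring in which each $U_e$ acts as $\pm$ the generator attached to the component of $\sm(S)$ through $e$; then $e_{i(v)}$ and $e_{j(v)}$ (and likewise $e_{k(v)},e_{l(v)}$) sit at consecutive positions on a single such circle, which is forced to have even length by the cup/cap structure, so the signs alternate and $U_{i(v)}=-U_{j(v)}$.

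The interior case is the real obstacle, and it is genuinely a statement about the homology rather than about $\mathscr{M}(S)$: the defining relations of $\mathscr{M}(S)$ together with the Koszul differential impose only $U_{i(v)}+U_{j(v)}=U_{k(v)}+U_{l(v)}$, not $U_{i(v)}+U_{j(v)}=0$. Indeed, for a complete resolution with a single singular vertex (for instance a plat resolution of the Hopf link) the linear relations coming from the cups, caps, and the $L_v$ leave a two-parameter family of possible edge actions, so passing to $H_{1+1}(S)$ — and with it the fine structure of the cycle module $M(S)$ — is essential. Once that input is in place, the cup/cap case and the homotopy bookkeeping are routine, and the same proof applies verbatim to $H_{1-1}(S)$ since for a complete resolution there are no edge maps.
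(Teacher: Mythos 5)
Your handling of the cup and cap vertices is the same as the paper's: a homotopy supported on the corresponding Koszul factor of $\mathsf{K}(S)$ kills $L_{w_i^\pm}$ and $L'_{w_i^\pm}$ on homology, and over $\QQ$ this gives $U_{i(w_i^+)}+U_{j(w_i^+)}=0$ and $U_{k(w_i^-)}+U_{l(w_i^-)}=0$. The gap is exactly where you locate it, at the interior $4$-valent vertices, and neither of your two suggestions closes it. The ``propagation'' sketch cannot work as described: the only linear identities at your disposal are $U_{i(v)}+U_{j(v)}=U_{k(v)}+U_{l(v)}$ at each $4$-valent vertex together with the vanishing of the cup and cap sums, and these do not imply the vanishing of the two-term sum at an interior vertex. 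For instance, take the $4$-strand singular braid with a single vertex $v$ between strands $2$ and $3$, plat-closed by two cups and two caps: writing $a_1,\dots,a_4$ for the cup edges and $b_2,b_3$ for the outgoing edges of $v$, the available relations are $a_1+a_2=a_3+a_4=a_1+b_2=b_3+a_4=0$ and $a_2+a_3=b_2+b_3$, whose solution space is two-dimensional and does not force $a_2+a_3=0$. (In your two-strand example the vertex is adjacent to the cup and the cap, so the cup/cap homotopies already give the statement; the difficulty only appears for vertices not adjacent to the closing-off arcs.) Pushing the vertex relations down to the cups only controls the sum of the $U_e$ over a full horizontal slice, not the two edges at $v$, so no patching of these identities into the cup homotopies can produce the required vanishing, and you never say how the module structure of $M(S)$ would actually enter such a construction. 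Your ``cleaner'' alternative is circular: the assertion that each $U_e$ acts on $H_{1+1}(S)$ as $(-1)^l$ times the generator attached to the circle of $\sm(\pc(S))$ through $e$ is the content of Corollary \ref{modulelemma1} (and ultimately Theorem \ref{SingularKhovanovTheorem}), which the paper deduces from this lemma together with Lemma \ref{prop2}, so it cannot be taken as input here.

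The idea you are missing is the paper's second step. Since the elements $L_v$ form a regular sequence in $R$, the factor $R/L$ in $C_{1\pm1}(S)=M(S)\otimes\mathsf{K}(S)\otimes R/L$ may be replaced, up to quasi-isomorphism, by the Koszul complex $\mathcal{L}=\bigotimes_v\bigl(R\rightleftarrows R\bigr)$ whose maps are $L_v$ and $L'_v$. On the quasi-isomorphic model $M(S)\otimes\mathsf{K}(S)\otimes\mathcal{L}$ every interior $4$-valent vertex contributes its own two-periodic Koszul factor, and the same one-factor homotopies you used for the cups and caps show that $L_v$ and $L'_v$, hence $U_{i(v)}+U_{j(v)}$ and $U_{k(v)}+U_{l(v)}$, act as zero on homology. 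With that replacement your argument goes through, and your closing remark about $H_{1-1}(S)$ for a complete resolution is fine.
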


\begin{proof}
Suppose first that $v=w_{i}^{+}$ for some $i$, so that the two incoming edges are $e_{i(w_{i}^{+})},e_{j(w_{i}^{+})} $. The complex $\mathsf{K}(S)$ includes the factor 
\[\xymatrix{R\ar@<1ex>[r]^{L_{w_i^\pm}}&R\ar@<1ex>[l]^{L'_{w_i^{\pm}}}} \]

\noindent
We can define a homotopy $H$ to act on this Koszul factor by 
\[\xymatrix{R\ar@<1ex>[r]^{1}&R\ar@<1ex>[l]^{1}} \]

\noindent
and by the identity on the other factors. Then $dH+Hd = 2U_{i(w_{i}^{+})}+2U_{j(w_{i}^{+})}$. Since we are working over $\mathbb{Q}$, this shows that $U_{i(w_{i}^{+})}=-U_{j(w_{i}^{+})}$ on homology.

Similarly, for $w_{i}^{-}$, we can use the homotopy $H'$ which acts on this Koszul factor by 
\[\xymatrix{R\ar@<1ex>[r]^{1}&R\ar@<1ex>[l]^{-1}} \]

\noindent
and by the identity on other factors. Then $dH'+Hd' = 2U_{k(w_{k}^{-})}+2U_{l(w_{l}^{-})}$, so $U_{k}(w_{k}^{-})=-U_{l}(w_{i}^{-})$ on homology.

For the 4-valent vertices, it helps to modify the complex to a quasi-isomorphic one. We can write $C_{1\pm 1}(S)$ as 
\[ M(S) \otimes \mathsf{K}(S) \otimes R/L \]

\noindent
Since the $L(v)$ form a regular sequence, $R/L$ can be replaced with the Koszul complex 
\[ \mathcal{L} = \bigotimes_{v}
\big{(}
\xymatrix{R\ar@<1ex>[r]^{L_{v}}&R\ar@<1ex>[l]^{L'_{v}}}\big{)}\]

Then $C_{1\pm 1}(S)$ is quasi-isomorphic to $M(S) \otimes \mathsf{K}(S) \otimes \mathcal{L}$. Using the same homotopies $H$ and $H'$ on the factor $\xymatrix{R\ar@<1ex>[r]^{L_{v}}&R\ar@<1ex>[l]^{L'_{v}}}$ gives the desired result.

\end{proof}

\begin{lemma} \label{prop2}

For any $i$, $U_{i}^{2}=0$ on $H_{1+1}(S)$.

\end{lemma}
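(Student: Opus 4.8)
The plan is to deduce this from Lemma~\ref{prop1}, together with the chain-level relations already in hand, by propagating the identity $U_i^2=0$ up the braid. First I would collect the chain-level input. At a top vertex $w_j^+$ with incoming edges $e_p,e_q$, the definition of the module structure on $M(S)$ gives $U_pU_q x_Z=0$ for every cycle $Z$, so $U_pU_q=0$ as an operator on $C_{1\pm1}(S)=\mathscr{M}(S)\otimes\mathsf{K}(S)$ (and similarly $U_rU_s=0$ at a bottom vertex with outgoing edges $e_r,e_s$). At a $4$-valent vertex $v$ the Remark in Section~\ref{welldef} gives $Q_vM(S)=0$, i.e. $U_{i(v)}U_{j(v)}=U_{k(v)}U_{l(v)}$ on $C_{1\pm1}(S)$. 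Since the edge actions commute (case 3 of Section~\ref{welldef}), all of these descend to relations on $H_{1+1}(S)$.

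Next I would combine these with Lemma~\ref{prop1}. At any vertex, if $e_p,e_q$ are the two incoming (resp. outgoing) edges then $U_p=-U_q$ on $H_{1+1}(S)$, and hence on homology $U_p^2=U_q^2=-U_pU_q$. Applied at a top vertex $w_j^+$ this gives $U_p^2=-U_pU_q=0$ on $H_{1+1}(S)$, and likewise $U_r^2=0$ at a bottom vertex. Applied at a $4$-valent vertex $v$ it gives, on $H_{1+1}(S)$,
\[ U_{i(v)}^2=U_{j(v)}^2,\qquad U_{k(v)}^2=U_{l(v)}^2,\qquad -U_{i(v)}^2=U_{i(v)}U_{j(v)}=U_{k(v)}U_{l(v)}=-U_{k(v)}^2, \]
so the squares of all four $U$'s at $v$ agree on $H_{1+1}(S)$.

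Finally I would finish by induction on height in the braid. Given an edge $e_i$, consider its top vertex $v_t(e_i)$ in $\pc(S)$. If $v_t(e_i)$ is some top vertex $w_j^+$, then $e_i$ is incoming there and $U_i^2=0$ by the paragraph above. Otherwise $v_t(e_i)$ is $4$-valent and $e_i$ is one of its incoming edges; picking an outgoing edge $e'$ at $v_t(e_i)$ gives $U_i^2=U_{e'}^2$ on $H_{1+1}(S)$, with $e'$ strictly higher than $e_i$. Iterating produces an upward path of edges which must terminate (finitely many edges, strictly increasing height, and the only vertices with no outgoing edge are the caps $w_j^+$), so after finitely many steps we reach an edge incident to some $w_j^+$, whose square vanishes. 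Chaining the equalities yields $U_i^2=0$ on $H_{1+1}(S)$, and the same for $H_{1-1}(S)$ since the two agree for a complete resolution.

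I don't expect a genuine obstacle here; the one point that needs care is the bookkeeping behind ``all four squares at a $4$-valent vertex agree,'' which uses both $Q_v=0$ at the chain level and the precise sign in Lemma~\ref{prop1}, together with the verification that the upward propagation really terminates at a cap rather than closing up into a loop.
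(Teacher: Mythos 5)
Your argument is correct and is essentially the paper's own proof: both deduce from Lemma \ref{prop1} and the quadratic relations $Q_v$ that the squares $U_i^2$ all agree on $H_{1+1}(S)$ (the paper invokes connectedness of the diagram where you spell out an explicit upward propagation terminating at a cap), and both conclude by noting that at a top vertex $w_j^+$ one has $U_{i(w_j^+)}^2=-U_{i(w_j^+)}U_{j(w_j^+)}=0$ on homology. Your version just makes the bookkeeping of the propagation step more explicit.
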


\begin{proof}

For any two edges $e_{i}$ and $e_{j}$ on the same component of $\sm(S)$, this follows from the lemma. Since $S$ is connected, the quadratic relations $Q_{v}$ force $U_{i}^{2}=U_{j}^{2}$ for any $U_{i}$, $U_{j}$. Together with the observation that on homology $U^{2}_{i(w^{+}_{1})}=-U_{i(w^{+}_{1})}U_{j(w^{+}_{1})}=0$, this proves the corollary.

\end{proof}

Suppose $\sm(\pc(S))$ consists of $k$ circles. Let $X_{i}=(-1)^{l}U_{j}$, where $e_{j}$ is an edge on the $i$th circle in $s(S)$ and lies on the $l$th strand of $S$.

\begin{corollary} \label{modulelemma1}

The homology $H_{1+1}(S)$ is a finitely generated module over 
\[\mathcal{A}^{\otimes k} = \QQ[X_{1},...,X_{k}]/(X_{1}^{2}=...=X_{k}^{2}=0)\]

\end{corollary}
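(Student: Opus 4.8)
The plan is to bootstrap from the two preceding lemmas. Lemma~\ref{prop1} tells us that any two edges meeting at a common vertex (as a pair of incoming, or a pair of outgoing, edges) have opposite $U$-actions on $H_{1+1}(S)$, and the quadratic relations $Q_v = U_{i(v)}U_{j(v)} - U_{k(v)}U_{l(v)} = 0$ hold on all of $M(S)$ (hence on $H_{1+1}(S)$), while Lemma~\ref{prop2} gives $U_i^2 = 0$ on $H_{1+1}(S)$. First I would make precise the assertion implicit in the statement that the $X_i$ are well-defined: an edge $e_j$ on the $i$th circle of $\sm(\pc(S))$ lying on the $l$th strand of $S$ gives $X_i = (-1)^l U_j$, and I must check this is independent of the choice of $e_j$. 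Walking along the $i$th circle of $\sm(S)$, each time we pass through a wide edge (4-valent vertex of $S$) the local picture identifies the $U$-actions as in Figure~\ref{resolutions}: consecutive edges along the circle either lie on the same strand and change sign (they are the two incoming or two outgoing edges at $v$, so Lemma~\ref{prop1} applies), or lie on adjacent strands with $l \to l \pm 1$ and — using the quadratic relation together with $U^2=0$, exactly as in the proof of Lemma~\ref{prop2} — their $U$-actions agree. In both cases the quantity $(-1)^l U_j$ is preserved, so $X_i$ is well-defined on $H_{1+1}(S)$.

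Next I would verify the $\mathcal{A}^{\otimes k}$-module structure. Commutativity of the $U_i$-actions on $M(S)$ (hence $H_{1+1}(S)$) was established in Section~\ref{welldef}, so the $X_i$ commute; and $X_i^2 = U_j^2 = 0$ on $H_{1+1}(S)$ by Lemma~\ref{prop2}. This makes $H_{1+1}(S)$ a module over $\QQ[X_1,\dots,X_k]/(X_1^2,\dots,X_k^2) = \mathcal{A}^{\otimes k}$, where I should also note that the $\QQ$-algebra map $\mathcal{A}^{\otimes k} \to \End_\QQ(H_{1+1}(S))$ is well-defined precisely because the images of the $X_i$ pairwise commute and square to zero.

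Finally, finite generation. The complex $C_{1\pm 1}(S) = \mathscr{M}(S) \otimes \mathsf{K}(S)$ is built from $M(S) = \bigoplus_{Z \in c(S)} R_Z$, and the set of cycles $c(S)$ is finite; each $R_Z = \QQ[U_1,\dots,U_m]/(Q_Z + I_Z)$ is a quotient of a polynomial ring by an ideal containing all $U_i$ for $e_i \subset Z$ together with the quadratic relations $Q_v$ for $v$ disjoint from $Z$. Modding further by the linear ideal $L$ (generated by the $L_v$) and tensoring with the finite-rank Koszul complex $\mathsf{K}(S)$ keeps everything finitely generated over $R/L$. The key point is that $R/L$, after imposing $U_i^2 = 0$ and the sign identifications of Lemma~\ref{prop1}, is a finite-dimensional $\QQ$-algebra: the linear relations $L_v = 0$ and $L_{w_i^\pm} = 0$ collapse the $m$ variables $U_1,\dots,U_m$ down, up to sign, to the $k$ generators $X_1,\dots,X_k$, one per circle of $\sm(\pc(S))$, and $U^2 = 0$ then makes the algebra finite-dimensional. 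Hence $H_{1+1}(S)$, being a subquotient of a finitely generated module over this finite-dimensional algebra, is itself finitely generated over $\mathcal{A}^{\otimes k}$.

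**The main obstacle** I expect is the well-definedness check for $X_i$: one must handle the walk around a circle of $\sm(S)$ carefully, treating separately the two ways a circle can interact with a wide edge (passing ``straight through'' on one strand versus ``turning'' between the two strands), and in the turning case one genuinely needs the combination of the quadratic relation $Q_v = 0$ and $U_i^2 = 0$ rather than Lemma~\ref{prop1} alone. Everything else — commutativity, squaring to zero, and finite generation — follows fairly mechanically from the established module structure and the finiteness of $c(S)$.
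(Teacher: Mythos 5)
Your overall plan --- well-definedness of the $X_{i}$ via Lemma \ref{prop1}, $X_{i}^{2}=0$ via Lemma \ref{prop2}, commutativity from Section \ref{welldef}, then finite generation --- is exactly the route the paper intends (it states the corollary without proof, treating these points as immediate). But your execution of the well-definedness check rests on a wrong local model and is internally inconsistent. In $\sm(\pc(S))$ every singularization is replaced by the unoriented (turn-back) smoothing, so as you traverse a circle there is no ``straight through on one strand'' case at all: consecutive edges along a circle are always the two incoming edges or the two outgoing edges at a common vertex (a $4$-valent vertex or one of the $w_{i}^{\pm}$), and such a pair lies on \emph{adjacent} strands $l$ and $l\pm 1$. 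Lemma \ref{prop1} gives $U_{j}=-U_{j'}$ for the pair, hence $(-1)^{l}U_{j}=(-1)^{l\pm 1}U_{j'}$, and $(-1)^{l}U_{j}$ is preserved around the circle; no appeal to the quadratic relations is needed. As you wrote it --- same-strand pairs whose actions change sign, adjacent-strand pairs whose actions agree --- the quantity $(-1)^{l}U_{j}$ would flip sign in \emph{both} of your cases, so your case analysis does not yield the conclusion you assert, and the claim that the two edges at a vertex have equal $U$-actions on homology contradicts Lemma \ref{prop1}.

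The finite-generation step also conflates homology-level relations with chain-level ones. The identities $U_{i}^{2}=0$ and $U_{i}=-U_{j}$ hold only on $H_{1+1}(S)$, not on $C_{1\pm 1}(S)$, and $R/L$ is a polynomial ring in fewer variables, not a finite-dimensional algebra; so the chain complex is not a finitely generated module over a finite-dimensional ring, and $H_{1+1}(S)$ is not a subquotient of one in the sense you use. The clean argument is: $C_{1\pm 1}(S)$ is a finitely generated module over the Noetherian ring $R$ (there are finitely many cycles $Z$ and $\mathsf{K}(S)$ has finite rank), hence $H_{1+1}(S)$ is finitely generated over $R$; by Lemmas \ref{prop1} and \ref{prop2}, together with commutativity of the $U_{i}$-actions, the $R$-action on $H_{1+1}(S)$ factors through $U_{j}\mapsto (-1)^{l}X_{i}$, so any finite $R$-generating set of $H_{1+1}(S)$ is already an $\mathcal{A}^{\otimes k}$-generating set (equivalently, $H_{1+1}(S)$ is finite-dimensional over $\QQ$).
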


\section{A Local Bimodule Representation and the Total Complex}

\subsection{A Bimodule for Tangles}
In this section we will define an algebra $A_n$ for any positive integer $n$ and a bimodule $\mathsf{M}[S]$ over $A_n$ for any singular braid $S$ with $2n$ strands such that 
\[\mathsf{M}[S_1]\otimes_{A_n}\mathsf{M}[S_2]=\mathsf{M}[S_1\circ S_2]\]
for all $2n$-strand singular braids $S_1$ and $S_2$. In addition, we will define a right $A_n$-module $\mathsf{M}[S_{cup}]$ for the singular cups and a left $A_n$-module $\mathsf{M}[S_{cap}]$ for singular cap such that 
\[\mathsf{M}[S_{cup}]\otimes_{A_n}\mathsf{M}[S]\otimes_{A_n}\mathsf{M}[S_{cap}]=\mathscr{M}(S)\]
for every singular braid $S$ with $2n$ strands. 

\subsubsection{The algebra $A_n$}  Let $[2n]$ denote the set $\{1,2,...,2n\}$.  The algebra $A_{n}$ is generated over $\mathbb{Q}[u_1,...,u_{2n}]$ by monotone bijections $P: S_{1} \to S_{2}$, where $S_{1}$ and $S_{2}$ are $n$-element subsets of $[2n]$. By \emph{monotone} we mean that if $i<j$, then $P(i)<P(j)$. These generators can be viewed pictorially as a strands algebra with $n$ strands out of $2n$ slots. The monotonicity imposes the restriction that there are no crossings between the strands.

Given $P_{1}: S_{1} \to S_{2}$ and $P_{2}: S_{3} \to S_{4}$, we define the product $P_{1}P_{2}=\prod_{i=1}^nu_i^{\alpha_i}P$ where $P$ is the concatenation of the two strands diagrams when $S_{2}=S_{3}$, and $0$ otherwise, and 
\[\alpha_i=\#\left\{j\ |\ \max\{j,P_2(P_1(j))\}\le i<P_1(j)\ \text{or}\ \min\{j,P_2(P_1(j))\}\ge i>P_1(j)\right\}.\]
See Figure \ref{Strands} for an $n=2$ example. These strands diagrams are drawn vertically, with left-to-right multiplication corresponding to bottom-to-top concatenation.

The idempotents are given by $id: S \to S$, which are denoted $\iota_{S}$. Note that $\sum_{S} \iota_{S} = 1$.

\begin{figure}[h!]
\begin{subfigure}{.32\textwidth}\scriptsize
 \centering
\def\svgwidth{3cm}
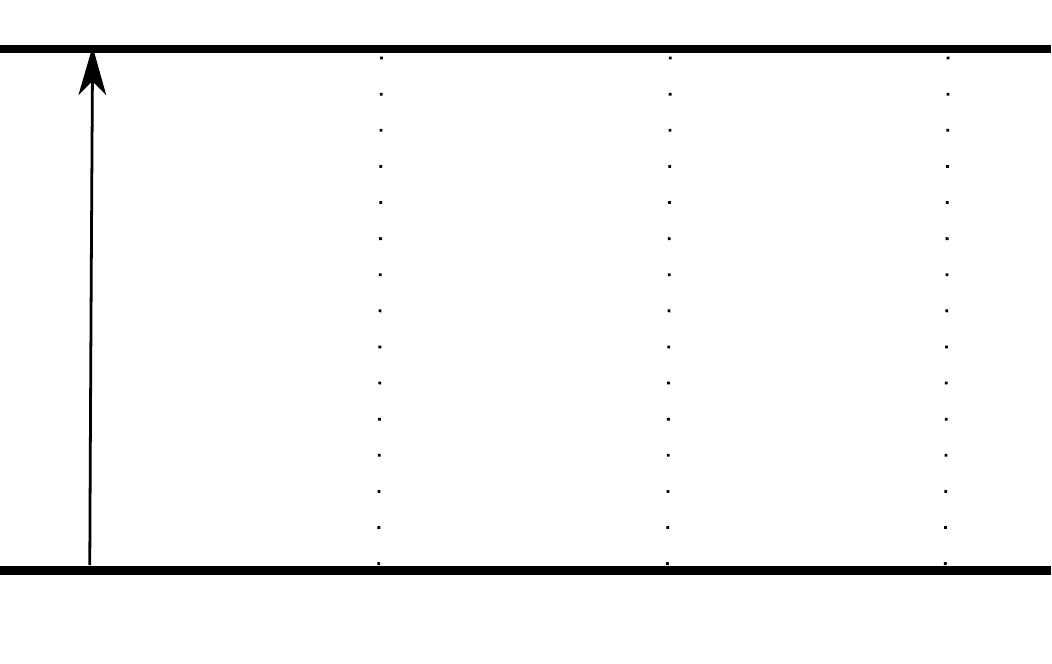
  \caption{$P_{1}$}  
\end{subfigure}%
\begin{subfigure}{.32\textwidth}\scriptsize
  \centering
\def\svgwidth{3cm}
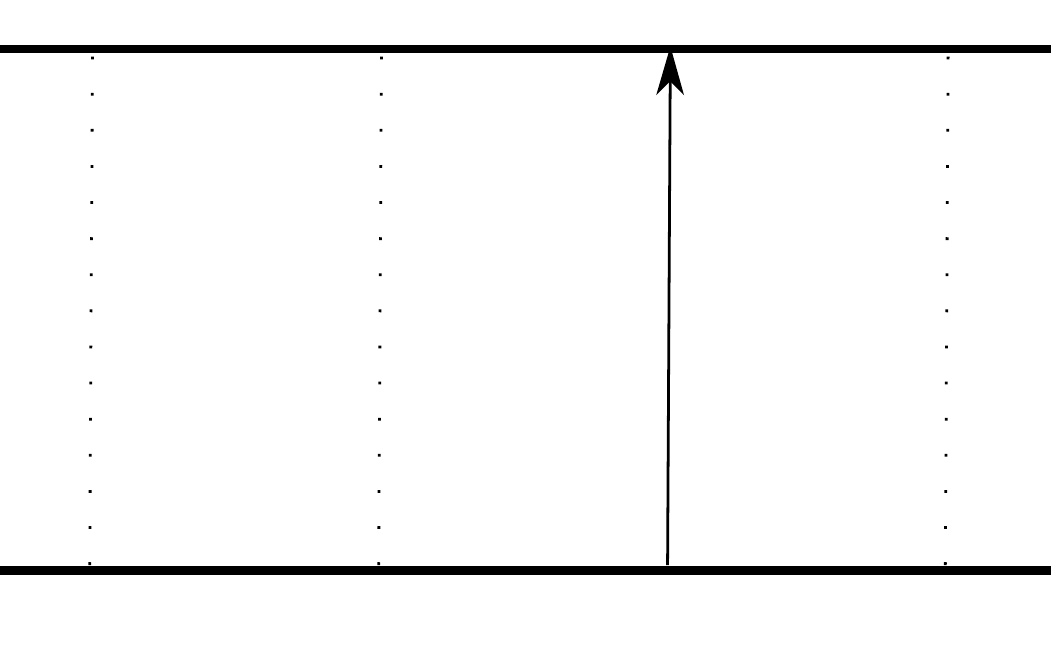
  \caption{$P_{2}$}  
\end{subfigure}
\begin{subfigure}{.32\textwidth}\scriptsize
  \centering
\def\svgwidth{3cm}
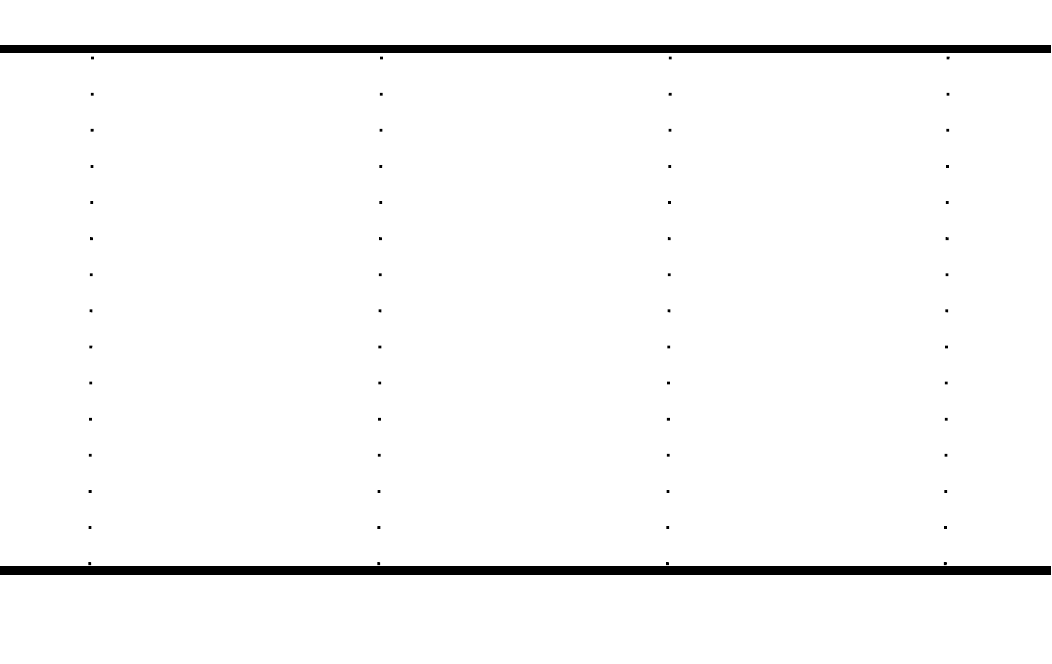
  \caption{$P_{3}$}  
\end{subfigure}
\caption{$P_{1}$, $P_{2}$, and $P_{3}$ are three examples of monotone bijections for $n=2$. For these examples, $P_{1}P_{2}=P_{3}$, and all other products are zero.} \label{Strands}
\end{figure}

The algebra has a simplified set of generators consisting the idempotents together with a set of elementary strand moves. Given a subset $S$ of $[2n]$ with $n$ elements, such that $i \in S$, $i+1 \notin S$, define $r_{i}(S)=S \cup \{i+1\} - \{i\}$. Let $R_{i}^{S}: S \to r_{i}(S)$ denote the bijection 

\[
R_{i}^{S}(j)=\begin{cases}
\begin{array}{ll}
i+1&\text{ if } j = i \\
j &\text{ if } j \ne i 
\end{array}
\end{cases}
\]

\noindent
For example, in Figure \ref{Strands}, $P_{2}=R_{1}^{\{1,3\}}$. Note that $R_{i}^{S}=\iota_{S}R_{i}^{S}\iota_{r_{i}(S)}$. Let $R_{i}^{S}=0$ when $r_{i}(S)$ is not well-defined (i.e. $i\notin S$ or $i+1 \in S$). The element $R_{i}$ is defined to be the sum over all the $R_{i}^{S}$:

\[ R_{i} = \sum_{S} R_{i}^{S}    \]

There is a similar definition for the elementary leftward-moving elements. Given an idempotent $\iota_{S}$ with $i+1 \in S$ and $i \notin S$, define $l_{i}(S)=S \cup \{i\}-\{i+1\}$. Let $L_{i}^{S}: S \to l_{i}(S)$ denote the bijection 

\[
L_{i}^{S}(j)=\begin{cases}
\begin{array}{ll}
i&\text{ if } j = i+1 \\
j &\text{ if } j \ne i+1 
\end{array}
\end{cases}
\]

\noindent
In Figure \ref{Strands}, $P_{1}=L_{3}^{\{1,4\}}$. In terms of idempotents, $L_{i}^{S}=\iota_{S}L_{i}^{S}\iota_{l_{i}(S)}$. Let $L_{i}^{S}=0$ when $l_{i}(S)$ is not well-defined. Then we define

\[ L_{i} = \sum_{S} L_{i}^{S}    \]

The strands algebra is generated by the idempotents together with $R_{i}$ and $L_{i}$, for $i=1,...,2n-1$. These elements satisfy five relations, some of which are already described, but we will list them all here for clarity.

\begin{itemize}

\item[R1)] The $u_{i}$ commute with each $R_i$, $L_i$ and all idempotents.

\item[R2)] For every $S$, $\iota_S\iota_S=\iota_S$. 

\item[R3)] For every $S\neq S'$, $\iota_S\iota_{S'}=0$.

\item[R4)] For every $S$, if $r_{i}(S)$ (resp. $l_{i}(S)$) is not well-defined, then $\iota_{S}R_{i}=0$ (resp. $\iota_{S}L_{i}=0$). Otherwise, $\iota_SR_i=R_i\iota_{r_i(S)}=\iota_SR_i\iota_{r_i(S)}$ and $\iota_SL_i=L_i\iota_{l_i(S)}=\iota_SL_i\iota_{l_i(S)}$.




\item[R5)] For $i=1,...,2n-1$, $R_{i}L_{i}=\iota_{i}u_{i}$ and $L_{i}R_{i} = \iota_{i+1}u_{i}$, where \[ \iota_{i} = \sum_{S \text{ containing } i} \iota_{S} \]

\end{itemize}


%
%
%
%
%
%
%

\noindent
Relations R2-R5 are illustrated in Figure \ref{algrelations}.

\begin{figure}[h!]
\begin{subfigure}{.45\textwidth}\scriptsize
 \centering
\def\svgwidth{6cm}
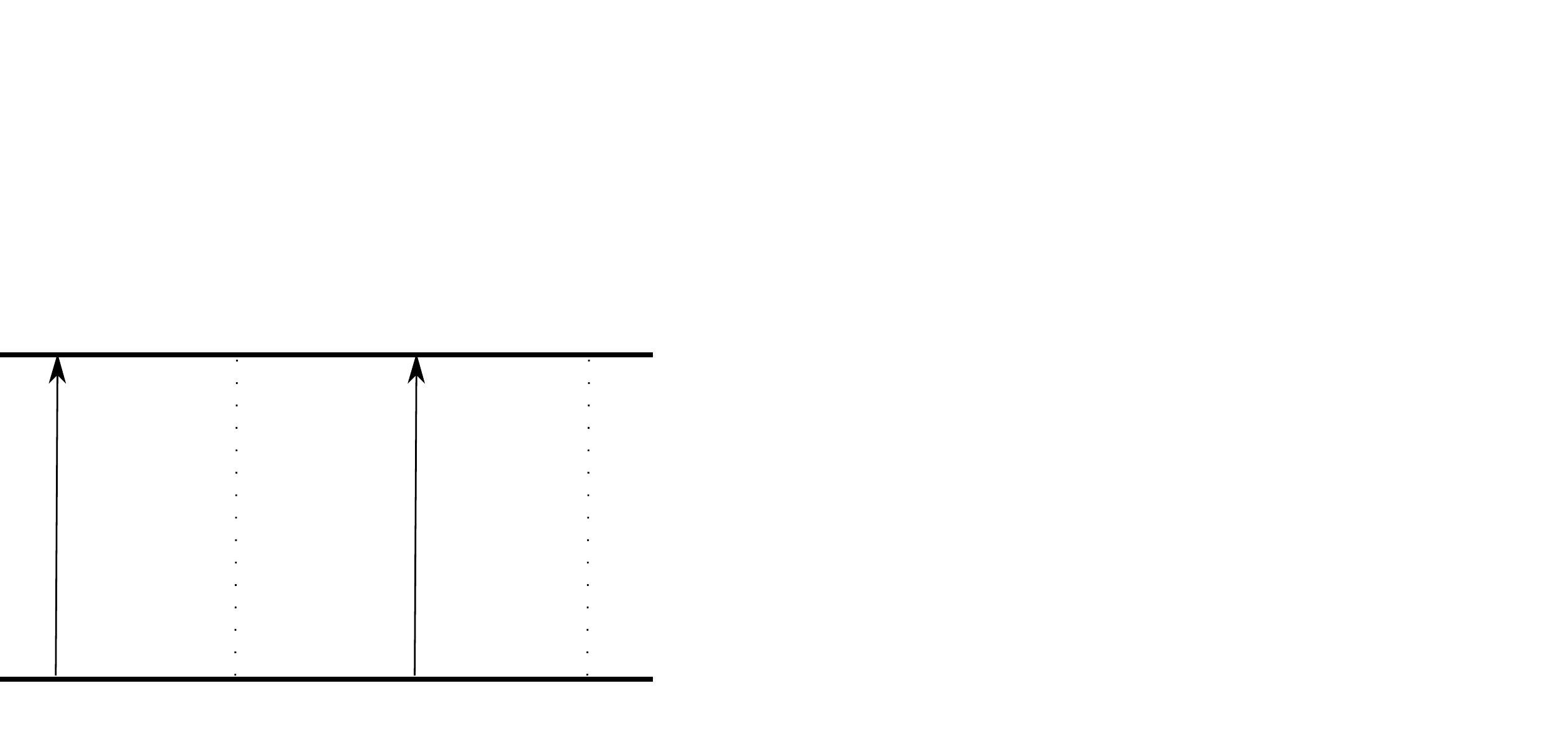
  \caption{An example of R2}  
  \vspace{12mm}
\end{subfigure}%
\begin{subfigure}{.45\textwidth}\scriptsize
  \centering
\def\svgwidth{3.3cm}
\hspace{20mm}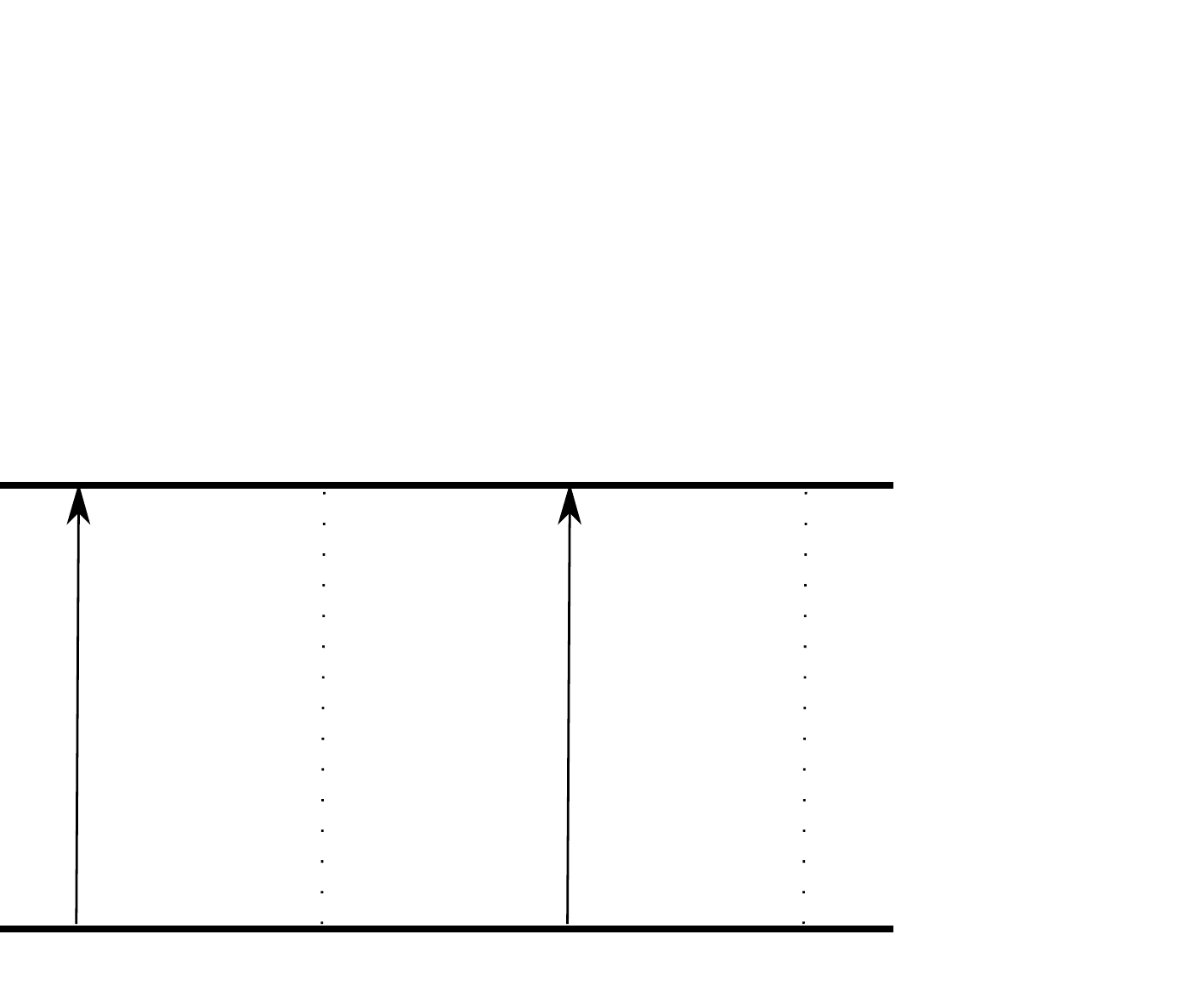
  \caption{An example of R3} 
  \vspace{12mm}
\end{subfigure}
\vspace{8mm}
\begin{subfigure}{.8\textwidth}\scriptsize
  \centering
\def\svgwidth{10cm}
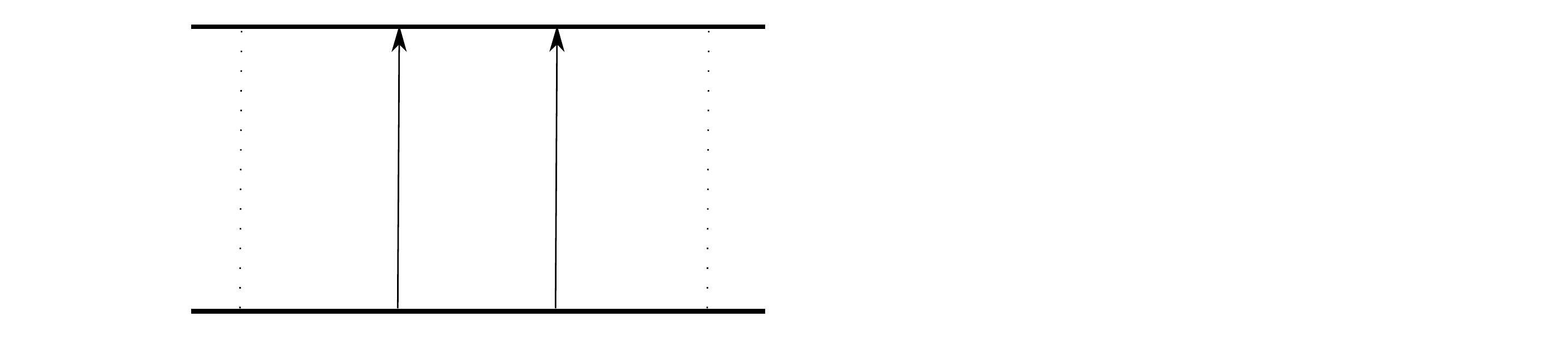
  \caption{An example of R4}  
\end{subfigure}
\vspace{4mm}
\begin{subfigure}{.6\textwidth}\scriptsize
  \centering
\def\svgwidth{8cm}
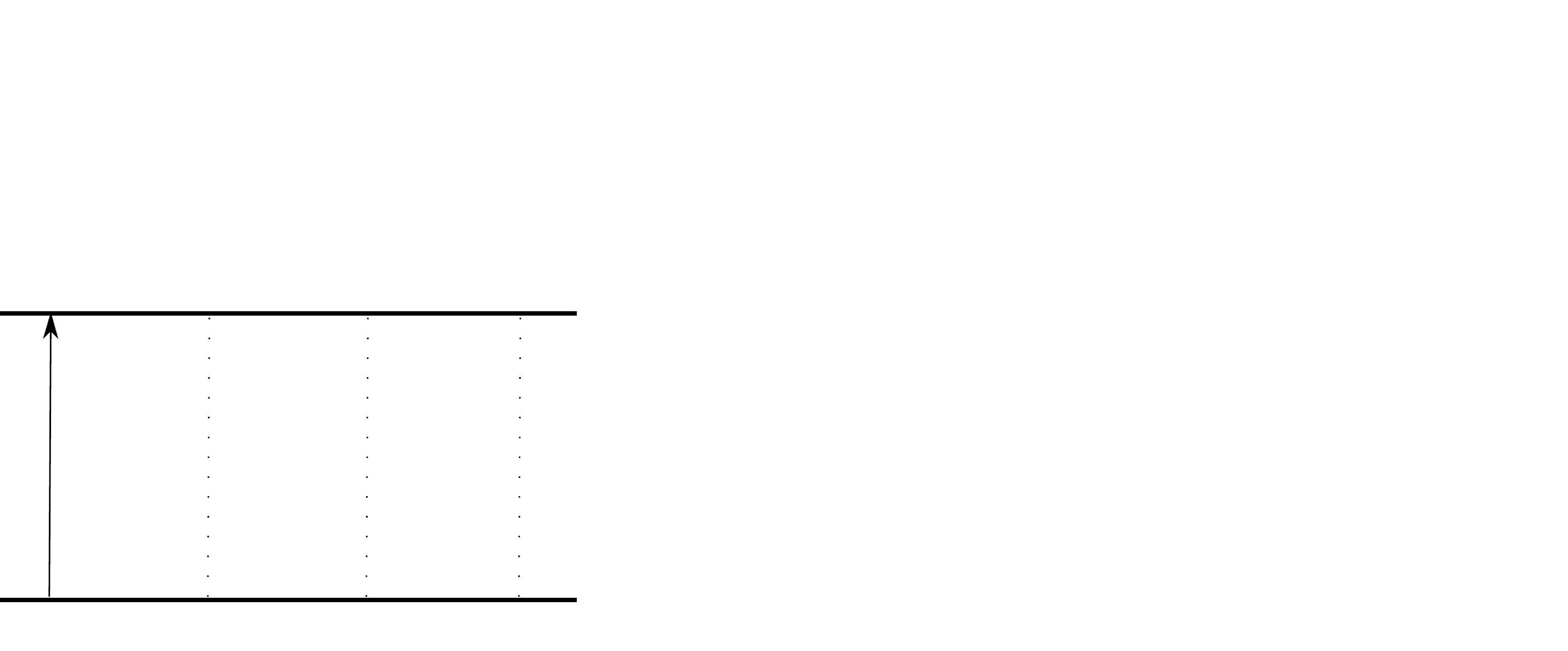
  \caption{An example of R5}  
\end{subfigure}
\caption{Some pictorial examples of the algebra relations} \label{algrelations}
\end{figure}

For each pair $j>i$, let \[\rho_{i,j}=R_iR_{i+1}...R_{j-1}\ \ \ \text{and}\ \ \ \delta_{j,i}=L_{j-1}L_{j-2}...L_{i}.\]
Thus, $R_{i}=\rho_{i,i+1}$ and $L_{i}=\delta_{i+1,i}$.

\subsubsection{The bimodule for a singular braid.} Let $S$ be a singular braid with $2n$ strands and $m$ edges. Corresponding to the top and bottom boundaries of $S$ we have the algebra $A_n$ defined in previous section. We will define a $\mathbb{Q}[U_1,...,U_m]$-module $\mathsf{M}[S]$ with a left and right action by $A_n$ such that the left action corresponds to the bottom boundary and the right action corresponds to the top boundary.  

For a singular braid $S$, a \emph{cycle} $Z$ in $S$ is a set of $n$  pairwise disjoint paths oriented from the bottom boundary to the top boundary. Let $b(Z), t(Z)\subset [2n]$ denote the set of vertices occupied at the bottom and top boundary, respectively. In addition, let $e_{b(i)}$ and $e_{t(i)}$ be the outgoing edge from $\{i\}\times\{0\}$ and incoming edge to $\{i\}\times\{1\}$, respectively. 

Similar to Section \ref{sub3.2}, for any cycle $Z$ and edge $e_i\subset Z$ we define $\mathcal{D}(Z,e_i)$ to be the set of disks $D$ in $\RR\times [0,1]$ such that $\bdy D\subset S\cup(\RR\times\{0\})\cup(\RR\times\{1\})$ decomposes as $\bdy D=\bdy_LD\cup \bdy_RD\cup\bdy_TD\cup \bdy_BD$ satisfying the following conditions: 
 \begin{enumerate}
\item $\bdy_BD$ (resp. $\bdy_TD$) is either a $4$-valent vertex of $S$ or a connected line segment in $\RR\times\{0\}$ (resp. $\RR\times\{1\}$). 
\item $\bdy_LD$ (resp. $\bdy_R(D)$) is an oriented path connecting left (resp. right) endpoint of $\bdy_BD$ to the left (resp. right) endpoint of $\bdy_TD$. Note that if $\bdy_BD$ or $\bdy_TD$ is a vertex then its both left and right endpoints are equal to the vertex. 
\item $D\cap Z=\bdy_LD$ and $\bdy_LD$ contains $e_i$. 
\end{enumerate}
As before, let $D(Z,e_i)$ denotes the smallest disk, i.e. the intersection of all disks in $\mathcal{D}(Z,e_i)$. Also, $U_i(Z)$ denotes the cycle obtained from $Z$ by replacing the $\bdy_LD(Z,e_i)\subset Z$ with $\bdy_RD(Z,e_i)$. As in Section \ref{sub3.2}, each disk $D(Z,e_i)$ has a coefficient $U(D(Z,e_i))$ which is a monomial in $\mathbb{Q}[U_1,...,U_m]$, where as before $U_i$ is the variable corresponding to the edge $e_i$. 

For each cycle $Z$, the bimodule $\mathsf{M}[S]$ contains an element denoted by $x_Z$. We define $\mathsf{M}[S]$  to be the $\mathbb{Q}[U_1,...,U_m]$-module generated by elements of the form $ax_Za'$ where $a,a'\in A_n$ such that $a\iota_{b(Z)}=a$ and $\iota_{t(Z)}a'=a'$, modulo the following conditions.

\begin{itemize}
\item[M1)] $x_Z=\iota_{b(Z)}x_Z\iota_{t(Z)}$.
\item[M2)] For every $4$-valent vertex $v$ of $S$ and any cycle $Z$, $U_{i(v)}U_{j(v)}x_Z=U_{k(v)}U_{l(v)}x_Z$.
\item[M3)] For every $4$-valent vertex $v$ of $S$ and any cycle $Z$, $(U_{i(v)}+U_{j(v)})x_Z=(U_{k(v)}+U_{l(v)})x_Z$.
\item[M4)] $u_{i}x_Z=U_{b(i)}x_Z$ for every cycle $Z$ and $1\le i\le 2n$.
\item[M5)] $x_Zu_i=U_{t(i)}x_Z$ for every cycle $Z$ and $1\le i\le 2n$.
\item[M6)] Consider an edge $e_i\subset Z$. If for $D=D(Z,e_i)$, both $\bdy_BD$ and $\bdy_TD$ are $4$-valent vertices in $S$, then $U_ix_Z=U(D)x_{U_i(Z)}$, and if $D=\emptyset$ then $U_ix_Z=0$.

\item[M7)] For every $D=D(Z,e_i)$, if $\bdy_TD$ is a vertex in $S$ and $\bdy_BD=[j,k]\times \{0\}\subset\RR\times\{0\}$, then
\[\delta_{k,j}x_Z=U(D)x_{U_i(Z)}\ \ \ \text{and}\ \ \ \rho_{j,k}(U(D)x_{U_i(Z)})=U_ix_Z.\]
\item[M8)] For every $D=D(Z,e_i)$, if $\bdy_BD$ is a vertex and $\bdy_TD=[j,k]\times\{1\}\subset\RR\times\{1\}$, then 
\[x_Z\rho_{j,k}=x_{U_i(Z)}\ \ \ \text{and}\ \ \ (U(D)x_{U_i(Z)})\delta_{k,j}=U_ix_Z,\]
\item[M9)] For every $D=D(Z,e_i)$ with $\bdy_BD=[j_0,k_0]\subset \RR\times\{0\}$ and $\bdy_TD=[j_1,k_1]\subset\RR\times\{1\}$
we have
\[\begin{split}
\delta_{k_0,j_0}x_{Z}=x_{U_i(Z)}\delta_{k_1,j_1},\ \ \ \ \ x_z\rho_{j_1,k_1}=\rho_{j_0,k_0}x_{U_i(Z)}\\
\text{and}\ \ U_ix_Z=\rho_{j_0,k_0}(U(D)x_{U_i(Z)})\delta_{k_1,j_1}.
\end{split}
\]
\end{itemize}

\begin{example} Let $S_{id}$ be the identity braid on $2n$ strands as in Figure \ref{sid}. In this case, the cycles are in one-to-one correspondence with the idempotents in $A_n$ and $\mathsf{M}[S_{id}]\cong A_n$ as $A_n$-bimodules.

\begin{figure}[ht]
\centering
\def\svgwidth{10cm} \scriptsize
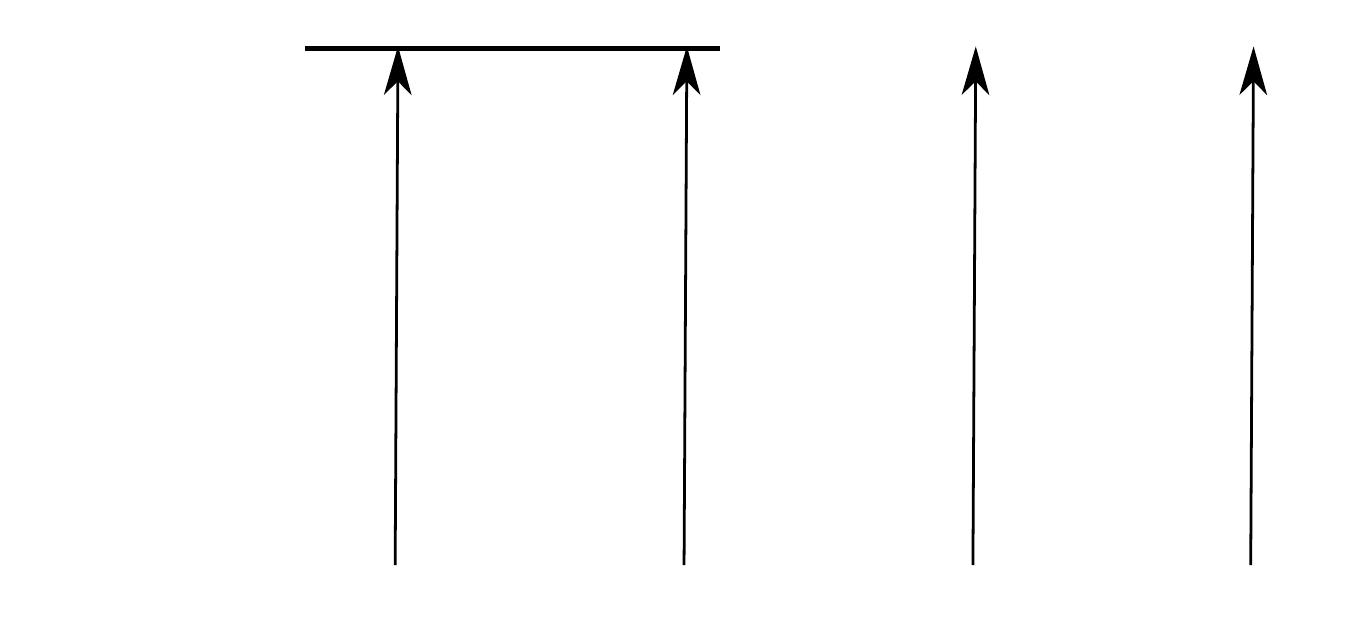
\caption{The trivial tangle $S_{id}$}
\end{figure}\label{sid}

%
%
%
\end{example}

\begin{example}\label{ex:sing}
 Let $\mathsf{X}_i$ denote the elementary singular braid on $2n$ strands with a singularization between strands $i$ and $i+1$. Let $e_1$ and $e_2$ (resp. $e_3$ and $e_4$) denote the left and right incoming (resp. outgoing) edges to (resp. from) the only $4$-valent vertex of $\mathsf{X}_i$, respectively (see Figure \ref{xi}.  For each subset $I\subset\{1,2,3,4\}$, let $\mathcal{Z}I$ denote the set of cycles that locally consist of the edges labelled with elements in $I$.

\begin{figure}[ht]
\centering
\def\svgwidth{12cm} \scriptsize
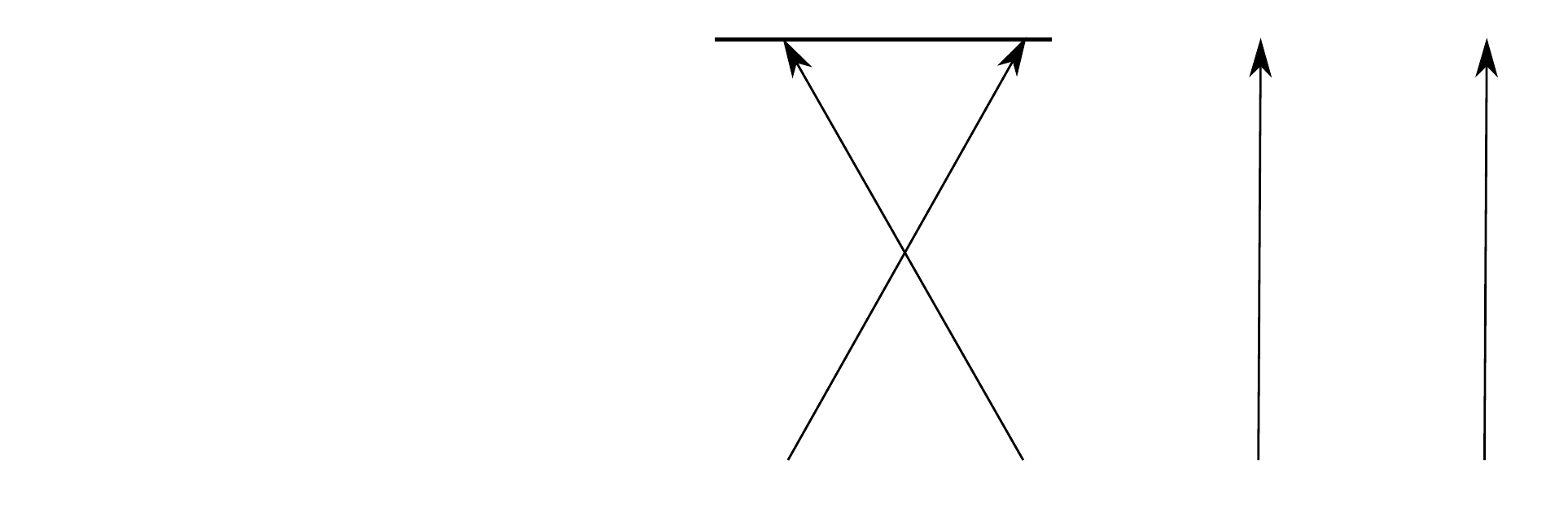
\caption{The elementary singular braid $\mathsf{X}_{i}$}\label{xi}
\end{figure}
 
 Similar to identity bimodule, the bimodule $\mathsf{M}(\mathsf{X}_i)$ is the set of elements $A_n.x_Z.A_n$ modulo the following relations: 
 
 \begin{enumerate}
 \item $x_{Z}=\iota_{b(Z)}x_{Z}\iota_{t(Z)}$,
 \item For $j<i$ or $j>i+1$, $u_jx_Z=x_Zu_j$, 
 \item For $j<i-1$ or $j>i+1$, if $j+1\in b(Z)$, then $L_{j}x_{l_j(Z)}=x_{Z}L_{j}$,
 \item For $j<i-1$ or $j>i+1$,, if $j\in b(Z)$, then $R_{j}x_{r_{j}(Z)}=x_{Z}R_{j}$,
 \item $(u_i+u_{i+1})x_Z=x_Z(u_i+u_{i+1})$,
 \item $(u_iu_{i+1})x_{Z}=x_Z(u_iu_{i+1})$,
 \item For $Z\in\mathcal{Z}\emptyset$, if $i-1\in b(Z)$, then 
 \[L_{i-1}x_{Z}=x_{Z(1,3)}L_{i-1} \ \ \ \ \text{and}\ \ \ \ x_ZR_{i-1}=R_{i-1}x_{Z(1,3)},\]
  where $Z(1,3)\in\mathcal{Z}\{1,3\}$ denotes the cycle obtained from $Z$ by replacing the $(i-1)$-th strand with local cycle $e_1e_3$.
 \item For $Z\in\mathcal{Z}\{1,3\}$, \[L_ix_Z=x_{Z(2,3)}\ \ \ \text{and}\ \ \ x_{Z}R_i=x_{Z(1,4)},\]
 where $Z(2,3)=(Z\setminus e_1)\cup e_2$ and $Z(1,4)=(Z\setminus e_3)\cup e_4$.
 \item For $Z\in\mathcal{Z}\{1,4\}$, $L_ix_Z=x_{Z(2,4)}$ where $Z(2,4)=(Z\setminus e_1)\cup e_2$.
 \item For $Z\in\mathcal{Z}\{2,3\}$, $x_ZR_i=x_{Z(2,4)}$ where $Z(2,4)=(Z\setminus e_3)\cup e_4$.
 \item For each $Z\in\mathcal{Z}\emptyset$, if $i+2\in b(Z)$ then, 
 \[ \begin{split}
 &x_ZL_{i+1}L_i=L_{i+1}x_{Z}(2,3), \ \ \ \ x_ZU_3L_{i+1}=L_{i+1}x_{Z(2,4)}\\
 &R_iR_{i+1}x_Z =U_3x_{Z(1,4)}R_{i+1}\ \ \ \ R_{i+1}U_3x_Z=x_{Z(2,4)}R_{i+1}.
 \end{split}
 \]
where $Z(i,j)$ denotes the cycle obtained from $Z$ by replacing the $(i+2)$-th strand with local cycle $e_ie_j$. 
 \end{enumerate}
 \end{example}

\subsubsection{The Cup and Cap Modules}
Let $\mycap_i$ denote the elementary tangle on $2n$ strands with a singular cap between the strands $i$ and $i+1$ as in Figure \ref{elcapdiagram}.  This tangle consists of $2n$ edges, we label them by their bottom vertex. As before, a cycle $Z$ in $\mycap_i$ is a union of $n$ disjoint edges such that it contains exactly one of $e_i$ or $e_{i+1}$. Associated to $\mycap_i$ we define a bimodule $\mathsf{M}[\mycap_i]$, generated by the cycles so that it is a left module over $A_n$ and right module over $A_{n-1}$. Specifically, it  is the set of elements $A_n.x_Z.A_{n-1}$ for all cycles $Z$ subject to the following relations.\begin{enumerate}
\item $x_Z=\iota_{b(Z)}x_Z\iota_{t(Z)}$
\item For all $1\le j<i-1$ we have $L_{j}x_Z=x_{U_j(Z)}L_j$ and $x_ZR_j=R_jx_{U_j(Z)}$.
\item For all  $i+2\le j< 2n$ we have $L_jx_{Z}=x_{U_j(Z)}L_{j-2}$ and $R_jx_{U_j(Z)}=x_ZL_{j-2}$.
\item For every $Z$, $L_{i-1}x_Z=L_{i+1}x_Z=x_ZL_{i-1}=0$ and $R_{i-1}x_Z=R_{i+1}x_Z=x_{Z}R_{i-1}=0$.
\item If $e_i\subset Z$, then  $L_ix_Z=x_{U_i(Z)}$.
\end{enumerate}

\begin{figure}[ht]
\centering
\def\svgwidth{12cm} \scriptsize
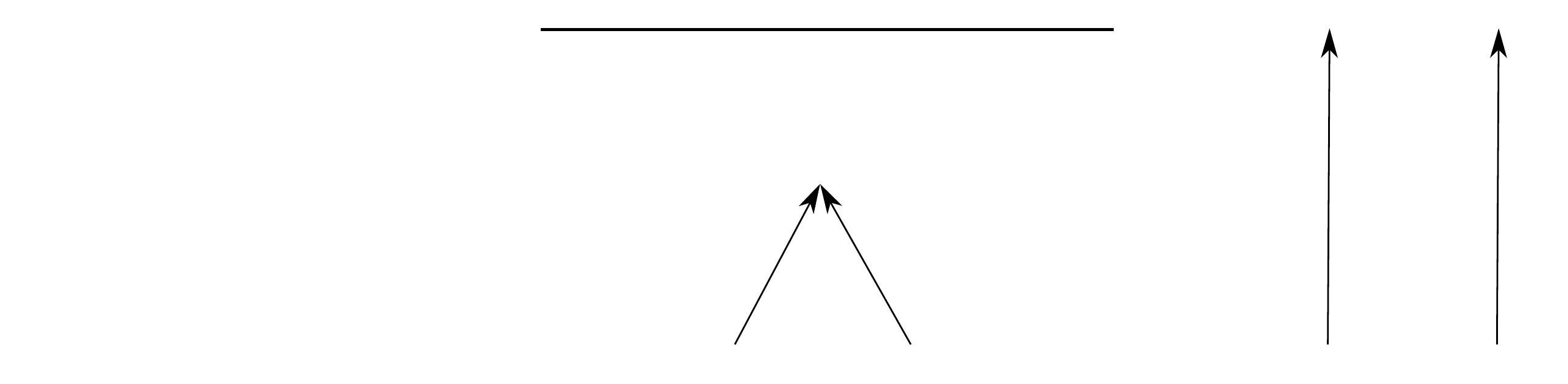
\caption{The elementary singular cap diagram} \label{elcapdiagram}
\end{figure}

\noindent

Similarly, $\mycup_i$ denotes the elementary tangle on $2n$ strands with a singular cup between the strands $i$ and $i+1$ as in Figure \ref{elcupdiagram}. Associated with this we define a bimodule $\mathsf{M}[\mycup_i]$ which is a left $A_{n-1}$-module and right $A_n$-module generated by the cycles. Similarly, if we label the edges by their top boundary, a cycle in $Z$ is a union of $n$ pairwise disjoint edges containing exactly one of $e_i$ or $e_{i+1}$, and $x_Z$ denotes the corresponding generator. Then, $\mathsf{M}[\mycup_i]$ is the set of elements $A_{n-1}.x_Z.A_n$ for all cycles $Z$ subject to the following relations. 

\begin{enumerate}
\item $x_Z=\iota_{b(Z)}x_Z\iota_{t(Z)}$
\item For all $1\le j<i-1$ we have $L_{j}x_Z=x_{U_j(Z)}L_j$ and $x_ZR_j=R_jx_{U_j(Z)}$.
\item For all  $i+2\le j< 2n$ we have $L_jx_{Z}=x_{U_j(Z)}L_{j+2}$ and $R_jx_{U_j(Z)}=x_ZL_{j+2}$.
\item For every $Z$, $L_{i-1}x_Z=x_ZL_{i-1}=x_ZL_{i+1}=0$ and $R_{i-1}x_Z=x_ZR_{i-1}=x_ZR_{i+1}=0$.
\item If $e_i\subset Z$, then $x_ZR_i=x_{U_i(Z)}$.
\end{enumerate}

\begin{figure}[ht]
\centering
\def\svgwidth{12cm} \scriptsize
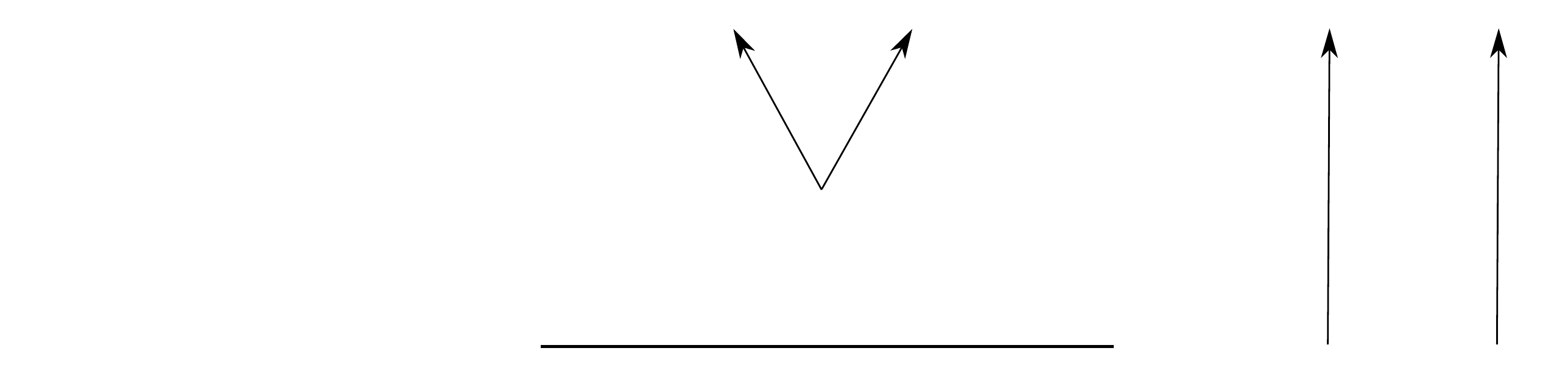
\caption{The elementary singular cup diagram} \label{elcupdiagram}
\end{figure}

Finally, by tensoring these bimodules we define $\mathsf{M}[S_{cup}]$ and $\mathsf{M}[S_{cap}]$ for the tangle consisting of $n$ singular cups (Figure \ref{S-cups}) and caps (Figure \ref{S-caps}), respectively.

Specifically, $\mathsf{M}[S_{cup}]$ is a right $A_n$-module defined as follows. Label the edges such that $e_i$ denotes the edge occupying the $i$-th vertex on top boundary. The generators are given by local cycles $Z$ with $n$ strands that contain exactly one of the edges $e_{2j}$ and $e_{2j-1}$ for each $j\in\{1,...,n\}$. As before, $x_Z$ denotes the generator corresponding to $Z$ and $t(Z)$ denotes the set of occupied strands at the top boundary. The module $\mathsf{M}[S_{cup}]$ is the set of elements $x_Z.A_n$ for every cycle $Z$ modulo the following relations: 
\begin{enumerate}
\item[C1)] $x_Z=x_Z.\iota_{t(Z)}$.
\item[C2)] If $2j-1\in t(Z)$ for some $1\le j\le n$, then $x_ZR_{2j-1}=x_{U_j(Z)}$.
\item[C3)] If $2j\in t(Z)$ for some $1\le j\le n$, then $x_ZR_{2j}=0$.
\end{enumerate}

Similarly, for $S_{cap}$, $\mathsf{M}[S_{cap}]$ is a left $A_n$-module consisting of the elements $A_n.x_Z$ for all cycles $Z$ modulo the following relations: 
\begin{enumerate}
\item[C'1)] $x_Z=\iota_{b(Z)}x_Z$
\item[C'2)] If $2j-1\in b(Z)$ for some $1\le j\le n$, then $L_{2j-1}x_Z=x_{U_j(Z)}$.
\item[C'3)] If $2j\in b(Z)$ for some $1\le j\le n$, then $L_{2j}x_{Z}=0$.
\end{enumerate}

\begin{theorem}
Let $S_1$ and $S_2$ be singular braids with $2n$ strands. Then, the $\QQ[U_1,...,U_m]$-modules
\[\mathsf{M}(S_1)\otimes_{A_n}\mathsf{M}(S_2)\cong\mathsf{M}(S_1\circ S_2)\]
and this isomorphism preserves the left and right $A_n$-actions. Here, $m$ denotes the number of edges in $S=S_1\circ S_2$.

\end{theorem}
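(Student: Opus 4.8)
\emph{Proof sketch.} The plan is to construct the isomorphism explicitly by concatenating cycles and then to reduce all of the verification to a local analysis at the gluing interface. Write $S=S_1\circ S_2$, with $S_1$ the bottom tangle and $S_2$ the top tangle, so that the top boundary of $S_1$ is identified with the bottom boundary of $S_2$. A cycle $Z$ in $S$ is exactly a pair $(Z_1,Z_2)$ consisting of a cycle $Z_1$ in $S_1$ and a cycle $Z_2$ in $S_2$ with $t(Z_1)=b(Z_2)$; write $Z=Z_1\ast Z_2$ for the concatenation. The first thing to record is that tensoring over $A_n$ already produces the correct edge ring: relations M4 and M5 say that $u_i$ acts on $\mathsf{M}[S_1]$ from the right as the variable $U^{(1)}_{t(i)}$ of the $i$-th top edge of $S_1$ and on $\mathsf{M}[S_2]$ from the left as the variable $U^{(2)}_{b(i)}$ of the $i$-th bottom edge of $S_2$, so balancing over $A_n\supseteq\mathbb{Q}[u_1,\dots,u_{2n}]$ identifies $U^{(1)}_{t(i)}$ with $U^{(2)}_{b(i)}$, which is precisely the identification of the $i$-th interface edges inside $S$. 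Hence $\mathsf{M}[S_1]\otimes_{A_n}\mathsf{M}[S_2]$ is naturally a module over $\mathbb{Q}[U_1,\dots,U_m]$ with $m=m_1+m_2-2n$, as the statement requires.

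I would then define $\Phi : \mathsf{M}[S_1]\otimes_{A_n}\mathsf{M}[S_2]\to\mathsf{M}[S]$ on generators by $\Phi(a\,x_{Z_1}\otimes x_{Z_2}\,a')=a\,x_{Z_1\ast Z_2}\,a'$ when $t(Z_1)=b(Z_2)$ and $0$ otherwise — this is essentially forced, since $x_{Z_1}\otimes x_{Z_2}=x_{Z_1}\otimes\iota_{t(Z_1)}\iota_{b(Z_2)}x_{Z_2}$ already vanishes unless $t(Z_1)=b(Z_2)$ — extended $\mathbb{Q}[U]$-linearly and $A_n$-bilinearly, together with a candidate inverse $\Psi$ sending $x_Z$ to $x_{Z_1}\otimes x_{Z_2}$ for $Z=Z_1\ast Z_2$. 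Once both maps are shown to be well-defined, that they are mutually inverse is immediate on generators, and compatibility with the $A_n$-actions is automatic: the residual left action is through the bottom of $S_1$, which is the bottom of $S$, and the residual right action is through the top of $S_2$, which is the top of $S$. So the entire content of the theorem is the well-definedness of $\Phi$ and $\Psi$.

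For $\Phi$ this means checking that it annihilates every defining relation of the tensor product: the relations M1--M9 for $\mathsf{M}[S_1]$, the relations M1--M9 for $\mathsf{M}[S_2]$, and the balancing relations $ma\otimes m'=m\otimes am'$ for the algebra generators $a$. Relations supported at a $4$-valent vertex of $S_1$ or of $S_2$ (M1--M3), the cases of M6 in which the disc $D(Z,e_i)$ lies entirely inside $S_1$ or entirely inside $S_2$, and the idempotent- and $u_i$-balancing relations are routine: under the concatenation correspondence each is literally a relation in $\mathsf{M}[S]$, using the edge identification above. The balancing relations for $R_i$ and $L_i$ are also easy, since $x_{Z_1}R_i$ changes the top idempotent of $Z_1$ away from $b(Z_2)$ (and symmetrically $R_ix_{Z_2}$ changes the bottom idempotent of $Z_2$ away from $t(Z_1)$), so both sides map to $0$. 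The real work is the remaining cases of M6--M9: a disc $D(Z,e_i)$ in $S$ that \emph{straddles} the interface. I would isolate this as a \emph{splitting lemma} asserting that such a disc decomposes as $D=D'\cup D''$ along a segment $[p,q]$ of the interface, where $D'$ is a disc for $Z_1$ abutting the top boundary of $S_1$ and $D''$ a disc for $Z_2$ abutting the bottom boundary of $S_2$, with $U(D)=U(D')\,U(D'')$ and $U_i(Z)=U_i(Z_1)\ast U_j(Z_2)$ for the appropriately modified cycles. Feeding this into M8 (or M9) for $\mathsf{M}[S_1]$ and M7 (or M9) for $\mathsf{M}[S_2]$ expresses multiplication by $U_i$ on $x_{Z_1}\otimes x_{Z_2}$ as a composite in which the algebra elements $\rho_{p,q},\delta_{q,p}$ emitted at the interface by those relations are transported across the tensor product and recombine, via the algebra relations R5, into exactly the edge-variable monomial that $U_i$ contributes in $\mathsf{M}[S]$. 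This simultaneously settles the M7--M9 relations of $S_1$ and $S_2$ whose discs abut the interface, and, read in reverse, the well-definedness of $\Psi$ on the M6--M9 relations of $S$.

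The step I expect to be the main obstacle is precisely this splitting lemma and the bookkeeping around it: one has to check that a straddling disc cuts cleanly into $D'$ and $D''$ along an interface interval, that the coefficient monomials multiply and the cycle modifications on the two sides reassemble to $U_i(Z)$, and — most delicately — that the $\rho$ and $\delta$ elements produced at the interface by M7--M9 cancel correctly under the $A_n$-balancing, i.e. that $\rho_{p,q}\delta_{q,p}$ (respectively $\delta_{q,p}\rho_{p,q}$) evaluates via R5 to the $u$-monomial along $[p,q]$ that the relevant $U$-action is supposed to supply. Everything else — the vanishing cases $D=\emptyset$ and idempotent mismatches, and all relations localized strictly inside $S_1$ or $S_2$ — follows directly once the interface case is understood.
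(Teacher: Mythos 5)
Your proposal follows essentially the same route as the paper's proof: both construct the isomorphism by splitting/concatenating cycles along the interface, reduce all the verification to the case of a disc $D(Z,e_i)$ that straddles $\RR\times\{1\}$, and resolve that case by decomposing $D$ into a piece in each factor and matching the coefficient monomials against the $\rho/\delta$ elements emitted by M7--M9 (the paper's identity $\delta_{k,j}x_{Z_2}=\tfrac{U(D)}{U(D_1)}x_{Z_2'}a_2$ is exactly your splitting lemma). The only cosmetic difference is that you define both $\Phi$ and the inverse $\Psi$ and deduce bijectivity from mutual inverseness, whereas the paper defines only the map $x_Z\mapsto x_{Z_1}\otimes x_{Z_2}$ and argues surjectivity and injectivity separately.
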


\begin{proof}
Let 
\[h:\mathsf{M}[S_1\circ S_2]\to \mathsf{M}[S_1]\otimes_{A_n}\mathsf{M}[S_2]\]
be the map defined by $h(x_Z)=x_{Z_1}\otimes x_{Z_2}$ where $Z_i=Z\cap S_i$. First, we show that $h$ extends to a well-defined $\QQ[U_1,...,U_m]$-homomorphism that preserves the right and left $A_n$ actions.

Suppose $S_1\subset \RR\times [0,1]$ and $S_2\subset \RR\times [1,2]$ such that $S_1\cap (\RR\times\{1\})=S_2\cap (\RR\times\{1\})$. Let $Z$ be a cycle for $S$ and $Z_i=Z\cap S_i$.  Consider an edge $e_i\subset Z$. If $D=D(Z,e_i)$ is a disk away from $\RR\times\{1\}$, then it is clear from the definition that \[h(U_ix_Z)=U_ix_{Z_1}\otimes x_{Z_2}.\]

Suppose $e_i\subset S_1$, and $D\cap (\RR\times\{1\})=[j,k]\times\{1\}$.  Then, $D=D_1\cup D_2$ where $D_1=D(Z_1,e_i)$ and $D_2\subset \RR\times [1,2]$ is the smallest disk with left boundary on $Z_2$ and bottom boundary equal $[j,k]\times \{1\}$. Depending on the type of $D$, one of the relations M6, M7, M8 or M9, implies that $U_ix_Z=a_0(U(D)x_{U_i(Z)})a_2$
where $a_0,a_2$ are generators of $A_n$.  Since, $D_1=D(Z_1,e_i)$ we have
\[U_ix_{Z_1}=a_0 U(D_1)x_{Z_1'}\delta_{k,j}.\]
On the other hand, it is not hard to show that \[D_2=D(Z_2,e_{b(j)})\cup D(u_j(Z_2),e_{b(j+1)})\cup...\cup D(u_{k-1}(...(u_j(Z_2))),e_{b(k-1)}),\]
and thus $\delta_{k,j}x_{Z_2}=\frac{U(D)}{U(D_1)}x_{Z_2'}a_2$ where $U_i(Z)=Z_1'\cup Z_2'$. Therefore, \[U_ih(x_{Z})=U_ix_{Z_1}\otimes x_{Z_2}=a_0U(D)x_{Z_1'}\otimes x_{Z_2'}a_2=a_0U(D)h(x_{U_i(Z)})a_2.\]

Finally, if $e_i\cap (\RR\times\{1\})\neq \emptyset$, then $U_ix_{Z_1}=x_{Z_1}R_iL_i$. On the other hand, $D=D_1\cup D_2$ where $D_1=D(Z_1,e_{t(i)})$ and $D_2=D(Z_2,e_{b(i)})$. Thus, 
\[\begin{split}
U_ix_{Z_1}\otimes x_{Z_2}=x_{Z_1}R_i\otimes L_ix_{Z_2}&=(U(D_1)a_0x_{Z_1'})\otimes (U(D_2)x_{Z_2'}a_2)\\
&=U(D)a_0x_{Z_1'}\otimes x_{Z_2'}a_2.
\end{split}
\]
where $U_ix_Z=U(D)a_0x_{U_i(Z)}a_2$ and $U_i(Z)=Z_1'\cup Z_2'$.

The proof for the rest of relations is similar. 

Next, we show that $h$ is surjective. It is enough to prove that any element of the form $x_{Z_1}\otimes ax_{Z_2}$ is in the image of $h$. Here, $Z_1$ and $Z_2$ are cycles for $S_1$ and $S_2$, respectively, and $a:t(Z_1)\to b(Z_2)$ is a monotone bijection.  The relations M7, M8, M9 imply that $x_{Z_1}\otimes ax_{Z_2}=q.a_1x_{Z_1'}\otimes x_{Z_2'}a_2$, where $q\in\QQ[U_1,...,U_m]$, $a_1,a_2\in A_n$ and $t(Z_1')=b(Z_2')$. Therefore, $h$ is surjective. It is not hard to see that $h$ is injective, and thus is an isomorphism.  
\end{proof}

\begin{theorem}
Let $S$ be a singular braid with $2n$ strands and $m$ edges. Then,
\[\mathscr{M}(S)\cong\mathsf{M}(S_{cup})\otimes_{A_n}\mathsf{M}(S)\otimes_{A_n}\mathsf{M}(S_{cap})\]
as $\QQ[U_1,...,U_m]$-modules.
\end{theorem}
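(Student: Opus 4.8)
The plan is to imitate the proof of the preceding (composition) theorem, building the isomorphism on the level of cycles.

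\textbf{Setting up the map.} Every cycle $Z$ in $\pc(S)$ uses exactly one of the two edges incident to each of the $n$ bottom $2$-valent vertices, and likewise at the top, so $Z$ restricts to a cycle $Z_{\mathrm{cup}}=Z\cap S_{cup}$, a cycle $Z_S=Z\cap S$ and a cycle $Z_{\mathrm{cap}}=Z\cap S_{cap}$ satisfying $t(Z_{\mathrm{cup}})=b(Z_S)$ and $t(Z_S)=b(Z_{\mathrm{cap}})$; conversely every such compatible triple glues to a cycle of $\pc(S)$. Exactly as in the surjectivity step of the preceding theorem --- pushing $A_n$-elements across the tensor using M7--M9 on the $\mathsf{M}(S)$-factor and C2/C3, C'2/C'3 on the cup and cap factors --- the module
\[N:=\mathsf{M}(S_{cup})\otimes_{A_n}\mathsf{M}(S)\otimes_{A_n}\mathsf{M}(S_{cap})\]
is generated over $\QQ[U_1,...,U_m]$ by the balanced tensors $x_{Z_{\mathrm{cup}}}\otimes x_{Z_S}\otimes x_{Z_{\mathrm{cap}}}$ indexed by these triples. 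I will define $h\colon\mathscr{M}(S)\to N$ by $h(x_Z)=x_{Z_{\mathrm{cup}}}\otimes x_{Z_S}\otimes x_{Z_{\mathrm{cap}}}$ and extend $\QQ[U_1,...,U_m]$-linearly.

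\textbf{Well-definedness.} Since $L_v$ and $Q_v$ annihilate $N$ for every $4$-valent $v$ (relations M2, M3), while $Q_v$ is already built into each $R_Z$ and $\mathscr{M}(S)=M(S)/LM(S)$, it suffices to check $h(U_i x_Z)=U_i\cdot h(x_Z)$, i.e.\ that $h$ intertwines the two cycle-moving $\QQ[U_1,...,U_m]$-actions. Let $D=D(Z,e_i)$ be the smallest disc. When $D$ is disjoint from the two gluing lines this is immediate from M6 together with the corresponding cup/cap statement. Otherwise $D$ meets $\RR\times\{0\}$ or $\RR\times\{1\}$, or abuts a $2$-valent cup/cap vertex, and $D$ decomposes as the smallest disc of the same type inside $S$ --- whose effect on $x_{Z_S}$ is given by M7, M8 or M9 --- glued to a disc inside $S_{cup}$ or $S_{cap}$, whose effect is given by C2/C3, resp.\ C'2/C'3, with the relation $x_Z R_{2j-1}=x_{U_j(Z)}$ playing the role M7 plays in $S$. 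Checking that the target cycles and the $U$-monomial coefficients agree across this decomposition is the ``$D=D_1\cup D_2$'' computation of the preceding theorem, now with the bookkeeping at the $2$-valent cup/cap vertices as the only new ingredient. This is the step I expect to be the main obstacle, in particular because one must also verify that closing up produces no shortcut discs, so that $D(Z,e_i)$ is genuinely reconstructed from the smaller discs.

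\textbf{Bijectivity.} Surjectivity is clear since $h$ hits every generator of $N$. For injectivity I would construct the inverse $g\colon N\to\mathscr{M}(S)$, $g(x_{Z_{\mathrm{cup}}}\otimes x_{Z_S}\otimes x_{Z_{\mathrm{cap}}})=x_Z$, and check that it respects all of M1--M9, C1--C3, C'1--C'3 and the $A_n$-balancing: each such relation translates into a single cycle-move or $U_i$-multiplication already valid in $\mathscr{M}(S)$ (the balancing relations being handled by expressing an $A_n$-element via M7--M9 on one side and via C2/C3 on the other and comparing), and $g\circ h$, $h\circ g$ are the identity on generators. Alternatively, as all the modules involved are graded by total $U$-degree with finite-dimensional graded pieces, one can conclude by comparing Hilbert series, computing that of $N$ from the composition theorem. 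Finally, it is worth noting a shortcut: the proof of the preceding theorem uses only the local structure near a single horizontal gluing line and nowhere uses that the tangle pieces are honest braids, so it applies verbatim to give $N\cong\mathsf{M}(S_{cup}\circ S\circ S_{cap})$; one is then left only to observe that the generators-and-relations description of the closed diagram coincides with that of $\mathscr{M}(S)$.
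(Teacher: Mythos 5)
Your proposal follows essentially the same route as the paper's (very terse) proof: the paper likewise takes the generators $x_{Z_0}a_0\otimes x_Z\otimes a_1x_{Z_1}$ of the triple tensor product, factors $a_0,a_1$ into $R_j$'s and $L_j$'s and pushes them across using M7--M9 together with the cup/cap relations to reduce everything to $q\cdot x_{Z'}$ for cycles $Z'$ of $\pc(S)$, and then invokes M2--M3 to match the $Q_v$ and $L_v$ relations defining $\mathscr{M}(S)$. The only real difference is cosmetic --- you build the map from $\mathscr{M}(S)$ into the tensor product and then invert it, whereas the paper implicitly works in the opposite direction --- and the disc-decomposition bookkeeping at the $2$-valent closure vertices that you flag as the main obstacle is exactly the step the paper leaves implicit.
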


\begin{proof}
By definition $\mathsf{M}(S_{cup})\otimes_{A_n}\mathsf{M}(S)\otimes_{A_n}\mathsf{M}(S_{cap})$ is generated by the elements of the form $x_{Z_0}a_0\otimes x_{Z}\otimes a_1x_{Z_1}$ where $Z, Z_0$ and $Z_1$ are cycles for $S$, $S_{cup}$ and $S_{cap}$, respectively. Moreover, $a_0:t(Z_0)\to b(Z)$ and $a_{1}:t(Z)\to b(Z_2)$ are the corresponding monotone bijections. Factoring $a_0$ and $a_1$ as products of $R_j$'s and $L_j$'s, the relations imply that $x_{Z_0}a_1\otimes x_{Z}\otimes a_1x_{Z_1}=q.x_{Z'}$ where $q\in\QQ[U_1,...,U_m]$ and $Z'$ is a cycle in the plat closure of $S$. Because of the relations M2 and M3, we are done.
\end{proof}

\subsection{The Edge Bimodule Homomorphisms and the Total Complex}

In this Section, we define $A_n$-bimodule homomorphisms 
\[
\begin{split}
d^-:\mathsf{M}[\mathsf{X}_i]\to \mathsf{M}[S_{id}]\\
d^+:\mathsf{M}[S_{id}]\to\mathsf{M}[\mathsf{X}_i].
\end{split}
\]

The cycles in $S_{id}$ are in one-to-one correspondence with subsets  of $[2n]$ with $n$ elements. For each such subset $S$, denote the corresponding cycle and generator by $Z_{S}$ and $x_{S}$, respectively. Furthermore, for each cycle $Z=Z_{S}$ (i.e., $S=b(Z)=t(Z)$) that $\{i,i+1\}\not\subset S$ there is unique cycle $Z^z$ in $\mathsf{X}_i$ for which $b(Z^z)=t(Z^z)=S$. Conversely, for any cycle $Z$ in $\mathsf{X}_i$ with $b(Z)=t(Z)$, there is a unique cycle $Z^u=Z_{b(Z)}$ in $S_{id}$. So we define:

\[
d^-(x_Z)=\begin{cases}
\begin{array}{lcl}
x_{Z^u}&&\text{if}\ \{i+1\}\cap (b(Z)\cup t(Z))=\emptyset\\
x_{b(Z)}.L_i&&\text{if}\ i+1\in b(Z)\ \text{and}\ i\in t(Z)\\
R_i.x_{t(Z)}&&\text{if}\ i\in b(Z)\ \text{and}\ i+1\in t(Z)\\
U_i.x_{Z^u}&&\text{if}\ \{i+1\}\subset b(Z)\cap t(Z)
\end{array}
\end{cases}
\]

\begin{lemma}
$d^-$ is well-defined $A_n$-bimodule homomorphism.
\end{lemma}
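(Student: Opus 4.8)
The plan is to \emph{define} $d^-$ on all of $\mathsf{M}[\mathsf{X}_i]$ by the rule $d^-(a\,x_Z\,a') = a\cdot d^-(x_Z)\cdot a'$ for $a,a'\in A_n$, and to regard it as a homomorphism over the edge-ring projection $\pi$ which identifies the variable of $e_1$ with that of $e_3$ and the variable of $e_2$ with that of $e_4$ (so that the four edges at the singular vertex of $\mathsf{X}_i$ collapse to the two edges of $S_{id}$ at strands $i$ and $i{+}1$), and is the identity on the remaining $2n-2$ edge variables. Since $\mathsf{M}[\mathsf{X}_i]$ is presented as the free $(A_n,A_n)$-bimodule on the symbols $x_Z$ with the normalization $x_Z=\iota_{b(Z)}x_Z\iota_{t(Z)}$, modulo the relations M1--M9 (equivalently, relations (1)--(11) of Example~\ref{ex:sing}), checking that $d^-$ is a well-defined $A_n$-bimodule map reduces to two things: (a) $d^-(x_Z)$ lies in $\iota_{b(Z)}\,\mathsf{M}[S_{id}]\,\iota_{t(Z)}$ for every cycle $Z$; and (b) $d^-$ sends each defining relation to a valid identity in $\mathsf{M}[S_{id}]\cong A_n$. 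Once (a) and (b) hold, left- and right-$A_n$-linearity -- hence the bimodule property -- are automatic from the defining rule. Consistency with the vertex relations $U_1U_2\,x_Z=U_3U_4\,x_Z$ and $(U_1{+}U_2)x_Z=(U_3{+}U_4)x_Z$ (relations (6) and (5) of Example~\ref{ex:sing}, via M4--M5) is immediate, since $\pi$ carries both sides of each of these to the same element.

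For (a), a direct inspection of the four clauses of the definition -- using $r_i(b(Z))=t(Z)$ in the clause with $i\in b(Z)$ and $i{+}1\in t(Z)$, and likewise in the others -- shows the output always lies in the prescribed idempotent component; this is where relation R4 enters. For (b), the relations (2)--(6) -- those involving only the far-away variables $u_j$ and moves $L_j,R_j$ with $j<i-1$ or $j>i+1$, together with the symmetric combinations $u_i+u_{i+1}$ and $u_iu_{i+1}$ -- translate under $d^-$ into instances of R1--R5 and of the self-bimodule structure of $A_n$, because such $u_j$ are central and $\pi$ treats the singular edge variables and their symmetric functions correctly. The real content is in relations (7)--(11), which record how the strand-moving generators $L_{i-1},R_{i-1},L_{i+1},R_{i+1},L_i,R_i$ and the compositions $L_{i+1}L_i$, $R_iR_{i+1}$ interact with the local cycle at the singular vertex (carrying the $U_3$-coefficients visible in (11)). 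I would verify these by organizing according to the local cycle type: cycles of type $\emptyset$, $\{1,3\}$, $\{2,4\}$ and $\{1,2,3,4\}$ are sent to $x_{Z^u}$ (picking up an extra strand-$i$ variable in the types that contain $e_2,e_4$, from the fourth clause of the definition), while cycles of type $\{1,4\}$ and $\{2,3\}$ are sent to $R_i\,x_{(\cdot)}$ and $x_{(\cdot)}\,L_i$ respectively. Applying $d^-$ to both sides of each of (7)--(11) then exhibits it as an instance of R4 or R5 (or a tautology) in $A_n$.

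The main obstacle will be the coefficient bookkeeping in relations (8)--(11), above all in (11): one must check that the extra factor $U_i$ inserted in the fourth clause of the definition of $d^-$ is exactly what makes the $U_3$-coefficient on the $\mathsf{X}_i$ side match the coefficient produced by the $S_{id}$-side relations (after applying $\pi$) when a strand is pushed past the singular vertex by $L_{i+1}L_i$ or $R_iR_{i+1}$, while in the type-$\{1,4\}$ and type-$\{2,3\}$ cases the moving generator must be absorbed without producing any spurious coefficient. Once (7)--(11) are verified, $d^-$ is well defined, and, as noted above, the $A_n$-bimodule property is built into the construction.
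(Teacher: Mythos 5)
Your plan is correct and matches the paper's approach exactly --- the paper's entire proof is the one-line remark that the claim is ``straightforward by checking the local relations,'' and your outline simply spells out what that check involves (extending bilinearly over $A_n$, verifying idempotent compatibility, and checking the defining relations of $\mathsf{M}[\mathsf{X}_i]$, with the real content in relations (7)--(11) and the $U_i$-coefficient bookkeeping). One small slip: the local cycle type $\{1,2,3,4\}$ you list does not occur, since the $n$ paths of a cycle are pairwise disjoint and hence at most one of them can pass through the $4$-valent vertex.
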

\begin{proof}
The proof is straightforward by checking the local relations.
\end{proof}
Next, we define 
\[
d^+(x_{Z})=
\begin{cases}
\begin{array}{lcl}
x_{Z^z}u_i-u_{i+1}x_{Z^z}&&\text{if}\ i+1\notin b(Z)=t(Z)\\
x_{Z^z}-\epsilon R_{i+1} x_{e_{i+1}(Z)^z}L_{i+1}&&\text{if}\ \{i,i+1\}\cap b(Z)=\{i+1\}\\
-\epsilon R_{i+1}x_{e_{i+1}(Z)^z}L_{i+1}&&\text{if}\ \{i,i+1\}\subset b(Z)
\end{array}
\end{cases}
\]
where $\epsilon=0$ if $i+2\in b(Z)$, otherwise $\epsilon=1$.

\begin{lemma}\label{lem:comp}
For any $i$ and $x\in\mathsf{M}[\mathsf{X}_i]$,
\[d^+_{i}\circ d^-_i=U_i-U_{i+1}\quad\text{and}\quad d^-_{i}\circ d^+_i(x)=xu_{i}-u_{i+1}x\]
\end{lemma}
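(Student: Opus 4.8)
The plan is to use that $d^-_i$ and $d^+_i$ are $A_n$-bimodule homomorphisms (the content of the two lemmas just proved), so that the composites $d^+_i\circ d^-_i$ and $d^-_i\circ d^+_i$ are again $A_n$-bimodule homomorphisms; it is therefore enough to check the two identities on the module generators $x_Z$ of $\mathsf{M}[\mathsf{X}_i]$, respectively on the generators $x_S$ of $\mathsf{M}[S_{id}]$. (The target operators are themselves $A_n$-bimodule homomorphisms because $u_i,u_{i+1}$ are central in $A_n$ by R1.) I would organize the verification by the local type of the cycle at the unique $4$-valent vertex of $\mathsf{X}_i$: the generators of $\mathsf{M}[\mathsf{X}_i]$ fall into the classes $\mathcal{Z}\emptyset$, $\mathcal{Z}\{1,3\}$, $\mathcal{Z}\{1,4\}$, $\mathcal{Z}\{2,3\}$, $\mathcal{Z}\{2,4\}$ of Example~\ref{ex:sing}, and these line up exactly with the case split in the definition of $d^-_i$, which depends only on $b(Z)$ and $t(Z)$; dually, the generators $x_S$ of $\mathsf{M}[S_{id}]$ split according to $\{i,i+1\}\cap S$, matching the cases in the definition of $d^+_i$.

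First I would dispatch the two ``diagonal'' configurations, where $d^-_i$ and $d^+_i$ are essentially mutually inverse. If $Z\in\mathcal{Z}\emptyset$ or $Z\in\mathcal{Z}\{1,3\}$, equivalently $i+1\notin b(Z)\cup t(Z)$, then $d^-_i(x_Z)=x_{Z^u}$; the first clause of $d^+_i$ applies to $x_{Z^u}$ and returns $x_{(Z^u)^z}u_i-u_{i+1}x_{(Z^u)^z}=x_Zu_i-u_{i+1}x_Z$, which is the asserted operator (rewriting the $u$-actions via M4 and M5). The corresponding case for $\mathsf{M}[S_{id}]$, namely $i+1\notin S$, is even quicker: $d^+_i(x_S)=x_{S^z}u_i-u_{i+1}x_{S^z}$ and $d^-_i$ sends this straight back to $x_Su_i-u_{i+1}x_S$, which equals $(U_i-U_{i+1})x_S$ since $u_i$ is central in $\mathsf{M}[S_{id}]\cong A_n$.

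The remaining ``off--diagonal'' configurations carry all of the content. Here $Z$ runs through the vertex nontrivially (resp.\ $S$ contains $i+1$), so $d^-_i(x_Z)$ is no longer a bare generator: it is $x_{b(Z)}L_i$, $R_ix_{t(Z)}$, or $U_ix_{Z^u}$ according as $Z\in\mathcal{Z}\{2,3\}$, $\mathcal{Z}\{1,4\}$, or $\mathcal{Z}\{2,4\}$, and dually the output of $d^+_i$ now carries the correction term $-\epsilon R_{i+1}x_{e_{i+1}(Z)^z}L_{i+1}$, whose presence is governed by whether $i+2$ is occupied. I would evaluate each composite by pushing the strand elements $R_i,L_i,R_{i+1},L_{i+1}$ past the generators using relations M6--M9 together with the explicit $\mathsf{X}_i$--relations (7)--(11) of Example~\ref{ex:sing}, and then collapsing the resulting $R_jL_j$ and $L_jR_j$ pairs by R5, i.e.\ $R_jL_j=\iota_ju_j$ and $L_jR_j=\iota_{j+1}u_j$. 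In each case the ``main'' term should contribute $x_Zu_i$ or $-u_{i+1}x_Z$, the $R_{i+1}(\cdots)L_{i+1}$ correction should supply exactly the complementary term, and when $i+2$ is occupied it should also kill the spurious contribution that would otherwise survive --- which is precisely why $\epsilon$ is defined the way it is --- so the composite equals the claimed operator on each generator, hence on all of $\mathsf{M}[\mathsf{X}_i]$ (resp.\ $\mathsf{M}[S_{id}]$).

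The main obstacle is exactly this last bookkeeping: tracking the monomial coefficients produced by M6--M9, and getting the $\epsilon$--dependence and the signs to come out right. I expect the most delicate subcases to be those with $i+2$ occupied, where the would-be correction term is absent and one must instead invoke relation (11) of Example~\ref{ex:sing} (the identity involving $R_iR_{i+1}$ and the coefficient $U_3$) in order to move the intermediate generator back past $R_{i+1}$ and see that the main term alone already produces the answer. The $\mathcal{Z}\{1,4\}$ and $\mathcal{Z}\{2,3\}$ cases and the $\mathsf{M}[S_{id}]$--cases with $i+1\in S$ run along the same lines and I would present them more briefly. Everything else reduces to a direct application of the relations already recorded in the paper.
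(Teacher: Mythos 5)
Your proposal is correct and follows the same route as the paper, whose entire proof of this lemma is the single sentence ``It is straightforward'': a direct verification on generators, organized by the local type of the cycle at the vertex, using the bimodule-homomorphism property to reduce to generators and relations M4--M9, R5, and the relations of Example~\ref{ex:sing} to collapse the off-diagonal cases. Your case analysis (including the role of $\epsilon$ when $i+2$ is occupied) is in fact more explicit than what the paper records.
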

\begin{proof}
It is straightforward. 
\end{proof}

We now have the necessary tools to define the total complex $C_{1 \pm 1}(D)$. Let $\mathfrak{C}=\{c_{1},...,c_{N} \}$ denote the crossings in $D$, and for each $v \in \{0,1\}^{N}$, let $D_{v}$ denote the corresponding complete singular resolution.

The complex $(C_{1 \pm 1}(D), d_{0})$ is given by 
\[ \bigoplus_{v \in \{0,1\}^{N}} C_{1 \pm 1}(D_{v}) \]

\noindent
For each $u \lessdot v$, the edge map $d_{u,v}$ from $C_{1 \pm 1}(D_{u})$ to $C_{1 \pm 1}(D_{v})$ for a positive and negative crossing, respectively, is given by the natural extensions of $d^{+}$ and $d^{-}$, respectively. The total edge map is given by 
\[ d_{1} = \sum_{u \lessdot v} (-1)^{\epsilon_{u,v}} d_{u,v} \]

The fact that $(d_{0}+d_{1})^{2}=0$ follows from the local definition of $d_{1}$. In particular, if $D = D_{1} \circ ... \circ D_{j}$ where each $D_{i}$ has a single crossing, we could rewrite the complex $\mathscr{M}(D)$ as a tensor product 
\[ \mathscr{M}(D) = \mathsf{M}(S_{cup}) \otimes_{A_n} \mathsf{M}(D_{1}) \otimes_{A_n} ... \otimes_{A_n} \mathsf{M}(D_{j}) \otimes_{A_n} \mathsf{M}(S_{cap})   \]

\noindent
where each $\mathsf{M}(D_{i})$ is the mapping cone on $d^{+}$ or $d^{-}$, depending on the sign of the crossing. The total complex is 
\[  C_{1 \pm 1}(D) = \mathscr{M}(D) \otimes \mathsf{K}(D) \]

\noindent
Since each tensorand satisfies $d^{2}=0$, the total complex does as well.

\subsection{A Bigrading for Complete Resolutions}\label{gradingsect1}

In this section we will define a bigrading on $C_{1 \pm 1}(S)$ for a complete resolution $S$. It is denoted ($\mathfrak{gr}_{q}, \mathfrak{gr}_{a})$. We will refer to $\mathfrak{gr}_{q}$ as the quantum grading and $\mathfrak{gr}_{a}$ as the horizontal grading in analogy with the corresponding gradings on Khovanov-Rozansky $\mathfrak{sl}_{2}$ homology. 

Let $R\{i,j\}$ denote a copy of the ground ring with $1 \in R$ in $\delta$-grading $i$ and horizontal grading $j$. The variables $U_{i}$ in $R$ have bigrading $(-2, 0)$.

We will start with the quantum grading. Since $R$ has an internal grading, it suffices to define the quantum grading on each generator $x_{Z}$. Given a cycle $Z$, let $T_{1}(Z)$ denote the number of 4-valent vertices at which $Z$ has edges $e_{1}$ and $e_{3}$, let $T_{2}(Z)$ be the number of 4-valent vertices at which $Z$ has edges $e_{2}$ and $e_{4}$, and $E(Z)$ the number of 4-valent vertices at which $Z$ contains none of the four edges (see Figure \ref{labeled4v} for the edge labelings).

\begin{figure}[ht]
\centering
\def\svgwidth{3cm} 
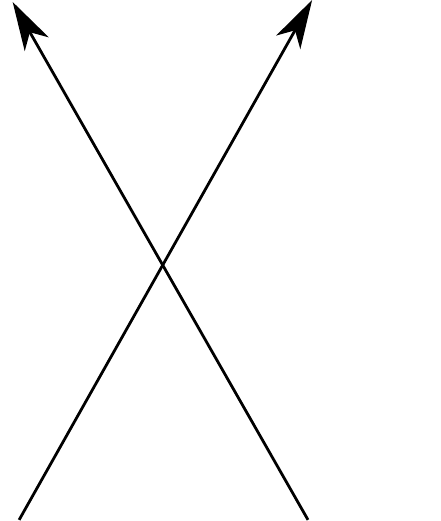
\caption{A 4-valent vertex with labeled edges}\label{labeled4v}
\end{figure}

At each $w_{i}^{+}$ and $w_{i}^{-}$, $Z$ contains either the left or the right edge. Define $w_{i}(Z)$ by 

\[
w_{i}(Z)=\begin{cases}
\begin{array}{ll}
1 &\text{ if } Z \text{ contains the left edge at both } w_{i}^{+} \text{ and } w_{i}^{-} \\
-1 &\text{ if } Z \text{ contains the right edge at both } w_{i}^{+} \text{ and } w_{i}^{-} \\
0&\text{otherwise}
\end{array}
\end{cases}
\]

\noindent
Let $w(Z) = \sum_{i}w_{i}(Z)$.

We define $x_{Z}$ to lie in quantum grading $T_{1}(Z)-T_{2}(Z)+E(Z)+w(Z)$. For $e_{i}$ not in $Z$, multiplication by $U_{i}$ decreases $\mathfrak{gr}_{q}$ by 2 by construction. Assume $e_{i}$ is in $Z$. The vertices $v_{t}D(Z, e_{i})$ and  $v_{b}D(Z, e_{i})$ contribute 2 more to $T_{1}(Z)-T_{2}(Z)+E(Z)+w(Z)$ than to $T_{1}(U_{i}(Z))-T_{2}(U_{i}(Z))+E(U_{i}(Z))+w(U_{i}(Z))$. Further, the total contributions of vertices on $\partial(D(Z, e_{i}))\setminus\{ v_{t}D(Z, e_{i}), v_{b}D(Z, e_{i})  \}$ to $T_{1}(U_{i}(Z))-T_{2}(U_{i}(Z))+E(U_{i}(Z))+w(U_{i}(Z))$ and $T_{1}(Z)-T_{2}(Z)+E(Z)+w(Z)$ differ by $\mathfrak{gr}_{q}(U(D,e_i))$.  Thus, the quantum grading is well-defined on $\mathscr{M}(S)$.

%
%
%
%

For the horizontal grading, we place the whole complex $\mathscr{M}(S)$ in grading $0$. The bigrading extends to $C_{1 \pm 1}(S)$ by placing gradings on the matrix factorizations in Section \ref{defcomplex} as follows: 

\[\xymatrix{R\{0,-2\}\ar@<1ex>[r]^{ L_{w_i^\pm}}&R\{0,0\}\ar@<1ex>[l]^{L'_{w_i^{\pm}}}}\]

Let $d_{0+}$ denote the component of the differential $d_0$ which is homogeneous of degree $2$ with respect to the horizontal grading and $d_{0-}$ the component which is homogeneous of degree $-2$. Both $d_{0+}$ and $d_{0-}$ are homogeneous of degree $-2$ with respect to the quantum grading.

\section{Computing the homology for complete resolutions}

In this section we will compute the homology of $C_{1\pm 1}(S)$ for any singular braid $S$. The homology will first be computed as a bigraded vector space over $\mathbb{Q}$. This requires two main tools: a filtration induced by basepoints in the diagram, and a formula called the composition product. After computing the bigraded vector space, we will describe it as an $R$-module by showing that $H_{1+1}(S)$ satisfies the MOY relations of Murakami, Ohtsuki, and Yamada \cite{murakami1998homfly}.

\subsection{The Basepoint Filtration}\label{bpf}

Let $A_{1},...,A_{r}$ denote the set of bounded regions in $\RR^{2}$ in the complement of $S$, and for each $A_{i}$, let $p_{i}$ denote a basepoint in $A_{i}$. It turns out that each $p_{i}$ gives a filtration on $M(S)$, which we will define in terms of a grading $\gr_{i}$. 

Let $Z$ be a cycle in $\pc(S)$. Instead of viewing $Z$ as strands in $\pc(S)$, we can move to the singular closure viewpoint of Section \ref{closuresection}, so that the cycle consists of $n$ disjoint circles in $\x(S)$. If $Z$ is a cycle in $\pc(S)$, let $\overline{Z}$ denote the corresponding cycle in $\x(S)$.

The grading $\gr_{i}$ is defined as follows. Let $\mathcal{C}_{Z}$ denote the unique 2-chain in $\RR^{2}$ with $\partial \mathcal{C}_{Z} = \overline{Z}$. Recall that as a $\QQ$-module, 

\[M(S)=\bigoplus _{Z\in c(S)}R_{Z}\]

\noindent
On each summand $R_{Z}$, define $\gr_i$ to be the coefficient of $\mathcal{C}_{Z}$ at $p_{i}$. Since the $R$-action only moves cycles to the right, it is non-increasing with respect to each $\gr_{i}$. Thus, each $p_{i}$ induces an ascending filtration on $M(S)$
\[  ... \subseteq \mathcal{F}_{k-1}(M(S)) \subseteq \mathcal{F}_{k}(M(S)) \subseteq \mathcal{F}_{k+1}(M(S)) \subseteq ...  \]

\noindent
where $\mathcal{F}_{k}(M(S))$ consists of elements of $M(S)$ with $\gr_{i} \le k$. 

The module $\mathscr{M}(S) = M(S) \otimes R/L$ inherits the quotient filtration from $M(S)$. We make $C_{1 \pm 1}(S)=\mathscr{M}(S) \otimes \mathsf{K}(S)$ a filtered complex by putting the generators of $\mathsf{K}(S)$ in the same filtration level. Thus, the complex $C_{1 \pm 1}(S)$ comes equipped with a $\ZZ^{r}$ filtration by the basepoints $p_{1},...,p_{r}$, which we will call the basepoint filtration.



Note that when the filtration is bounded, this means that if $f$ induces an isomorphism on the $E_{k}$ pages for some $k$, then it induces an isomorphism on the $E_{\infty}$ pages, i.e. the total homology.

\subsection{The $\mathfrak{sl}_{1}$ Homology of Singular Braids} 

Let $S$ be a singular braid, and let $\mathsf{b}(S)$ denote the standard (clockwise oriented) braid closure of $S$. Each vertex in $\mathsf{b}(S)$ is either 2-valent or 4-valent and has the same number of incoming and outgoing edges. We take the convention that there is a bivalent vertex on each arc corresponding to the braid closure, so that if the diagram were cut at these vertices, the resulting diagram would be $S$. These vertices will be called \emph{closing vertices}.

In \cite{KhovanovRozansky08:MatrixFactorizations}, Khovanov and Rozansky defined a family of knot invariants called $\mathfrak{sl}_{n}$ homology. Each $\mathfrak{sl}_{n}$ homology categorifies the $\mathfrak{sl}_{n}$ polynomial, and the $\mathfrak{sl}_{2}$ homology agrees with Khovanov homology over $\QQ$. In this section, we will describe the $\mathfrak{sl}_{1}$ homology of a singular braid.

The $\mathfrak{sl}_{1}$ complex $C_{1}(S)$ is defined over the same ground ring $R$. The complex comes with two gradings, the quantum grading $gr_{q}$ and the horizontal grading $gr_{h}$. As with the $C_{1 \pm 1}(S)$ complex, each variable $U_{i}$ has quantum grading $-2$ and horizontal grading $0$, and we write $R\{i,j\}$ for a copy of the ground ring with $1 \in R$ in bigrading $(i,j)$.

To each 2-valent vertex $v$, let $e_{i}$ be the incoming edge at $v$ and $e_{j}$ the outgoing edge. We define a matrix factorization corresponding to $v$ by

\vspace{-2mm}
\begin{figure}[!h]
\centering
\begin{tikzpicture}
  \node at (-5,0) {\large$C_{1}(v)=$};
  \matrix (m) [matrix of math nodes,row sep=5em,column sep=8em,minimum width=2em] {
     R\{0, -2\} & R\{0, 0\} \\};
  \path[-stealth]
    (m-1-1) edge [bend left=15] node [above] {$U_{i(v)}-U_{j(v)}$} (m-1-2)
    (m-1-2) edge [bend left=15] node [below] {$U_{i(v)}+U_{j(v)}$} (m-1-1);
\end{tikzpicture}
\end{figure}

\noindent
Now suppose $v$ is a 4-valent vertex. In sections \ref{sub3.2} and \ref{defcomplex}, we define linear polynomials $L_{v}$, $L'_{v}$, and a quadratic polynomial $Q_{v}$. For 4-valent $v$, we define the matrix factorization $C_{1}(v)$ by the Koszul complex in Figure \ref{sl14}. The total $\mathfrak{sl}_{1}$ complex $C_{1}(S)$ is given by 

\[ C_{1}(S) = \bigotimes_{v \in S} C_{1}(v)    \]

One can see by inspection that at each vertex (both 2-valent and 4-valent), $d^{2}= \sum_{e_{i} \in In(v)} U_{i}^{2} - \sum_{e_{j} \in Out(v)} U_{j}^{2}$, so each $C_{1}(v)$ is a matrix factorization. However, the sum of these potentials over all vertices in $S$ is zero, so $d^{2}=0$ on $C_{1}(S)$ making it a true chain complex.

\begin{figure}[!h]
\centering
\begin{tikzpicture}
  \node at (-5.5,0) {\large$C_{1}(v)=$};
  \matrix (m) [matrix of math nodes,row sep=7em,column sep=8em,minimum width=2em] {
     R\{-1,-4\} & R\{-1,-2\} \\
     R\{1,-2\} & R\{1,0\} \\};
  \path[-stealth]
    (m-1-1) edge [bend right = 15] node [left] {$Q_{v}$} (m-2-1)
            edge [bend left = 15] node [above] {$L_{v}$} (m-1-2)
    (m-2-1) edge [bend left = 15] node [above] {$-L_{v}$} (m-2-2)
            edge [bend right = 15] node [right] {$2$} (m-1-1)    
    (m-1-2) edge [bend right=15] node [left] {$Q_{v}$} (m-2-2)
            edge [bend left=15] node [below] {$L'_{v}$} (m-1-1)
    (m-2-2) edge [bend left=15] node [below]  {$-L'_{v}$} (m-2-1)
            edge [bend right=15] node [right] {$2$} (m-1-2);        
\end{tikzpicture}
\caption{The $\mathfrak{sl}_{1}$ complex at a 4-valent vertex} \label{sl14}
\end{figure}

Let $d_{0+}$ denote the component of the differential which has horizontal grading $2$ and $d_{0-}$ the component which has horizontal grading $-2$. Both $d_{0+}$ and $d_{0-}$ are homogeneous of degree $-2$ with respect to the quantum grading.

The $\mathfrak{sl}_{1}$ homology of $S$ is defined to be
\[ H_{1}(S) = H_{*}(C_{1}(S), d_{0+}+d_{0-}) \]

\noindent
The homology only has one grading - the quantum grading - as $d_{0+}+d_{0-}$ is not homogeneous with respect to the horizontal grading. However, if we take homology first with respect to $d_{0+}$, then with respect to $d_{0-}^{*}$, this $E_{2}$ page turns out to be isomorphic to the total homology, but now the differentials are homogeneous so the homology admits a horizontal grading. We will denote this homology by $H_{1}^{\pm}(S)$:
\[ H_{1}^{\pm}(S) = H_{*}(H_{*}(C_{1}(S), d_{0+}), d^{*}_{0-}) \]

\begin{example}

Let $\mathcal{U}$ be the 1-strand diagram for the unknot with a single bivalent vertex, and a single edge $e_{1}$. Then $C_{1}(\mathcal{U})$ is given by 

\begin{figure}[!h]
\centering
\begin{tikzpicture}
  \node at (-6,0) {\large$C_{1}(S)=$};
  \matrix (m) [matrix of math nodes,row sep=5em,column sep=8em,minimum width=2em] {
     \QQ[U_{1}]\{0, -2\} & \QQ[U_{1}]\{0, 0\} \\};
  \path[-stealth]
    (m-1-2) edge node [below] {$2U_{1}$} (m-1-1);
\end{tikzpicture}
\end{figure}

\noindent
Thus, $H^{\pm}_{1}(\mathcal{U}) \cong \QQ\{0,-2\}$.

\end{example}
\begin{remark}
The $\mathfrak{sl}_{1}$ homology of a singular diagram is invariant under addition or removal of bivalent vertices, provided each component of $S$ has at least one vertex.
\end{remark}

The $\mathfrak{sl}_{1}$ homology of singular diagrams in general is described in the following lemma.

\begin{lemma}
Let $S$ be a $k$-strand singular braid. Then 

\[
H^{\pm}_{1}(S)=\begin{cases}
\begin{array}{ll}
\QQ\{0,-2k \} &\text{ if } \mathsf{b}(S) \text{ is the }k \text{ component unlink} \\
0&\text{otherwise}
\end{array}
\end{cases}
\]

\end{lemma}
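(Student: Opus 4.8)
The plan is to separate into two cases according to whether $S$ contains a $4$-valent vertex, reducing in each case to the local matrix factorizations $C_1(v)$. First I would record two structural inputs from the material above: that $C_1(S)$ depends only on the graph $\mathsf{b}(S)$ and is the tensor product $\bigotimes_v C_1(v)$ over all vertices $v$ of $\mathsf{b}(S)$ (the $4$-valent ones coming from singular points of $S$, the $2$-valent ones being the closing vertices); and that, by the discussion preceding the lemma, $H^\pm_1(S)$ is isomorphic to the total homology $H_1(S)=H_*(C_1(S),d_{0+}+d_{0-})$ as a quantum-graded vector space, while $H^\pm_1$ is multiplicative under disjoint union by the K\"unneth theorem over $\QQ$. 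In particular $H_1(S)=0$ forces $H^\pm_1(S)=0$.

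Next I would dispose of the case where $S$ has a $4$-valent vertex $v$; this is exactly the case ``$\mathsf{b}(S)$ is not the $k$-component unlink'', since a singular braid with no $4$-valent vertex is a crossingless braid whose closure is $k$ disjoint circles, while any $4$-valent vertex of $S$ survives in $\mathsf{b}(S)$. Inspecting Figure~\ref{sl14}, $C_1(v)$ is the tensor product of the Koszul matrix factorization built from the linear polynomials $L_v,L'_v$ with the two-term matrix factorization $R\xrightarrow{\,Q_v\,}R\xrightarrow{\,\pm2\,}R$ (the scalars $\pm2$ being the vertical maps). Because we work over $\QQ$, the scalar $\pm2$ is a unit of $R$, so this second factor is contractible; hence $C_1(v)$ is contractible, and therefore so is $C_1(S)=C_1(v)\otimes_R\bigl(\bigotimes_{w\ne v}C_1(w)\bigr)$. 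A contractible complex has vanishing homology, so $H_1(S)=0$ and hence $H^\pm_1(S)=0$.

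Then I would treat the complementary case, $S$ the trivial $k$-strand braid. Here $\mathsf{b}(S)$ is a disjoint union of $k$ circles, each carrying a single $2$-valent closing vertex, i.e.\ $k$ copies of the diagram $\mathcal{U}$ of the example above, for which $H^\pm_1(\mathcal{U})\cong\QQ\{0,-2\}$ (its $\mathfrak{sl}_1$ complex is $\QQ[U]\{0,-2\}\xleftarrow{\,2U\,}\QQ[U]\{0,0\}$, with homology $\QQ[U]/(U)$ placed in bidegree $\{0,-2\}$). By multiplicativity of $H^\pm_1$ under disjoint union, $H^\pm_1(S)\cong H^\pm_1(\mathcal{U})^{\otimes k}\cong\QQ\{0,-2k\}$, which together with the previous case proves the lemma.

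The genuinely delicate point is the identification $H^\pm_1(S)\cong H_1(S)$ that lets the $4$-valent case go through so cleanly: the contraction of the factor $R\xrightarrow{\,Q_v\,}R\xrightarrow{\,\pm2\,}R$ uses the arrow $\pm2$, which lies in $d_{0-}$ rather than $d_{0+}$, so it does \emph{not} directly contract $(C_1(S),d_{0+})$. Absent that identification one would instead have to compute $H_*(C_1(S),d_{0+})$ as the homology of a tensor product of Koszul complexes on the regular sequences $(L_v,Q_v)$ and $(U_i-U_j)$, and then verify that the resulting $\Tor$-corrections are annihilated by the induced differential $d^*_{0-}$; checking that cancellation is the one place I expect real work, and it is why I would route the argument through the total homology instead.
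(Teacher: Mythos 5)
Your proof is correct and takes essentially the same route as the paper's: the paper likewise observes that the coefficient-$2$ arrows in $C_{1}(v)$ at any $4$-valent vertex are isomorphisms over $\QQ$, so that $C_{1}(S)$ is acyclic whenever $\mathsf{b}(S)$ is not an unlink, and handles the unlink case by the unknot computation together with multiplicativity under disjoint union. The ``delicate point'' you flag at the end --- passing from vanishing of the total homology to vanishing of $H^{\pm}_{1}(S)$ --- is handled in the paper exactly as you propose, by appealing to the earlier (asserted, not proved there) identification of $H^{\pm}_{1}$ with the total homology $H_{1}$.
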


\begin{proof}

The computation for the $k$ component unlink follows from the computation for the unknot, together with the fact that disjoint union corresponds to tensor product. In other words, if $S=S_{1} \bigsqcup S_{2}$, then $C_{1}(S)=C_{1}(S_{1}) \otimes C_{1}(S_{2})$.

If $\mathsf{b}(S)$ is not the $k$ component unlink, then it has at least one 4-valent vertex. In Figure \ref{sl14}, the arrows with coefficient $2$ are isomorphisms because we are working over $\QQ$. Thus, $C_{1}(S)$ is acyclic.

\end{proof}

\subsection{The Filtered Homology for Singular Braids}\label{bpsect}

In this section we will compute the homology of the associated graded object for $C_{1 \pm 1}(S)$ with respect to the basepoint filtration. The complex splits over cycles $Z$, with the corresponding complex given by 

\[
R_{Z}\otimes\big{(}
\xymatrix{R\ar@<1ex>[r]^{L_{w_1^\pm}}&R\ar@<1ex>[l]^{L'_{w_1^{\pm}}}}\big{)}\otimes\big{(}
\xymatrix{R\ar@<1ex>[r]^{L_{w_2^\pm}}&R\ar@<1ex>[l]^{L'_{w_2^{\pm}}}}\big{)}\otimes...\otimes\big{(}
\xymatrix{R\ar@<1ex>[r]^{L_{w_n^\pm}}&R\ar@<1ex>[l]^{L'_{w_n^{\pm}}}}\big{)}.
\]

Since $Z$ contains one of the edges at each $w_{k}^{+}$ and $w_{k}^{-}$, we can rewrite this as

\begin{equation}\label{sl1cycle}
R_{Z}\otimes\big{(}
\xymatrix{R\ar@<1ex>[r]^{U_{i_{1}}-U_{j_{1}}}&R\ar@<1ex>[l]^{U_{i_{1}}+U_{j_{1}}}}\big{)}\otimes\big{(}
\xymatrix{R\ar@<1ex>[r]^{U_{i_{2}}-U_{j_{2}}}&R\ar@<1ex>[l]^{U_{i_{2}}+U_{j_{2}}}}\big{)}\otimes...\otimes\big{(}
\xymatrix{R\ar@<1ex>[r]^{U_{i_{n}}-U_{j_{n}}}&R\ar@<1ex>[l]^{U_{i_{n}}+U_{j_{n}}}}\big{)}
\end{equation}

\noindent
where $e_{i_{k}}$ is the edge adjacent to $w_{k}^{+}$ which is not in $Z$, and $e_{j_{k}}$ is the edge adjacent to $w_{k}^{-}$ which is not in $Z$.

We can view $S-Z$ as an open singular braid on $n$ strands. Let $\mathsf{b}(S-Z)$ denote the braid closure of $S-Z$ so that each $w_{i}^{+}=w_{i}^{-}$ is now a bivalent vertex, which we will denote $w_{i}$. 

\begin{lemma}

Up to an overall grading shift, the complex in Equation \ref{sl1cycle} is quasi-isomorphic to the $\mathfrak{sl}_{1}$ complex of $\mathsf{b}(S-Z)$.

\end{lemma}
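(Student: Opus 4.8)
The plan is to match the complex in Equation \ref{sl1cycle} with the $\mathfrak{sl}_1$ complex $C_1(\mathsf{b}(S-Z))$ vertex by vertex. First I would set up the dictionary between the two sides. The edge set of $\mathsf{b}(S-Z)$ is exactly the edge set of $S$ restricted to the complement of $Z$ (with the $n$ pairs $w_i^\pm$ now identified to bivalent vertices $w_i$), so both complexes are built over the same polynomial ring in these $U$-variables — modulo the relations defining $R_Z$, which on the $\mathfrak{sl}_1$ side are precisely the relations $Q_v=0$ for $v\notin Z$ together with the relations forcing the $U_i$ for $e_i\subset Z$ to be treated as external parameters (they do not appear as edges of $\mathsf{b}(S-Z)$ at all). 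I would first observe that $R_Z = \QQ[U_1,\dots,U_m]/(Q_Z+I_Z)$ is, after killing the variables in $I_Z$, exactly the ground ring $R(\mathsf{b}(S-Z))$ with the quadratic relations $Q_v$, $v\notin Z$, imposed.

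Next, I would account for the matrix-factorization factors. On the right-hand side, $C_1(\mathsf{b}(S-Z)) = \bigotimes_{v} C_1(v)$, with one two-term factor $\xymatrix{R\{0,-4\}\ar@<1ex>[r]^{\;U_i-U_j\;}&R\{0,-2\}\ar@<1ex>[l]^{\;U_i+U_j\;}}$ for each bivalent vertex $w_k$ (here $e_i, e_j$ are the two edges of $S-Z$ at $w_k$, namely $e_{i_k}$ and $e_{j_k}$ in the notation of the excerpt), and one four-term Koszul factor of the form in Figure \ref{sl14} for each $4$-valent vertex $v\notin Z$. The $n$ bivalent factors match the $n$ tensor factors of \ref{sl1cycle} on the nose, since $U_{i_k}\pm U_{j_k}$ are exactly the linear polynomials $L_{w_k^\pm}$, $L'_{w_k^\pm}$ after substituting for the edges of $Z$. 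For a $4$-valent $v\notin Z$, the Koszul factor $C_1(v)$ has the arrows with coefficient $2$, which are isomorphisms over $\QQ$; cancelling them (a standard Gaussian-elimination reduction on matrix factorizations) replaces $C_1(v)$ by the quotient imposing $L_v=Q_v=0$, up to a grading shift. Collecting these reductions over all $4$-valent $v\notin Z$ turns $C_1(\mathsf{b}(S-Z))$ into $R_Z$ tensored with just the $n$ bivalent factors — which is exactly the complex \ref{sl1cycle}. The $4$-valent vertices lying on $Z$ contribute nothing to $\mathsf{b}(S-Z)$ and their quadratic relations are already absorbed into $R_Z$, so there is no discrepancy there.

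I would then keep careful track of the grading shifts introduced by each Gaussian elimination of a coefficient-$2$ arrow; since the claim only asserts quasi-isomorphism "up to an overall grading shift," I only need that the accumulated shift depends on $S$ but not on the internal structure of the argument, which follows because the number of $4$-valent vertices off $Z$ is fixed once $Z$ is fixed (and, for the final global statement, one notes this number is independent of $Z$). The main obstacle I anticipate is bookkeeping rather than conceptual: verifying that after cancelling the coefficient-$2$ isomorphisms in every $C_1(v)$, $v\notin Z$, the residual differentials and relations assemble exactly into $R_Z$ with no leftover terms — in particular checking that the induced relations from the Gaussian elimination are precisely $Q_v$ and $L_v$ and that no cross-terms between different $4$-valent vertices survive. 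This is where I would be most careful, using the fact established earlier in the excerpt that the $L_v$ form a regular sequence (so that quotienting by them and replacing by a Koszul complex are interchangeable up to quasi-isomorphism) and that $Q_v M(S)=0$, to justify that the reduction is clean.
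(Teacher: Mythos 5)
The central step of your reduction is not correct. In the four-term factor $C_{1}(v)$ of Figure \ref{sl14}, Gaussian elimination along the coefficient-$2$ arrows does not produce the quotient $R/(L_{v},Q_{v})$: those arrows pair up the four free generators, cancelling one of them leaves a two-generator factor whose remaining arrow still has invertible constant coefficient, and cancelling that one annihilates the factor altogether. Indeed this is exactly how the paper proves, in the lemma computing $H_{1}^{\pm}$, that $C_{1}$ of any diagram containing a $4$-valent vertex is acyclic; eliminating isomorphisms can only replace a free module of rank $4$ by one of rank $2$ and then $0$, never by the non-free quotient $R/(L_{v},Q_{v})$. So your reduction, carried out honestly, shows only that $C_{1}(\mathsf{b}(S-Z))$ is contractible whenever $S-Z$ has a $4$-valent vertex; to conclude the lemma along that route you would still need to show that the complex in \ref{sl1cycle} is acyclic in that case, which you do not address and which is essentially the content of the statement. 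The legitimate move --- and the one the paper makes --- is dual to yours: one cancels along the $L_{v}$ and $Q_{v}$ arrows (and the $U_{i}-U_{j}$ arrows at interior bivalent vertices), and the justification is that these elements form a regular sequence, so the corresponding Koszul-type factors are quasi-isomorphic to the quotient they cut out. Establishing this regularity is the heart of the paper's proof: one walks up the braid and uses $L_{v}$, and then the modified $Q_{v}$, to eliminate the incoming edge variables at each $4$-valent vertex (and $U_{i}-U_{j}$ at each bivalent one), so the relations amount to successive variable exclusions. In your write-up the regularity of the $L_{v}$ alone appears only as a peripheral check; the sequence actually needed includes the quadratics $Q_{v}$, and it is the mechanism of the proof rather than bookkeeping.

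A smaller inaccuracy: a $4$-valent vertex of $S$ through which $Z$ passes does not ``contribute nothing'' to $\mathsf{b}(S-Z)$. Its two edges not lying on $Z$ survive and meet there, so it persists as a bivalent vertex of $S-Z$ and contributes a factor as in Figure \ref{bivcomplex} that must also be eliminated using $U_{i}-U_{j}$; moreover its quadratic $Q_{v}$ is not among the relations $Q_{Z}$ defining $R_{Z}$ (it only becomes trivial after imposing $I_{Z}$ together with the linear relation $L_{v}$). Your dictionary between the two sides therefore needs the $L$-relations and these extra bivalent vertices built in before any factor-by-factor comparison can be made.
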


\begin{proof}

This is equivalent to showing that the set of relations $\{L_{v},Q_{v}:v \ne w_{i}\}$ form a regular sequence in $R$. To see that they do in fact form a regular sequence, we can start at the bottom of the braid, and use these relations to perform variable exclusion in the polynomial ring on the incoming edges at each vertex. 

At each bivalent vertex, we get the complex in Figure \ref{bivcomplex}, where $e_{i}$ is the incoming edge and $e_{j}$ is the outgoing edge. We will use the relation $U_{i}-U_{j}$ to substitute for $U_{i}$.

\begin{figure}[!h]
\centering
\begin{tikzpicture}
  \matrix (m) [matrix of math nodes,row sep=5em,column sep=8em,minimum width=2em] {
     R\{0, -2\} & R\{0, 0\} \\};
  \path[-stealth]
    (m-1-1) edge [bend left=15] node [above] {$U_{i(v)}-U_{j(v)}$} (m-1-2)
    (m-1-2) edge [bend left=15] node [below] {$U_{i(v)}+U_{j(v)}$} (m-1-1);
\end{tikzpicture}
\caption{}\label{bivcomplex}
\end{figure}

At each 4-valent vertex, we get the complex in Figure \ref{4comp}. We can use the relation $L_{v}=U_{i(v)}+U_{j(v)}-U_{k(v)}-U_{l(v)}$ to substitute for $U_{i(v)}$. After making this substitution, $Q_{v}$ becomes equal to $-U^{j(v)}_{2}+U_{j(v)}(U_{k(v)}+U_{l(v)})-U_{k(v)}U_{l(v)}$, so we can use this relation to exclude $U_{j}$. 

\begin{figure}[!h]
\centering
\begin{tikzpicture}
  \matrix (m) [matrix of math nodes,row sep=7em,column sep=8em,minimum width=2em] {
     R\{-1,-4\} & R\{-1,-2\} \\
     R\{1,-2\} & R\{1,0\} \\};
  \path[-stealth]
    (m-1-1) edge [bend right = 15] node [left] {$Q_{v}$} (m-2-1)
            edge [bend left = 15] node [above] {$L_{v}$} (m-1-2)
    (m-2-1) edge [bend left = 15] node [above] {$-L_{v}$} (m-2-2)
            edge [bend right = 15] node [right] {$2$} (m-1-1)    
    (m-1-2) edge [bend right=15] node [left] {$Q_{v}$} (m-2-2)
            edge [bend left=15] node [below] {$L'_{v}$} (m-1-1)
    (m-2-2) edge [bend left=15] node [below]  {$-L'_{v}$} (m-2-1)
            edge [bend right=15] node [right] {$2$} (m-1-2);        
\end{tikzpicture}
\caption{}\label{4comp}
\end{figure}

Since at each vertex the relations can be written in terms of exclusions on the incoming edges, the relations form a regular sequence in $R$. After canceling the differentials corresponding to these relations in $C_{1}(\mathsf{b}(S-Z))$, we get exactly the complex in \ref{sl1cycle}, so the two complexes are quasi-isomorphic. To see that the bigradings match up (up to an overall grading shift), note that the gradings assigned to the Koszul complex on a bivalent vertex in $C_{1}(\mathsf{b}(S-Z))$ are the same as the gradings assigned to the closing-off Koszul complex in $C_{1 \pm 1}(S)$.

\end{proof}

The element $x_{Z}$ in $R_{Z}$ has bigrading $(T_{1}(Z)-T_{2}(Z)+E(Z)+w(Z),0)$, while the element at the bottom of the Koszul complex on $L_{v}$ and $Q_{v}$ has bigrading $(E(Z), 0)$. Thus, we can write the basepoint-filtered homology of $C_{1 \pm 1}(S)$ as follows:

\begin{lemma} \label{compproduct}

Let $d_{0}^{f}$ denote the component of the differential on $C_{1 \pm 1}(S)$ which preserves the basepoint filtration. Then 

\[  H_{*}((C_{1 \pm 1}(S), d_{0+}^{f}),d_{0-}^{f}) \cong \bigoplus_{Z}H^{\pm}_{1}(S-Z)\{T_{1}(Z)-T_{2}(Z)+w(Z), 0 \}   \]

\end{lemma}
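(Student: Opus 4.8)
The plan is to combine the two structural results established just above with the explicit description of the bigrading on generators. Recall that $C_{1\pm 1}(S)=\mathscr{M}(S)\otimes\mathsf{K}(S)$, and that $\mathscr{M}(S)=M(S)/LM(S)$, where $M(S)=\bigoplus_Z R_Z$. The key point is that the part $d_0^f$ of the differential preserving the basepoint filtration acts diagonally on the cycle decomposition: since the $R$-action on $M(S)$ only ever moves a cycle strictly to the right (hence strictly down in each basepoint grading $\gr_i$), in the associated graded object all the cycle-changing terms are killed, and what remains is, for each cycle $Z$, the subquotient complex displayed in Equation~\eqref{sl1cycle}, namely $R_Z$ tensored with the Koszul factors $(R\xrightarrow{U_{i_k}-U_{j_k}}R,\ R\xrightarrow{U_{i_k}+U_{j_k}}R)$. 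So the first step is to record that $(C_{1\pm1}(S),d_0^f)$ splits as a direct sum over cycles $Z$ of the complexes in~\eqref{sl1cycle}, with the splitting respecting both the $d_{0+}$/$d_{0-}$ decomposition and the bigrading; this is essentially a restatement of the discussion preceding the Lemma, so little new work is needed.

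Second, I would invoke the Lemma proved immediately above: the complex in~\eqref{sl1cycle}, up to an overall bigrading shift, is quasi-isomorphic to the $\mathfrak{sl}_1$ complex $C_1(\mathsf{b}(S-Z))$, via the variable-exclusion/regular-sequence argument that cancels the $L_v,Q_v$ Koszul factors. Taking homology first with respect to the horizontally-degree-$2$ piece $d_{0+}^f$ and then with respect to $d_{0-}^{f*}$ matches, by construction, the definition of $H_1^\pm$ of the closed-up braid, so the $E_2$-page of each summand is $H_1^\pm(S-Z)$, again up to a shift. Quasi-isomorphism is compatible with this two-step homology because the cancellation of a regular sequence in the Koszul complex is a deformation retract, so it respects the $d_{0+}/d_{0-}$ bigrading and passes through the spectral sequence; I would state this compatibility explicitly rather than re-derive it.

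Third, I need to pin down the grading shift. The Lemma's proof says the shift comes from comparing where $x_Z$ sits versus where the bottom generator of the $L_v,Q_v$ Koszul complex sits. By the formula for $\mathfrak{gr}_q$ from Section~\ref{gradingsect1}, $x_Z$ has bigrading $(T_1(Z)-T_2(Z)+E(Z)+w(Z),\,0)$, while the bottom of the Koszul complex on the $L_v$'s and $Q_v$'s has bigrading $(E(Z),0)$, and the closing-off Koszul factors are identified on the nose with the bivalent-vertex factors of $C_1(\mathsf{b}(S-Z))$ (this matching of gradings is exactly the last sentence of the Lemma's proof). Subtracting, the net shift applied to $H_1^\pm(S-Z)$ is $\{T_1(Z)-T_2(Z)+w(Z),\,0\}$, which is precisely the shift in the statement. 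Summing over all cycles $Z\in c(S)$ then yields
\[
H_*\big((C_{1\pm1}(S),d_{0+}^f),d_{0-}^f\big)\cong\bigoplus_Z H_1^\pm(S-Z)\{T_1(Z)-T_2(Z)+w(Z),0\}.
\]

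The main obstacle I anticipate is bookkeeping rather than conceptual: making sure the quasi-isomorphism of the Lemma is genuinely compatible with the \emph{two-step} homology $H_*(H_*(-,d_{0+}),d_{0-}^*)$ and not merely with the total homology $H_*(-,d_{0+}+d_{0-})$. Since $d_{0+}^f$ and $d_{0-}^f$ are separately homogeneous for the horizontal grading, and the Koszul-cancellation of the regular sequence $\{L_v,Q_v\}$ can be realized as a filtered chain homotopy equivalence that is itself horizontally homogeneous (the relevant homotopies are built from the Koszul contracting homotopies, which have pure horizontal degree), this compatibility holds; but one should say a word confirming that the cancellation does not mix $d_{0+}$ with $d_{0-}$. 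The other place requiring care is the precise identification of the shift, i.e.\ checking that the horizontal-grading contribution is indeed $0$ (the whole of $\mathscr{M}(S)$ and the cancelled part both sit in horizontal grading $0$, with the surviving horizontal grading carried entirely by the closing-off Koszul factors, which match those of $C_1(\mathsf b(S-Z))$), so that only the quantum shift $T_1(Z)-T_2(Z)+w(Z)$ survives.
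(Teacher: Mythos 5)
Your proposal is correct and follows essentially the same route as the paper: split the associated graded over cycles $Z$ into the complexes of Equation \eqref{sl1cycle}, apply the preceding quasi-isomorphism lemma identifying each summand with $C_{1}(\mathsf{b}(S-Z))$, and read off the shift as the difference between the bigrading $(T_{1}(Z)-T_{2}(Z)+E(Z)+w(Z),0)$ of $x_{Z}$ and the bigrading $(E(Z),0)$ of the bottom of the Koszul complex on the $L_{v}$ and $Q_{v}$. Your added remarks on compatibility of the Koszul cancellation with the two-step homology $H_{*}(H_{*}(-,d_{0+}),d_{0-}^{*})$ are a useful point of care that the paper leaves implicit.
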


There is a spectral sequence from $H_{*}((C_{1 \pm 1}(S), d_{0+}^{f}),d_{0-}^{f})$ to the total homology $H_{1+1}(S)$. However, $H^{\pm}_{1}(S-Z)$ always lies in a single horizontal grading, so $H_{*}((C_{1 \pm 1}(S), d_{0+}^{f}),d_{0-}^{f})$ does as well. Thus, there are no higher differentials, and with respect to the quantum grading, we have the following isomorphism:

\[ H_{1+1}(S) \cong  \bigoplus_{Z}H^{\pm}_{1}(S-Z)\{T_{1}(Z)-T_{2}(Z)+w(Z)\}  \]

\begin{corollary} \label{KhovCor}

With respect to the quantum grading, $H_{1+1}(S) \cong Kh(\mathsf{sm}(\pc(S)))$ as graded $\QQ$-vector spaces.

\end{corollary}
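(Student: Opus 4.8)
The plan is to read the answer off the two preceding computations and then recognize it as an $\mathfrak{sl}_{2}$ state sum. By Lemma~\ref{compproduct} (together with the collapse of the remaining spectral sequence noted immediately after it) one has, with respect to the quantum grading,
\[ H_{1+1}(S)\ \cong\ \bigoplus_{Z} H^{\pm}_{1}(S-Z)\,\{T_{1}(Z)-T_{2}(Z)+w(Z)\}. \]
Since $S-Z$ is an $n$-strand singular braid for every cycle $Z$, the $\mathfrak{sl}_{1}$ computation for singular braids above gives $H^{\pm}_{1}(S-Z)=\QQ\{0,-2n\}$ when $\mathsf{b}(S-Z)$ is the $n$-component unlink and $H^{\pm}_{1}(S-Z)=0$ otherwise. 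So the proof reduces to (i) identifying the cycles $Z$ with $\mathsf{b}(S-Z)$ an unlink and (ii) checking that $q^{-2n}\sum_{Z}q^{T_{1}(Z)-T_{2}(Z)+w(Z)}$, summed over those $Z$, equals the graded dimension of $Kh(\sm(\pc(S)))=\mathcal{A}^{\otimes k}$, where $k$ is the number of circles of $\sm(\pc(S))$. Both $H_{1+1}(S)$ and $Kh(\sm(\pc(S)))$ carry only the quantum grading, so matching graded dimensions (up to an overall shift) is enough and no natural bijection is required.

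For (i): at each $4$-valent vertex $v$ of $\pc(S)$ a cycle $Z$ contains an even number of the four local edges, and the case of all four is excluded by disjointness of the strands of $Z$; so either $Z$ is locally empty at $v$, and $v$ persists as a $4$-valent vertex of $S-Z$, or $Z$ contains exactly two of them (one incoming, one outgoing), and $v$ becomes bivalent in $S-Z$. Hence $S-Z$ has no $4$-valent vertex exactly when $Z$ meets every singularization, and in that case $S-Z$ is a crossingless, orientation-monotone $n$-strand tangle, hence trivial, so $\mathsf{b}(S-Z)$ is the $n$-component unlink. Recording $Z$ as the color-$1$ part of an edge-coloring of the trivalent graph underlying $\pc(S)$ (each thin edge colored $1$ or $2$, each wide edge colored $2$), the condition that $Z$ meets every singularization is exactly MOY-admissibility: at each trivalent vertex the two incident thin edges receive the colors $1$ and $2$, and the analogous condition at the cups $w_{i}^{-}$ and caps $w_{i}^{+}$ is automatic from $\partial Z=\sum w_{i}^{+}-\sum w_{i}^{-}$. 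Thus the cycles contributing to $H_{1+1}(S)$ are precisely the admissible $\mathfrak{sl}_{2}$ MOY states of $\pc(S)$.

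For (ii): I would identify $\sum_{Z}q^{T_{1}(Z)-T_{2}(Z)+w(Z)}$ over admissible states with the $\mathfrak{sl}_{2}$ MOY bracket $\langle\pc(S)\rangle$. The vertex weight at a singularization depends only on whether $Z$ passes to the left, to the right, or crosses at that vertex, and after the standard normalization of MOY weights contributes $q$, $q^{-1}$, $1$, $1$ respectively (recall $E(Z)=0$ for an admissible state), which is exactly $q^{T_{1}(Z)-T_{2}(Z)}$; the local minima and maxima (cups and caps) account for $q^{w(Z)}$. Since $\Lambda^{2}(\QQ^{2})$ is one-dimensional, each wide edge of $\pc(S)$ together with its two trivalent vertices equals, up to a monomial in $q$, the tangle ``cap then cup'' --- the unoriented smoothing --- so $\langle\pc(S)\rangle=q^{c}\langle\sm(\pc(S))\rangle=q^{c}(q+q^{-1})^{k}$ for some integer $c$, and $(q+q^{-1})^{k}$ is the graded dimension of $\mathcal{A}^{\otimes k}=Kh(\sm(\pc(S)))$. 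Combined with the $\{-2n\}$ shift from (i), this yields $H_{1+1}(S)\cong Kh(\sm(\pc(S)))$ as graded $\QQ$-vector spaces, up to an overall grading shift that a direct check pins down.

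The main obstacle is the grading bookkeeping in (ii): verifying that the purely combinatorial weight $T_{1}(Z)-T_{2}(Z)+w(Z)$ agrees with the MOY/Kauffman weight up to a single overall monomial, and then reconciling that monomial with the $\{-2n\}$ shift and the Khovanov quantum-grading normalization. An alternative that sidesteps MOY calculus --- arguably preferable here, since the MOY relations are only invoked afterwards for the module structure --- is to induct on the number of singularizations of $S$: the admissible local states at a singularization carry weights $q$, $q^{-1}$, $1$, $1$, which one checks reproduces the Kauffman-bracket recursion for $\sm(\pc(S))$ at that smoothing, with the base case of a crossingless $\pc(S)$ handled directly by the $\mathfrak{sl}_{1}$ computation.
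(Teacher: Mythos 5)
Your proposal is correct in outline but takes a genuinely different route from the paper. The paper, starting from the same decomposition $H_{1+1}(S)\cong\bigoplus_{Z}H^{\pm}_{1}(S-Z)\{T_{1}(Z)-T_{2}(Z)+w(Z)\}$ of Lemma~\ref{compproduct}, passes to the singular closure $\x(S)$ and then simply cites two results: the composition product formula of \cite{dowlin2015knot} (with $m=n=1$), which identifies this sum with the $\mathfrak{sl}_{2}$ homology $H_{2}(\x(S))$, and the theorem of \cite{hughes2014note} that $H_{2}$ of a singular diagram is the Khovanov homology of its smoothing. You instead decategorify: you observe that only a graded dimension count is needed (true, since the corollary asserts only an isomorphism of graded $\QQ$-vector spaces over a field), correctly identify the contributing cycles as exactly those meeting every singularization (your argument that $S-Z$ is then a disjoint union of $n$ embedded monotone planar arcs, hence closes to the unlink, is sound and matches the paper's $\mathfrak{sl}_{1}$ lemma), and then propose to match the resulting state sum with the Kauffman bracket $(q+q^{-1})^{k}$ of $\sm(\pc(S))$ via the MOY/Temperley--Lieb identity for the wide edge. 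What the paper's route buys is that all of the weight bookkeeping in your step (ii) is outsourced to the two citations, with the gradings handled there; what your route buys is a self-contained, elementary argument that avoids the composition product entirely, at the cost of having to actually verify that $T_{1}-T_{2}+w$ reproduces the Kauffman weights and pins down the overall normalization --- which you correctly flag as the remaining work, and which is needed since the corollary asserts equality of gradings on the nose, not up to shift. One small slip: the shift $\{0,-2n\}$ in the $\mathfrak{sl}_{1}$ computation lives in the \emph{horizontal} grading, not the quantum grading, so the prefactor $q^{-2n}$ in your state sum should not be there; since you absorb it into an undetermined overall shift this does no harm, but it would surface when you do the final normalization check.
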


\begin{proof}

As before, if $Z$ is a cycle in $\pc(S)$, let $\overline{Z}$ be the corresponding cycle in $\mathsf{x}(S)$. Then $T_{1}(Z)-T_{2}(Z)+w(Z) = T_{1}(\overline{Z}) - T_{2}(\overline{Z})$, and we get

\[ H_{1+1}(S) \cong  \bigoplus_{Z}H^{\pm}_{1}(\mathsf{x}(S)-\overline{Z})\{T_{1}(\overline{Z}) - T_{2}(\overline{Z})\}  \]

\noindent
This is precisely the composition product formula for $m=n=1$, which proves that this sum is isomorphic to 
$H_{2}(\mathsf{x}(S))$. (See \cite{dowlin2015knot}, equation (5).) 

The $\mathfrak{sl}_{2}$ homology of a singular diagram is known to be isomorphic to the Khovanov homology of the smoothing of that diagram \cite{hughes2014note}:
\[ H_{2}(\mathsf{x}(S)) \cong Kh(\sm(\x(S)) = Kh(\sm(\pc(S)) \]

\noindent
The second equality is due to the fact that $\sm(\x(S))$ and $\sm(\pc(S))$ are the same diagram.

\end{proof}

\subsection{MOY Relations} Khovanov-Rozansky homology satisfies a set of graph relations known as the MOY relations. In this section, we will show that $H_{1+1}(S)$ satisfies the analogous relations to $\mathfrak{sl}_{2}$ homology.

First, we recall these relations. Let $\mathcal{A} = \mathbb{Q}[U]/(U^2)$, the Khovanov homology of the unknot, and $S$ and $S'$ be singular braids with even strands. 
\begin{itemize}
\item (MOY 0) If $\pc(S')$ is the disjoint union of $\pc(S)$ with the plat closure of the trivial braid with two strands (i.e. unknot) then
\[H_{1+1}(S')\cong H_{1+1}(S)\otimes \mathcal{A}\{1\}.\]

\item (MOY \rom{1}) If $\pc(S')$ is obtained is obtained from $\pc(S)$ by one of the local moves as in Figure \ref{fig:MI} then
\[H_{1+1}(S')\cong H_{1+1}(S)\]

\begin{figure}[ht]
\centering
\def\svgwidth{8cm}
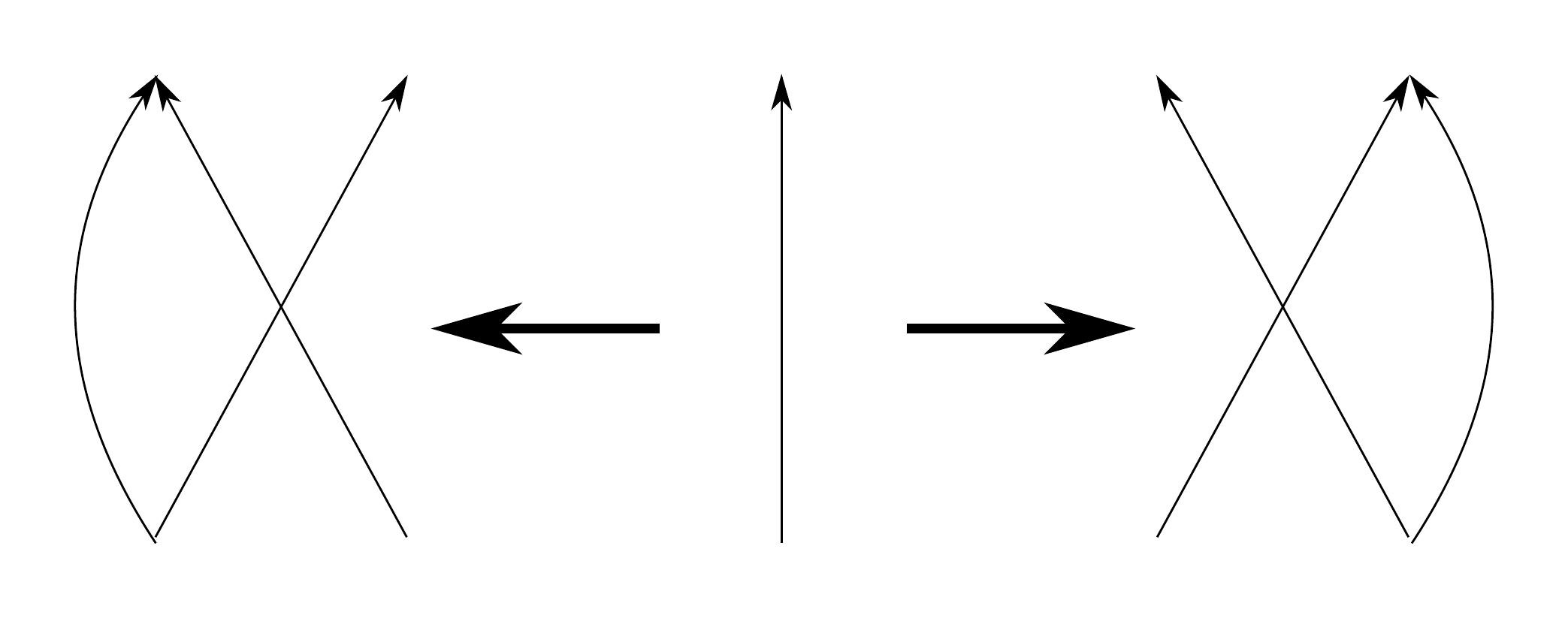
\caption{MOY \rom{1} moves.}\label{fig:MI}
\end{figure}

\item (MOY \rom{2}) If $\pc(S')$ is obtained from $\pc(S)$ by one of the local moves as in Figure \ref{fig:MII} then 
\[H_{1+1}(S')\cong H_{1+1}(S)\otimes \mathcal{A}.\]

\begin{figure}[ht]
\centering
\def\svgwidth{8.5cm}
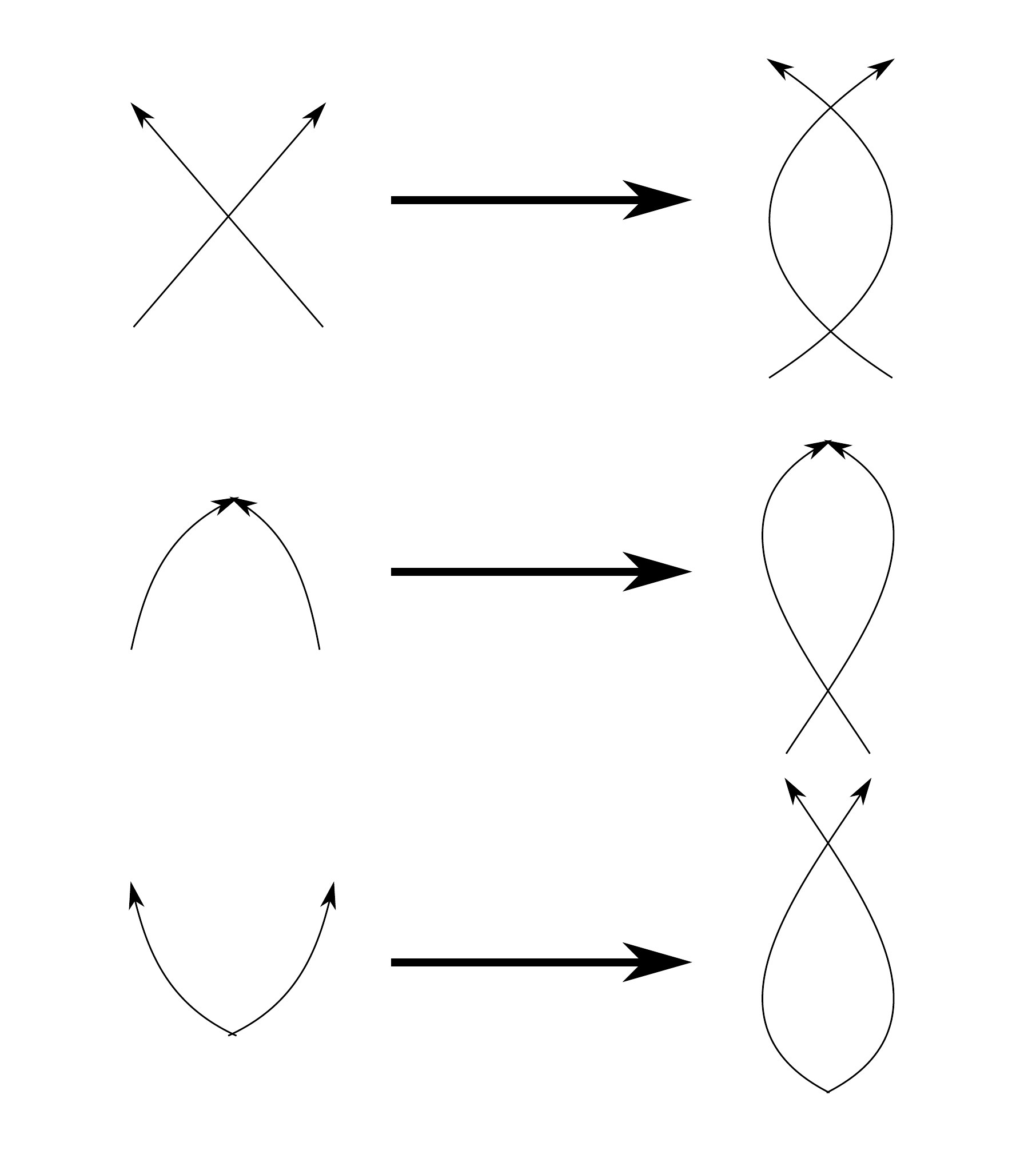
\caption{MOY \rom{2} moves.}\label{fig:MII}
\end{figure}

\item (MOY \rom{3}) If $\pc(S')$ is obtained from $\pc(S)$ by one  of the local move as in Figure \ref{fig:MIII}, then 
\[H_{1+1}(S')\cong H_{1+1}(S)\]

\end{itemize}

\begin{figure}[ht]
\centering
\def\svgwidth{15.5cm}
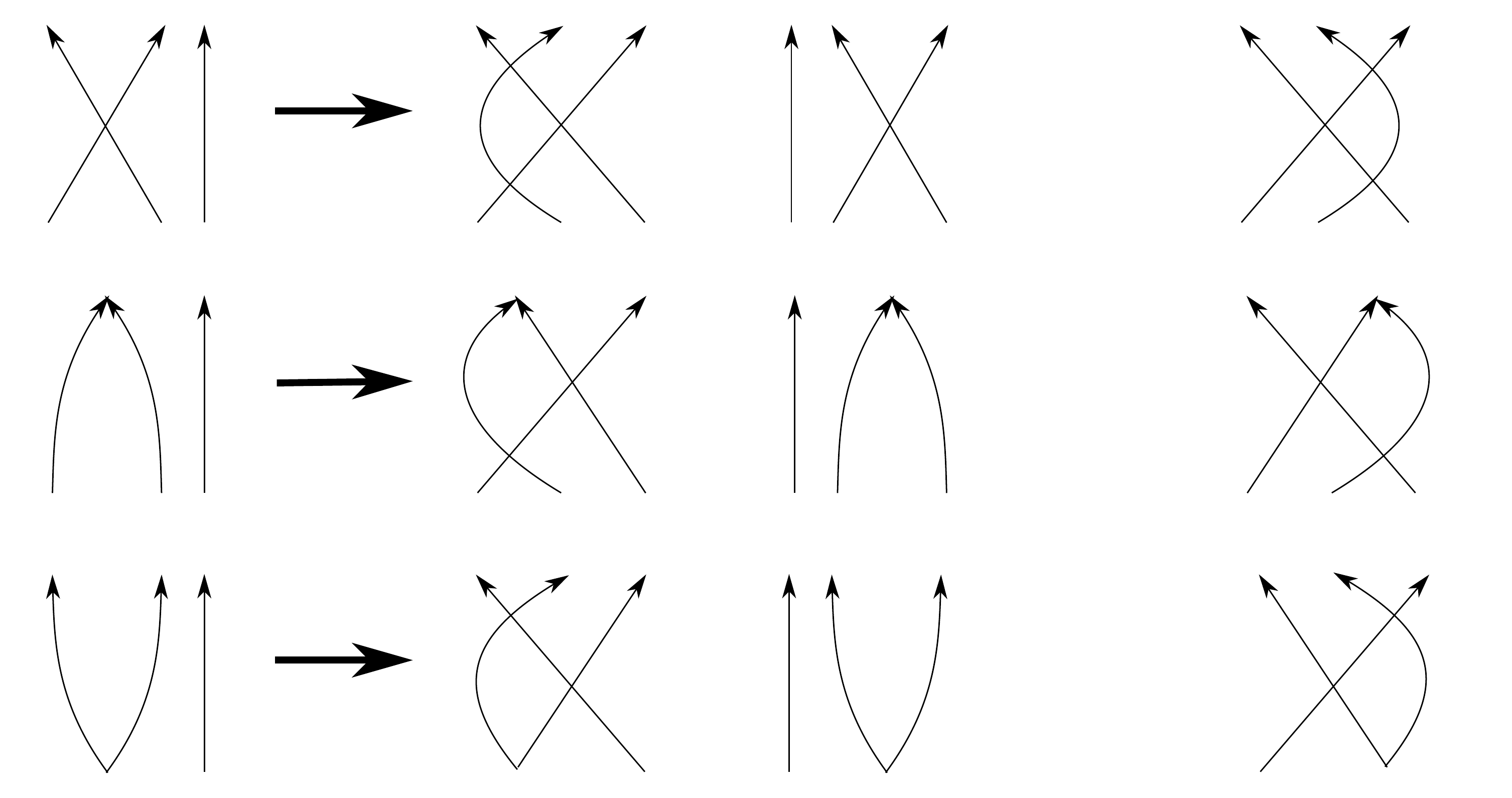
\caption{MOY \rom{3} moves.}\label{fig:MIII}
\end{figure}



\noindent
 Note that these isomorphisms follow from Corollary \ref{KhovCor} as graded vector spaces. However, in this section we will make the isomorphisms explicit on the chain level and show that the maps commute with the $U_{i}$-actions.


\subsubsection{MOY 0} Suppose $S$ and $S'$ are singular braids with even number of strands. Then, it is clear from the definition that 
\[C_{1\pm1}(S|S')\simeq C_{1\pm1}(S)\otimes C_{1\pm1}(S'),\]
where $S|S'$ is the disjoint union of $S$ and $S'$. In particular, $C_{1+1}(S|\mathcal{U}_2)\simeq C_{1+1}(S)\otimes C_{1+1}(\mathcal{U}_2)$ for any braid $S$ with an even number of strands. Let $e_1$ and $e_2$ denote the left and right strands of $\mathcal{U}_2$, respectively.  The diagram $\mathcal{U}_2$ has two cycles:  $Z_{1}=\{e_{1}\}$ and $Z_{2}=\{e_{2}\}$. Let $x_{1}$ and $x_{2}$ be the corresponding generators in $\mathscr{M}(\mathcal{U}_2)$. The relations are given by $U_{1}x_{1}=x_{2}$, $U_{2}x_{2}=0$. Thus, $\mathscr{M}(\mathcal{U}_2) \cong \QQ[U_{1}, U_{2}]/(U_{1}U_{2})$. So, the complex $C_{1\pm1}(\mathcal{U}_2)$ becomes
\[  
\xymatrix@C+2pc{\QQ[U_{1},U_{2}]/(U_{1}U_{2})&\QQ[U_{1},U_{2}]/(U_{1}U_{2})\ar[l]^{2U_{1}+2U_{2}}}. \]

Since $x_{Z_{1}}$ lies in quantum grading $1$, the generator of $\mathscr{M}(\mathcal{U}_2)$ also lies in quantum grading $1$ and the homology of this complex is isomorphic to $\mathcal{A}\{1\}$, with $U_{1}$ acting as $U$ and $U_{2}$ acting as $-U$. Thus, 
\[ H_{1+1}(S')=H_{1+1}(S|\mathcal{U}_2) \cong H_{1+1}(S) \otimes \mathcal{A}\{1\} \]

\subsubsection{MOY \rom{2}} Let $\mathsf{X}_i$ be the elementary singular braid where the singularization takes place between strands $i$ and $i+1$ as in Example \ref{ex:sing}. Suppose $S_{\RN{2}}^i=\mathsf{X}_i\circ \mathsf{X}_i$ i.e. $S^i_{\RN{2}}$ is obtained from $\mathsf{X}_i$ by a MOY \rom{2} move as in Figure \ref{fig:MII}.


For each cycle $Z$ in $\mathsf{X}_i$, there is a cycle $\imath(Z)$ in $S_{\RN{2}}^i$ such that $b(Z)=b(\imath(Z))$, $t(Z)=t(\imath(Z))$ and if $Z$ is not locally empty at the crossing then $e_5\subset \imath(Z)$.  This induces an $A_n$-bimodule homomorphism $\imath_{\RN{2}}:\mathsf{M}[\mathsf{X}_i]\to\mathsf{M}[S^i_{\RN{2}}]$ such that $\imath_{\RN{2}}(x_Z)=x_{\imath(Z)}$.

\begin{lemma}\label{lem:MII}
With the above notation, 
\[\mathsf{M}[S_{\RN{2}}^i]\cong\mathsf{M}[\mathsf{X}_i]\otimes_{A_n}\mathsf{M}[\mathsf{X}_i]\cong\mathsf{M}[\mathsf{X}_i]\{1\}\oplus\mathsf{M}[\mathsf{X}_i]\{-1\}\]
as $A_n$-bimodules. Moreover, $\QQ[U_5,U_6]$ acts on this module such that multiplication by $U_5$ is given by 
\[\begin{bmatrix}0&-u_iu_{i+1}\\1&u_i+u_{i+1}\end{bmatrix},\] while $U_6=u_i+u_{i+1}-U_5$.
\end{lemma}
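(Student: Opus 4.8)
The isomorphism $\mathsf{M}[S_{\RN{2}}^i]\cong\mathsf{M}[\mathsf{X}_i]\otimes_{A_n}\mathsf{M}[\mathsf{X}_i]$ is the special case $S_1=S_2=\mathsf{X}_i$ of the composition theorem already established, so the content is the second isomorphism together with the description of the $\QQ[U_5,U_6]$-action. Throughout I use the labelling of Figure~\ref{fig:MII}: the bottom $4$-valent vertex of $S_{\RN{2}}^i=\mathsf{X}_i\circ\mathsf{X}_i$ has incoming edges $e_1,e_2$ and outgoing edges $e_5,e_6$, while the top vertex has incoming edges $e_5,e_6$ and outgoing edges $e_3,e_4$, so that $e_5,e_6$ form the two sides of the digon. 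By relations M4--M5, on $\mathsf{M}[S_{\RN{2}}^i]$ the variable $U_1$ acts as left multiplication by $u_i$, $U_2$ as left $u_{i+1}$, $U_3$ as right $u_i$, $U_4$ as right $u_{i+1}$, while $U_5,U_6$ are genuinely internal variables.

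First I would classify the cycles of $S_{\RN{2}}^i$. A cycle that is locally empty at the digon is exactly the data of a locally empty cycle of $\mathsf{X}_i$, and a cycle that passes through the digon uses precisely one of $e_5,e_6$; hence each ``through'' cycle $Z$ of $\mathsf{X}_i$ has exactly two lifts $Z^{(5)},Z^{(6)}\subset S_{\RN{2}}^i$, distinguished by which middle edge they contain. The thin digon disc $D(Z^{(5)},e_5)$ has both ends at the $4$-valent vertices and carries no incoming or outgoing edges on its boundary, so M6 gives $U_5x_{Z^{(5)}}=x_{Z^{(6)}}$ with coefficient $1$. Writing $\imath(Z)$ for the $e_5$-lift of a through cycle, and for the unique lift when $Z$ is locally empty, the already-introduced $A_n$-bimodule map $\imath_{\RN{2}}\colon\mathsf{M}[\mathsf{X}_i]\to\mathsf{M}[S_{\RN{2}}^i]$, $x_Z\mapsto x_{\imath(Z)}$, picks out one copy. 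I would pair it with $j_{\RN{2}}\colon\mathsf{M}[\mathsf{X}_i]\to\mathsf{M}[S_{\RN{2}}^i]$, $x_Z\mapsto U_5x_{\imath(Z)}$; on through cycles $j_{\RN{2}}(x_Z)=x_{Z^{(6)}}$, and on locally empty cycles $j_{\RN{2}}(x_Z)=U_5x_Z$, a new ``phantom'' generator. The plan is to show $\imath_{\RN{2}}\oplus j_{\RN{2}}\colon\mathsf{M}[\mathsf{X}_i]\{1\}\oplus\mathsf{M}[\mathsf{X}_i]\{-1\}\to\mathsf{M}[S_{\RN{2}}^i]$ is an isomorphism of $A_n$-bimodules: surjectivity and injectivity are immediate from the cycle classification once the two images are seen to be complementary $\QQ[u_1,\dots,u_{2n}]$-submodules, and the gradings are matched by a direct count of $\mathfrak{gr}_q(x_{\imath(Z)})$ against $\mathfrak{gr}_q(x_Z)$ using the formula $T_1-T_2+E+w$ for both braids. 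The real point is that $\imath_{\RN{2}}$ and $j_{\RN{2}}$ are genuinely bimodule maps; since $\imath_{\RN{2}}$ already is, this reduces to checking that $U_5$ is central for the $A_n$-action on $\mathsf{M}[S_{\RN{2}}^i]$ modulo M1--M9.

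Granting the splitting, the $\QQ[U_5,U_6]$-action follows formally from the $4$-valent relations at the bottom vertex. Relation M3 gives $(U_5+U_6)x_Z=(U_1+U_2)x_Z=(u_i+u_{i+1})x_Z$, hence $U_6=u_i+u_{i+1}-U_5$; relation M2 gives $U_5U_6x_Z=U_1U_2x_Z=u_iu_{i+1}x_Z$, hence $U_5^2=(u_i+u_{i+1})U_5-u_iu_{i+1}$ on the module. In the ordered basis $\bigl(x_{\imath(Z)},\,U_5x_{\imath(Z)}\bigr)$ of the two summands,
\[U_5\cdot x_{\imath(Z)}=U_5x_{\imath(Z)},\qquad U_5\cdot\bigl(U_5x_{\imath(Z)}\bigr)=-u_iu_{i+1}\,x_{\imath(Z)}+(u_i+u_{i+1})\,U_5x_{\imath(Z)},\]
which is exactly the matrix in the statement, with $U_6$ acting as $(u_i+u_{i+1})\Id-U_5$. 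Since $U_5$ has quantum grading $-2$, this is also consistent with the second summand sitting two degrees below the first.

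The step I expect to be the main obstacle is establishing that the $A_n$-bimodule structure on $\mathsf{M}[S_{\RN{2}}^i]$ is precisely the direct sum of the two copies of $\mathsf{M}[\mathsf{X}_i]$, equivalently that $U_5$ commutes with the left and right actions of the generators $R_j,L_j$. For $j\notin\{i-1,i,i+1\}$ this is automatic, since those moves are supported away from the digon. For $j\in\{i-1,i,i+1\}$ one must run through the digon analogues of relations (7)--(11) of Example~\ref{ex:sing}, identifying the monomial coefficients via Lemma~\ref{ProductLemma}; particular care is needed on the locally empty cycles, where the second copy is carried by the phantom generator $U_5x_Z$ rather than by an honest cycle, so that one is really verifying a relation in $\QQ[u_i,u_{i+1},U_5]/\bigl((U_5-u_i)(U_5-u_{i+1})\bigr)$ against the corresponding relation in $\mathsf{M}[\mathsf{X}_i]$. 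Once this compatibility is in place, the composition isomorphism, the cycle count, and the $U_5,U_6$ matrix assemble into the statement.
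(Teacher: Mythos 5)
Your proposal is correct and follows essentially the same route as the paper: the paper also takes $\mathsf{M}_1=\imath_{\RN{2}}(\mathsf{M}[\mathsf{X}_i])$ together with $U_5\mathsf{M}_1$ as the two summands, eliminates $U_6$ via the linear relation and $U_5^2$ via the quadratic relation to see these span, and checks the grading shift by counting $4$-valent vertices in the cycle complements. Your explicit identification of $U_5\mathsf{M}_1$ on through cycles versus locally empty ones, and the derivation of the matrix from M2--M3, match the paper's argument; the bimodule-compatibility of $U_5$ that you flag as the main obstacle is in fact built into the definition of $\mathsf{M}[S]$ as a $\QQ[U_1,\dots,U_m]$-module with commuting $A_n$-actions, so the paper dispenses with it as ``easy to check.''
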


\begin{proof}
It is easy to check that the $A_n$-subbimodule $\mathsf{M}_1=\imath_{\RN{2}}(\mathsf{M}[\mathsf{X}_i])$ is canonically isomorphic to $\mathsf{M}[\mathsf{X}_i]$. Furthermore, $U_5\mathsf{M}_1\cong \mathsf{M}_1\cong\mathsf{M}[\mathsf{X}_i]$ and $\mathsf{M}_1\oplus U_5\mathsf{M}_1\subset \mathsf{M}[S_{\RN{2}}^i]$.  

For each element $x\in\mathsf{M}[S_{\RN{2}}^i]$, it follows from the relations that $(u_i+u_{i+1})x=x(u_i+u_{i+1})$. Using the linear relation $U_{6} = u_{i} + u_{i+1} - U_{5}$, we can substitute for $U_{6}$ in the internal action on $\mathsf{M}[S_{\RN{2}}^i]$. Similarly, the quadratic relation $U_{5}^{2} = (u_{i}+u_{i+1})U_{5} -u_{i}u_{i+1}$ allows us to substitute for $U_{5}^{2}$. Therefore, 
\[\mathsf{M}[S_{\RN{2}}^i]\cong \mathsf{M}_1\oplus U_5\mathsf{M}_1\cong\mathsf{M}[\mathsf{X}_i]\oplus\mathsf{M}[\mathsf{X}_i]\]
as $A_n$-bimodules (up to quantum grading shifts), and $U_5$ and $U_6$ acts as described. 

For each locally empty cycle $Z$ in $\mathsf{X}_i$, the complement of $Z$ contains one less $4$-valent vertex comparing to the complement of $\imath(Z)$ in $S_{\RN{2}}^i$. Thus, $\mathfrak{gr_q}(x_{\imath(Z)})=\mathfrak{gr_q}(x_{Z})+1$ for each locally empty cycle $Z$. Similarly, for the other cycles in $\mathsf{X}_i$ one can check that $\mathfrak{gr_q}(x_{\imath(Z)})=\mathfrak{gr_q}(x_{Z})+1$. Therefore, $\mathsf{M}_1\cong \mathsf{M}[\mathsf{X}_i]\{1\}$ and so $\mathsf{M}_2=U_5\mathsf{M}_1\cong\mathsf{M}[\mathsf{X}_i]\{-1\}$.

%
%
%

\end{proof}

\begin{corollary}\label{cor:MII}
Suppose $S=S_1\circ \mathsf{X}_i\circ S_2$ is a singular braid with $2n$ strands. Let $S'=S_1\circ S_{\RN{2}}^i\circ S_2$. Then,
\[H_{1+1}(S')\cong H_{1+1}(S)\otimes_{\QQ}\mathcal{A}\{1\}\]
\end{corollary}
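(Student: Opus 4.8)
The plan is to deduce the corollary from Lemma \ref{lem:MII} together with the composition product, the key observation being that the closing-off Koszul factor is inert under a MOY \rom{2} move.

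First I would use the composition product theorem and the cup/cap decomposition $\mathscr{M}(S)\cong\mathsf{M}[S_{cup}]\otimes_{A_n}\mathsf{M}[S]\otimes_{A_n}\mathsf{M}[S_{cap}]$ to write
\[
\mathscr{M}(S)\cong\mathsf{M}[S_{cup}]\otimes_{A_n}\mathsf{M}[S_1]\otimes_{A_n}\mathsf{M}[\mathsf{X}_i]\otimes_{A_n}\mathsf{M}[S_2]\otimes_{A_n}\mathsf{M}[S_{cap}]
\]
and likewise for $\mathscr{M}(S')$ with $\mathsf{M}[S_{\RN{2}}^i]$ in place of $\mathsf{M}[\mathsf{X}_i]$. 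Lemma \ref{lem:MII} gives the $A_n$-bimodule isomorphism $\mathsf{M}[S_{\RN{2}}^i]\cong\mathsf{M}[\mathsf{X}_i]\{1\}\oplus\mathsf{M}[\mathsf{X}_i]\{-1\}$, and since tensoring over $A_n$ is additive, substituting this yields
\[
\mathscr{M}(S')\cong\mathscr{M}(S)\{1\}\oplus\mathscr{M}(S)\{-1\}
\]
as graded modules over the edge ring of $S$. (The two interior variables $U_5,U_6$ of $S'$ act through the $2\times2$ matrices recorded in Lemma \ref{lem:MII}, but they will not intervene below.)

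Next I would push this splitting through the tensor product with $\mathsf{K}$. Because the MOY \rom{2} move is supported away from the cups and caps, $S$ and $S'$ have the same plat closure, hence the same top and bottom vertices $w_k^\pm$ and the same linear forms $L_{w_k^\pm},L'_{w_k^\pm}$, all of which involve only closure edges of $S$. Thus $\mathsf{K}(S')$ is $\mathsf{K}(S)$ after the base change $R(S)\hookrightarrow R(S')$, and $d_0=1\otimes d_{\mathsf{K}}$ acts purely through the $R(S)$-module structure, which respects the direct sum above. Consequently $C_{1\pm1}(S')\cong C_{1\pm1}(S)\{1\}\oplus C_{1\pm1}(S)\{-1\}$ as complexes of $R(S)$-modules, with $d_0$ and every action of an edge variable of $S$ diagonal with respect to the two summands.

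Finally, since there are no edge maps for a single complete resolution, so that $H_{1+1}=H_{1-1}=H_*(C_{1\pm1},d_0)$, taking homology gives
\[
H_{1+1}(S')\cong H_{1+1}(S)\{1\}\oplus H_{1+1}(S)\{-1\}\cong H_{1+1}(S)\otimes_\QQ\mathcal{A}\{1\},
\]
compatibly with the actions of the edge variables of $S$, which is the assertion. I do not expect a genuine obstacle here: Lemma \ref{lem:MII} already carries all of the content, and the only point requiring a short argument is that the splitting of $\mathscr{M}(S')$ is compatible with the closing-off Koszul differential, which is immediate from the locality of the move — together with the bookkeeping that the grading shifts $\{1\},\{-1\}$ reassemble precisely into $\otimes_\QQ\mathcal{A}\{1\}$.
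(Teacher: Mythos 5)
Your proposal is correct and follows essentially the same route as the paper: Lemma \ref{lem:MII} supplies the splitting $\mathsf{M}[S_{\RN{2}}^i]\cong\mathsf{M}[\mathsf{X}_i]\{1\}\oplus\mathsf{M}[\mathsf{X}_i]\{-1\}$, the closing-off Koszul factor is untouched, and the homology splits accordingly. The only point you defer as ``bookkeeping'' is exactly what the paper makes explicit: by Lemmas \ref{prop1} and \ref{prop2}, on homology $U_5=-U_6$ and $U_5^2=0$, so the matrix action of $U_5$ from Lemma \ref{lem:MII} reduces to the shift between the two summands, identifying the direct sum with $H_{1+1}(S)\otimes_{\QQ}\mathcal{A}\{1\}$ as a module (with $U_5$ acting as $U$ and $U_6$ as $-U$) rather than merely as a graded vector space.
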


\begin{proof} From Lemmas \ref{prop1} and \ref{prop2}, we see that on $H_{1+1}(S)$, $U_{5}=-U_{6}$, and $U_{5}^{2}=0$. So \[H_{1+1}(S)\cong H_{1+1}(S')\otimes_{\QQ}\mathcal{A}\{1\}\]
with $U_{5}$ acting as $U$ and $U_{6}$ acting as $-U$.
\end{proof}

\begin{lemma}\label{lem:cap} For each odd integer $i$
\[\mathsf{M}[\mathsf{X}_i]\otimes_{A_n}\mathsf{M}[\mycap_i]\cong \mathsf{M}[\mycap_i]\{1\}\oplus\mathsf{M}[\mycap_i]\{-1\}\]
as left $A_n$-modules. Moreover, considering the labels in Figure \ref{fig:MII} $\mathsf{M}[\mathsf{X}_i]\otimes_{A_n}\mathsf{M}[\mycap_i]$ is a module over $\QQ[U_5,U_6]$ where $U_5$ acts as 
\[
\begin{bmatrix}
0&-u_iu_{i+1}\\
1&u_i+u_{i+1}
\end{bmatrix}
\]
and $U_6=u_i+u_{i+1}-U_5$.
\end{lemma}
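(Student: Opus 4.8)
The plan is to imitate the proof of Lemma~\ref{lem:MII} almost verbatim, with the second tensor factor replaced by the cap module. Set $N:=\mathsf{M}[\mathsf{X}_i]\otimes_{A_n}\mathsf{M}[\mycap_i]$ and let $v$ be the unique $4$-valent vertex of $\mathsf{X}_i$, with incoming edges $e_1,e_2$; once the top of $\mathsf{X}_i$ is glued to the bottom of $\mycap_i$, its outgoing edges become the edges labelled $e_5,e_6$ in Figure~\ref{fig:MII}. Using relations M4--M5 for $\mathsf{M}[\mathsf{X}_i]$ together with the $A_n$-balancing in the tensor product, the left $A_n$-action on $N$ identifies $u_i$ with multiplication by $U_1$ and $u_{i+1}$ with multiplication by $U_2$, while relations M2--M3 at $v$ give
\[
U_5+U_6=U_1+U_2=u_i+u_{i+1},\qquad U_5U_6=U_1U_2=u_iu_{i+1}
\]
on $N$; hence $U_6=u_i+u_{i+1}-U_5$ and $U_5^{2}=(u_i+u_{i+1})U_5-u_iu_{i+1}$ on $N$.

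Next I would introduce the left $A_n$-module homomorphism $\phi\colon\mathsf{M}[\mycap_i]\to N$ given on generators by $\phi(x_W)=x_{Z(W)}\otimes x_W$, where $Z(W)$ is the cycle of $\mathsf{X}_i$ with $b(Z(W))=t(Z(W))=b(W)$ which near $v$ is locally empty when possible and otherwise runs along $e_5$ --- the exact analogue of the cycle map used to build $\imath_{\RN{2}}$. Showing $\phi$ is well defined amounts to checking, relation by relation, that it is compatible with the defining relations of $\mathsf{M}[\mycap_i]$, with the relations of $\mathsf{M}[\mathsf{X}_i]$ from Example~\ref{ex:sing}, and with the tensor-product balancing; the point in each case is, as in Section~\ref{welldef}, that the small disk created by pushing a cycle through $v$ has no outgoing edge on its right boundary and so introduces no extra monomial coefficient. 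One then verifies that $\phi$ is injective, so that $N_1:=\phi(\mathsf{M}[\mycap_i])\cong\mathsf{M}[\mycap_i]$ as left $A_n$-modules.

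The quadratic relation above shows $N=N_1+U_5N_1$, and $U_5$ maps $N_1$ isomorphically onto $U_5N_1$; a direct-sum check on the explicit generators (as in the proof of Lemma~\ref{lem:MII}) gives $N_1\cap U_5N_1=0$, so $N=N_1\oplus U_5N_1\cong\mathsf{M}[\mycap_i]^{\oplus2}$ as left $A_n$-modules. In the basis $\{N_1,U_5N_1\}$ multiplication by $U_5$ sends $a+bU_5\mapsto -bu_iu_{i+1}+(a+b(u_i+u_{i+1}))U_5$, which is exactly the matrix displayed in the statement, and $U_6=u_i+u_{i+1}-U_5$. For the grading shifts, $\mathsf{X}_i\circ\mycap_i$ has exactly one more $4$-valent vertex than $\mycap_i$, namely $v$; running through the possible local configurations of $Z(W)$ at $v$, the quantum grading formula yields $\mathfrak{gr}_q(x_{Z(W)}\otimes x_W)=\mathfrak{gr}_q(x_W)+1$ in every case, just as in the last paragraph of the proof of Lemma~\ref{lem:MII}. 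Since $U_5$ has quantum degree $-2$, this gives $N_1\cong\mathsf{M}[\mycap_i]\{1\}$ and $U_5N_1\cong\mathsf{M}[\mycap_i]\{-1\}$.

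The main obstacle is the well-definedness of $\phi$. The cap module $\mathsf{M}[\mycap_i]$ carries relations with no counterpart in Lemma~\ref{lem:MII} --- in particular the ones forcing $L_{i-1},L_{i+1},R_{i-1},R_{i+1}$ to annihilate it --- and one must check that the image cycles $Z(W)$ interact correctly with them; this, together with the case-by-case grading computation, is where the real work lies, the remainder being a transcription of the argument for Lemma~\ref{lem:MII}.
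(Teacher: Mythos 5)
Your overall strategy is the paper's: exhibit a left $A_n$-submodule $N_1\cong\mathsf{M}[\mycap_i]$, show $N=N_1\oplus U_5N_1$, and read off the $U_5$-matrix and grading shifts as in Lemma \ref{lem:MII}. But the specific section you propose is not the right one, and the error is not repairable by "checking relations": it genuinely fails. First, your two defining conditions on $Z(W)$ are incompatible for half the generators. Every cycle $W$ of $\mycap_i$ contains exactly one of the two cap edges, so $b(W)$ always meets $\{i,i+1\}$ and the "locally empty" case never occurs; and when $e_{i+1}\subset W$ (so $i+1\in b(W)$), the requirement $t(Z(W))=b(W)$ forces $Z(W)$ to exit $v$ through the \emph{right} outgoing edge, i.e.\ the composite cycle $Z(W)\cup W$ contains $e_6$, not $e_5$. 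If you resolve the conflict in favor of the idempotent condition (taking the "diagonal" cycle $x_{Z(W)}\otimes x_W$), the map $\phi$ is not left $A_n$-linear: for $e_i\subset W$ the cap relation gives $L_ix_W=x_{U_i(W)}$, so $\phi(L_ix_W)$ is the generator with local configuration $e_2e_6$, whereas $L_i\phi(x_W)=(L_ix_{Z(W)})\otimes x_W$ is the generator with configuration $e_2e_5$; these differ by a factor of $U_5$. Consequently your $\QQ[U]$-span $N_1$ is not an $A_n$-submodule, and $N_1+U_5N_1$ does not span $N$: the generator $x_{e_2e_5}$ satisfies $U_5x_{e_2e_5}=x_{e_2e_6}$, so from your generators you only recover $U_1U_2\,x_{e_2e_5}$ (via $U_5^2=(U_1+U_2)U_5-U_1U_2$), never $x_{e_2e_5}$ itself.

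The fix — and what the paper actually does — is to define $\imath(W)$ as the cycle of $\mathsf{X}_i\circ\mycap_i$ with $b(\imath(W))=b(W)$ that \emph{always} contains $e_5$, i.e.\ whose restriction to the cap factor is the cycle through $e_i$ regardless of which cap edge $W$ contains (local configurations $e_1e_5$ and $e_2e_5$). This is $A_n$-equivariant precisely because $L_i$ and $R_i$ act on the $\mathsf{X}_i$ factor by switching $e_1\leftrightarrow e_2$ while preserving $e_5$, matching $L_ix_W=x_{U_i(W)}$ on the cap side; and $U_5$ then carries $\mathsf{M}_1$ bijectively onto the $e_6$-configurations, giving the direct sum. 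Note also that the relations you flagged as the "real work" ($L_{i\pm1},R_{i\pm1}$ annihilating the cap module) are unproblematic for either map; the relation that kills your construction, $L_ix_W=x_{U_i(W)}$, is the one you did not examine.
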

\begin{proof}
 For each cycle $Z$ in $\mycap_i$, let $\imath(Z)$ be the cycle in $Z$ where $b(Z)=b(\imath(Z))$ and $e_{5}\subset \imath(Z)$. The set of all generators $x_{\imath(Z)}$ generates an $A_n$-subbimodule $\mathsf{M}_1$ in $\mathsf[\mathsf{X}_i\circ \mycap_i]$, which is isomorphic to $\mathsf{M}[\mycap_i]$. Furthermore, $U_5\mathsf{M}_1\cong \mathsf{M}_1$ and \[\mathsf{M}[\mathsf{X}_i]\otimes_{A_n}\mathsf{M}[\mycap_i]\cong\mathsf{M}_1\oplus U_5\mathsf{M}_1.\]
The rest is similar to the proof of Lemma \ref{lem:MII}.
\end{proof}

Similarly, we can prove that: 

\begin{lemma}\label{lem:cup}
As right $A_n$-modules
\[\mathsf{M}[\mycup_i]\otimes_{A_n}\mathsf{M}[\mathsf{X}_i]\cong \mathsf{M}[\mycup_i]\{1\}\oplus\mathsf{M}[\mycup_i]\{-1\}.\]
Moreover, considering the labels in Figure \ref{fig:MII} $\mathsf{M}[\mycup_i]\otimes_{A_n}\mathsf{M}[\mathsf{X}_i]$ is a module over $\QQ[U_5,U_6]$ where $U_5$ acts as 
\[
\begin{bmatrix}
0&-u_iu_{i+1}\\
1&u_i+u_{i+1}
\end{bmatrix}
\]
and $U_6=u_i+u_{i+1}-U_5$.
\end{lemma}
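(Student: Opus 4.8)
The plan is to run the argument of Lemma \ref{lem:cap} on the top boundary rather than the bottom. Geometrically $\mycup_i\circ\mathsf{X}_i$ is a singular cup with a singularization stacked above it, one of the local configurations appearing in a MOY \rom{2} move (Figure \ref{fig:MII}), so the key input is again that the $4$-valent vertex of $\mathsf{X}_i$ supplies, through relations M2 and M3, the quadratic relation $U_5U_6=u_iu_{i+1}$ and the linear relation $U_5+U_6=u_i+u_{i+1}$; thus $U_5$ and $U_6$ play the role of the two roots of $t^2-(u_i+u_{i+1})t+u_iu_{i+1}$, and in particular $U_5^2=(u_i+u_{i+1})U_5-u_iu_{i+1}$.

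First I would construct the splitting inclusion. For every cycle $Z$ in $\mycup_i$ there is a unique cycle $\imath(Z)$ in $\mycup_i\circ\mathsf{X}_i$ with $t(\imath(Z))=t(Z)$ that agrees with $Z$ away from $\mathsf{X}_i$ and routes the strand coming out of the cup through the middle edge $e_5$. Setting $\imath_{\mathrm{cup}}(x_Z)=x_{\imath(Z)}$ defines a right $A_n$-module homomorphism $\imath_{\mathrm{cup}}\colon\mathsf{M}[\mycup_i]\to\mathsf{M}[\mycup_i\circ\mathsf{X}_i]$, whose well-definedness is checked by matching relations (1)--(5) of $\mathsf{M}[\mycup_i]$ against the corresponding local relations of $\mathsf{M}[\mathsf{X}_i]$ from Example \ref{ex:sing}, exactly as in the $\mycap_i$ case. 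Write $\mathsf{M}_1=\imath_{\mathrm{cup}}(\mathsf{M}[\mycup_i])$; it is canonically isomorphic to $\mathsf{M}[\mycup_i]$ as a right $A_n$-module.

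Next I would identify the whole tensor product. Using $U_6=u_i+u_{i+1}-U_5$ one eliminates $U_6$ from the internal polynomial action, and using $U_5^2=(u_i+u_{i+1})U_5-u_iu_{i+1}$ one eliminates all higher powers of $U_5$; together with the relations of $\mathsf{M}[\mathsf{X}_i]$ this shows every element of $\mathsf{M}[\mycup_i]\otimes_{A_n}\mathsf{M}[\mathsf{X}_i]$ is an $A_n$-combination of the $x_{\imath(Z)}$ and the $U_5x_{\imath(Z)}$, so $\mathsf{M}_1\oplus U_5\mathsf{M}_1\to\mathsf{M}[\mycup_i\circ\mathsf{X}_i]$ is surjective, and an idempotent count as in the proof of the tensor-product theorem shows it is an isomorphism. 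On the basis $\{x_{\imath(Z)},U_5x_{\imath(Z)}\}$ the map $U_5$ sends $x_{\imath(Z)}\mapsto U_5x_{\imath(Z)}$ and $U_5x_{\imath(Z)}\mapsto -u_iu_{i+1}x_{\imath(Z)}+(u_i+u_{i+1})U_5x_{\imath(Z)}$, i.e. it acts by the claimed matrix, and $U_6=u_i+u_{i+1}-U_5$ by construction. The quantum-grading bookkeeping is identical to that in Lemma \ref{lem:MII}: a direct check gives $\mathfrak{gr}_q(x_{\imath(Z)})=\mathfrak{gr}_q(x_Z)+1$ for every cycle $Z$, whence $\mathsf{M}_1\cong\mathsf{M}[\mycup_i]\{1\}$ and $U_5\mathsf{M}_1\cong\mathsf{M}[\mycup_i]\{-1\}$.

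The step I expect to cause the most trouble is exactly the one hidden behind the word ``similarly'': verifying that $\imath_{\mathrm{cup}}$ is well defined and that $\mathsf{M}_1\oplus U_5\mathsf{M}_1$ exhausts the tensor product with no overcounting. The delicate relations are (5) of $\mathsf{M}[\mycup_i]$ (the move $x_ZR_i=x_{U_i(Z)}$ at the cup) and relations (7)--(10) of Example \ref{ex:sing}, which govern how a strand enters and leaves the $4$-valent vertex of $\mathsf{X}_i$; one must confirm that after the substitutions for $U_6$ and $U_5^2$ no further identifications among the $x_{\imath(Z)}$ and the $U_5x_{\imath(Z)}$ are forced by these relations. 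Once that local check is complete, the conclusion follows by the same formal manipulations as in Lemma \ref{lem:cap}.
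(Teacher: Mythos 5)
Your proposal is correct and follows essentially the same route as the paper, which proves this lemma by declaring it "similar" to Lemma \ref{lem:cap} (itself modeled on Lemma \ref{lem:MII}): the inclusion via the cycle through $e_5$, the substitution of $U_6$ and $U_5^2$ by the linear and quadratic vertex relations, the splitting $\mathsf{M}_1\oplus U_5\mathsf{M}_1$, and the grading shift computation are exactly the intended argument. You have merely made explicit the local checks the paper leaves implicit.
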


\begin{corollary}
Let $S$ be a singular braid with $2n$-strands. Then, for each odd integer $i$
\[H_{1+1}(S\circ\mathsf{X}_i)\cong H_{1+1}(\mathsf{X}_i\circ S)\cong H_{1+1}(S)\otimes_{\QQ}\mathcal{A}\{1\}.\]
\end{corollary}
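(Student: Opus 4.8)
\emph{Proof plan.} The plan is to mimic the proof of Corollary~\ref{cor:MII}, using Lemmas~\ref{lem:cap} and~\ref{lem:cup} in place of Lemma~\ref{lem:MII}. The key geometric point is that when $i$ is odd the pair of strands $\{i,i+1\}$ is exactly one of the pairs joined by a cup at the bottom and a cap at the top of any plat closure; thus $\mycap_i$ occurs as a tensor factor of $S_{cap}$ (caps at disjoint positions commute, so it can be brought adjacent to $\mathsf{X}_i$), and $\mycup_i$ occurs as a tensor factor of $S_{cup}$ adjacent to $\mathsf{X}_i$ in $\pc(\mathsf{X}_i\circ S)$.

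First I would combine the composition product with the closing-off isomorphism $\mathscr{M}(T)\cong\mathsf{M}[S_{cup}]\otimes_{A_n}\mathsf{M}[T]\otimes_{A_n}\mathsf{M}[S_{cap}]$ to write
\[\mathscr{M}(S\circ\mathsf{X}_i)\cong \mathsf{M}[S_{cup}]\otimes_{A_n}\mathsf{M}[S]\otimes_{A_n}\big(\mathsf{M}[\mathsf{X}_i]\otimes_{A_n}\mathsf{M}[\mycap_i]\big)\otimes_{A_{n-1}}\mathsf{M}[S_{cap}'],\]
with $S_{cap}'$ the remaining $n-1$ caps. Lemma~\ref{lem:cap} rewrites the parenthesized factor as $\mathsf{M}[\mycap_i]\{1\}\oplus\mathsf{M}[\mycap_i]\{-1\}$, with the two new edge variables $U_5,U_6$ of $S\circ\mathsf{X}_i$ acting by the displayed matrix and by $U_6=u_i+u_{i+1}-U_5$. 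Collapsing the rest of the tensor product back to the closing-off description of $\mathscr{M}(S)$ gives $\mathscr{M}(S\circ\mathsf{X}_i)\cong\mathscr{M}(S)\{1\}\oplus\mathscr{M}(S)\{-1\}$ over the edge ring of $S$, with $U_5$ acting by $\left[\begin{smallmatrix}0&-U_aU_b\\1&U_a+U_b\end{smallmatrix}\right]$ and $U_6=U_a+U_b-U_5$, where $e_a,e_b$ are the edges of $S$ at positions $i,i+1$ (under the correspondence M4/M5 between the algebra variables $u_i,u_{i+1}$ and these edges). Since on $\mathscr{M}(S\circ\mathsf{X}_i)$ the closing-off Koszul factor at $\{i,i+1\}$ has potential involving $U_5+U_6=U_a+U_b$, which is the same linear form as for $\pc(S)$, tensoring with $\mathsf{K}$ and taking homology yields $H_{1+1}(S\circ\mathsf{X}_i)\cong H_{1+1}(S)^{\oplus 2}$ with $U_5$ still acting by that matrix, now over $H_{1+1}(S)$.

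Then I would finish exactly as in Corollary~\ref{cor:MII}: by Lemma~\ref{prop1} applied to the top vertex $w^+$ of $\pc(S)$ at $\{i,i+1\}$ we have $U_a=-U_b$ on $H_{1+1}(S)$, and by Lemma~\ref{prop2} we have $U_a^2=0$; hence $U_a+U_b=0$ and $U_aU_b=-U_a^2=0$ there, so the matrix for $U_5$ degenerates to $\left[\begin{smallmatrix}0&0\\1&0\end{smallmatrix}\right]$, a square-zero operator, with $U_6=-U_5$. Thus $\QQ[U_5,U_6]$ acts on $H_{1+1}(S\circ\mathsf{X}_i)$ through $\mathcal{A}=\QQ[U]/(U^2)$ with $U=U_5$, giving $H_{1+1}(S\circ\mathsf{X}_i)\cong H_{1+1}(S)\otimes_{\QQ}\mathcal{A}$, and the grading shifts recorded in Lemma~\ref{lem:cap} upgrade this to $H_{1+1}(S)\otimes_{\QQ}\mathcal{A}\{1\}$. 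The statement for $\mathsf{X}_i\circ S$ is identical, with Lemma~\ref{lem:cup} in place of Lemma~\ref{lem:cap} and $\mycup_i\subset S_{cup}$ in place of $\mycap_i\subset S_{cap}$.

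The main obstacle I anticipate is the bookkeeping in the third step: checking that the splitting of $\mathscr{M}(S\circ\mathsf{X}_i)$ is genuinely compatible with the closing-off Koszul complex $\mathsf{K}$ — i.e. that the extra variables $U_5,U_6$ enter only through the matrix action and that the relevant cap potential is unchanged modulo the linear relation $L_v$ at the singularization — and pinning down the overall quantum-grading shift so that it is exactly $\{1\}$. Everything else is either a direct appeal to the lemmas already established or a verbatim repetition of the MOY~\rom{2} argument.
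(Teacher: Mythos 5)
Your proposal is correct and takes essentially the same route as the paper, whose proof is just a one-line appeal to Lemmas \ref{lem:cap} and \ref{lem:cup} ``with the same proof as Corollary \ref{cor:MII}.'' What you spell out — splitting off the elementary cap/cup from $S_{cap}$/$S_{cup}$ (this is where odd $i$ enters), invoking the matrix description of the $U_5,U_6$-action, checking compatibility with the closing-off Koszul factor via $U_5+U_6=U_a+U_b$, and then degenerating the matrix on homology using Lemmas \ref{prop1} and \ref{prop2} — is exactly the intended argument, just written out in full.
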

\begin{proof}
It is a straightforward corollary of Lemmas \ref{lem:cap} and \ref{lem:cup}, with the same proof as Corollary \ref{cor:MII}.

\end{proof}

Finally, we will review some properties of the chain map $\imath_{\RN{2}}$ that will be used later to prove invariance. 


\begin{lemma}\label{lem:propII}
For all $\mathsf{X}_i$ the followings hold:
\begin{enumerate}
\item $(d^-\otimes \mathrm{id})\circ \imath_{\RN{2}}=(\mathrm{id}\otimes d^-)\circ\imath_{\RN{2}}=\mathrm{id}$.
\item $\pi_{\RN{2}}^2\circ (d^+\otimes\mathrm{id})=\pi_{\RN{2}}^2\circ (\mathrm{id}\otimes d^+)=\mathrm{id}$, while  $\pi_{\RN{2}}^1\circ (d^+\otimes\mathrm{id})=-U_2\mathrm{id}$ and $\pi_{\RN{2}}^1\circ (\mathrm{id}\otimes d^+)=-U_4\mathrm{id}$

\end{enumerate}
Here, $\pi_{\RN{2}}^i:\mathsf{M}[S_{\RN{2}}^i]\to\mathsf{M}[\mathsf{X}_i]$ is the projection onto the $i$-th summand for $i=1,2$.\end{lemma}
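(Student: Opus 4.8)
The plan is to verify each of the five identities directly on generators. All five are equalities of $A_n$-bimodule (and $\QQ[U_1,\dots,U_m]$-module) homomorphisms between bimodules with explicit presentations, so it is enough to check them on the $x_Z$, and since $d^{\pm}$, $\imath_{\RN{2}}$ and the projections $\pi_{\RN{2}}^{1},\pi_{\RN{2}}^{2}$ are local near the crossing of $\mathsf{X}_i$, only finitely many local types of cycle $Z$ occur: $Z$ locally empty at the $4$-valent vertex, or of local type $e_1e_3$, $e_1e_4$, $e_2e_3$, or $e_2e_4$ (notation as in Example~\ref{ex:sing} and Figure~\ref{fig:MII}), together with the value of the constant $\epsilon$ (whether $i+2\in b(Z)$) in the formula for $d^{+}$. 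In each case I would substitute the explicit formulas for $d^{\pm}$ and for $\imath_{\RN{2}}(x_Z)=x_{\imath(Z)}$, and then use the composition isomorphism $\mathsf{M}[S_1]\otimes_{A_n}\mathsf{M}[S_2]\cong\mathsf{M}[S_1\circ S_2]$ to re-glue the two $\mathsf{X}_i$-factors of $S^i_{\RN{2}}=\mathsf{X}_i\circ\mathsf{X}_i$ into a single cycle.

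For part~(1): since $\imath$ routes the through-strand along $e_5$, the factor of $\imath(Z)$ to which $d^{-}$ is applied is always a locally empty cycle or one of local type $e_1e_3$ or $e_1e_4$, so only a few of the cases in the definition of $d^{-}$ occur; in each, the strand-algebra element ($L_i$ or $R_i$) produced by $d^{-}$ is absorbed by the relations of Example~\ref{ex:sing} when the factors are re-glued, and one recovers $x_Z$ with no coefficient. This yields $(d^{-}\otimes\mathrm{id})\circ\imath_{\RN{2}}=\mathrm{id}$, and $(\mathrm{id}\otimes d^{-})\circ\imath_{\RN{2}}=\mathrm{id}$ follows from the same computation applied to the other factor.

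For part~(2): I would first record, from Lemma~\ref{lem:MII}, that $\mathsf{M}[S^i_{\RN{2}}]=\mathsf{M}_1\oplus U_5\mathsf{M}_1$ with $\mathsf{M}_1=\imath_{\RN{2}}(\mathsf{M}[\mathsf{X}_i])$, so every element is uniquely $a+U_5b$ with $a,b\in\mathsf{M}_1$ and $\pi_{\RN{2}}^{1},\pi_{\RN{2}}^{2}$ extract $a$ and $b$ under $\mathsf{M}_1\cong\mathsf{M}[\mathsf{X}_i]$; this uses $U_6=u_i+u_{i+1}-U_5$, the quadratic relation $U_5^{2}=(u_i+u_{i+1})U_5-u_iu_{i+1}$ from the matrix of the $U_5$-action, and the linear relations $U_1+U_2=U_5+U_6=U_3+U_4$ (relation M3 at the two vertices). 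Writing a generator of the source as $x_{Z_0}\otimes x_Z$ with $Z_0$ the straight-through cycle of $S_{id}$, one has $(d^{+}\otimes\mathrm{id})(x_Z)=d^{+}(x_{Z_0})\otimes x_Z$; evaluating $d^{+}(x_{Z_0})$ by its three cases, rewriting the $u$'s as edge variables via M4, M5 (so that e.g.\ $x_{Z_0^z}u_i=U_5x_{Z_0^z}$ and $u_{i+1}x_{Z_0^z}=U_2x_{Z_0^z}$), and simplifying the term $-\epsilon\,R_{i+1}x_{e_{i+1}(Z_0)^z}L_{i+1}$ using R5 ($R_{i+1}L_{i+1}=\iota\,u_{i+1}$) and the $\mathsf{M}[\mathsf{X}_i]$-relations, I expect to find in every case that the $U_5\mathsf{M}_1$-component of $(d^{+}\otimes\mathrm{id})(x_Z)$ corresponds to $x_Z$ and its $\mathsf{M}_1$-component corresponds to $-U_2x_Z$. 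This gives $\pi_{\RN{2}}^{2}\circ(d^{+}\otimes\mathrm{id})=\mathrm{id}$ and $\pi_{\RN{2}}^{1}\circ(d^{+}\otimes\mathrm{id})=-U_2\,\mathrm{id}$; the statement for $\mathrm{id}\otimes d^{+}$ is the mirror computation, where the linear relation $U_3-U_6=U_5-U_4$ produces $-U_4$ in place of $-U_2$.

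The hard part will be the bookkeeping in part~(2): one must carry the strand-algebra elements $R_{i+1},L_{i+1}$ and the constant $\epsilon$ correctly through $\otimes_{A_n}$ and through the relations M6--M9 and those of Example~\ref{ex:sing}, and, most delicately, identify the summands $\mathsf{M}_1$ and $U_5\mathsf{M}_1$ precisely enough to read off the $U_5$-component. The part~(1) identities and the locally empty case of part~(2) are essentially mechanical once the cycle correspondence $Z\mapsto\imath(Z)$ and the re-gluing isomorphism are written out explicitly.
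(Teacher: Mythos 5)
Your proposal is correct and takes essentially the same route as the paper: the paper's proof simply observes that $\mathsf{M}[\mathsf{X}_i]$ is generated by the $x_Z$ with $b(Z)=t(Z)$ and $i+1\notin b(Z)$ and declares the identities clear from the definitions on those generators, which is exactly the generator-by-generator verification you outline (the paper's restriction to $i+1\notin b(Z)$ just trims your case list to the locally empty and $e_1e_3$ types). Your part-(2) computation via the splitting $\mathsf{M}_1\oplus U_5\mathsf{M}_1$ and the substitutions $u_i\mapsto U_5$, $u_{i+1}\mapsto U_2$ (resp.\ $U_3-U_6=U_5-U_4$) is the intended argument.
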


\begin{proof}
The $A_n$-bimodule $\mathsf{M}[\mathsf{X}_i]$ is generated by $x_Z$ for all cycles $Z$ in $\mathsf{X}_i$ such that $b(Z)=t(Z)$ and   $i+1\notin b(Z)$. It is then clear from the definition of $\imath_{\RN{2}}$, $\pi_{\RN{2}}$, $d^-$ and $d^+$ that for any such $Z$ the Equations (1) and (2) hold. 
\end{proof}

%
%


\subsubsection{MOY \rom{1} and MOY \rom{3}} Consider the singular braid $S_{\RN{3}a}^i=\mathsf{X}_{i}\circ\mathsf{X}_{i+1}\circ\mathsf{X}_{i}$ which is obtained from $\mathsf{X}_i$ by applying the MOY \rom{3} move as in Figure \ref{fig:MIII}.  We define the $A_n$-bimodule homomorphism \[\imath_{\RN{3}a}:\mathsf{M}[\mathsf{X}_i]\to\mathsf{M}[S_{\RN{3}a}^i]\cong \mathsf{M}[\mathsf{X}_{i}]\otimes_{A_n}\mathsf{M}[\mathsf{X}_{i+1}]\otimes_{A_n}\mathsf{M}[\mathsf{X}_{i}]\]
as the composition $\imath_{\RN{3}a}=(\mathrm{id}\otimes d^+\otimes \mathrm{id})\circ\imath_{\RN{2}}$ where 
\begin{multline*}
\mathrm{id}\otimes d^+\otimes \mathrm{id}: \mathsf{M}[S_{\RN{2}}^{i}]\cong\mathsf{M}[\mathsf{X}_{i}]\otimes_{A_n}\mathsf{M}[S_{id}]\otimes_{A_n}\mathsf{M}[\mathsf{X}_{i}]\\ \to \mathsf{M}[\mathsf{X}_{i}]\otimes_{A_n}\mathsf{M}[\mathsf{X}_{i+1}]\otimes_{A_n}\mathsf{M}[\mathsf{X}_{i}].
\end{multline*}
Let $\mathsf{M}_1^a=\imath_{\RN{3}a}(\mathsf{M}[\mathsf{X}_{i}])$.

\begin{lemma}\label{lem:submodMOY3}
The homomorphism $\imath_{\RN{3}a}$ is injective and so $\mathsf{M}_1^a\cong\mathsf{M}[\mathsf{X}_i]$. \end{lemma}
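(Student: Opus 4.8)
The plan is to exhibit an explicit left inverse (or retraction) for $\imath_{\RN{3}a}$ built out of the composition-product identifications and the maps $d^-,d^+$ already in play, and to deduce injectivity from this. Recall that by definition $\imath_{\RN{3}a}=(\mathrm{id}\otimes d^+\otimes \mathrm{id})\circ \imath_{\RN{2}}$, where $\imath_{\RN{2}}\colon \mathsf{M}[\mathsf{X}_i]\to\mathsf{M}[S_{\RN{2}}^i]\cong\mathsf{M}[\mathsf{X}_i]\otimes_{A_n}\mathsf{M}[S_{id}]\otimes_{A_n}\mathsf{M}[\mathsf{X}_i]$ is the injection from Lemma~\ref{lem:MII}, and $d^+\colon \mathsf{M}[S_{id}]\to\mathsf{M}[\mathsf{X}_{i+1}]$ is the edge map. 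So I would first construct a retraction $r\colon \mathsf{M}[S_{\RN{3}a}^i]\to \mathsf{M}[\mathsf{X}_i]$ as $r=\pi_{\RN{2}}^{?}\circ(\mathrm{id}\otimes d^-\otimes\mathrm{id})$, i.e. apply the edge map $d^-\colon\mathsf{M}[\mathsf{X}_{i+1}]\to\mathsf{M}[S_{id}]$ to the middle factor to land back in $\mathsf{M}[S_{\RN{2}}^i]$, and then project onto the appropriate summand using the splitting $\mathsf{M}[S_{\RN{2}}^i]\cong\mathsf{M}[\mathsf{X}_i]\{1\}\oplus\mathsf{M}[\mathsf{X}_i]\{-1\}$ of Lemma~\ref{lem:MII}, together with the fact that $\imath_{\RN{2}}$ identifies $\mathsf{M}[\mathsf{X}_i]$ with the first summand $\mathsf{M}_1$.

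The key computation is then $r\circ\imath_{\RN{3}a}=\mathrm{id}$. Expanding, $r\circ\imath_{\RN{3}a}=\pi_{\RN{2}}^{1}\circ(\mathrm{id}\otimes d^-\otimes\mathrm{id})\circ(\mathrm{id}\otimes d^+\otimes\mathrm{id})\circ\imath_{\RN{2}}=\pi_{\RN{2}}^{1}\circ(\mathrm{id}\otimes (d^-\circ d^+)\otimes\mathrm{id})\circ\imath_{\RN{2}}$. By Lemma~\ref{lem:comp}, $d^-\circ d^+$ acting on $\mathsf{M}[S_{id}]$ is the operator $x\mapsto xu_{i+1}-u_{i+2}x$ (up to re-indexing for the elementary braid $\mathsf{X}_{i+1}$), which under the bimodule structure is multiplication by a linear combination of edge variables. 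Since $\imath_{\RN{2}}$ lands in $\mathsf{M}_1\cong\mathsf{M}[\mathsf{X}_i]$ and the relevant $u$-action there is just multiplication by an edge variable (via relations M4, M5), the composite becomes, after applying $\pi_{\RN{2}}^1$, multiplication by some explicit element of $\QQ[U_5,U_6]$ acting on $\mathsf{M}[\mathsf{X}_i]$. I would then check — using the description of the $\QQ[U_5,U_6]$-action in Lemma~\ref{lem:MII} (the $2\times2$ matrix with $U_5\mapsto\begin{bmatrix}0&-u_iu_{i+1}\\1&u_i+u_{i+1}\end{bmatrix}$ and $U_6=u_i+u_{i+1}-U_5$) — that on the distinguished summand this operator is in fact the identity (the off-diagonal/degree-$(-1)$ contributions being killed by the projection $\pi_{\RN{2}}^1$, and the surviving term being exactly $1$). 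Combining the properties in Lemma~\ref{lem:propII}, which record precisely how $d^\pm$ interact with $\imath_{\RN{2}}$ and the projections $\pi^i_{\RN{2}}$, this bookkeeping should go through cleanly; in fact part (2) of Lemma~\ref{lem:propII} already says $\pi_{\RN{2}}^2\circ(\mathrm{id}\otimes d^+)=\mathrm{id}$, which is the crucial cancellation one needs.

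The main obstacle I expect is purely combinatorial/notational: keeping track of which summand of $\mathsf{M}[S_{\RN{2}}^i]$ is hit, and correctly matching up the index shifts (strands $i,i+1$ versus $i+1,i+2$) between the two elementary singular braids $\mathsf{X}_i$ and $\mathsf{X}_{i+1}$ composed in $S_{\RN{3}a}^i=\mathsf{X}_i\circ\mathsf{X}_{i+1}\circ\mathsf{X}_i$, and between the variables $U_5,U_6$ and the edge variables in the middle factor. Once the retraction $r$ is correctly normalized so that $r\circ\imath_{\RN{3}a}=\mathrm{id}$, injectivity of $\imath_{\RN{3}a}$ is immediate, and the isomorphism $\mathsf{M}_1^a\cong\mathsf{M}[\mathsf{X}_i]$ follows since $\imath_{\RN{3}a}$ is by construction a surjection onto $\mathsf{M}_1^a$ and, being split injective, restricts to an isomorphism onto its image.
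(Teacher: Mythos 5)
Your overall strategy---producing a one-sided inverse of the form $\pm\pi_{\RN{2}}^{j}\circ(\mathrm{id}\otimes d^-\otimes\mathrm{id})$ and deducing injectivity from a retraction identity---is viable, and it is essentially the computation the paper records later as Lemma \ref{lem:propIII-1}; it is not circular, since that computation only uses Lemmas \ref{lem:MII}, \ref{lem:comp} and \ref{lem:propII}, not the injectivity being proved. By contrast, the paper's own proof of Lemma \ref{lem:submodMOY3} is more elementary and direct: it evaluates $\imath_{\RN{3}a}$ on each generator, obtaining $\imath_{\RN{3}a}(x_Z)=(U_8-U_3)x_{\ol{Z}}$ when $i+2\notin b(Z)$ and $x_{\ol{Z}}-\epsilon R_{i+2}x_{\ol{u_{i+2}(Z)}}L_{i+2}$ otherwise, and reads injectivity off these formulas; no retraction is needed.

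However, the key computation as you wrote it contains a concrete error: you project with $\pi_{\RN{2}}^1$ and assert that the surviving term is exactly $1$. Under the identifications of Lemma \ref{lem:MII}, $(\mathrm{id}\otimes(d^-\circ d^+)\otimes\mathrm{id})\circ\imath_{\RN{2}}=U_6\,\imath_{\RN{2}}$ with $U_6=u_i+u_{i+1}-U_5$, and since $U_5$ carries the first summand onto the second, the element $U_6\,\imath_{\RN{2}}(x)$ has $\mathsf{M}_1$-component $(u_i+u_{i+1})x$ and $\mathsf{M}_2$-component $-x$. So $\pi_{\RN{2}}^1\circ(\mathrm{id}\otimes d^-\otimes\mathrm{id})\circ\imath_{\RN{3}a}$ is multiplication by $u_i+u_{i+1}$, not the identity; to salvage that choice you would still have to prove injectivity of this multiplication on $\mathsf{M}[\mathsf{X}_i]$, which is not addressed. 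The projection that works is $\pi_{\RN{2}}^2$, giving $\pi_{\RN{2}}^2\circ(\mathrm{id}\otimes d^-\otimes\mathrm{id})\circ\imath_{\RN{3}a}=-\mathrm{id}$, so the correct retraction is $-\pi_{\RN{2}}^2\circ(\mathrm{id}\otimes d^-\otimes\mathrm{id})$, exactly as in Lemma \ref{lem:propIII-1}. This is in fact consistent with the identity from Lemma \ref{lem:propII}(2) that you quote: it is $\pi_{\RN{2}}^2$, not $\pi_{\RN{2}}^1$, that cancels $d^+$ (indeed $\pi_{\RN{2}}^1\circ(\mathrm{id}\otimes d^+)=-U_4\,\mathrm{id}$). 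With the projection corrected and the sign absorbed into the normalization, your argument closes up and gives a legitimate, case-free alternative to the paper's generator-by-generator proof; as written, the claimed identity $r\circ\imath_{\RN{3}a}=\mathrm{id}$ fails.
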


\begin{proof}
For each cycle $Z$ in $\mathsf{X}_i$ there is a unique cycle $\ol{Z}$ in $S_{\RN{3}a}^i$ such that $b(\ol{Z})=b(Z)$, $t(\ol{Z})=t(Z)$ and if $b(Z)\cap\{i,i+1\}\neq\emptyset$ then $e_9\subset \ol{Z}$. It follows from the definition of $d^+$ and $\imath_{\RN{2}}$ that 
\[\imath_{\RN{3}a}(x_Z)=\begin{cases}
\begin{array}{lll}
(U_8-U_3)x_{\ol{Z}}&&\text{if}\ i+2\notin b(Z)\\
x_{\ol{Z}}-\epsilon R_{i+2}x_{\ol{u_{i+2}(Z)}}L_{i+2}&&\text{otherwise}
\end{array}
\end{cases}
\]
where $\epsilon=0$ if $i+3\in b(Z)$, otherwise $\epsilon=1$. It is straightforward that $\imath_{\RN{3}a}$ is injective and thus 
$\mathsf{M}^a_1\cong\mathsf{M}[\mathsf{X}_i]$.
\end{proof}

As before, $\mathcal{Z}\{i_1,...,i_k\}$ denotes the set of cycles that locally contain the edges $e_{i_1},...,e_{i_k}$ for any subset $\{i_1,...,i_k\}\subset \{1,...,9\}$. Let $\mathsf{M}_2^a$ be the $A_n$-bimodule generated by $x_Z$ for every $Z\in\mathcal{Z}\emptyset\cup\mathcal{Z}\{1,9,4\}$.  In particular, $x_Z\in\mathsf{M}_2^a$ for all 

\begin{multline*}
Z\in\mathcal{Z}\emptyset\amalg\mathcal{Z}\{1,9,4\}\amalg\mathcal{Z}\{1,9,5\}\amalg\mathcal{Z}\{2,9,4\}\amalg\mathcal{Z} \{2,9,5\}\amalg\\
\mathcal{Z}\{1,7,6\}\amalg \mathcal{Z}\{2,7,6\}\amalg \mathcal{Z}\{3,8,4\}\amalg\mathcal{Z}\{3,8,5\}.
\end{multline*}
Moreover, if $Z\in\mathcal{Z}\{3,6\}$ then $U_9x_Z\in \mathsf{M}_2$.


\begin{lemma}\label{lem:MIIIsplit}
The $A_n$-bimodule $\mathsf{M}[S_{\RN{3}a}^i]$  splits as the direct sum $\mathsf{M}_1^a\oplus\mathsf{M}_2^a$. 
\end{lemma}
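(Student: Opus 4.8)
The plan is to prove two things: that $\mathsf{M}_1^a=\imath_{\RN{3}a}(\mathsf{M}[\mathsf{X}_i])$ and $\mathsf{M}_2^a$ together generate $\mathsf{M}[S_{\RN{3}a}^i]$ as an $A_n$-bimodule, and that $\mathsf{M}_1^a\cap\mathsf{M}_2^a=0$, so the sum is direct. Both submodules are given by explicit generators of the form $a\,x_Z\,a'$, and $\mathsf{M}[S_{\RN{3}a}^i]$ is generated over $A_n\otimes A_n^{\mathrm{op}}$ by the $x_Z$ for $Z$ a cycle in $S_{\RN{3}a}^i$. Since $S_{\RN{3}a}^i=\mathsf{X}_i\circ\mathsf{X}_{i+1}\circ\mathsf{X}_i$ is the trivial braid away from the three strands $i,i+1,i+2$, it suffices to run through the finite list of local configurations a cycle can take near the triple point of Figure \ref{fig:MIII} and handle each one.

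For the spanning statement I would split the local types into three groups. First, the locally empty type together with $\mathcal{Z}\{1,9,4\}$, $\mathcal{Z}\{1,9,5\}$, $\mathcal{Z}\{2,9,4\}$, $\mathcal{Z}\{2,9,5\}$, $\mathcal{Z}\{1,7,6\}$, $\mathcal{Z}\{2,7,6\}$, $\mathcal{Z}\{3,8,4\}$, $\mathcal{Z}\{3,8,5\}$: these have $x_Z\in\mathsf{M}_2^a$ by the description immediately preceding the lemma, so nothing is required. Second, the types in which the cycle meets slot $i+2$ at both ends so that $e_9\subset Z$: by the computation of $\imath_{\RN{3}a}(x_Z)$ in Lemma \ref{lem:submodMOY3} such a generator equals $\imath_{\RN{3}a}(x_{Z'})$ up to a correction term which, by the first group, already lies in $\mathsf{M}_2^a$, hence $x_Z\in\mathsf{M}_1^a+\mathsf{M}_2^a$. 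Third, the genuinely mixed class $\mathcal{Z}\{3,6\}$: here the statement records that $U_9x_Z\in\mathsf{M}_2^a$, while the formula for $\imath_{\RN{3}a}$ on the corresponding cycle of $\mathsf{X}_i$ puts $(U_8-U_3)x_Z$ (equivalently $x_Z$ minus an $\mathsf{M}_2^a$-correction) into $\mathsf{M}_1^a$; combining these with the defining relations at the relevant $4$-valent vertices — the linear relation $(U_{i(v)}+U_{j(v)})x_Z=(U_{k(v)}+U_{l(v)})x_Z$ and the quadratic relation, which let one solve for $x_Z$ in terms of $U_8x_Z$, $U_3x_Z$, $U_9x_Z$ — shows $x_Z\in\mathsf{M}_1^a+\mathsf{M}_2^a$. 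This exhausts the list.

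For directness I would construct an $A_n$-bimodule retraction $\pi_1\colon\mathsf{M}[S_{\RN{3}a}^i]\to\mathsf{M}[\mathsf{X}_i]$ of $\imath_{\RN{3}a}$, modeled on the projections $\pi_{\RN{2}}^i$ of Lemma \ref{lem:MII}: send each generator in the image of $\imath_{\RN{3}a}$ to the corresponding generator of $\mathsf{M}[\mathsf{X}_i]$ and kill all generators of $\mathsf{M}_2^a$. One then checks that this assignment respects every defining relation of $\mathsf{M}[S_{\RN{3}a}^i]$ (again a local case check at the triple point) and that $\pi_1\circ\imath_{\RN{3}a}=\mathrm{id}$. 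Granting this, $\mathsf{M}[S_{\RN{3}a}^i]=\mathsf{M}_1^a\oplus\ker\pi_1$ with $\mathsf{M}_2^a\subseteq\ker\pi_1$; since $\mathsf{M}_1^a+\mathsf{M}_2^a$ is everything, a standard argument forces $\ker\pi_1=\mathsf{M}_2^a$, giving the claimed splitting.

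The main obstacle is exactly the well-definedness of $\pi_1$, together with the bookkeeping for the $\mathcal{Z}\{3,6\}$ class: one must track how the edge actions $U_j$ move cycles between "$\mathsf{M}_1^a$-type" and "$\mathsf{M}_2^a$-type" near the triple point and verify consistency with relations (M6)--(M9) and with the linear and quadratic relations at the three $4$-valent vertices. This is a longer but entirely parallel version of the local computation already done for the MOY \RN{2} move in Lemma \ref{lem:MII}; once $\pi_1$ is in hand, the rest is formal.
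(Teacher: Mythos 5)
Your overall architecture (spanning plus a retraction to get directness) is reasonable, and the retraction idea for the "direct" half is a legitimate alternative to the paper, which simply asserts disjointness "by checking local relations"; in fact the paper later realizes exactly such a retraction as $-\pi_{\RN{2}}^2\circ(\mathrm{id}\otimes d^-\otimes\mathrm{id})$ in Lemma \ref{lem:propIII-1}, and that composition of already-defined maps is the non-circular way to do what you propose. As you state it, though, "send each generator in the image of $\imath_{\RN{3}a}$ to the corresponding generator and kill all generators of $\mathsf{M}_2^a$" presupposes that no element admits both descriptions, which is the disjointness you are trying to prove; you should instead define the map by an explicit formula on all generators of $\mathsf{M}[S^i_{\RN{3}a}]$ and verify the relations, or use the composition above.

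The genuine gap is in the spanning argument. $\mathsf{M}[S^i_{\RN{3}a}]$ is \emph{not} generated over $A_n\otimes A_n^{\op}$ by the $x_Z$: it is a $\QQ[U_1,\dots,U_9]$-module, and the internal edges $e_7,e_8,e_9$ are invisible to the boundary algebra actions, so elements such as $U_9x_Z$ are not in the bimodule span of the $x_Z$. The first (and essential) step of the paper's proof is to use the linear and quadratic relations at the three $4$-valent vertices to write $U_7=U_1+U_2-U_9$, $U_8=U_4+U_5-U_9$, and $U_9^2=(U_1+U_2)U_9-U_1U_2$, reducing the generating set to $\{x_Z,\,U_9x_Z\}$, and then to chase \emph{both} of these elements into $\mathsf{M}_1^a+\mathsf{M}_2^a$ for each local cycle type. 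Your case analysis only treats $x_Z$. For the locally empty cycles this matters: $\mathsf{M}_2^a$ gives $x_Z$ and $\mathsf{M}_1^a$ gives $(U_3-U_4-U_5+U_9)x_Z$, and one needs the bimodule action of $U_3,U_4,U_5$ to conclude that the span is $\langle x_Z,U_9x_Z\rangle$. For $Z\in\mathcal{Z}\{1,7,6\}$ and its analogues, obtaining $U_9x_Z=(U_1+U_2-U_7)x_Z$ requires showing $U_7x_Z=R_iR_{i+1}x_{U_7(Z)}$ lands in the already-handled class $\mathcal{Z}\{3,6\}$ — none of which appears in your outline. There is also a misattribution in your third group: the formula $\imath_{\RN{3}a}(x_Z)=(U_8-U_3)x_{\overline{Z}}$ applies when $i+2\notin b(Z)$, whereas for $\mathcal{Z}\{3,6\}$ the image is $x_{\overline{Z}}-\epsilon R_{i+2}x_{\overline{u_{i+2}(Z)}}L_{i+2}$ (your parenthetical "$x_Z$ minus an $\mathsf{M}_2^a$-correction" is the correct reading, but one cannot "solve for $x_Z$ in terms of $U_8x_Z$, $U_3x_Z$, $U_9x_Z$" — multiplication by edge variables is not invertible).
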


%
%
%
%
%
%
%
%

%
\begin{proof}

The fact that $\mathsf{M}_{1}^a$ and $\mathsf{M}_{2}^a$ are disjoint follows from checking the local relations, which we leave to the reader. To see that $\mathsf{M}_{1}^a \oplus \mathsf{M}_{2}^a$ spans $\mathsf{M}(S^i_{\RN{3}a})$, note that we can substitute for $U_{7}$, $U_{8}$, and $U_{9}^{2}$ by the relations

\[  
U_{7} = U_{1}+U_{2}-U_{9}, \hspace{5mm} U_{8} = U_{4}+U_{5} - U_{9}, \hspace{5mm} U_{9}^{2} = (U_{1}+U_{2})U_{9} - U_{1}U_{2}
\]

\noindent
The variables $U_{1},...,U_{6}$ are represented in the two algebra actions, so $\mathsf{M}(S^i_{\RN{3}a})$ is generated by $ x_{Z}, U_{9}x_{Z}$ over all cycles $Z$.

For any cycle $Z$ that $i+2\notin t(Z)\cup b(Z)$ i.e. $Z$ does not contain $e_3$ and $e_6$, $\mathsf{M}_2^a$ contains $x_Z$ and $\mathsf{M}_1^a$ contains $ (U_{3}-U_{8})x_{Z} = (U_{3}-U_{4}-U_{5}+U_{9})x_{Z}$. Since $U_{3}$, $U_{4}$, and $U_{5}$ are in the bimodule action, the span of these two elements is $\langle x_{Z}, U_{9}x_{Z} \rangle$. 


If $Z\in\mathcal{Z}\{3,6\}$, then $\mathsf{M}_1^a$ contains $x_Z-\epsilon R_{i+2}x_{u_{3}(Z)}R_{i+2}$ and $\mathsf{M}_2^a$ contains $x_{u_{3}(Z)}$, so $x_Z$ is in the direct sum. Also, $\mathsf{M}_2^a$ contains $U_9x_{Z}$.

If $Z\in\mathcal{Z}\{1,7,6\}$, then $x_Z\in\mathsf{M}_2^a$ and $U_9x_{Z}=(U_1+U_2-U_7)x_Z$. Thus, it suffices to show that $U_7x_Z\in\mathsf{M}_1^a\oplus\mathsf{M}_2^a$. Note that $U_{7}x_{Z}=R_{i}R_{i+1}x_{U_7(Z)}$ and $U_7(Z)\in\mathcal{Z}\{3,6\}$. So $x_{U_7(Z)}$ and consequently $R_{i}R_{i+1}x_{U_{7}(Z)}$ is in $\mathsf{M}_{1}^a \oplus \mathsf{M}_{2}^a$, as well. The proof for the cycles $Z\in\mathcal{Z}\{2,7,6\}\amalg\mathcal{Z}\{3,8,4\}\amalg\mathcal{Z}\{3,8,5\}$ are similar. 

Finally, for any cycle $Z$ in \[\mathcal{Z}\{1,9,4,3,6\}\amalg\mathcal{Z}\{1,9,5,3,6\}\amalg\mathcal{Z}\{2,9,4,3,6\}\amalg\mathcal{Z}\{2,9,5,3,6\},\] $\mathsf{M}_2^a$ contains $x_{u_3(Z)}$, and so $\mathsf{M}_1^a\oplus\mathsf{M}_2^a$ contains $x_{Z}$. Furthermore, $U_9x_Z=0$. Thus, we are done. 
\end{proof}

Similar statements hold for the singular braid $S_{\RN{3}b}^i=\mathsf{X}_{i}\circ\mathsf{X}_{i-1}\circ\mathsf{X}_{i}$ obtained from $\mathsf{X}_i$ by applying a type b MOY \rom{3}, as in Figure \ref{fig:MIII}. Specifically, the $A_n$-bimodule $\mathsf{M}[S_{\RN{3}b}^i]$ splits as the direct sum $\mathsf{M}^b_1\oplus\mathsf{M}_2^b$ where $\mathsf{M}_1^b=\imath_{\RN{3}b}(\mathsf{M}[\mathsf{X}_i])\cong\mathsf{M}[\mathsf{X}_i]$, and $\mathsf{M}_2^b$ gives a cyclic summand of the total complex. 

In addition, the summands $\mathsf{M}_2^a$ and $\mathsf{M}_2^b$ of $\mathsf{M}[S_{\RN{3}a}^i]$ and $\mathsf{M}[S_{\RN{3}b}^{i+1}]$ are isomorphic, with isomorphisms
\begin{equation}\label{eq:isomofcyclic}
j_{ab}:\mathsf{M}_2^a\to\mathsf{M}_2^b\ \ \ \text{and}\ \ \ j_{ba}:\mathsf{M}_2^b\to\mathsf{M}_2^a,
\end{equation}
defined as follows. For every cycle $Z\in\mathcal{Z}\emptyset\amalg\mathcal{Z}\{1,9,4\}$ of $S_{\RN{3}a}^i$ there is a unique cycle $Z'\in\mathcal{Z}\emptyset\amalg\mathcal{Z}\{1,4\}$ of $S_{\RN{3}b}^i$ such that $b(Z)=b(Z')=t(Z)=t(Z')$ and vice versa. We set $j_{ab}(x_Z)=x_{Z'}$ and $j_{ba}(x_{Z'})=x_Z$. By checking the local relations, it is straightforward that these relations induce well-defined isomorphisms as above.

\begin{corollary}\label{cor:IIIcap}
Let $S$ be the singular tangle obtained by applying a MOY \rom{3} move to the singular cap $\mycap_i$ (resp. cup $\mycup_i$), as in Figure \ref{fig:MIII}. Then, there is an injective homomorphism from $\mathsf{M}[\mycap_i]$ (resp. cup $\mathsf{M}[\mycup_i]$) to $\mathsf{M}[S]$ 
such that its corresponding short exact sequence splits. 
\end{corollary}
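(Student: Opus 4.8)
The plan is to treat the cap (respectively cup) case exactly as the braid case in Lemma~\ref{lem:MIIIsplit}, tensoring the splitting $\mathsf{M}[S^i_{\RN{3}a}]\cong\mathsf{M}^a_1\oplus\mathsf{M}^a_2$ against the appropriate cup/cap module. Concretely, suppose $S$ is obtained by applying the MOY~\rom{3} move near a singular cap $\mycap_i$; then $S$ differs from $\mathsf{X}_j\circ\mathsf{X}_{j\pm1}\circ\mathsf{X}_j\circ\mycap_i$ only in a region disjoint from the cap (or, if the move interacts with the cap, in the cup/cap version of the $S^i_{\RN{3}}$ picture, for which the same local-relation bookkeeping applies). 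Using Theorem~5 (composition product, $\mathsf{M}[T_1\circ T_2]\cong\mathsf{M}[T_1]\otimes_{A_n}\mathsf{M}[T_2]$) I would write
\[\mathsf{M}[S]\cong\mathsf{M}[S^i_{\RN{3}a}]\otimes_{A_n}\mathsf{M}[\mycap_i]\cong\bigl(\mathsf{M}^a_1\oplus\mathsf{M}^a_2\bigr)\otimes_{A_n}\mathsf{M}[\mycap_i],\]
and then define the injection $\mathsf{M}[\mycap_i]\to\mathsf{M}[S]$ as $(\imath_{\RN{3}a}\otimes\mathrm{id})$ composed with the identification above, landing in $\mathsf{M}^a_1\otimes_{A_n}\mathsf{M}[\mycap_i]$. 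The splitting of the associated short exact sequence is then inherited: the retraction is $(\imath_{\RN{3}a}^{-1}\circ\pi_{\mathsf{M}^a_1})\otimes\mathrm{id}$, where $\pi_{\mathsf{M}^a_1}$ is the projection in the direct sum decomposition of Lemma~\ref{lem:MIIIsplit}.

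First I would verify that $\imath_{\RN{3}a}$, being an $A_n$-bimodule homomorphism, remains injective after applying $-\otimes_{A_n}\mathsf{M}[\mycap_i]$; this is where a little care is needed, since tensoring is only right exact in general. However, Lemma~\ref{lem:MIIIsplit} gives $\mathsf{M}[S^i_{\RN{3}a}]\cong\mathsf{M}^a_1\oplus\mathsf{M}^a_2$ as $A_n$-bimodules, so the inclusion $\imath_{\RN{3}a}\colon\mathsf{M}[\mathsf{X}_i]\xrightarrow{\sim}\mathsf{M}^a_1\hookrightarrow\mathsf{M}^a_1\oplus\mathsf{M}^a_2$ is a \emph{split} injection of bimodules; split injections are preserved by any additive functor, in particular by $-\otimes_{A_n}\mathsf{M}[\mycap_i]$. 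That immediately produces both the injection $\mathsf{M}[\mathsf{X}_i]\otimes_{A_n}\mathsf{M}[\mycap_i]\hookrightarrow\mathsf{M}[S]$ and a splitting of its cokernel sequence. Finally, by Lemma~\ref{lem:cap} we have $\mathsf{M}[\mathsf{X}_i]\otimes_{A_n}\mathsf{M}[\mycap_i]\cong\mathsf{M}[\mycap_i]\{1\}\oplus\mathsf{M}[\mycap_i]\{-1\}$ when $i$ is odd, but for the MOY~\rom{3} statement we only need that the source is the module obtained from $\mathsf{M}[\mycap_i]$ by the MOY~\rom{3}-inverse move, i.e. $\mathsf{M}[\mycap_i]$ itself composed with $\mathsf{X}_i$; tracing through the definition of $\imath_{\RN{3}a}$ as $(\mathrm{id}\otimes d^+\otimes\mathrm{id})\circ\imath_{\RN{2}}$ shows the image is indeed the bimodule assigned to the cap with a single $\mathsf{X}_i$ attached. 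The cup case is identical after reversing left/right actions, using Lemma~\ref{lem:cup} in place of Lemma~\ref{lem:cap}.

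The step I expect to be the main obstacle is confirming that the decomposition $\mathsf{M}[S]\cong\mathsf{M}^a_1\otimes_{A_n}\mathsf{M}[\mycap_i]\oplus\mathsf{M}^a_2\otimes_{A_n}\mathsf{M}[\mycap_i]$ is the \emph{correct} identification of summands --- that is, that after tensoring, the summand $\mathsf{M}^a_1\otimes_{A_n}\mathsf{M}[\mycap_i]$ really is the image of $\mathsf{M}[\mycap_i]$ under the map induced by $\imath_{\RN{3}a}$ and coincides (up to grading shift) with the bimodule for the cap-with-$\mathsf{X}_i$ tangle. This requires re-examining the explicit generators $x_{\ol Z}$ from the proof of Lemma~\ref{lem:submodMOY3} and checking, cycle by cycle, that the cup/cap relations C'1--C'3 (respectively C1--C3) do not create unexpected interactions with the $U_7,U_8,U_9$ substitutions used there. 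This is routine local-relation bookkeeping of exactly the kind already deferred to the reader in Lemma~\ref{lem:MIIIsplit}, so I would again check a representative family of cycles (the locally empty ones, those in $\mathcal{Z}\{3,6\}$, and those meeting the cap) and state that the remaining cases are analogous. No genuinely new idea beyond Lemma~\ref{lem:MIIIsplit} and the composition product is required; the content is purely the observation that a split bimodule injection stays split after tensoring.
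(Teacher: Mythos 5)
There is a genuine gap, and it lies precisely in the case the corollary is about. When the MOY \rom{3} move involves the cap itself, the resulting tangle is $S=\mathsf{X}_i\circ\mathsf{X}_{i+1}\circ\mycap_i$ (the cap plays the role of one of the three singular vertices, so only two new $4$-valent vertices appear), \emph{not} $S_{\RN{3}a}^i\circ\mycap_i=\mathsf{X}_i\circ\mathsf{X}_{i+1}\circ\mathsf{X}_i\circ\mycap_i$. Your central identification $\mathsf{M}[S]\cong\mathsf{M}[S^i_{\RN{3}a}]\otimes_{A_n}\mathsf{M}[\mycap_i]$ is therefore false: by Lemma \ref{lem:cap} (the MOY \rom{2} splitting at a cap) one has instead
\[\mathsf{M}[S^i_{\RN{3}a}]\otimes_{A_n}\mathsf{M}[\mycap_i]\cong\mathsf{M}[S]\{1\}\oplus\mathsf{M}[S]\{-1\},\]
and likewise the source of your map is $\mathsf{M}[\mathsf{X}_i]\otimes_{A_n}\mathsf{M}[\mycap_i]\cong\mathsf{M}[\mycap_i]\{1\}\oplus\mathsf{M}[\mycap_i]\{-1\}$, not $\mathsf{M}[\mycap_i]$. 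Your observation that a split bimodule injection stays split under $-\otimes_{A_n}\mathsf{M}[\mycap_i]$ is correct and does give a split injection $\mathsf{M}[\mathsf{X}_i\circ\mycap_i]\hookrightarrow\mathsf{M}[S^i_{\RN{3}a}\circ\mycap_i]$, but that is a map between two doubled modules, which is not yet the statement of the corollary; your closing remark that ``the source is $\mathsf{M}[\mycap_i]$ itself composed with $\mathsf{X}_i$'' suggests you have in fact proved a different statement. Your hedge that the move might occur ``in a region disjoint from the cap'' avoids the issue only in the trivial case; the parenthetical deferral of the interacting case to ``the same local-relation bookkeeping'' is exactly where the content lives.

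What is missing is the step the paper supplies: apply Lemma \ref{lem:cap} to split \emph{both} the source $\mathsf{M}[\mathsf{X}_i\circ\mycap_i]$ and the target $\mathsf{M}[S_{\RN{3}a}^i\circ\mycap_i]$ into two shifted copies of $\mathsf{M}[\mycap_i]$ and $\mathsf{M}[S]$ respectively, check that $\imath_{\RN{3}a}\otimes\mathrm{id}$ respects these two splittings (this is where the cycle-level description of $\imath_{\RN{3}a}$ from Lemma \ref{lem:submodMOY3} is actually used), and then define $\imath\colon\mathsf{M}[\mycap_i]\to\mathsf{M}[S]$ as the induced map on a summand; injectivity and the splitting of its short exact sequence are then inherited from $\imath_{\RN{3}a}$ as you argue. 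So your tensoring strategy is the right starting point and coincides with the paper's first move, but without the compatibility-with-the-Lemma-\ref{lem:cap}-splitting step the argument does not produce a map out of $\mathsf{M}[\mycap_i]$ into $\mathsf{M}[S]$ at all.
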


\begin{proof} Without loss of generality, we assume that $S=\mathsf{X}_i\circ\mathsf{X}_{i+1}\circ\mycap_i$ i.e. the singular tangle obtained from applying type a MOY \rom{3} move to $\mycap_i$. Let $S'=S_{\RN{3}a}^i\circ\mycap_i$, and consider the $A_n$-bimodule homomorphism 
\[\imath_{\RN{3}a}\otimes\mathrm{id}:\mathsf{M}[\mathsf{X}_i]\otimes_{A_n}\mathsf{M}[\mycap_i]\to \mathsf{M}[S']\cong\mathsf{M}[S_{\RN{3}a}^i]\otimes_{A_n}\mathsf{M}[\mycap_i].\]
By Lemma \ref{lem:cap} we have 
\[
\begin{split}
&\mathsf{M}[S']\cong \mathsf{M}[S]\{1\}\oplus\mathsf{M}[S]\{-1\}\\
&\mathsf{M}[\mathsf{X}_i\circ\mycap_i]\cong\mathsf{M}[\mycap_i]\{1\}\oplus\mathsf{M}[\mycap_i]\{-1\}
\end{split}
\]
It is clear from the definition that $\imath_{\RN{3}a}\otimes\mathrm{id}$ respects these splittings, and induces an $A_n$-bimodule homomorphism
\[\imath:\mathsf{M}[\mycap_i]\to\mathsf{M}[S].\]
Then, injectivity of $\imath_{\RN{3}a}$ implies injectivity of $\imath$ and since the short exact sequence for $\imath_{\RN{3}a}$ splits, the short exact sequence for $\imath$ also splits. 
\end{proof}


\begin{lemma}\label{lem:MIII}
Let $S$ and $S'$ be singular braids such that $\pc(S')$ is obtained from $\pc(S)$ by the MOY \rom{3} local move in Figure \ref{fig:MIII}. Then, the induced chain map from $C_{1\pm1}(S)$ to $C_{1\pm 1}(S')$ by the homomorphism $\imath$ from Lemma \ref{lem:submodMOY3} or Corollary \ref{cor:IIIcap} is a quasi-isomorphism. 
\end{lemma}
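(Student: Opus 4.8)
The plan is to reduce the statement to the splitting results already established and a direct analysis of the ``extra'' cyclic summand $\mathsf{M}_2^a$ (or $\mathsf{M}_2^b$). First I would set up the mapping cone picture: since $\pc(S')$ is obtained from $\pc(S)$ by a MOY \rom{3} move, locally $S'$ is built from $S$ by replacing $\mathsf{X}_i$ with $S_{\RN{3}a}^i=\mathsf{X}_i\circ\mathsf{X}_{i+1}\circ\mathsf{X}_i$ (or the type-b analogue), and by Lemma \ref{lem:MIIIsplit} there is an $A_n$-bimodule splitting $\mathsf{M}[S_{\RN{3}a}^i]\cong\mathsf{M}_1^a\oplus\mathsf{M}_2^a$ with $\mathsf{M}_1^a\cong\mathsf{M}[\mathsf{X}_i]$ via the injection $\imath=\imath_{\RN{3}a}$. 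Tensoring with the fixed bimodules $\mathsf{M}[S_1]$, $\mathsf{M}[S_2]$ (and with $\mathsf{K}(D)$, which is untouched by the move), and using the composition product theorem (Theorem on $\mathsf{M}[S_1\circ S_2]\cong\mathsf{M}[S_1]\otimes_{A_n}\mathsf{M}[S_2]$), the splitting persists at the level of the full cube complexes: $C_{1\pm1}(S')\cong \Cone\big(\imath\big)$ in an appropriate sense, or more precisely $C_{1\pm1}(S')$ decomposes as $\imath(C_{1\pm1}(S))\oplus (\text{summand built from }\mathsf{M}_2^a)$ as a complex, provided the edge differentials $d_1$ (i.e. $d^\pm$) respect this decomposition up to the contribution we must analyze.

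The key steps, in order, are: (1) check that the chain map induced by $\imath$ is compatible with the full differential $d_0+d_1$ of $C_{1\pm1}$ --- the $d_0$-compatibility is immediate since $\imath$ is a bimodule homomorphism and $\mathsf{K}(D)$ is fixed, while the $d_1$-compatibility amounts to checking, vertex by vertex in the cube, that $\imath$ commutes with the edge maps $d^+$, $d^-$ on the tangle factors, using that $\imath_{\RN{3}a}=(\mathrm{id}\otimes d^+\otimes\mathrm{id})\circ\imath_{\RN{2}}$ together with Lemma \ref{lem:propII} ($d^+\circ d^-$ and $d^-\circ d^+$ act as the $U$-difference operators) and Lemma \ref{lem:comp}; (2) identify the complementary summand coming from $\mathsf{M}_2^a$ --- by the explicit generator description in Lemma \ref{lem:MIIIsplit} and the isomorphisms $j_{ab}$, $j_{ba}$ of \eqref{eq:isomofcyclic}, this summand, viewed with its inherited $d_1$-differential across the affected crossing, is itself a mapping cone of an isomorphism (the ``$R_{i+1}U_3$'' and ``$L_{i+1}$'' type maps pair the $\mathcal{Z}\emptyset$ generators with the $\mathcal{Z}\{1,9,4\}$ generators), hence contractible; (3) conclude that $\imath$ induces a quasi-isomorphism $C_{1\pm1}(S)\to C_{1\pm1}(S')$. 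The cap/cup cases are handled identically, substituting Corollary \ref{cor:IIIcap} for Lemma \ref{lem:submodMOY3}.

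The main obstacle I anticipate is step (2): showing that the complementary summand $\mathsf{M}_2^a$, once equipped with the edge differential inherited from the cube of resolutions of the MOY \rom{3} region (which involves the maps $d^+,d^-$ at \emph{two} of the three crossings $\mathsf{X}_i,\mathsf{X}_{i+1},\mathsf{X}_i$), is acyclic as a complex --- not merely split as a bimodule. This requires carefully tracking how the generators of $\mathsf{M}_2^a$ enumerated in Lemma \ref{lem:MIIIsplit} (the nine families $\mathcal{Z}\{1,9,5\}$, $\mathcal{Z}\{2,9,4\}$, etc., plus $U_9$ times the $\mathcal{Z}\{3,6\}$ generators) get matched up by the edge maps into a filtered complex whose associated graded is a sum of two-term acyclic pieces; concretely one expects to exhibit a homotopy built from the ``wrong-way'' maps $R_{i+1}$, $L_{i+1}$, $R_{i+2}$, $L_{i+2}$ on these local cycles. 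I would organize this by introducing a filtration of $\mathsf{M}_2^a$ by the number of edges of the cycle lying in the central strand region, reduce to the associated graded, and verify acyclicity there by direct inspection of finitely many local cases --- the same kind of bookkeeping already carried out in the proof of Lemma \ref{lem:MIIIsplit}, so the tools are in place even though the computation is lengthy.
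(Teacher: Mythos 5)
Your first step (use the splitting $\mathsf{M}[S_{\RN{3}a}^i]\cong\mathsf{M}_1^a\oplus\mathsf{M}_2^a$ of Lemma \ref{lem:MIIIsplit}, tensor with the untouched factors, and reduce to showing the complementary summand $C_2$ is acyclic) is the same as the paper's. But there are two problems with how you propose to finish. First, you have misread the setting: in this lemma $S$ and $S'$ are \emph{complete resolutions} (singular braids), so $C_{1\pm1}(S')=\mathscr{M}(S')\otimes\mathsf{K}(S')$ carries only the Koszul differential $d_0$; there is no cube of resolutions here and no edge maps $d^{\pm}$ appearing in the differential. The maps $d^+$ enter only in the \emph{definition} of $\imath_{\RN{3}a}=(\mathrm{id}\otimes d^+\otimes\mathrm{id})\circ\imath_{\RN{2}}$, not as components of $d$. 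Consequently your step (1) ($d_1$-compatibility) is vacuous, and your step (2) --- analyzing ``the edge differential inherited from the cube of resolutions of the MOY \rom{3} region'' on $\mathsf{M}_2^a$ --- is analyzing a differential that is not there. The differential you actually need to control on $C_2$ is the closing-off Koszul differential, and the two-term cancelling pairs you describe (built from $R_{i+1}$, $L_{i+1}$, etc.) are not components of it.

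Second, even granting a corrected setup, your acyclicity argument for $C_2$ is only a plan (``one expects to exhibit a homotopy\dots''), and it is exactly the step you flag as the main obstacle. The paper avoids this computation entirely with a dimension count: by Corollary \ref{KhovCor}, $H_{1+1}(S)\cong Kh(\sm(\pc(S)))$ and $H_{1+1}(S')\cong Kh(\sm(\pc(S')))$ as graded vector spaces, and since the MOY \rom{3} move does not change the underlying smoothed link diagram up to isotopy, $\dim H_{1+1}(S)=\dim H_{1+1}(S')$. The splitting gives $H_*(C_1)\cong H_{1+1}(S)$, so $C_1$ already accounts for all of $H_{1+1}(S')$, forcing $C_2$ to be acyclic with no local homotopy needed. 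I would recommend replacing your step (2) with this global argument; if you insist on a direct proof of acyclicity of $C_2$, you must first correctly identify its differential as the restriction of $1\otimes d_{\mathsf{K}}$ and then the natural tool is the basepoint filtration of Section \ref{bpsect}, not the edge maps.
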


\begin{proof}
By Lemmas \ref{lem:submodMOY3} and \ref{lem:MIIIsplit}, and Corollary \ref{cor:IIIcap}, we get a splitting $C_{1\pm1}(S')=C_1\oplus C_2$ so that $H_\star(C_1)=H_{1+1}(S)$. By applying Corollary \ref{KhovCor}, $\dim H_{1+1}(S) = \dim H_{1+1}(S')$, so all of the homology must come from the complex $C_{1}$. Thus, $C_2$ is acyclic and the chain map induced by $\imath$ is a quasi-isomorphism. 
\end{proof}

\begin{corollary}(MOY \rom{1}) Assume $S$ and $S'$ be singular braids, such that $\pc(S')$ is obtained from $\pc(S)$ by one of the local moves in Figure \ref{fig:MI}. Then, $H_{1+1}(S)\cong H_{1\pm 1}(S')$.
\end{corollary}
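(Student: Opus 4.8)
The plan is to reduce the MOY I move to the MOY III move, exactly as is standard in the Khovanov--Rozansky / MOY calculus. Recall from Figure \ref{fig:MIII} and Figure \ref{fig:MI} that each MOY I configuration (a "digon" or "bigon" formed by two adjacent $4$-valent vertices sharing a pair of edges) can be obtained from a simpler diagram by first applying a MOY II move to introduce an extra singularization, then applying a MOY III move to slide it through, and finally applying a MOY II move in reverse to remove it. Concretely, if $\pc(S')$ contains a MOY I digon while $\pc(S)$ is its resolution, one finds an intermediate singular braid $S''$ such that $\pc(S'')$ differs from $\pc(S')$ by a MOY III move and differs from $\pc(S)\sqcup(\text{trivial strand})$, or from $\pc(S)$ with an inserted $\mathsf{X}_i$, by MOY II moves.

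First I would fix the local picture: write the MOY I region in terms of the elementary singular braids, identifying it (after the standard isotopy) with $\mathsf{X}_i\circ\mathsf{X}_{i+1}\circ\mathsf{X}_i\circ\cdots$ matched against a single $\mathsf{X}_i$ or an identity strand. Then I would invoke Lemma \ref{lem:MIII} to replace the MOY III sub-configuration, obtaining $H_{1+1}(S')\cong H_{1+1}(\tilde S)$ where $\tilde S$ now contains a pair of stacked singularizations $S_{\RN{2}}^i$. Next I would apply Corollary \ref{cor:MII} (together with Lemmas \ref{lem:cap} and \ref{lem:cup} and their Corollary when the move occurs at a cup or cap) to collapse $S_{\RN{2}}^i$ back down: $H_{1+1}(\tilde S)\cong H_{1+1}(S)\otimes_{\QQ}\mathcal{A}\{1\}$ or, depending on which MOY I configuration is in play, the factor of $\mathcal{A}\{1\}$ is cancelled against the factor introduced by a MOY 0 / MOY II deletion elsewhere in the chain of moves, yielding $H_{1+1}(S')\cong H_{1+1}(S)$. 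Since each of these isomorphisms is realized by an explicit $A_n$-bimodule (hence $U_i$-equivariant) chain map, composing them gives the desired $U_i$-equivariant isomorphism on homology, not merely an abstract isomorphism of graded vector spaces.

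The main obstacle I expect is purely bookkeeping rather than conceptual: there are several inequivalent local pictures grouped under "MOY I" in Figure \ref{fig:MI} (reflections, and the cases where the digon sits in the interior of the braid versus adjacent to a cup or a cap), and for each one must exhibit the precise sequence of MOY II and MOY III moves and verify that the net quantum-grading shifts and the net number of $\mathcal{A}$-factors cancel to give $\cong$ on the nose. The cup/cap cases in particular require Corollary \ref{cor:IIIcap} in place of Lemma \ref{lem:submodMOY3}, and one must check that the splittings $\mathsf{M}_1\oplus\mathsf{M}_2$ from Lemma \ref{lem:MIIIsplit} are compatible with the cup/cap tensor factors, which is where a little care is needed. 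A clean way to sidestep most of this is to note, via Corollary \ref{KhovCor}, that $\dim H_{1+1}(S)=\dim H_{1+1}(S')$ as graded vector spaces, so it suffices to produce an explicit injective (or surjective) bimodule map between the two complexes and invoke the dimension count to conclude it is a quasi-isomorphism — the same device used in the proof of Lemma \ref{lem:MIII}. I would therefore construct the composite inclusion $\mathsf{M}[S]\hookrightarrow\mathsf{M}[S']$ coming from $\imath_{\RN{3}}$ and $\imath_{\RN{2}}$, check it is a chain map commuting with the $U_i$-actions, and finish by the dimension argument.
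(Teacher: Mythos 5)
Your strategy is essentially the paper's: the paper forms the intermediate diagram $S''=S'\circ\mathsf{X}_1$, observes that $\pc(S'')$ is obtained from $\pc(S)\amalg\pc(\mathcal{U}_2)$ by a MOY III move and from $\pc(S')$ by a MOY II move, and cancels the resulting $\mathcal{A}\{1\}$ factors, i.e.\ exactly the intermediate-diagram reduction (MOY 0 + III + II) you outline, with the dimension count of Corollary \ref{KhovCor} already doing the work inside Lemma \ref{lem:MIII}. The only correction to your bookkeeping is that the roles are reversed: $S''$ differs from $S'$ by MOY II (not III) and from $\pc(S)\amalg\pc(\mathcal{U}_2)$ by MOY III, as forced by the vertex count, since $S'$ itself does not contain a MOY III configuration until the extra singularization is added.
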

\begin{proof}
Suppose $S'$ and $S$ differ by a type a MOY \rom{1} move, and the extra $4$-valent vertex of $S'$ is a crossing between second and third strands. Let $S''=S'\circ \mathsf{X}_1$. Then, $\pc(S'')$ is obtained from $\pc(S)\amalg \pc(\mathcal{U}_2)$ by a MOY \rom{3} move, so $\imath_{\star}$ induces an isomorphism $H_{1+1}(S'')\cong H_{1+1}(S)\otimes_{\QQ}\mathcal{A}\{1\}$. On the hand, $\pc(S'')$ differs from $\pc(S')$ by a MOY \rom{2} move. Thus, $H_{1+1}(S'')\cong H_{1+1}(S')\otimes_{\QQ}\mathcal{A}\{1\}$. It is straightforward, that $\imath_{\star}$ induces and isomorphism between $H_{1+1}(S')$ and $H_{1+1}(S)$.

\end{proof}


Finally, we will review some properties of this splitting that will be used in Section \ref{sec:invariance} for proving invariance.

For $\circ\in\{a,b\}$, let $\pi_{\RN{3}\circ}^1$ be the map from $\mathsf{M}[S_{\RN{3}\circ}^i]$ to $\mathsf{M}[\mathsf{X}_i]$ defined by composing the projection on $\mathsf{M}_1^\circ$ with $\imath_{\RN{3}\circ}^{-1}$, and $\pi_{\RN{3}\circ}^2$ be the projection on $\mathsf{M}_2^\circ$.

\begin{lemma}\label{lem:propIII-1}
For every $i$ and $\circ=a,b$ we have:
\[\pi_{\RN{3}\circ}^1=-\pi_{\RN{2}}^2\circ (\mathrm{id}\otimes d^-\otimes\mathrm{id})\]
\end{lemma}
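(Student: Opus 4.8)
The plan is to verify the asserted identity of $A_n$--bimodule maps $\mathsf{M}[S_{\RN{3}\circ}^i]\to\mathsf{M}[\mathsf{X}_i]$ by splitting the source. Since both $\pi_{\RN{3}\circ}^1$ and $-\pi_{\RN{2}}^2\circ(\mathrm{id}\otimes d^-\otimes\mathrm{id})$ respect the left and right $A_n$--actions, and since $\mathsf{M}[S_{\RN{3}\circ}^i]=\mathsf{M}_1^\circ\oplus\mathsf{M}_2^\circ$ by Lemma~\ref{lem:MIIIsplit}, it suffices to compare the two maps on $\mathsf{M}_1^\circ$ and on $\mathsf{M}_2^\circ$ separately. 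I will carry out $\circ=a$ in detail; the case $\circ=b$ is entirely analogous, using $\mathsf{M}_1^b=\imath_{\RN{3}b}(\mathsf{M}[\mathsf{X}_i])$ in place of $\mathsf{M}_1^a$.

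On $\mathsf{M}_1^a=\imath_{\RN{3}a}(\mathsf{M}[\mathsf{X}_i])$ the left-hand map is by construction $\imath_{\RN{3}a}^{-1}$, so $\pi_{\RN{3}a}^1\circ\imath_{\RN{3}a}=\mathrm{id}$; the point is to show $-\pi_{\RN{2}}^2\circ(\mathrm{id}\otimes d^-\otimes\mathrm{id})\circ\imath_{\RN{3}a}=\mathrm{id}$ as well. First I would substitute $\imath_{\RN{3}a}=(\mathrm{id}\otimes d^+\otimes\mathrm{id})\circ\imath_{\RN{2}}$, so that the composite becomes $(\mathrm{id}\otimes(d^-_{i+1}d^+_{i+1})\otimes\mathrm{id})\circ\imath_{\RN{2}}$, and then invoke Lemma~\ref{lem:comp} with index $i+1$: on the middle factor $\mathsf{M}[S_{id}]$ one has $d^-_{i+1}d^+_{i+1}(y)=yu_{i+1}-u_{i+2}y$. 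Evaluating on a generator $x_Z$ and writing $\imath_{\RN{2}}(x_Z)=x_{Z_1}\otimes x_{Z_{\mathrm{mid}}}\otimes x_{Z_2}$ for the decomposition $S_{\RN{2}}^i=\mathsf{X}_i\circ S_{id}\circ\mathsf{X}_i$, relations M4 and M5 rewrite $x_{Z_{\mathrm{mid}}}u_{i+1}-u_{i+2}x_{Z_{\mathrm{mid}}}$ as $(U_6-u_{i+2})x_{Z_{\mathrm{mid}}}$, where $e_6$ is the right middle edge of Figure~\ref{fig:MII}; hence $(\mathrm{id}\otimes d^-\otimes\mathrm{id})\circ\imath_{\RN{3}a}(x)=(U_6-u_{i+2})\imath_{\RN{2}}(x)$. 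Next I would apply $U_6=u_i+u_{i+1}-U_5$ on $\mathsf{M}[S_{\RN{2}}^i]$ (this is Lemma~\ref{lem:MII}, equivalently the M3 relation at the lower vertex together with $U_1=u_i$, $U_2=u_{i+1}$) to get $-U_5\,\imath_{\RN{2}}(x)+(u_i+u_{i+1}-u_{i+2})\,\imath_{\RN{2}}(x)$. The second summand lies in $\mathsf{M}_1=\imath_{\RN{2}}(\mathsf{M}[\mathsf{X}_i])$, an $A_n$--subbimodule, hence is killed by $\pi_{\RN{2}}^2$; and $\pi_{\RN{2}}^2(U_5\,\imath_{\RN{2}}(x))=x$ by the identification $\mathsf{M}_2=U_5\mathsf{M}_1\cong\mathsf{M}[\mathsf{X}_i]$ of Lemma~\ref{lem:MII}. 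Applying $-\pi_{\RN{2}}^2$ therefore returns $x=\pi_{\RN{3}a}^1\circ\imath_{\RN{3}a}(x)$, which is the claim on $\mathsf{M}_1^a$.

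On $\mathsf{M}_2^a$ the left-hand side vanishes identically, since $\mathsf{M}_2^a=\ker\pi_{\RN{3}a}^1$, so it remains to check that $\pi_{\RN{2}}^2\circ(\mathrm{id}\otimes d^-\otimes\mathrm{id})$ vanishes on $\mathsf{M}_2^a$ too, i.e.\ that $(\mathrm{id}\otimes d^-\otimes\mathrm{id})$ carries $\mathsf{M}_2^a$ into the summand $\mathsf{M}_1$ of $\mathsf{M}[S_{\RN{2}}^i]$. For this I would run through the generators $x_Z$ of $\mathsf{M}_2^a$ listed in Lemma~\ref{lem:MIIIsplit} (the cycles in $\mathcal{Z}\emptyset\sqcup\mathcal{Z}\{1,9,4\}$, together with the edge–variable multiples $U_9x_Z$ for $Z\in\mathcal{Z}\{3,6\}$), apply the four–case definition of $d^-$ to the middle factor, and observe that each output is an edge–variable times some $\imath_{\RN{2}}(x_{Z'})$, which lies in $\mathsf{M}_1$. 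This is a finite, mechanical check; I would record one representative case (say $Z\in\mathcal{Z}\emptyset$) and leave the remainder to the reader. Combining the two summands then establishes the identity on all of $\mathsf{M}[S_{\RN{3}a}^i]$.

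I expect the only real obstacle to be organizational rather than conceptual: keeping the index shift for the middle crossing $\mathsf{X}_{i+1}$ consistent, so that Lemma~\ref{lem:comp} and the M3 relation are applied to the correct edges $e_5,e_6$ (and $u_{i+1},u_{i+2}$), and, in the $\mathsf{M}_2^a$ step, tracking which summand of $\mathsf{M}[S_{\RN{2}}^i]$ each $d^-$–image lands in. Once the edge conventions of Figures~\ref{fig:MII} and~\ref{fig:MIII} are fixed, no new idea beyond the computation of $d^-d^+$ and the $U_5$–action already in hand is needed.
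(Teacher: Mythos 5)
Your proposal is correct and follows essentially the same route as the paper: the core step --- writing $\imath_{\RN{3}\circ}=(\mathrm{id}\otimes d^{+}\otimes\mathrm{id})\circ\imath_{\RN{2}}$, applying Lemma \ref{lem:comp} to identify $(\mathrm{id}\otimes d^{-}d^{+}\otimes\mathrm{id})\circ\imath_{\RN{2}}$ with $(U_6-u_{i+2})\,\imath_{\RN{2}}$, and using the linear relation $U_6=u_i+u_{i+1}-U_5$ together with the splitting $\mathsf{M}_1\oplus U_5\mathsf{M}_1$ so that only the $-U_5\,\imath_{\RN{2}}$ term survives $\pi_{\RN{2}}^2$ --- is exactly the paper's computation. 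Your extra verification that both sides vanish on $\mathsf{M}_2^\circ$ (which, since both maps are $A_n$-bimodule homomorphisms, only needs to be checked on the bimodule generators $x_Z$ with $Z\in\mathcal{Z}\emptyset\cup\mathcal{Z}\{1,9,4\}$, whose images under $\mathrm{id}\otimes d^-\otimes\mathrm{id}$ land in $\imath_{\RN{2}}(\mathsf{M}[\mathsf{X}_i])=\mathsf{M}_1$) is left implicit in the paper, so your write-up is, if anything, slightly more complete.
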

\begin{proof}
By definition,
\[\begin{split}
\pi_{\RN{2}}^2\circ(\mathrm{id}\otimes d^-\otimes \mathrm{id})\circ \imath_{\RN{3}a}&=\pi_{\RN{2}}^2\circ(\mathrm{id}\otimes d^-\otimes \mathrm{id})\circ (\mathrm{id}\otimes d^+\otimes \mathrm{id})\circ\imath_{\RN{2}}\\
&=\pi_{\RN{2}}^2\circ (U_6 \imath_{\RN{2}})=  \pi_{\RN{2}}^2\circ ((U_3+U_4-U_5) \imath_{\RN{2}})=-\mathrm{id}.
\end{split}\]
Note that in the second line of the above equalities, we are considering $S_{\RN{2}}^i$ with the labeling as in Figure \ref{fig:MII}.
\end{proof}

Let $S=\mathsf{X}_i\circ \mathsf{X}_{i+1}$. Then, 
\[S_{\RN{3}a}^i=S\circ \mathsf{X}_i\quad \text{and}\quad S_{\RN{3}b}^{i+1}=\mathsf{X}_{i+1}\circ S.\]
The edge homomorphism $d^+$ induces homomorphisms $f_a=\mathrm{id}\otimes d^+$ and $f_b=d^+\otimes \mathrm{id}$ from $\mathsf{M}[S]$ to $\mathsf{M}[S_{\RN{3}a}^i]$ and $\mathsf{M}[S_{\RN{3}b}^{i+1}]$. 

Moreover, $\mathsf{X}_i$ and $\mathsf{X}_{i+1}$ are obtained from $S$ by smoothing the top and bottom singular points, respectively.  So, the edge map $d^-$ induces homomorphisms $g_a=\mathrm{id}\otimes d^-$ and $g_b=d^-\otimes \mathrm{id}$ from $\mathsf{M}[S]$ to $\mathsf{M}[\mathsf{X}_i]$ and $\mathsf{M}[\mathsf{X}_{i+1}]$.

\begin{lemma}\label{lem:propIII-2} With the above notation fixed,
\begin{enumerate}
\item For $\circ=a,b$ we have $g_\circ=-\pi_{\RN{3}\circ}^1\circ f_\circ$.
\item $j_{ab}\circ \pi_{\RN{3}a}^2\circ f_a=\pi_{\RN{3}b}^2\circ f_b$.
\end{enumerate}
\end{lemma}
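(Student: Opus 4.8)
The plan is to prove part (1) by a formal manipulation using the earlier lemmas, and part (2) by a direct case check on bimodule generators.

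For part (1) I would start from Lemma \ref{lem:propIII-1}, which expresses $\pi_{\RN{3}\circ}^1=-\pi_{\RN{2}}^2\circ(\mathrm{id}\otimes d^-\otimes\mathrm{id})$, and then exploit that the two edge maps involved are supported on disjoint pieces of the diagram. Concretely, for $\circ=a$, with $S=\mathsf{X}_i\circ\mathsf{X}_{i+1}$ and $S_{\RN{3}a}^i=\mathsf{X}_i\circ\mathsf{X}_{i+1}\circ\mathsf{X}_i$, the map $f_a=\mathrm{id}\otimes d^+$ inserts a new singular point $\mathsf{X}_i$ at the top, while the $d^-$ hidden in $\mathrm{id}\otimes d^-\otimes\mathrm{id}$ smooths the \emph{middle} $\mathsf{X}_{i+1}$, and $g_a=\mathrm{id}\otimes d^-$ smooths the top $\mathsf{X}_{i+1}$ of $S$. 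Since the $d^+$ in $f_a$ and the $d^-$ in $\mathrm{id}\otimes d^-\otimes\mathrm{id}$ act on mutually disjoint tensor factors they commute, and performing them in the reverse order identifies $(\mathrm{id}\otimes d^-\otimes\mathrm{id})\circ f_a=(\mathrm{id}\otimes d^+)\circ g_a$ as maps $\mathsf{M}[S]\to\mathsf{M}[S_{\RN{2}}^i]$, where now $\mathrm{id}\otimes d^+$ is exactly the map of Lemma \ref{lem:propII}(2). Hence
\[
-\pi_{\RN{3}a}^1\circ f_a
=\pi_{\RN{2}}^2\circ(\mathrm{id}\otimes d^-\otimes\mathrm{id})\circ f_a
=\pi_{\RN{2}}^2\circ(\mathrm{id}\otimes d^+)\circ g_a
=g_a,
\]
the first equality by Lemma \ref{lem:propIII-1} and the last by Lemma \ref{lem:propII}(2). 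The case $\circ=b$ is identical after replacing $\mathrm{id}\otimes d^+$ by $d^+\otimes\mathrm{id}$ and $i$ by $i+1$, inserting and smoothing at the bottom instead of the top.

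For part (2), all of $f_a$, $f_b$, $\pi_{\RN{3}a}^2$, $\pi_{\RN{3}b}^2$, $j_{ab}$ are $A_n$-bimodule homomorphisms, so it suffices to verify $j_{ab}\circ\pi_{\RN{3}a}^2\circ f_a=\pi_{\RN{3}b}^2\circ f_b$ on the bimodule generators $x_Z$ of $\mathsf{M}[S]$, where $Z$ ranges over cycles of $S$ with $b(Z)=t(Z)$. I would organize the verification by the local type of $Z$ at the two interior $4$-valent vertices of $S$, i.e.\ by which of the strands $i$, $i+1$, $i+2$ lie in $b(Z)$. In each case one applies the explicit formula for $d^+$ to evaluate $f_a(x_Z)$ and $f_b(x_Z)$, then uses the direct sum decomposition of Lemma \ref{lem:MIIIsplit} and the explicit generators-and-relations description of $\mathsf{M}_2^a$ recorded in its proof (together with the $b$-analogue) to extract $\pi_{\RN{3}a}^2 f_a(x_Z)$ and $\pi_{\RN{3}b}^2 f_b(x_Z)$, and finally compares the results using the definition $j_{ab}(x_W)=x_{W'}$, which matches cycles of $S_{\RN{3}a}^i$ and $S_{\RN{3}b}^{i+1}$ with identical boundary data. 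In the cases where $Z$ is not locally empty at the vertex where $f_\bullet$ creates its new singularity, $d^+$ contributes only the term $x_{Z^z}$, whose projection to $\mathsf{M}_2$ is the cycle that $j_{ab}$ identifies directly, so the identity is immediate.

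The step I expect to be the main obstacle is the cases with $\epsilon=1$ in the definition of $d^+$, i.e.\ where $Z$ \emph{is} locally empty at the relevant vertex: there $f_a(x_Z)$ (and likewise $f_b(x_Z)$) acquires the correction term $-R_{\bullet}x_{\bullet}L_{\bullet}$, which a priori has components in both $\mathsf{M}_1$ and $\mathsf{M}_2$, so one must reuse the computations from the proof of Lemma \ref{lem:MIIIsplit} that rewrite such terms in the $\mathsf{M}_1\oplus\mathsf{M}_2$ basis, determine which part survives the projection onto $\mathsf{M}_2^a$, and check that it lies in a $j_{ab}$-matched cycle — and then check that the parallel computation on the $b$-side yields precisely its image under $j_{ab}$. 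A secondary bookkeeping issue is keeping the signs $\epsilon$ and the quantum-grading shifts consistent across the two sides, but this is routine once the correspondence of cycles has been pinned down.
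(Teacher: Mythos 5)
Your proposal is correct and follows essentially the same route as the paper: part (1) is exactly the paper's chain of equalities $-\pi_{\RN{3}\circ}^1\circ f_\circ=\pi_{\RN{2}}^2\circ(\mathrm{id}\otimes d^-\otimes\mathrm{id})\circ f_\circ=\pi_{\RN{2}}^2\circ(\mathrm{id}\otimes d^+)\circ g_\circ=g_\circ$, using Lemma \ref{lem:propIII-1}, commutativity of edge maps on disjoint tensor factors, and Lemma \ref{lem:propII}(2). Part (2) likewise matches the paper's argument, which checks the identity on generators $x_Z$ organized by local cycle type ($\mathcal{Z}\emptyset$, $\mathcal{Z}\{1,4\}$, $\mathcal{Z}\{3,5\}$, $\mathcal{Z}\{1,3,4,5\}$) and handles precisely the correction terms you flag as the main obstacle.
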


\begin{proof} First, we prove part (1). For the singular braid $S_{\RN{2}}^i$, the edge maps induce homomorphisms $\mathrm{id}\otimes d^-\otimes\mathrm{id}$ and  $\mathrm{id}\otimes d^+$ from $\mathsf{M}[S_{\RN{3}a}^i]$ and $\mathsf{M}[\mathsf{X}_i]$ to $\mathsf{M}[S_{\RN{2}}^i]$, respectively. By Lemma \ref{lem:propIII-1} we have 
\[
-\pi_{\RN{3}a}^1\circ f_a=\pi_{\RN{2}}^2\circ (\mathrm{id}\otimes d^-\otimes \mathrm{id})\circ f_a=\pi_{\RN{2}}^2\circ(\mathrm{id}\otimes d^+)\circ g_a=g_a
\]
The last equality follows from part (2) of Lemma \ref{lem:propII}.  The proof for $\circ=b$ is similar. 

For part (2), it is enough to check it for the generators $x_Z$ where 
\[Z\in\mathcal{Z}\emptyset\amalg\mathcal{Z}\{1,4\}\amalg\mathcal{Z}\{3,5\}\amalg \mathcal{Z}\{1,3,4,5\}\]
Each $Z\in \mathcal{Z}\emptyset\amalg\mathcal{Z}\{1,4\}\amalg\mathcal{Z}\{3,5\}$ extends to unique cycles $Z_a$ and $Z_b$ in $S_{\RN{3}a}^i$ and $S_{\RN{3}b}^i$ such that $b(Z)=t(Z)=b(Z_a)=t(Z_a)=b(Z_b)=t(Z_b)$. Moreover, $j_{ab}(x_{Z_a})=x_{Z_b}$.  It follows from the definitions that for $Z\in \mathcal{Z}\emptyset\amalg\mathcal{Z}\{1,4\}$
\[j_{ab}\circ\pi_{\RN{3}a}^2\circ f_a(x_Z)=j_{ab}\left((U_4-U_3)x_{Z_a}\right)=\pi_{\RN{3}b}^2\circ f_b(x_Z),\]
and for $Z\in\mathcal{Z}\{3,5\}$
\[j_{ab}\circ\pi_{\RN{3}a}^2\circ f_a(x_Z)=j_{ab}\left(x_{Z_a}-R_{i+2}x_{u_3(Z)_a}L_{i+2}L_{i+1}\right)=\pi_{\RN{3}b}^2\circ f_b(x_Z).\]
Finally, for $Z\in \mathcal{Z}\{1,3,4,5\}$, 
\[j_{ab}\circ\pi_{\RN{3}a}^2\circ f_a(x_Z)=-j_{ab}\left(R_{i+2}x_{u_3(Z)_a}L_{i+2}L_{i+1}\right)=\pi_{\RN{3}b}^2\circ f_b(x_Z).\]
\end{proof}

Similarly, the edge homomorphism $d^-$ induces homomorphisms $f_a'=\mathrm{id}\otimes d^-$ and $f_b'=d^-\otimes \mathrm{id}$ from $\mathsf{M}[S^{i}_{\RN{3}a}]$ and $\mathsf{M}[S^{i+1}_{\RN{3}b}]$ to $\mathsf{M}[S]$. Further, $d^+$ induces homomorphisms $g_a'=\mathrm{id}\otimes d^+$ and $g_{b}'=d^+\otimes\mathrm{id}$ from $\mathsf{M}[\mathsf{X}_i]$ and $\mathsf{M}[\mathsf{X}_{i+1}]$ to $\mathsf{M}[S]$. 

\begin{lemma}\label{lem:propIII-3} For $\circ=a,b$ we have $g_{\circ}'=f_{\circ}\circ \imath_{\RN{3}\circ}$ and $f_a'|_{\mathsf{M}_2^a}=f_b'\circ j_{ab}$.
\end{lemma}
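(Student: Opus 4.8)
The plan is to prove Lemma~\ref{lem:propIII-3} by unwinding the definitions of the maps $f_\circ$, $g_\circ'$, $\imath_{\RN{3}\circ}$, $f_\circ'$, and $j_{ab}$, and checking the two asserted identities on generators $x_Z$, using the bimodule relations of Example~\ref{ex:sing} together with the edge-map formulas for $d^\pm$. Both identities are statements about bimodule homomorphisms into $\mathsf{M}[S]$ where $S=\mathsf{X}_i\circ\mathsf{X}_{i+1}$, so it suffices to verify them on the generating set of $\mathsf{M}[\mathsf{X}_i]$ (for the first) and on $\mathsf{M}_2^a$ (for the second).

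For the identity $g_\circ'=f_\circ\circ\imath_{\RN{3}\circ}$: recall $g_a'=\mathrm{id}\otimes d^+:\mathsf{M}[\mathsf{X}_i]\to\mathsf{M}[S]=\mathsf{M}[\mathsf{X}_i]\otimes_{A_n}\mathsf{M}[S_{id}]\to\mathsf{M}[\mathsf{X}_i]\otimes_{A_n}\mathsf{M}[\mathsf{X}_{i+1}]$, while $f_a=\mathrm{id}\otimes d^+:\mathsf{M}[S]\to\mathsf{M}[S]\otimes_{A_n}\mathsf{M}[S_{id}]\to\mathsf{M}[S_{\RN{3}a}^i]$, and $\imath_{\RN{3}a}=(\mathrm{id}\otimes d^+\otimes\mathrm{id})\circ\imath_{\RN{2}}$. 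Since $S=\mathsf{X}_i\circ S_{id}\circ\mathsf{X}_{i+1}$ (inserting a trivial tangle between the two singular braids), both sides amount to first applying $\imath_{\RN{2}}$ on the appropriate factor and then applying $d^+$ to the middle identity tangle; the point is simply that $\imath_{\RN{2}}$ and the $d^+$ on the middle slot act on disjoint tensor factors, so they commute, and the composite equals $(\mathrm{id}\otimes d^+\otimes\mathrm{id})\circ\imath_{\RN{2}}=\imath_{\RN{3}a}$ composed with the structural identifications. I would make this precise by tracking a generator $x_Z$ through each arrow using the explicit formula for $\imath_{\RN{3}a}(x_Z)$ recorded in the proof of Lemma~\ref{lem:submodMOY3} (the case split on whether $i+2\in b(Z)$) and the formula for $d^+$; the case $\circ=b$ is symmetric, swapping the roles of top and bottom singular points.

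For the identity $f_a'|_{\mathsf{M}_2^a}=f_b'\circ j_{ab}$: here $f_a'=\mathrm{id}\otimes d^-:\mathsf{M}[S_{\RN{3}a}^i]\to\mathsf{M}[S]$ smooths the top singular point, $f_b'=d^-\otimes\mathrm{id}:\mathsf{M}[S_{\RN{3}b}^{i+1}]\to\mathsf{M}[S]$ smooths the bottom singular point, and $j_{ab}:\mathsf{M}_2^a\to\mathsf{M}_2^b$ is the isomorphism defined by $j_{ab}(x_Z)=x_{Z'}$ on the generators indexed by cycles $Z\in\mathcal{Z}\emptyset\amalg\mathcal{Z}\{1,9,4\}$. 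I would check this directly on these generators: apply $d^-$ (which, depending on the local occupancy at the relevant strand, is either $x_{Z^u}$, or $x\cdot L$, or $R\cdot x$, or $U\cdot x_{Z^u}$ per the four-case formula for $d^-$) to $x_Z$ in $S_{\RN{3}a}^i$ and, separately, to $x_{Z'}$ in $S_{\RN{3}b}^{i+1}$, and observe that the resulting elements of $\mathsf{M}[S]$ coincide because $Z$ and $Z'$ have the same bottom and top occupied vertices and the same local configuration after the respective smoothing. This is the same mechanism that made $j_{ab}$ well-defined in the first place (checking local relations), so the computation largely mirrors that earlier verification.

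The main obstacle I expect is purely bookkeeping: keeping the many index shifts straight (the MOY~\rom{3} picture involves edges $e_1,\dots,e_9$ and strands $i,i+1,i+2$, and the ``type~a'' versus ``type~b'' conventions differ by a reflection), and making sure that the structural isomorphisms $\mathsf{M}[S_1\circ S_2]\cong\mathsf{M}[S_1]\otimes_{A_n}\mathsf{M}[S_2]$ are applied consistently so that ``acting on disjoint tensor factors'' is literally correct rather than merely morally so. There is no conceptual difficulty — each step reduces to evaluating an explicit formula on a generator and comparing — so the proof will be short once the conventions are pinned down; I would simply state that the verifications are a routine check of the local relations and edge-map formulas, exhibiting one or two representative cases (e.g.\ $Z\in\mathcal{Z}\emptyset$ and $Z\in\mathcal{Z}\{1,9,4\}$) and leaving the rest to the reader, in keeping with the style of the surrounding lemmas.
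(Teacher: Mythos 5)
Your plan for the second identity is fine and matches the paper's template (the analogue of part (2) of Lemma \ref{lem:propIII-2}): since $f_a'$, $f_b'$ and $j_{ab}$ are bimodule homomorphisms, verifying $f_a'|_{\mathsf{M}_2^a}=f_b'\circ j_{ab}$ on the generators $x_Z$ with $Z\in\mathcal{Z}\emptyset\amalg\mathcal{Z}\{1,9,4\}$, using the four-case formula for $d^-$, is exactly the intended routine check.

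The first identity is where there is a genuine gap. As printed the composition does not typecheck ($f_a\colon\mathsf{M}[S]\to\mathsf{M}[S^i_{\RN{3}a}]$ and $\imath_{\RN{3}a}\colon\mathsf{M}[\mathsf{X}_i]\to\mathsf{M}[S^i_{\RN{3}a}]$ cannot be composed); the identity that is true, and that the Reidemeister \rom{3} argument uses, is $g'_\circ=f'_\circ\circ\imath_{\RN{3}\circ}$ with the \emph{primed} map $f'_\circ$ ($\mathrm{id}\otimes d^-$, resp.\ $d^-\otimes\mathrm{id}$). You never resolve this, and your argument shows it: you assert that ``both sides amount to first applying $\imath_{\RN{2}}$ and then applying $d^+$ to the middle identity tangle,'' but $g'_a=\mathrm{id}\otimes d^+$ involves no $\imath_{\RN{2}}$ at all, and the composite you actually compute, $(\mathrm{id}\otimes d^+\otimes\mathrm{id})\circ\imath_{\RN{2}}=\imath_{\RN{3}a}$, lands in $\mathsf{M}[S^i_{\RN{3}a}]$ rather than in $\mathsf{M}[S]$, so it cannot equal $g'_a$. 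Implicitly you are identifying the $d^+$-induced map $\mathsf{M}[\mathsf{X}_i]\to\mathsf{M}[\mathsf{X}_i\circ\mathsf{X}_i]$ with $\imath_{\RN{2}}$, and Lemma \ref{lem:propII}(2) says precisely that these differ: the $d^+$-induced map has a nonzero component ($-U_2$, resp.\ $-U_4$) onto the first MOY \rom{2} summand. The missing step is the one that parallels part (1) of Lemma \ref{lem:propIII-2}: write $f'_a\circ\imath_{\RN{3}a}=(\mathrm{id}\otimes\mathrm{id}\otimes d^-)\circ(\mathrm{id}\otimes d^+\otimes\mathrm{id})\circ\imath_{\RN{2}}$, commute the two middle maps (they are bimodule maps acting on different tensor factors), and then cancel via part (1) of Lemma \ref{lem:propII}, $(\mathrm{id}\otimes d^-)\circ\imath_{\RN{2}}=\mathrm{id}$, leaving exactly $\mathrm{id}\otimes d^+=g'_a$; the case $\circ=b$ is symmetric. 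Without the $d^-$ and this cancellation, your argument for the first identity establishes nothing about $g'_\circ$.
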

\begin{proof}
The proof is similar to the proof of Lemma \ref{lem:propIII-2}.
\end{proof}

Analogous statements hold for $S=\mathsf{X_{i+1}}\circ \mathsf{X}_i$.


\subsection{The Module Action for Singular Braids} \label{ModuleActionSection} Let $S$ be a singular braid such that $\sm(\pc(S))$ consists of $k$ circles. In Corollary \ref{modulelemma1}, we showed that $H_{1+1}(S)$ is a module over $\mathcal{A}^{\otimes k}$. Moreover, from Corollary \ref{KhovCor}, we know that the rank over $\mathbb{Q}$ is $2^k$. In this section, we will prove the following:

\begin{theorem} \label{SingularKhovanovTheorem}
The homology $H_{1+1}(S)$ is a free, rank one $\mathcal{A}^{\otimes k}$-module.
\end{theorem}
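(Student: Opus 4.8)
Looking at this theorem, I need to prove that $H_{1+1}(S)$ is a free, rank one $\mathcal{A}^{\otimes k}$-module, where $\mathcal{A} = \mathbb{Q}[X]/(X^2)$ and $k$ is the number of circles in $\mathsf{sm}(\pc(S))$.

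\textbf{The plan.} The strategy is to exhibit an explicit generator and show it generates freely. By Corollary \ref{KhovCor}, we already know $\dim_\mathbb{Q} H_{1+1}(S) = \dim_\mathbb{Q} Kh(\mathsf{sm}(\pc(S))) = 2^k$, which equals $\dim_\mathbb{Q} \mathcal{A}^{\otimes k}$. So it suffices to produce a cyclic generator: a class $\xi \in H_{1+1}(S)$ such that the map $\mathcal{A}^{\otimes k} \to H_{1+1}(S)$, $a \mapsto a \cdot \xi$, is surjective (equivalently injective, by the dimension count). Since $\mathcal{A}^{\otimes k}$ is generated over $\mathbb{Q}$ by the monomials $\prod_{i \in I} X_i$ for $I \subseteq \{1,\dots,k\}$, and each $X_i$ acts as $(-1)^{l}U_j$ for an appropriate edge $e_j$ on the $i$-th circle (as defined just before Corollary \ref{modulelemma1}), I need to find $\xi$ so that the $2^k$ elements $\left(\prod_{i \in I} X_i\right)\xi$ are linearly independent in $H_{1+1}(S)$.

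\textbf{Key steps.} First, I would use the basepoint-filtered computation of Lemma \ref{compproduct}: $H_{*}((C_{1 \pm 1}(S), d_{0+}^{f}),d_{0-}^{f}) \cong \bigoplus_{Z}H^{\pm}_{1}(S-Z)\{T_{1}(Z)-T_{2}(Z)+w(Z)\}$, where by the $\mathfrak{sl}_1$ computation only those cycles $Z$ contribute for which $\mathsf{b}(S-Z)$ is an unlink, and each such contributes a one-dimensional $\mathbb{Q}\{0,-2n\}$. Since the spectral sequence from the associated graded to $H_{1+1}(S)$ degenerates (no higher differentials, by the horizontal grading argument in the text), each surviving cycle $Z$ gives a generator of $H_{1+1}(S)$. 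The natural candidate for $\xi$ is the class represented by the ``leftmost'' surviving cycle $Z_0$ — the one whose associated 2-chain has the smallest coefficients at all basepoints. Second, I would show that multiplication by $X_i$ (i.e., by $\pm U_j$) sends the class of $Z$ to a $\mathbb{Q}^\times$-multiple of the class of $U_i(Z)$ modulo lower filtration, whenever $U_i(Z)$ is a surviving cycle obtained by pushing the $i$-th circle strand rightward, and kills it otherwise. This uses the explicit module structure formulas in Section \ref{sub3.2}: multiplication by $U_j$ maps $x_Z$ to $U(D(Z,e_j))x_{U_j(Z)}$, and one checks the coefficient monomial acts invertibly (reduces to $1$) on the relevant $\mathfrak{sl}_1$ homology of an unlink complement. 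Third, a combinatorial bijection argument: the $2^k$ subsets $I$ correspond bijectively to the $2^k$ surviving cycles, where $I$ records which circles have been pushed to their rightmost position; under this bijection, $\left(\prod_{i \in I} X_i\right)\xi$ is a nonzero multiple of the class of the cycle $Z_I$ modulo strictly lower filtration. Since these classes are linearly independent in the associated graded (distinct filtration levels / distinct summands), they are linearly independent in $H_{1+1}(S)$, proving the cyclic generator map is injective, hence an isomorphism by dimension count.

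\textbf{Main obstacle.} The hard part will be the bookkeeping in the second and third steps: verifying that the surviving cycles (those $Z$ with $\mathsf{b}(S-Z)$ an unlink) are exactly parametrized by subsets $I \subseteq \{1,\dots,k\}$ via the rightward-pushing operation, and that the module action moves between adjacent ones with invertible coefficient while respecting the filtration. This requires understanding how the circles of $\mathsf{sm}(\pc(S))$ sit relative to the cycle structure, and checking that pushing one circle to the right never accidentally creates a non-unlink complement or interferes with another circle's contribution. I expect one can organize this by induction on $k$ using the MOY relations established above — MOY 0, I, II, III give precisely the elementary local moves, and Theorem \ref{SingularKhovanovTheorem} for the building blocks (a single circle: $H_{1+1} \cong \mathcal{A}\{1\}$, free rank one over $\mathcal{A}$) propagates through tensor products and the local moves since each MOY isomorphism was shown to commute with the $U_i$-actions, hence with the $\mathcal{A}^{\otimes k}$-action. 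So an alternative, cleaner route is: reduce $\mathsf{sm}(\pc(S))$ to a disjoint union of $k$ unknots by MOY I, II, III moves, each of which induces an $\mathcal{A}^{\otimes k}$-linear isomorphism on $H_{1+1}$, and invoke freeness for the unlink (which follows from MOY 0 and the unknot case). I would present this inductive/MOY-reduction argument as the main proof, keeping the explicit-generator discussion as motivation.
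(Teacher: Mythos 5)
Your final MOY-reduction argument is essentially the paper's own proof: the paper inducts on $k$ by pruning the tree $G(c)$ of an innermost circle via MOY I/III moves (Lemma \ref{prunelemma}), then removing the resulting bigon via a MOY 0/II move, using that all these isomorphisms commute with the $U_i$-actions. The only point you leave implicit --- that such a reduction sequence always exists --- is exactly what the innermost-circle/leaf-pruning argument supplies, and it is already established in the text, so your proposal is correct and matches the paper's route.
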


We will prove this by induction on the number of components in $\sm(\pc(S))$. Let $c$ be a component of $\sm(\pc(S))$ which does not contain any other components, and let $E(c)$ be the set of edges in $c$. In $S$, the edges $E(c)$ bound a set of regions $A_{i}$ which make up the interior of $c$ in $\sm(\pc(S))$. Some of these regions will meet at a 4-valent vertex as in Figure \ref{Tree1}. 

\begin{figure}[ht]
\centering
\def\svgwidth{5cm} 
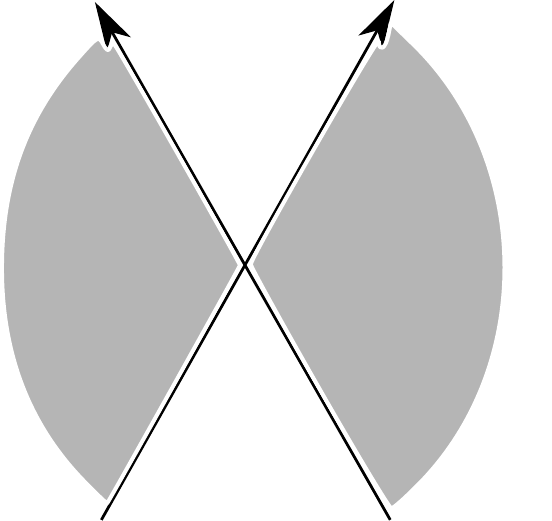
\caption{Two regions meeting at a 4-valent vertex}\label{Tree1}
\end{figure}

\begin{definition}

Define the graph $G(c)$ to have vertices $A_{i}$, and add in an edge between $A_{i}$ and $A_{j}$ for each 4-valent vertex as in Figure \ref{Tree1}.

\end{definition}

Since the edges correspond to a single circle when the singularizations are replaced with smoothings, this graph must be a tree.

\begin{definition}

We say that a region $A_{i}$ meets a vertex $v$ from the left, right, top, or bottom, depending on which of the 4 quadrants are occupied.

\end{definition}

\begin{lemma} \label{prunelemma}

Let $\mathcal{L}$ be a leaf on the tree $G(c)$. Then there is a MOY I or a MOY III move which trims this leaf.

\end{lemma}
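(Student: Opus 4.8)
\emph{Plan.} The idea is to read the local shape of the leaf $\mathcal{L}$ off the tree $G(c)$ and match it to one of the two moves. The first step is a structural observation about $G(c)$: the circle $c$ bounds a disk $D_{c}\subset\RR^{2}$, the regions $\{A_{i}\}$ tile $D_{c}$, and the wide edges of $S$ all four of whose adjacent edges lie in $E(c)$ are exactly the interior walls of this tiling, i.e.\ the edges of $G(c)$. (Such a wide edge separates its left and right regions, both interior to $D_{c}$; conversely, any wall separating two faces of $D_{c}$ is a wide-edge segment whose two flanking faces are among the $A_{i}$, which forces all four of its incident edges into $E(c)$.) Since $G(c)$ is a tree, the single wall $v$ of $\mathcal{L}$ cuts $\mathcal{L}$ off as a subdisk of $D_{c}$ meeting no other wall; hence $\bdy\mathcal{L}$ is the wide edge at $v$ together with an arc $\alpha\subset c$ that contains no $4$-valent vertex of $S$ on its $D_{c}$-side.

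The second step is to classify $\alpha$. Every edge of the singular braid $S$ points upward, so the arc $\alpha$, which runs from one end of the wall $v$ to the other, must reverse direction, and it can do so only by turning around a cup, a cap, or by ``bouncing'' off the outer side of a $4$-valent vertex. If $\alpha$ turns around without meeting any $4$-valent vertex --- directly, or around a single cup, cap, or $2$-valent closing vertex --- then $v$ and $\alpha$ cobound a bigon, which is precisely a MOY~I configuration (Figure~\ref{fig:MI}); applying the move removes the leaf $\mathcal{L}$ from $G(c)$. If $\alpha$ does bounce off a $4$-valent vertex, let $v'$ be the first such vertex along $\alpha$; then $v$, $v'$, the region $\mathcal{L}$, and the regions flanking $v'$ form one of the local pictures of a MOY~III move in Figure~\ref{fig:MIII} (or, when the relevant wide edge abuts a cap or cup of the plat closure, the capped/cupped version of Corollary~\ref{cor:IIIcap}); applying this move, read in the reduction direction, again removes the leaf.

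The third step, and the one I expect to be the main obstacle, is to prove that \emph{no other local shape of $\alpha$ can occur} --- that a leaf never forces $\alpha$ to make more than a single hairpin. This is the combinatorial core: one fixes the first bounce vertex $v'$ (or the cup/cap) and checks, from the orientation and quadrant data at $v'$ (which of its four edges lie on $\mathcal{L}$'s side, and whether $c$ uses the upper or lower smoothing arc there), that $\alpha$ is forced to close up immediately; here one uses that $c$ is innermost (so $\mathcal{L}$ contains no edges of $S$ and no component is nested inside $D_{c}$) and that $G(c)$ is a tree (so $\mathcal{L}$ is unnested among the $A_{i}$). The outcome is a short finite list of local pictures, each matching a MOY~I or MOY~III configuration as above.

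Finally, once the move is identified, one concludes by invoking Lemma~\ref{lem:MIII} and the MOY~I invariance corollary to see that it induces a quasi-isomorphism commuting with the $U_{i}$-action, and observes that it strictly decreases the number of vertices of $G(c)$ without changing the number of circles of $\sm(\pc(\cdot))$ --- exactly what is needed to keep pruning the tree in the proof of Theorem~\ref{SingularKhovanovTheorem}.
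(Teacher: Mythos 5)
Your overall strategy is the same as the paper's: isolate the leaf region, use leafness to forbid further left/right meetings, pin down the shape of the region, and match it to a MOY I or MOY III picture. But the decisive claim --- the paper's ``the only option is for it to meet a single vertex from the bottom and a single vertex from the top,'' i.e.\ that $\mathcal{L}$ is a bigon with the single side vertex $v$, hence literally one of the local pictures of the two moves --- is exactly the step you defer as ``the main obstacle,'' and the sketch you give of it is misdirected. The arc $\alpha$ always turns at least twice (it leaves the top of the wide edge going up and returns to its bottom from below), so it is never ``forced to close up immediately'' after the first bounce: in the generic MOY III configuration $\alpha$ bounces once at a top vertex and once at a bottom vertex before returning to $v$, and the MOY I case uses a cup \emph{and} a cap, not ``a single cup, cap'' (plat diagrams also have no $2$-valent closing vertices, so that case is vacuous). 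A purely local check at the first bounce vertex cannot exclude further wiggles of the far side of the region; what is missing is an argument that $\partial\mathcal{L}$ has exactly one local maximum and one local minimum.

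Here is what actually closes the gap, and where your hypotheses enter. First, any $4$-valent vertex in the interior of a monotone stretch of $\partial\mathcal{L}$ is met by $\mathcal{L}$ from the left or right; the region on its opposite side merges with $\mathcal{L}$ across that vertex in $\sm(\pc(S))$, hence is interior to $c$, so such a vertex contributes an edge of $G(c)$ and leafness allows only $v$. (This is the only direction of your step-one dictionary that is needed; the converse you assert --- every wide edge with all four adjacent edges in $E(c)$ is an edge of $G(c)$ --- is neither justified nor needed, since the disk could a priori approach such a vertex only from above and below.) Second, every turning point of $\partial\mathcal{L}$ is convex, i.e.\ $\mathcal{L}$ lies inside the turn: a reflex turn at a cup or cap would put $\mathcal{L}$ below the cups or above the caps, hence unbounded, while a reflex turn at a $4$-valent vertex would require the two edges on the far side of that vertex to be absent from $E(c)$, and then, because $c$ is innermost, the circle through them (whose smoothing arc lies in the interior of the disk bounded by $c$) would be a circle nested inside $c$. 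With all turns convex, a horizontal sweep shows components of $\mathcal{L}\cap\{y\le h\}$ are born at bottom turns and never merge, so connectedness of $\mathcal{L}$ forces exactly one bottom turn and, symmetrically, one top turn. The resulting four shapes (vertex/vertex, cup/vertex, vertex/cap, cup/cap at the two turns, with $v$ on one side) are precisely the MOY III, the cup/cap MOY III of Corollary~\ref{cor:IIIcap}, and the MOY I pictures, which is the content of the paper's appeal to Figure~\ref{Tree2}. Your closing paragraph about quasi-isomorphisms is not needed for this lemma; that is used later, in the proof of Theorem~\ref{SingularKhovanovTheorem}.
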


\begin{proof}

Let $A_{i}$ be the vertex on $\mathcal{L}$, and $v$ the unique 4-valent vertex at which $A_{i}$ meets another region $A_{j}$. Without loss of generality, assume that $A_{i}$ meets $v$ from the left. Since the boundary of $A_{i}$ can not meet any other vertices from the left or right, the only option is for it to meet a single vertex from the bottom and a single vertex from the top (see Figure \ref{Tree2}). The resulting region corresponds to a MOY I or a MOY III move, and applying this relation trims the leaf. 

\end{proof}

\begin{figure}[ht]
\centering
\def\svgwidth{5cm} 
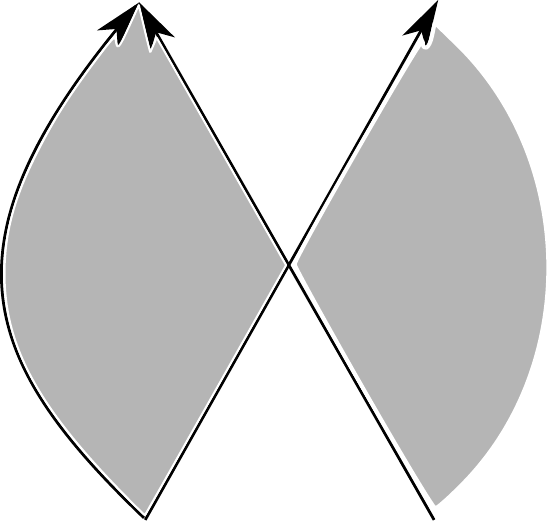
\caption{A leaf on the tree $G(c)$}\label{Tree2}
\end{figure}

\begin{proof}[Proof of Theorem \ref{SingularKhovanovTheorem}]

Given an innermost circle $c$ in $\sm(\pc(S))$, we can recursively apply MOY I and MOY III moves as in Lemma \ref{prunelemma} to prune $G(c)$ down to a single vertex. This gives a new singular diagram $S'$ with $H_{1+1}(S)=H_{1+1}(S')$. 

In $S'$, the circle $c$ has been replaced with a circle $c'$ with $G(c')$ equal to a single vertex. Let $A'_{i}$ denote the corresponding region in $S'$. Since $A'_{i}$ can't meet any regions from the left or right, it must meet a single vertex from the bottom and a single vertex from the top, i.e. it is a bigon. Thus, we can apply a MOY 0 or a MOY II move to remove the two edges bounding $A'_{i}$, obtaining a new diagram $S''$ satisfying 

\[ H_{1+1}(S) \cong H_{1+1}(S'') \otimes \mathcal{A} \]

For $e_{i}$ in $E(c)$, $U_{i}$ acts on $H_{1+1}(S'') \otimes \mathcal{A}$ by $\text{id} \otimes (-1)^{l}U$. The remaining edges in $S$ correspond to edges in $S''$. The smoothed diagram $\sm(\pc(S''))$ has $k-1$ components, so by the inductive hypothesis $H_{1+1}(S'') \cong \mathcal{A}^{\otimes(k-1)}$, proving the theorem.

\end{proof}

\subsection{The Quantum and $\delta$-gradings on the Total Complex} Now that we understand each vertex in the cube of resolutions, it is time to turn our attention to the total complex. Before identifying the edge maps with the Khovanov edge maps, we'll need the total complex $C_{1 \pm 1}(D)$ to be graded. 

We have a grading $\mathfrak{gr}_{q}$ on each vertex of the cube, with respect to which the variables $U_{i}$ have grading $-2$, and the differential $d_{0}$ is homogeneous of degree $-2$. We extend $\mathfrak{gr}_{q}$ to the total complex as follows: let $S = D_{v}$ be a complete resolution of $D$, and let $|v|$ be the height of $v$ in the cube of resolutions. Then we give $\mathfrak{gr}_{q}$ a shift of $|v|$ from the definition in Section \ref{gradingsect1}.

To be more precise, if $Z$ is a cycle in $S=D_{v}$, then $x_{Z}$ has quantum grading 
\[ \mathfrak{gr}_{q}(x_{Z}) = T_{1}(Z)-T_{2}(Z)+E(Z)+w(Z) +|v|+n_{+}-2n_{-} \]

With respect to $\mathfrak{gr}_{q}$, we can see by inspection that the edge maps $d_{1}$ are homogeneous of degree $0$. Unfortunately, since $d_{0}$ is homogeneous of degree $-2$, $\mathfrak{gr}_{q}$ does not give a well-defined grading on the total homology. 

\begin{definition}

The $\delta$-grading $\mathfrak{gr}_{\delta}$ is given by $\mathfrak{gr}_{q} - 2 |v| + 2 n_{-}$.

\end{definition}

With respect to the $\delta$-grading, $d_{0}+d_{1}$ is homogeneous of degree $-2$, and multiplication by $U_{i}$ is as well. 

\begin{lemma}

Under the module isomorphism
\[ H_{*}(C_{1 \pm 1}(D), d_{0}) \cong CKh(\pc(D)) \]

\noindent
the gradings $\mathfrak{gr}_{q}$ and $\mathfrak{gr}_{\delta}$ on $C_{1 \pm 1}(D)$ correspond to the gradings $\gr_{q}$ and $\gr_{\delta}$ on the Khovanov complex.

\end{lemma}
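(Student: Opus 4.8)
The plan is to verify the grading statement by tracing through the two places where gradings on $C_{1\pm1}(D)$ were defined and comparing them term-by-term with the Khovanov conventions recorded in Section 2. First I would recall that the module isomorphism $H_*(C_{1\pm1}(D),d_0)\cong CKh(\pc(D))$ is assembled vertex-by-vertex: at each complete resolution $S=D_v$ the homology $H_{1+1}(S)$ is, by Corollary \ref{KhovCor} and Theorem \ref{SingularKhovanovTheorem}, a free rank-one $\mathcal{A}^{\otimes k_v}$-module, and under the identification of $\mathfrak{sl}_1$-homology summands with $\mathcal{A}^{\otimes k_v}$ the generator $x$ of lowest filtration level matches $1\in R_v$ while the $U_i$-action matches multiplication by $\pm X_i$. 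So it suffices to check the grading match on these generators and on the $U_i$-action, since both sides are generated over the edge ring by the lowest generator at each vertex.

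Next I would do the quantum grading computation. On the Khovanov side, $\gr_q(1\in R_v)=n_+-2n_-+|v|+k_v$ and each $X_i$ has $\gr_q=-2$. On our side, after the shift by $|v|$ introduced in this subsection, the generator $x_Z$ has $\mathfrak{gr}_q(x_Z)=T_1(Z)-T_2(Z)+E(Z)+w(Z)+|v|+n_+-2n_-$, and each $U_i$ has $\mathfrak{gr}_q=-2$. Since the $U_i$-action matches the $\pm X_i$-action and both variables carry quantum grading $-2$, it remains only to match the gradings of the distinguished generators, i.e. to show that the minimal value of $T_1(Z)-T_2(Z)+E(Z)+w(Z)$ over cycles $Z$ contributing to a given circle-configuration equals $k_v$. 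This is exactly the content already extracted in the proof of Corollary \ref{KhovCor}: the composition-product formula identifies $\bigoplus_Z H_1^\pm(S-Z)\{T_1(Z)-T_2(Z)+w(Z)\}$ with $H_2(\x(S))\cong Kh(\sm(\pc(S)))$, and the quantum grading of the unknot generator in the $\mathfrak{sl}_2$-theory is $k_v$, so the $\mathfrak{sl}_1$-unit summand sitting at the bottom lands in grading $k_v$ after the shifts by $T_1-T_2+w$ and by $E(Z)$ (the grading of the bottom of the Koszul complex on the $L_v,Q_v$). Tracking the constants $n_+-2n_-$ through Lemma \ref{compproduct} then gives precisely $n_+-2n_-+|v|+k_v$.

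Then the $\delta$-grading is immediate from the quantum grading: by definition in Section 2, $\gr_\delta=\gr_q-2\gr_h$ with $\gr_h(R_v)=|v|-n_-$, so $\gr_\delta=\gr_q-2|v|+2n_-$; and by the definition in this subsection $\mathfrak{gr}_\delta=\mathfrak{gr}_q-2|v|+2n_-$, the same formula. So once the quantum gradings are matched, the $\delta$-gradings match automatically. I would also note that under $\gr_\delta$ the differential $d_0+d_1$ and multiplication by $U_i$ are both homogeneous of degree $-2$, consistent with the remark in Section 2 that the Khovanov differential is $\gr_\delta$-homogeneous of degree $-2$ and $X_i$ lowers $\gr_\delta$ by $2$; this is a sanity check rather than part of the logical chain.

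The main obstacle is the bookkeeping in the second step: one has to be careful that the grading shifts introduced at three different stages — the intrinsic $\delta$-grading $T_1-T_2+E+w$ on $x_Z$, the identification of the $\mathfrak{sl}_1$-complex of $S-Z$ with Equation \eqref{sl1cycle} up to an ``overall grading shift'' (which must be pinned down, not just named), and the final shift by $|v|$ and the constant $n_+-2n_-$ — all add up correctly to reproduce Khovanov's $n_+-2n_-+|v|+k_v$. Concretely I would fix the normalization by testing on a single circle ($S$ a trivial braid closing to the unknot, $v$ the empty vertex): there $k_v=1$, $|v|=0$, $n_\pm=0$, the unique cycle $Z$ has $T_1=T_2=w=0$ and $E(Z)=0$, and $H_1^\pm(S-Z)=H_1^\pm(\mathcal{U})\cong\QQ\{0,-2\}$, which after the composition-product reassembly must sit in quantum grading $1$ — forcing the overall shift to be exactly the one needed, after which linearity over the edge ring and additivity of all the quantities under disjoint union and under adding $4$-valent vertices propagates the match to all $D_v$.
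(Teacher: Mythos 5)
Your proof is correct, but it reaches the key quantum-grading computation by a different route than the paper. The paper's proof is essentially a one-liner: apply the MOY reductions of Theorem \ref{SingularKhovanovTheorem} to the complete resolution $D_{v}$; each circle removed contributes a factor $\mathcal{A}\{1\}$ (MOY 0 and MOY \rom{2}) while the MOY \rom{1} and \rom{3} moves are quantum-graded isomorphisms, so the module generator of $H_{1+1}(D_{v})$ lands in quantum grading $k_{v}$, and the shift by $|v|+n_{+}-2n_{-}$ built into the total complex reproduces Khovanov's $\gr_{q}(1\in R_{v})=n_{+}-2n_{-}+|v|+k_{v}$. You instead go through the basepoint filtration and the composition product (Lemma \ref{compproduct}, Corollary \ref{KhovCor}) to get the graded vector-space identification, then use freeness of $H_{1+1}(D_{v})$ over $\mathcal{A}^{\otimes k_{v}}$ and the degree $-2$ action of the $U_{i}$ to force the generator into grading $k_{v}$, with the unknot computation pinning down the normalization. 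Both arguments rest on machinery already established in the paper; yours is longer but makes the normalization explicit, while the paper's exploits the fact that the MOY isomorphisms were already stated with their quantum shifts. One small imprecision: you speak of the generator of \emph{lowest} level and of showing the \emph{minimal} value of $T_{1}-T_{2}+E+w$ equals $k_{v}$, but the module generator $1\in\mathcal{A}^{\otimes k_{v}}$ sits in the \emph{maximal} quantum grading $k_{v}$ (the $X_{i}$ lower it), and $E(Z)$ cancels against the bottom of the Koszul complex so the relevant shift is $T_{1}-T_{2}+w$ as in Lemma \ref{compproduct}; this does not affect your argument, since the freeness and graded-dimension step determines the generator's degree regardless. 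The $\delta$-grading comparison is, as you say, purely formal from the two definitions.
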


\begin{proof}

Applying the MOY relations to a complete resolution $D_{v}$, we see that the generator of the homology $H_{1+1}(D_{v})$ lies in quantum grading $k_{v}+|v|+n_{+}-2n_{-}$, just as in the Khovanov complex. The $\delta$-gradings being the same follows from the definition of the $\delta$-grading on Khovanov homology.

\end{proof}

\subsection{Isomorphism Between $H_{1+1}(L)$ and Khovanov Homology}

In this section, we will show that the $E_{2}$ page of the spectral sequence on $C_{1 \pm 1}(D)$ induced by the cube filtration is isomorphic to the Khovanov homology of $\pc(D)$.

\begin{theorem} \label{FullKhovIsoTheorem}

Let $D$ be an open braid whose plat closure $\pc(D)$ is a diagram for a link $L$. Then 

\[H_{1+1}(D)=H_{*}(H_{*}(C_{1\pm 1}(D), d_{0}), d^{*}_{1}) \cong Kh(L) \]

\end{theorem}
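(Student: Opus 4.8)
Since $d_0=1\otimes d_{\mathsf{K}}$ acts only within a single cube vertex, $H_*(C_{1\pm1}(D),d_0)=\bigoplus_{v\in\{0,1\}^N}H_*(C_{1\pm1}(D_v),d_0)=\bigoplus_v H_{1+1}(D_v)$, where $D_v$ ranges over the complete (singular) resolutions of $D$. By Corollary~\ref{KhovCor}, Theorem~\ref{SingularKhovanovTheorem} and Corollary~\ref{modulelemma1}, each summand $H_{1+1}(D_v)$ is canonically the free rank-one $\mathcal{A}^{\otimes k_v}$-module $R_{\sm(\pc(D_v))}=CKh(\pc(D)_v)$, where $\pc(D)_v$ is the Khovanov $v$-resolution (replace each singularization of $D_v$ by the unoriented smoothing), and the module isomorphism $H_*(C_{1\pm1}(D),d_0)\cong CKh(\pc(D))$ of the preceding lemma is compatible with the edge-ring action, with $\mathfrak{gr}_q$ (resp.\ $\mathfrak{gr}_\delta$) $\leftrightarrow\gr_q$ (resp.\ $\gr_\delta$), and with the cube height $\leftrightarrow\gr_h$. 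Note that the sign function $\epsilon_{u,v}=\sum_{j<i}u_j$ used to define $d_1$ is the same one used for the Khovanov differential.

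\textbf{Step 2 (the induced differential, up to scalars).} Because $d_1$ anticommutes with $d_0$, it descends to $d_1^*$ on $H_*(-,d_0)$ (the $E_1$-differential of the cube spectral sequence), which under Step~1 becomes an edge-ring-linear, $\gr_q$-degree-$0$ differential $\tilde d$ on $CKh(\pc(D))$ decomposing as $\tilde d=\sum_{u\lessdot v}(-1)^{\epsilon_{u,v}}\tilde d_{u,v}$, each $\tilde d_{u,v}\colon R_{D_u}\to R_{D_v}$ edge-ring-linear of $\gr_q$-degree $0$. Each cube edge $u\lessdot v$ is a merge or a split for the cobordism $\sm(\pc(D_u))\to\sm(\pc(D_v))$. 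In the merge case $R_{D_v}$ is the cyclic $R_{D_u}$-module $R_{D_u}/(X_j-X_k)$, so $\tilde d_{u,v}$ is multiplication by $\tilde d_{u,v}(1)$ followed by the projection $m=d^{\mathrm{Kh}}_{u,v}$; comparing quantum gradings (using $k_u=k_v+1$, $|u|=|v|-1$) forces $\tilde d_{u,v}(1)$ into the top $\gr_q$-degree of $R_{D_v}\cong\mathcal{A}^{\otimes k_v}$, hence $\tilde d_{u,v}=c_{u,v}\,d^{\mathrm{Kh}}_{u,v}$, $c_{u,v}\in\QQ$. In the split case $R_{D_u}=R_{D_v}/(X_j-X_k)$ with $X_j,X_k$ on the two new circles, so $\tilde d_{u,v}$ is $R_{D_v}$-linear, determined by $\tilde d_{u,v}(1)\in\mathrm{Ann}_{R_{D_v}}(X_j-X_k)$, which in the forced $\gr_q$-degree is one-dimensional, spanned by $X_j+X_k$; since $d^{\mathrm{Kh}}_{u,v}$ is multiplication by $X_j+X_k$, again $\tilde d_{u,v}=c_{u,v}\,d^{\mathrm{Kh}}_{u,v}$. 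Finally $c_{u,v}\neq0$: by Lemma~\ref{lem:comp} the composites $d^+_i\circ d^-_i$ and $d^-_i\circ d^+_i$ are multiplication by a difference of two edge variables, which on $H_{1+1}$ of the relevant resolution equals (by Lemmas~\ref{prop1},~\ref{prop2} and Corollary~\ref{modulelemma1}, using that opposite signs occur at adjacent strands) a nonzero combination of the operators $X_c$; hence each induced edge map is nonzero, so $c_{u,v}\in\QQ^\times$.

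\textbf{Step 3 (scalars to a chain isomorphism).} We now have $\tilde d=\sum_{u\lessdot v}c_{u,v}\,d^{\mathrm{Kh}}_{u,v}$ with $c_{u,v}\in\QQ^\times$ and $\tilde d^2=0$. For every $2$-face $u\lessdot v,v'\lessdot w$ the composite $d^{\mathrm{Kh}}_{v,w}\circ d^{\mathrm{Kh}}_{u,v}$ is, after identifying circles, a tensor product of copies of the Frobenius multiplication, comultiplication and identity maps of $\mathcal{A}$, hence nonzero; combined with $(d^{\mathrm{Kh}})^2=0$ this forces $c_{v,w}c_{u,v}=c_{v',w}c_{u,v'}$, so $\{c_{u,v}\}$ is a $\QQ^\times$-valued $1$-cocycle on the cube. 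The cube is contractible, so this cocycle is a coboundary: there are $\mu_v\in\QQ^\times$ with $c_{u,v}=\mu_u/\mu_v$, and rescaling the generators over vertex $v$ by $\mu_v^{-1}$ is a grading-preserving chain isomorphism $(CKh(\pc(D)),d^{\mathrm{Kh}})\xrightarrow{\ \sim\ }(CKh(\pc(D)),\tilde d)$. Since by definition $H_{1+1}(D)=H_*(H_*(C_{1\pm1}(D),d_0),d_1^*)=H_*(CKh(\pc(D)),\tilde d)$, taking homology gives
\[H_{1+1}(D)\cong H_*\big(CKh(\pc(D)),d^{\mathrm{Kh}}\big)=Kh(L),\]
compatibly with the quantum (equivalently $\delta$-) grading and the homological/cube grading.

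\textbf{Main obstacle.} The real content is Step~2: the grading-plus-module argument pins down the \emph{shape} of each $\tilde d_{u,v}$ cleanly, but one must check carefully that (a) the dictionary between singular resolutions and Khovanov resolutions — in particular which cube edges are merges and which splits — is exactly the one under which the module isomorphism of Step~1 is natural, and (b) the nonvanishing input from Lemma~\ref{lem:comp} genuinely survives after extending $d^\pm$ to the whole diagram and passing to $d_0$-homology. A more hands-on route — evaluating $d^\pm$ on the distinguished module generator using the explicit bimodule relations (Example~\ref{ex:sing} and the cup/cap modules) — would give $c_{u,v}=\pm1$ directly and render Step~3 unnecessary, at the cost of a longer local case analysis.
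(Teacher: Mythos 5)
Your proposal is correct and follows essentially the same route as the paper's proof: identify the $E_1$ page with $CKh(\pc(D))$ as an edge-ring module via Theorem \ref{SingularKhovanovTheorem}, use the grading and module structure together with the composite formula of Lemma \ref{lem:comp} to show each induced edge map is a nonzero rational multiple of the corresponding Khovanov edge map, and then rescale the vertices of the cube to obtain a chain isomorphism. Your Steps 2 and 3 simply make explicit two points the paper leaves terse (the one-dimensionality of the relevant graded piece of the annihilator in the split case, and the cocycle-to-coboundary rescaling), but the argument is the same.
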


Let $S$ be a complete resolution of $D$. In Theorem \ref{SingularKhovanovTheorem}, we showed that $H_{1+1}(S)$ is isomorphic to $Kh(\sm(\pc(S)))$. Thus, $H_{*}(C_{1 \pm 1}(D), d_{0})$ is isomorphic to the Khovanov complex as a module, so Theorem \ref{FullKhovIsoTheorem} will follow from the induced edge maps being the Khovanov edge maps.

\begin{proof}
Let $S_{1}$ and $S_{2}$ be two complete resolutions which differ at a single crossing, with $S_{1}$ having the singularization and $S_{2}$ the smoothing, and suppose the edges are labeled $e_{1}, e_{2}, e_{3}, e_{4}$ as in Figure \ref{labeled4v}. There are two edge maps, corresponding to a positive and negative crossing, respectively:
\[ d_{1}^{-}: C_{1 \pm 1}(S_{1}) \to C_{1 \pm 1}(S_{2}) \]
\[ d_{1}^{+}: C_{1 \pm 1}(S_{2}) \to C_{1 \pm 1}(S_{1}) \]

Applying Lemma \ref{lem:comp}, we see that
\[ d_{1}^{-} \circ d_{1}^{+} = d_{1}^{+} \circ d_{1}^{-}= U_{1}-U_{4} \]

\noindent
Thus, after taking homology with respect to $d_{0}$, the induced edge maps satisfy the same property.

After identifying $\QQ[U_{1},...,U_{k}]$ with the Khovanov ground ring $\QQ[X_{1},...,X_{k}]$ via $X_{i}=(-1)^{b(i)}U_{i}$, where $b(i)$ is the the number of the strand on which $e_{i}$ lies, we get 
\[ (d_{1}^{-})^{*} \circ (d_{1}^{+})^{*} = (d_{1}^{+})^{*} \circ (d_{1}^{-})^{*}= \pm(X_{1}+X_{4}) \] 

In other words, up to sign, the composition of our two edge maps is precisely the composition of the two Khovanov edge maps. Since the edge maps are module homomorphisms, this composition uniquely determines the two edge maps up to scaling by some non-zero element of $\QQ$.

Consider first the homomorphism corresponding to a merge of two circles. Since the map preserves the quantum grading, the only option is 
\[ 1 \mapsto r \]

\noindent
for some $r \in \QQ$. Thus, the merge map is $r$ times the Khovanov merge map. The split map then sends $r$ to $\pm(X_{1}+X_{4})$, so it is $\pm 1/r$ times the Khovanov split map.

So we have shown that each edge map is the Khovanov edge map, up to scaling. It is not hard to see that any such complex is isomorphic to the Khovanov complex, where the isomorphism is obtained by scaling the complex at each vertex so that the edge maps become the Khovanov edge maps.

\end{proof}

\section{Invariance of the Total Homology}\label{sec:invariance} 

In this section we will prove our main theorem, that the total homology is a link invariant.

\begin{theorem}

The total homology \[H_{1-1}(D)=H_{*}(C_{1\pm1}(D), d_0 + d_1)\] is a graded link invariant.

\end{theorem}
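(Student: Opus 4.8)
The plan is to establish invariance under the Markov-type moves that relate any two open-braid presentations of a link via plat closure. By the plat-closure analogue of Alexander's theorem and the corresponding Markov theorem for plats, two open braids $D$ and $D'$ whose plat closures represent the same link $L$ are related by a finite sequence of (i) braid-like Reidemeister II and III moves in the interior of the braid, (ii) stabilization moves that add a trivial strand pair together with a cup and a cap, and (iii) isotopies sliding crossings past the cups/caps at the top and bottom. So the theorem reduces to checking that the chain homotopy type of $C_{1\pm1}(D)=\mathscr{M}(D)\otimes\mathsf{K}(D)$ is preserved under each of these moves. Throughout I would work at the level of the total complex with differential $d_0+d_1$, and track gradings $\mathfrak{gr}_q$ and $\mathfrak{gr}_\delta$ to pin down the overall shifts.

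First I would treat the Reidemeister II and III moves. For a braid-like RII or RIII move occurring in the interior of $D$, write $D = D_{\mathrm{top}}\circ T\circ D_{\mathrm{bot}}$ where $T$ is the elementary tangle where the move happens, so that $\mathscr{M}(D)\cong \mathsf{M}[S_{cup}]\otimes_{A_n}\cdots\otimes_{A_n}\mathsf{M}(T)\otimes_{A_n}\cdots\otimes_{A_n}\mathsf{M}[S_{cap}]$, with $\mathsf{M}(T)$ the mapping cone of $d^+$ or $d^-$ over the appropriate crossings. Since tensoring over $A_n$ with a fixed bimodule preserves chain homotopy equivalences, it suffices to show that the local complex $\mathsf{M}(T)$ (a cube of mapping cones of the $d^\pm$ bimodule maps) for the two sides of an RII or RIII move are chain homotopy equivalent as $A_n$-bimodules. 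For RII this is a Gaussian-elimination / cancellation argument: using Lemma~\ref{lem:comp} ($d^+_i d^-_i = U_i-U_{i+1}$ etc.), one identifies an acyclic subcomplex in the $\mathsf{X}_i$-then-$\mathsf{X}_i$ direction of the cube and cancels it, exactly as in the standard proof that the Khovanov cube is invariant under RII. For RIII, the key inputs are the splittings and compatibility lemmas already proved in the MOY~III subsection — Lemmas~\ref{lem:submodMOY3}, \ref{lem:MIIIsplit}, \ref{lem:propII}, \ref{lem:propIII-1}, \ref{lem:propIII-2}, \ref{lem:propIII-3} and the isomorphisms $j_{ab}, j_{ba}$ of \eqref{eq:isomofcyclic} — which assemble into an explicit chain homotopy equivalence between the two triple-crossing cubes $S^i_{\RN{3}a}$ and $S^{i+1}_{\RN{3}b}$ compatible with the cube differential $d_1$. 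I would package these as a bimodule chain homotopy equivalence, then tensor with $\mathsf{M}[S_{cup}]\otimes\cdots$ and $\cdots\otimes\mathsf{M}[S_{cap}]$ and with $\mathsf{K}(D)$ (which is unchanged) to get the equivalence of total complexes.

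Next I would handle the stabilization / cup-cap moves, which are genuinely global because they change $n$ and hence change $\mathsf{K}(D)$. Here the move replaces $D$ by $D' = (\text{new cup}) \cdot D\cdot(\text{new cap})$ with an extra pair of strands carrying a single crossing $\mathsf{X}_i$ (so the plat closure gains a disjoint unknot or, after an isotopy, nothing). The effect on $\mathscr{M}$ is to tensor by $\mathsf{M}[\mycup_i]$ and $\mathsf{M}[\mycap_i]$ and adjoin a crossing; Lemmas~\ref{lem:cap} and \ref{lem:cup} compute $\mathsf{M}[\mathsf{X}_i]\otimes_{A_n}\mathsf{M}[\mycap_i]\cong \mathsf{M}[\mycap_i]\{1\}\oplus\mathsf{M}[\mycap_i]\{-1\}$ and similarly for cups, and the associated Corollary shows $H_{1+1}(S\circ\mathsf{X}_i)\cong H_{1+1}(S)\otimes\mathcal A\{1\}$. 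The new Koszul factor in $\mathsf{K}(D')$ is $\bigl(\xymatrix{R\ar@<1ex>[r]^{2U}&R\ar@<1ex>[l]}\bigr)$ after the Lemma-\ref{prop1}-type reduction, whose homology over $\mathbb{Q}$ is one-dimensional; I would show that tensoring the extra $\mathcal A\{1\}$ from the cup-cap pair with this new Koszul factor yields a complex chain homotopy equivalent to the ground ring in the appropriate grading, so that $C_{1\pm1}(D')\simeq C_{1\pm1}(D)$ with the expected overall $\delta$-grading shift (and no shift once the normalizing factors are accounted for, matching the fact that $H_{1-1}$ is singly graded and a link invariant). This uses that $2$ is invertible in $\mathbb{Q}$, exactly as in the $\mathfrak{sl}_1$ unknot computation.

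Finally, the moves of type (iii), sliding a crossing past a cup or cap, follow from combining Corollary~\ref{cor:IIIcap} (MOY~III applied to a singular cup or cap splits off a cyclic acyclic summand) with the RII/RIII machinery above, since such a slide can be realized as a braid-like Reidemeister move performed adjacent to the cup/cap together with a cancellation of the acyclic summand; one checks the homotopy equivalences are compatible with $d_1$ using Corollary~\ref{cor:IIIcap} and Lemma~\ref{lem:MIII}. I expect the main obstacle to be the stabilization move: unlike RII/RIII it is not purely local in the bimodule formalism because it changes the algebra $A_n\to A_{n\pm1}$ and the closing-off Koszul complex $\mathsf{K}(D)$, so one must carefully bookkeep how the extra cup-cap bimodule interacts with the new Koszul tensorand and verify the grading shifts cancel; getting the signs in $d_1$ and the normalization of $\mathfrak{gr}_\delta$ correct there is the delicate part. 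Once all moves are verified, $H_{1-1}(D)=H_*(C_{1\pm1}(D),d_0+d_1)$ depends only on $L$, and the grading statement follows from the $\mathfrak{gr}_\delta$ tracking, completing the proof.
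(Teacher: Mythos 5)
Your reduction to local moves is where the argument breaks down: the list of moves you verify does not generate plat equivalence. Birman's theorem for plat closures requires, besides braid-like Reidemeister II and III moves and stabilization, the double-coset moves by the Hilden subgroup, whose generators include the \emph{twist} $\sigma_{2i-1}^{\pm 1}$ inserted immediately below a cap (or above a cup), as in Figure \ref{fig:PTwist}. This move adds or removes a crossing between the two strands entering a single cap; it is not an ``isotopy sliding a crossing past a cup or cap,'' and it is not subsumed by your moves (i)--(iii). The paper devotes a separate subsection to it (and to the Reidemeister I move, which you also omit), defining a filtered chain map $f=(0,\imath_{\RN{2}})$ into the two-vertex cube of the one-crossing diagram and checking that on the $E_2$ page it realizes the standard Khovanov quasi-isomorphism. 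Without verifying invariance under the twists (at both top and bottom, for both signs), you have not shown that $H_{1-1}$ depends only on the link.

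On the moves you do treat, your RII and RIII arguments follow the paper's: explicit bimodule chain homotopy equivalences built from Lemma \ref{lem:comp} and the MOY~III splittings, then tensored through. But for the moves adjacent to cups and caps the paper's route is different and much lighter than what you propose: rather than assembling chain homotopy equivalences out of Corollary \ref{cor:IIIcap}, it writes down a chain map filtered by the cube filtration, observes that the induced map on the $E_2$ page (which is Khovanov homology by Theorem \ref{FullKhovIsoTheorem}) is the known Khovanov quasi-isomorphism for the corresponding Reidemeister move, and concludes via the spectral-sequence comparison Lemma \ref{SSLemma}. You never invoke this lemma, and constructing genuine homotopy equivalences for the cap swaps at the level of the sixteen-vertex total cube would be substantially harder; I recommend the filtered-map strategy there. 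Finally, your stabilization step is not in the paper at all, so if you keep it you must carry it out in full --- including the change of algebra $A_n\to A_{n+1}$ and the new Koszul tensorand --- and note that the homology of that new factor is not one-dimensional over $\QQ$ as you assert; the correct statement is the MOY~0-type computation giving a copy of $\mathcal{A}\{1\}$, which must then be cancelled against the contribution of the added crossing.
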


To prove invariance, we need to show that the homology is invariant under five types of local moves: braid-like Reidemeister I, II, and III moves, twists at the top and bottom, and cap swaps/cup swaps. For Reidemeister I, twists, and cap swaps / cup swaps, we will use the following lemma from homological algebra:

\begin{lemma}[\cite{SSBook}]\label{SSLemma} Let $f:C_{1} \to C_{2}$ be a filtered chain map of filtered chain complexes. If $f$ induces an isomorphism on the $E_{k}$ pages of the associated spectral sequences, then it induces an isomorphism on the $E_{l}$ pages for $l \ge k$. 
\end{lemma}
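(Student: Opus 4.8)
The statement is the standard comparison theorem for the spectral sequence of a filtered complex, and the plan is a short induction on $l$ once the naturality of the construction is recalled. The base case $l=k$ is the hypothesis. For the inductive step, assume $f$ induces an isomorphism $f_\ast\colon E_l(C_1)\to E_l(C_2)$, and deduce the same for $E_{l+1}$.

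First I would recall the relevant functoriality of the spectral sequence of a filtered complex: a filtered chain map $f$ induces, for every $r$, a map of graded groups $f^{(r)}\colon E_r(C_1)\to E_r(C_2)$; each $f^{(r)}$ commutes with the page differential $d_r$; and under the canonical identification $E_{r+1}\cong H_\ast(E_r,d_r)$ the map $f^{(r+1)}$ is precisely the map on homology induced by $f^{(r)}$. All of this is contained in standard references such as \cite{SSBook}. I would also note that no convergence hypothesis is needed here, since the claim concerns only the pages $E_l$ with $l\geq k$ and not $E_\infty$; in the applications in this paper (the cube filtration on $C_{1\pm1}(D)$ and the basepoint filtration) the filtrations are in any case bounded.

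Given this, the inductive step is immediate. The map $f^{(l)}\colon (E_l(C_1),d_l)\to (E_l(C_2),d_l)$ is an isomorphism of graded groups commuting with the differential, so its set-theoretic inverse is also a chain map; hence $f^{(l)}$ restricts to isomorphisms $\ker d_l \cong \ker d_l$ and $\operatorname{im} d_l \cong \operatorname{im} d_l$, and therefore induces an isomorphism on homology. Thus $f^{(l+1)}=H_\ast(f^{(l)})\colon E_{l+1}(C_1)\to E_{l+1}(C_2)$ is an isomorphism, which completes the induction.

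The only real content is invoking the naturality of the spectral sequence correctly — that $f_\ast$ commutes with each $d_r$ and that passing to the next page is functorial — so that ``an isomorphism of chain complexes induces an isomorphism on homology'' applies verbatim at every page. There is no further obstacle; everything else is formal.
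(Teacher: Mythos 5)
Your argument is correct and is exactly the standard one: the paper itself gives no proof of this lemma (it is quoted from \cite{SSBook}), and the textbook proof is precisely your induction using that $f$ commutes with each $d_r$, that $E_{r+1}\cong H_*(E_r,d_r)$ naturally, and that an isomorphism of chain complexes induces an isomorphism on homology. Your side remark is also well taken: the finite-page statement needs no convergence hypothesis, while the way the lemma is actually invoked in the invariance proofs (to conclude an isomorphism of total homologies via $E_\infty$) does use that the cube and basepoint filtrations are bounded, exactly as you note.
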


Together with Theorem \ref{FullKhovIsoTheorem} and familiar results from Khovanov homology, these invariance proofs will be fairly easy. For the braid-like Reidemeister II and III moves, we will actually prove that the chain homotopy type of the bimodules is invariant, making the ($A_{n}, A_{n}$)-bimodule an invariant of open braids.

\subsection{Twists at Top and Bottom}

Let $D_{1}$ and $D_{2}$ be two diagrams which differ by a single positive twist near $w_{i}^{+}$ as in Figure \ref{fig:PTwist}.

Let $D_1^0$ and $D_1^1$ denote the $0$- and $1$-resolutions of $D_1$, respectively, at the crossing shown in Figure \ref{fig:PTwist}. Note that $D_1^0$ is isotopic to $D_2$, and $D_1^1$ differs from $D_2$ by a MOY \rom{2} move. We define 
\[f=(f_0,f_1):C_{1\pm 1}(D_2)\to C_{1\pm 1}(D_1)=C_{1\pm 1}(D_1^0)\oplus C_{1\pm 1}(D_1^1)\]
such that $f_0=0$ and $f_1=\imath_{\RN{2}}$.



\begin{figure}[ht]
\centering
\def\svgwidth{6.5cm}
\input{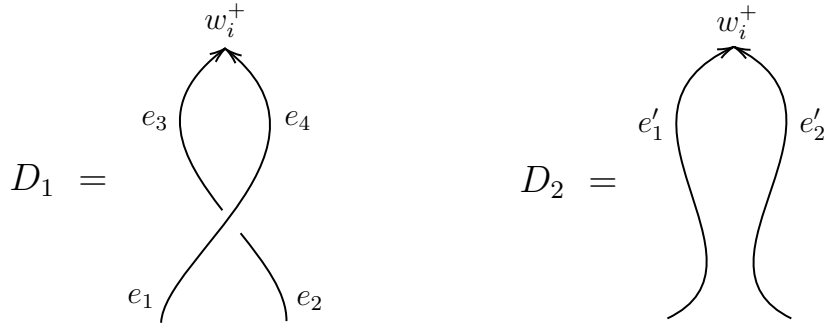}
\caption{Two diagrams differing by a positive twist at $w_{i}^{+}$}\label{fig:PTwist}
\end{figure}

The map $f$ is a filtered chain map with respect to the cube filtration. Let $E_{k}(C_{1 \pm 1}(D))$ denote the spectral sequence induced by the cube filtration. By the MOY II relation, $E_{1}(C_{1 \pm 1}(D_{1}^{1})) \cong E_{1}(C_{1 \pm 1}(D_{2})) \otimes \mathcal{A}$, and with respect to this isomorphism, $f$ maps $E_{1}(C_{1 \pm 1}(D_{2}))$ to $E_{1}(C_{1 \pm 1}(D_{2})) \otimes 1$. 

This is the standard quasi-isomorphism on Khovanov homology corresponding to a Reidemeister I move, so $f^{*}: E_{2}(D_{2}) \to E_{2}(D_{1})$ is an isomorphism. Applying Lemma \ref{SSLemma}, $f$ induces an isomorphism on the total homology. Moreover, since $f^{*}$ preserves the $\delta$-grading on Khovanov homology, it preserves $\mathfrak{gr}_{\delta}$.

The negative twist, as well as the two twists at the bottom, follow from similar arguments.

\subsection{Reidemeister I}

Let $D_{1}$ and $D_{2}$ be two diagrams which differ by a positive Reidemeister I move as in Figure \ref{NegativeR1}. 

\begin{figure}[ht]
\centering
\def\svgwidth{6.5cm}
\input{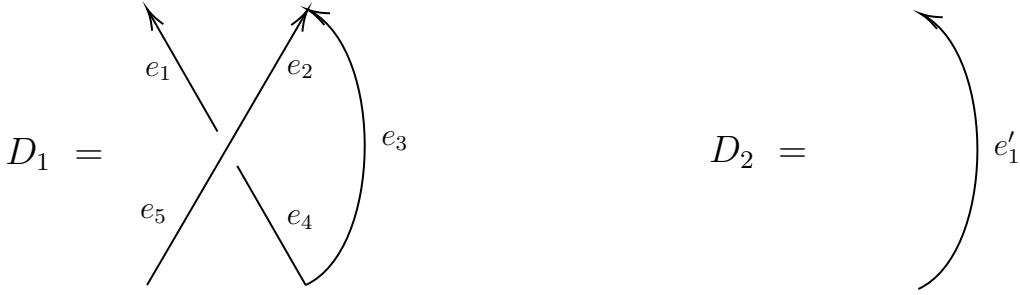}
\caption{Two diagrams differing by a positive Reidemeister I move}\label{NegativeR1}
\end{figure}

This argument is similar to the twists, as both correspond to a Reidemeister I move on Khovanov homology. Let $D_{1}^{i}$ denote the $i$-resolution of $D_{1}$ at the crossing in Figure \ref{NegativeR1}. By the MOY 0 relation, there is a splitting 
\[  C_{1 \pm 1}(D_{0}^{1}) \cong C_{1 \pm 1}(D_{2}) \otimes C_{1+1}(\mathcal{U}) \]

Since $U_{2}=U_{4}$ on $C_{1 \pm 1}(\mathcal{U})$, it can be written 
\[  C_{1 \pm 1}(\mathcal{U}) =
\xymatrix@C+2pc{\QQ[U_{2},U_{3}]/(U_{2}U_{3})&\QQ[U_{2},U_{3}]/(U_{2}U_{3})\ar[l]^{2U_{2}+2U_{3}}}. \]

Let $x_{1}$ and $x_{2}$ be the generators of the two copies of $\QQ[U_{2},U_{3}]/(U_{2}U_{3})$ so that we can write 
\[  C_{1 \pm 1}(\mathcal{U}) =
\xymatrix@C+2pc{x_1 & x_2 \ar[l]^{2U_{2}+2U_{3}}}. \]

We define 
\[f=(f_0,f_1): C_{1\pm 1}(D_1)=C_{1\pm 1}(D_1^0)\oplus C_{1\pm 1}(D_1^1) \to C_{1\pm 1}(D_2) \]
such that $f_0 (a \otimes U_{2}x_{1}) = a$, $f_0 (a \otimes x) =0 $ for $x \ne U_{2}x_{1}$, and $f_{1}=0$. 

The map $f$ is clearly a filtered chain map with respect to the cube filtration. Let $E_{k}(C_{1 \pm 1}(D))$ denote the spectral sequence induced by the cube filtration. By the MOY 0 relation, $E_{1}(C_{1 \pm 1}(D^{0}_{1}) \cong E_{1}(C_{1 \pm 1}(D_2) \otimes \mathcal{A}$, and $f^{*}$ is defined by 
\[ f^{*}(a \otimes 1)=0 \hspace{8mm} f^{*}(a \otimes U)= a \]

This is the standard quasi-isomorphism on Khovanov homology corresponding to a Reidemeister I move, so $f^{*}: E_{2}(D_{2}) \to E_{2}(D_{1})$ is an isomorphism. Applying Lemma \ref{SSLemma}, $f$ induces an isomorphism on the total homology. Moreover, since $f^{*}$ preserves the $\delta$-grading on Khovanov homology, it preserves $\mathfrak{gr}_{\delta}$.

\subsection{Reidemister II}

Let $D_{1}$ and $D_{2}$ be diagrams which differ by a Reidemeister II move as in Figure \ref{RII}. Let $D_{1}^{ij}$ denote the diagram $D_{1}$ where the top crossing has been resolved with the $i$-resolution and the bottom crossing has been resolved with the $j$-resolution.

\begin{figure}[ht]
\centering
\def\svgwidth{11cm}
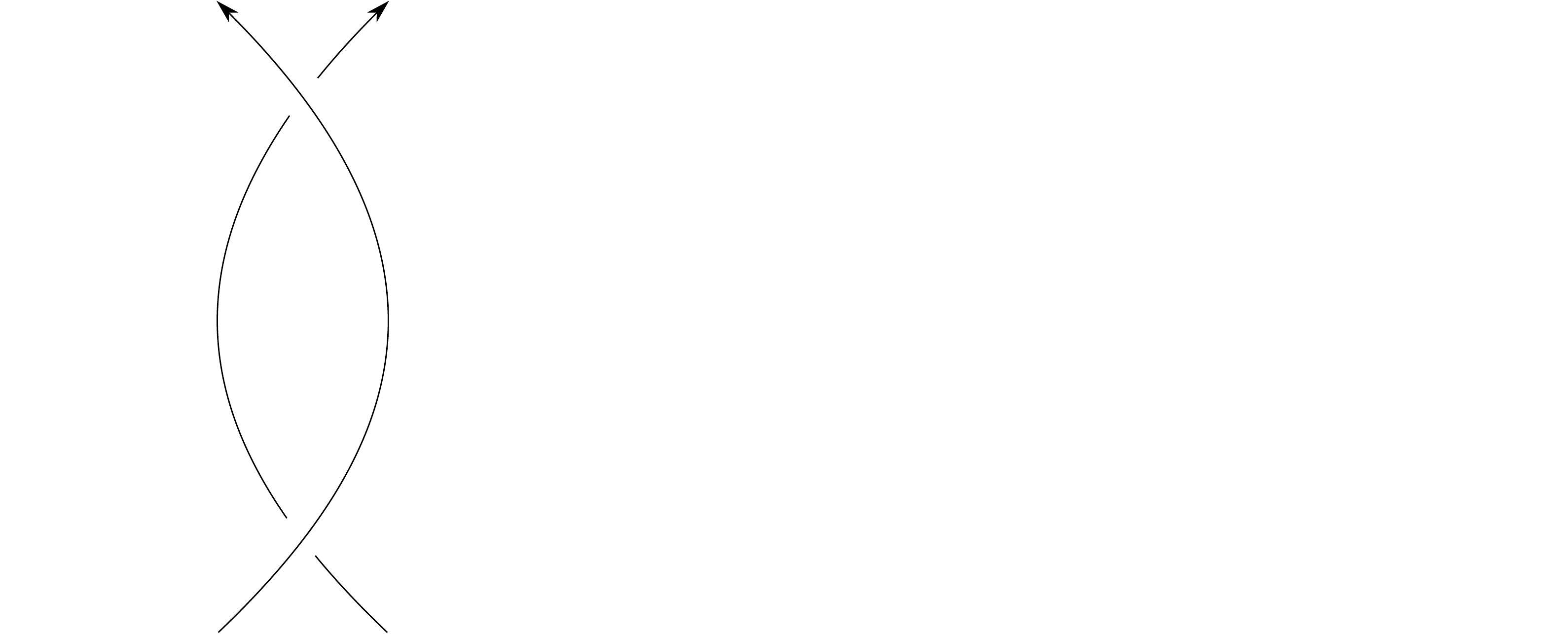
\caption{Two diagrams differing by a Reidemeister II move}\label{RII}
\end{figure}

Then, $D_{1}^{00}$ and $D_{1}^{11}$ are isotopic to the diagram $D_x$ obtained by locally replacing the two crossings with a singularization. Moreover, $D_1^{01}$ differs from $D_2$ by a MOY \rom{2} move, while $D_1^{10}$ is isotopic to $D_2$. So, $C_{1\pm 1}(D_1^{01})\cong C_{1\pm 1}(D_x)\{1\}\oplus C_{1\pm 1}(D_x)\{-1\}$, and with respect to this decomposition we can write $C_{1\pm 1}(D_1)$ as follows:

\[
\begin{diagram}
C_{1\pm 1}(D_x)&\rTo^{d_1^b}&C_{1\pm 1}(D_2)\\
\dTo_{d_1^a}&&\dTo_{d_1^d}\\
C_{1\pm 1}(D_x)\{1\}\oplus C_{1\pm 1}(D_x)\{-1\}&\rTo^{d_1^c}&C_{1\pm 1}(D_x)
\end{diagram}
\]
%
%


Then, part (2) of Lemma \ref{lem:propII} implies that $d_a^1=(-U_2\mathrm{id},\mathrm{id})$ under the above identification. Similarly, part (1) of Lemma \ref{lem:propII} implies that under the above identification the restriction of  $d_1^c$ to the first and the second summands is equal to $\mathrm{id}$ and $U_3\mathrm{id}$, respectively.

We want to define a chain map $f: C_{1\pm 1}(D_{2}) \to C_{1\pm 1}(D_{1})$ which is a homotopy equivalence. Let $f^{ij}$ denote the component of $f$ mapping to $C_{1\pm 1}(D^{ij}_{1})$. We define $f^{00}=f^{11}=0$, and under the above identification $f^{10}=\mathrm{id}$ and $f^{01}=(-d_1^d,0)$.

It's not hard to see that $f$ is a chain map. 

\begin{lemma}

The chain map $f$ is a homotopy equivalence. 

\end{lemma}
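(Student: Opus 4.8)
The plan is to exhibit an explicit homotopy inverse $g\colon C_{1\pm1}(D_1)\to C_{1\pm1}(D_2)$ together with a nullhomotopy of $\mathrm{id}-f\circ g$; at heart this is the bookkeeping of a double Gaussian elimination, made possible because each of the edge maps $d_1^a$ and $d_1^c$ contains an identity component.

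First I would record all the edge maps of the square. With the identifications already in place, Lemma~\ref{lem:propII} gives $d_1^a=(-U_2\,\mathrm{id},\,\mathrm{id})$ and $d_1^c=(\mathrm{id},\,U_3\,\mathrm{id})$, so each has a summand equal to the identity of $C_{1\pm1}(D_x)$, while Lemma~\ref{lem:comp} gives $d_1^d\circ d_1^b=U_3-U_2$. Together with $d_1^c\circ d_1^a=(-U_2)+U_3$ this is exactly the relation forcing the square to (anti)commute, and it is the only nontrivial ingredient of the homotopy check below.

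Next I would set $g$ equal to $\mathrm{id}$ on the vertex $C_{1\pm1}(D_1^{10})\cong C_{1\pm1}(D_2)$, equal to $-d_1^b$ on the summand of $C_{1\pm1}(D_1^{01})\cong C_{1\pm1}(D_x)\{1\}\oplus C_{1\pm1}(D_x)\{-1\}$ on which $d_1^a$ restricts to the identity, and equal to $0$ on $C_{1\pm1}(D_1^{00})$, on the other summand of $C_{1\pm1}(D_1^{01})$, and on $C_{1\pm1}(D_1^{11})$. Checking $g$ against the explicit edge maps (it commutes with all internal differentials automatically) shows $g$ is a chain map, and the formulas for $f$ and $g$ give $g\circ f=\mathrm{id}_{C_{1\pm1}(D_2)}$ at once, since the $-d_1^d$-component of $f$ lands in a summand that $g$ annihilates. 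To treat $f\circ g$, I would take $H\colon C_{1\pm1}(D_1)\to C_{1\pm1}(D_1)$ of cube-degree $-1$ assembled from the inverses of the two identity edge-components: $H$ maps the relevant summand of $C_{1\pm1}(D_1^{01})$ to $C_{1\pm1}(D_1^{00})$ and maps $C_{1\pm1}(D_1^{11})$ to the other summand of $C_{1\pm1}(D_1^{01})$, each by $\pm\mathrm{id}$, and is zero elsewhere. Then $dH+Hd$ reproduces the identity on the two ``corner'' vertices $C_{1\pm1}(D_1^{00})$ and $C_{1\pm1}(D_1^{11})$ directly, reproduces the off-diagonal $d_1^d$ term on $C_{1\pm1}(D_1^{10})$, and on $C_{1\pm1}(D_1^{01})$ the surviving cross-terms agree with $\mathrm{id}-f\circ g$ precisely because $d_1^d\circ d_1^b=U_3-U_2$ — the same mechanism as the Reidemeister~II argument in Khovanov homology. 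Since the $\{\pm1\}$ shifts in the MOY~\rom{2} splitting of Lemma~\ref{lem:MII} are arranged so that $\imath_{\RN{2}}$, hence $f$, is homogeneous for $\mathfrak{gr}_q$ and $\mathfrak{gr}_\delta$, this shows $f$ is a graded homotopy equivalence and $H_{1-1}(D_1)\cong H_{1-1}(D_2)$ as graded groups.

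Finally I would note that $f$, $g$, and $H$ are all built from the local maps $d^{\pm}$, $\imath_{\RN{2}}$, $\pi_{\RN{2}}$ and the MOY~\rom{2} splitting, all of which respect the $A_n$-actions, so the identical computation upgrades to a homotopy equivalence of $(A_n,A_n)$-bimodules, giving invariance of $\mathsf{M}[\,\cdot\,]$ under a braid-like Reidemeister~II move rather than merely of its homology. The main obstacle is essentially sign bookkeeping: reconciling the $(-1)^{\epsilon_{u,v}}$ conventions of the cube differential with the signs chosen in $H$, and confirming that the MOY~\rom{2} grading shifts make $f$ strictly degree-preserving; the substantive point, that the nullhomotopy closes up, is carried entirely by Lemma~\ref{lem:comp}.
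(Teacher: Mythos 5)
Your proposal is correct and follows essentially the same route as the paper: an explicit inverse $g$ which is the identity on the $D_1^{10}$ vertex and $\pm d_1^b\circ\pi_2$ on the summand of $C_{1\pm1}(D_1^{01})$ hit by the identity component of $d_1^a$, giving $g\circ f=\mathrm{id}$ immediately, together with a homotopy built from $\pi_2$ and $i_1$ whose key identity is exactly the paper's $\mathrm{id}-f^{01}\circ g^{01}=d_1^a\circ\pi_2+i_1\circ d_1^c$, driven by $d_1^d\circ d_1^b=U_3-U_2$. The only deviations — the overall sign on $g^{01}$ and spelling out $dH+Hd$ on all four vertices rather than just the $D_1^{01}$ one — lie within the sign-bookkeeping slack the paper itself leaves implicit.
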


\begin{proof}
We define a chain map $g:C_{1\pm 1}(D_1)\to C_{1\pm 1}(D_2)$ such that $g\circ f=\mathrm{id}$ and $f\circ g\simeq \mathrm{id}$. Let $g^{ij}$ denote the restriction of $g$ to $C_{1\pm 1}(D_1^{ij})$. Then, we define $g^{00}=g^{11}=0$ and under the above identification $g^{10}=\mathrm{id}$ and $g^{01}=d_1^b\circ \pi_2$, where $\pi_2$ denotes the projection on the second summand. It is clear that $g\circ f=\mathrm{id}$.

Furthermore, \[\mathrm{id}-f^{01}\circ g^{01}=\begin{bmatrix}
1&U_3-U_2\\
0&1
\end{bmatrix}=d_1^a\circ \pi_2+i_1\circ d_1^c.\]

Therefore, $f\circ g\simeq \mathrm{id}$.

%
%

\end{proof}

This proves invariance of $C_{1\pm 1}$ under the Reidemeister II move shown in Figure \ref{RII}. The other Reidemeister II move is similar.

\subsection{Reidemister \rom{3}}

Suppose $D$ and $D'$ are braid diagrams, such that $D'$ is obtained from $D$ by a Reidemeister III move, and $D''$ is the braid diagram obtained from them by the move indicated in Figure \ref{fig:RIII}. For $i=0,1$, $D_i$ and $D_i'$ denote the $i$-resolutions of the middle crossing in $D$ and $D'$, respectively.

\begin{figure}[!h]
\centering
\tikzset{every picture/.style={line width=0.75pt}} 

\begin{tikzpicture}[x=0.6pt,y=0.6pt,yscale=-1,xscale=1]

\draw    (94.93,261) -- (219.72,100.58) ;
\draw [shift={(220.95,99)}, rotate = 487.88] [color={rgb, 255:red, 0; green, 0; blue, 0 }  ][line width=0.75]    (10.93,-3.29) .. controls (6.95,-1.4) and (3.31,-0.3) .. (0,0) .. controls (3.31,0.3) and (6.95,1.4) .. (10.93,3.29)   ;

\draw    (153.74,176.14) -- (96.14,100.59) ;
\draw [shift={(94.93,99)}, rotate = 412.68] [color={rgb, 255:red, 0; green, 0; blue, 0 }  ][line width=0.75]    (10.93,-3.29) .. controls (6.95,-1.4) and (3.31,-0.3) .. (0,0) .. controls (3.31,0.3) and (6.95,1.4) .. (10.93,3.29)   ;

\draw    (163.26,185.14) -- (220.95,261) ;

\draw  [color={rgb, 255:red, 255; green, 255; blue, 255 }  ,draw opacity=1 ][fill={rgb, 255:red, 255; green, 255; blue, 255 }  ,fill opacity=1 ]  (119.15, 231.49) circle [x radius= 8.05, y radius= 8.05]  ;
\draw  [color={rgb, 255:red, 255; green, 255; blue, 255 }  ,draw opacity=1 ][fill={rgb, 255:red, 255; green, 255; blue, 255 }  ,fill opacity=1 ]  (118.73, 131.98) circle [x radius= 8.05, y radius= 8.05]  ;
\draw    (136.93,261) .. controls (104.22,231.47) and (103.89,130.87) .. (136.49,100.91) ;
\draw [shift={(137.49,100.03)}, rotate = 499.4] [color={rgb, 255:red, 0; green, 0; blue, 0 }  ][line width=0.75]    (10.93,-3.29) .. controls (6.95,-1.4) and (3.31,-0.3) .. (0,0) .. controls (3.31,0.3) and (6.95,1.4) .. (10.93,3.29)   ;

\draw [color={rgb, 255:red, 0; green, 0; blue, 0 }  ,draw opacity=1 ][line width=2.25]    (259,180) -- (355,180) ;
\draw [shift={(359,180)}, rotate = 180] [color={rgb, 255:red, 0; green, 0; blue, 0 }  ,draw opacity=1 ][line width=2.25]    (17.49,-5.26) .. controls (11.12,-2.23) and (5.29,-0.48) .. (0,0) .. controls (5.29,0.48) and (11.12,2.23) .. (17.49,5.26)   ;

\draw    (310,371) -- (310,283) ;
\draw [shift={(310,281)}, rotate = 450] [color={rgb, 255:red, 0; green, 0; blue, 0 }  ][line width=0.75]    (10.93,-3.29) .. controls (6.95,-1.4) and (3.31,-0.3) .. (0,0) .. controls (3.31,0.3) and (6.95,1.4) .. (10.93,3.29)   ;

\draw    (336,372) -- (336,284) ;
\draw [shift={(336,282)}, rotate = 450] [color={rgb, 255:red, 0; green, 0; blue, 0 }  ][line width=0.75]    (10.93,-3.29) .. controls (6.95,-1.4) and (3.31,-0.3) .. (0,0) .. controls (3.31,0.3) and (6.95,1.4) .. (10.93,3.29)   ;

\draw    (421.83,262) -- (550.69,103.32) ;
\draw [shift={(551.95,101.76)}, rotate = 489.08] [color={rgb, 255:red, 0; green, 0; blue, 0 }  ][line width=0.75]    (10.93,-3.29) .. controls (6.95,-1.4) and (3.31,-0.3) .. (0,0) .. controls (3.31,0.3) and (6.95,1.4) .. (10.93,3.29)   ;

\draw    (482.55,178.07) -- (423.07,103.33) ;
\draw [shift={(421.83,101.76)}, rotate = 411.49] [color={rgb, 255:red, 0; green, 0; blue, 0 }  ][line width=0.75]    (10.93,-3.29) .. controls (6.95,-1.4) and (3.31,-0.3) .. (0,0) .. controls (3.31,0.3) and (6.95,1.4) .. (10.93,3.29)   ;

\draw    (492.38,186.97) -- (551.95,262) ;

\draw  [color={rgb, 255:red, 255; green, 255; blue, 255 }  ,draw opacity=1 ][fill={rgb, 255:red, 255; green, 255; blue, 255 }  ,fill opacity=1 ]  (527.66, 131.52) circle [x radius= 6.94, y radius= 6.94]  ;
\draw  [color={rgb, 255:red, 255; green, 255; blue, 255 }  ,draw opacity=1 ][fill={rgb, 255:red, 255; green, 255; blue, 255 }  ,fill opacity=1 ]  (528.81, 232.75) circle [x radius= 6.94, y radius= 6.94]  ;
\draw    (510.31,262) .. controls (545.52,231.53) and (544.74,133.51) .. (510.49,101.94) ;
\draw [shift={(509.44,101)}, rotate = 400.87] [color={rgb, 255:red, 0; green, 0; blue, 0 }  ][line width=0.75]    (10.93,-3.29) .. controls (6.95,-1.4) and (3.31,-0.3) .. (0,0) .. controls (3.31,0.3) and (6.95,1.4) .. (10.93,3.29)   ;

\draw    (361,372) -- (361,284) ;
\draw [shift={(361,282)}, rotate = 450] [color={rgb, 255:red, 0; green, 0; blue, 0 }  ][line width=0.75]    (10.93,-3.29) .. controls (6.95,-1.4) and (3.31,-0.3) .. (0,0) .. controls (3.31,0.3) and (6.95,1.4) .. (10.93,3.29)   ;

\draw [line width=1.5]    (211,280) -- (249.13,327.66) ;
\draw [shift={(251,330)}, rotate = 231.34] [color={rgb, 255:red, 0; green, 0; blue, 0 }  ][line width=1.5]    (14.21,-4.28) .. controls (9.04,-1.82) and (4.3,-0.39) .. (0,0) .. controls (4.3,0.39) and (9.04,1.82) .. (14.21,4.28)   ;

\draw [line width=1.5]    (420,280) -- (381.87,327.66) ;
\draw [shift={(380,330)}, rotate = 308.65999999999997] [color={rgb, 255:red, 0; green, 0; blue, 0 }  ][line width=1.5]    (14.21,-4.28) .. controls (9.04,-1.82) and (4.3,-0.39) .. (0,0) .. controls (4.3,0.39) and (9.04,1.82) .. (14.21,4.28)   ;

\draw (91.93,245.57) node   {$e_{1}$};
\draw (139.93,245.57) node   {$e_{2}$};
\draw (225.15,245.57) node   {$e_{3}$};
\draw (92.93,114.43) node   {$e_{4}$};
\draw (140.93,115.43) node   {$e_{5}$};
\draw (229.35,114.43) node   {$e_{6}$};
\draw (416.96,245.98) node   {$e_{1} '$};
\draw (505.31,246.74) node   {$e_{2} '$};
\draw (556.08,245.98) node   {$e_{3} '$};
\draw (415.96,116.26) node   {$e_{4} '$};
\draw (505.7,116.26) node   {$e_{5} '$};
\draw (559.75,116.26) node   {$e_{6} '$};
\draw (151.33,207) node   {$e_{7}$};
\draw (147.33,148) node   {$e_{8}$};
\draw (100.33,183.86) node   {$e_{9}$};
\draw (551.95,184.93) node   {$e_{7} '$};
\draw (497.03,146.78) node   {$e_{8} '$};
\draw (498.03,215.45) node   {$e_{9} '$};
\draw (309,160) node  [align=left] {RIII};
\draw (61.72,178.83) node   {$D=$};
\draw (400,180) node   {$D'=$};
\draw (283,330) node   {$D''=$};

\end{tikzpicture}
\caption{}\label{fig:RIII}
\end{figure}
The diagrams $D_0$ and $D_0'$ differ from $D''$ by a Reidemeister II move, so both $C_{1\pm1}(D_0)$ and $C_{1\pm1}(D_0')$ are homotopy equivalent to $C_{1\pm1}(D'')$. Specifically, there exists chain maps
\[\imath: C_{1\pm1}(D'')\to C_{1\pm1}(D_0)\ \ \ \text{and}\ \ \ p:C_{1\pm1}(D_0)\to C_{1\pm1}(D''), \]
such that $p\circ \imath=\mathrm{id}$ and $\imath\circ p=dh+hd$ for some $h:C_{1\pm1}(D_0)\to C_{1\pm1}(D_0)$ such that $h\imath=0$. In other word, $p$ is a strong deformation retraction. Similarly, there exists a strong deformation retraction $p':C_{1\pm1}(D_0')\to C_{1\pm1}(D'')$. Denote the corresponding inclusion by $\imath'$. So \cite[Lemma 4.5]{BarNatan05:Kh-tangle-cob} implies that the chain complex  $C_{1\pm1}(D)$, given by the mapping cone
\[C_{1\pm1}(D_0)\xrightarrow{d^+} C_{1\pm1}(D_1)\]
 is chain homotopy equivalent to the mapping cone
\[C_{1\pm1}(D'')\xrightarrow{d^+\circ\imath} C_{1\pm1}(D_1).\]
Similarly, $C_{1\pm1}(D')$ is homotopy equivalent to the mapping cone
\[C_{1\pm1}(D'')\xrightarrow{d^+\circ\imath'} C_{1\pm1}(D_1').\]

Thus, it is enough to show that the above cones are homotopy equivalent. 

For $\bullet=0,1$, let $D_{\bullet ij}$ (resp. $D_{\bullet ij}'$) denote the result of resolving the top and bottom crossings of $D_\bullet$ (resp. $D_\bullet'$)  with the $i$-resolution and the $j$-resolution.  For $ij=00$ and $11$, the diagrams $D_{1ij}$ and $D_{1ij}'$ are isotopic, while for $ij=01$ and $10$, they differ by a MOY III move. So, $C_{1\pm 1}(D_{110})\cong C_{1\pm 1}(D_{110}')\oplus C_2$, $C_{1\pm 1}(D'_{101})=C_{1\pm 1}(D_{101})\oplus C_2'$ and the isomorphisms $j_{ab}$ and $j_{ba}$ (see Equation \ref{eq:isomofcyclic}) induce isomorphisms 
\[j:C_2\to C_2'\quad\text{and}\quad j':C_2'\to C_2,\]
respectively. Abusing the notation, we will denote the map from $C_{1\pm1}(D_1)$ to $C_{1\pm1}(D_1')$ (resp. $C_{1\pm1}(D_1')$ to $C_{1\pm 1}(D_1)$) that is equal to $j$ on $C_2$ (resp. $j'$ on $C_2'$) and zero on the rest of the summands by $j$ (resp. $j'$).

Let $f^{ij}:C_{1\pm1}(D_{1ij})\to C_{1\pm1}(D'_{1ij})$ be the canonical isomorphism from $ij=00, 11$, and the MOY III maps $\imath_{\RN{3}b}$ and $\pi_{\RN{3}a}^1$ for $ij=01$ and $10$, respectively. Putting these chain maps together, we get a map $f:C_{1\pm 1}(D_1)\to C_{1\pm 1}(D_1')$. 



\begin{lemma}
$f+j$ is a chain homotopy equivalence from $C_{1\pm 1}(D_1)$ to $C_{1\pm 1}(D_1')$. Further,  the identity map on $C_{1\pm 1}(D'')$ along with $f+j$ is a chain homotopy equivalence between the mapping cones of $d^+\circ \imath$ and $d^+\circ \imath'$, and thus $C_{1\pm 1}(D)$ and $C_{1\pm 1}(D')$.

\end{lemma}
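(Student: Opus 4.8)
The plan is to establish the lemma in two halves: first that $f+j$ is an isomorphism of chain complexes from $C_{1\pm 1}(D_1)$ to $C_{1\pm 1}(D_1')$ (hence a homotopy equivalence), and then, using this together with the deformation retractions $p,p'$ already in place and the mapping-cone reduction from \cite[Lemma 4.5]{BarNatan05:Kh-tangle-cob}, that the two cones, and thus $C_{1\pm 1}(D)$ and $C_{1\pm 1}(D')$, are homotopy equivalent. For the first half, the initial task is to check that $f+j$ is a chain map. Both $C_{1\pm 1}(D_1)=\bigoplus_{ij}C_{1\pm 1}(D_{1ij})$ and $C_{1\pm 1}(D_1')=\bigoplus_{ij}C_{1\pm 1}(D_{1ij}')$ are two-dimensional cubes in the top and bottom crossings of the local picture, with differential equal to the internal differential on each summand plus the cube edge maps (extensions of $d^{+}$ or $d^{-}$, carrying the signs $(-1)^{\epsilon_{u,v}}$). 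Commutativity with the internal differentials is immediate, since $f^{00},f^{11}$ are the tautological isomorphisms of isotopic diagrams, $f^{01}=\imath_{\RN{3}b}$ and $f^{10}=\pi_{\RN{3}a}^{1}$ are chain maps by Lemmas \ref{lem:submodMOY3} and \ref{lem:MIIIsplit}, and $j$ is built from the chain isomorphisms $j_{ab},j_{ba}$ of \ref{eq:isomofcyclic}. The real work is commutativity along the four cube edges, and this is exactly what Lemmas \ref{lem:propIII-1}, \ref{lem:propIII-2} and \ref{lem:propIII-3} are meant to supply: they record how $d^{\pm}$ interacts with the MOY \rom{3} inclusions/projections $\imath_{\RN{3}\circ},\pi_{\RN{3}\circ}^{1}$ and with $j_{ab}$. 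In particular the summand $j$ is precisely the correction that cancels the part of $f\circ d-d'\circ f$ landing in the acyclic summand $C_2'$; the identities $g_\circ'=f_\circ\circ\imath_{\RN{3}\circ}$ and $f_a'|_{\mathsf{M}_2^a}=f_b'\circ j_{ab}$ from Lemma \ref{lem:propIII-3}, together with their $a\leftrightarrow b$ mirrors, make this exact.

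Next I would observe that $f+j$ is in fact bijective on the underlying modules. By the MOY \rom{3} splittings of Lemma \ref{lem:MIIIsplit} one has $C_{1\pm 1}(D_{110})\cong C_{1\pm 1}(D_{110}')\oplus C_2$ and $C_{1\pm 1}(D_{101}')\cong C_{1\pm 1}(D_{101})\oplus C_2'$, with $C_2\cong C_2'$. Under these decompositions $f+j$ sends the $00$- and $11$-summands isomorphically onto the corresponding summands of $D_1'$, sends $C_{1\pm 1}(D_{101})$ isomorphically onto the $\mathsf{M}_1$-summand of $C_{1\pm 1}(D_{101}')$ via $\imath_{\RN{3}b}$, sends the $\mathsf{M}_1$-summand of $C_{1\pm 1}(D_{110})$ isomorphically onto $C_{1\pm 1}(D_{110}')$ via $\pi_{\RN{3}a}^{1}$, and sends the leftover $C_2$ isomorphically onto $C_2'$ via $j$. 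So it is a bijective chain map, hence a chain isomorphism; and since each constituent map is homogeneous of degree $0$ for $\mathfrak{gr}_{\delta}$, the isomorphism is graded.

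For the second half I would check that $(\mathrm{id}_{C_{1\pm 1}(D'')},\,f+j)$ is a morphism of mapping cones, i.e.\ that the square with horizontal maps $d^{+}\circ\imath$, $d^{+}\circ\imath'$ and vertical maps $\mathrm{id}$, $f+j$ commutes. On each summand this is once more the content of Lemmas \ref{lem:propIII-2} and \ref{lem:propIII-3}, now read in the direction $C_{1\pm 1}(D'')\to C_{1\pm 1}(D_1)$ (resp.\ $C_{1\pm 1}(D_1')$): $g_\circ'=f_\circ\circ\imath_{\RN{3}\circ}$ intertwines the parts of $d^{+}$ feeding the $00$, $11$ and $\mathsf{M}_1$ summands, while $f_a'|_{\mathsf{M}_2^a}=f_b'\circ j_{ab}$ matches up the $C_2$ and $C_2'$ pieces. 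Once the square commutes, the induced chain map of cones has both coordinates $\mathrm{id}$ and $f+j$ homotopy equivalences, so by the five-lemma for mapping cones (equivalently, the cone long exact sequence) it is a homotopy equivalence; composing with the reductions $C_{1\pm 1}(D)\simeq\mathrm{Cone}(d^{+}\circ\imath)$ and $C_{1\pm 1}(D')\simeq\mathrm{Cone}(d^{+}\circ\imath')$ already obtained gives $C_{1\pm 1}(D)\simeq C_{1\pm 1}(D')$. The mirror case $S=\mathsf{X}_{i+1}\circ\mathsf{X}_i$, and the cap/cup variants via Corollary \ref{cor:IIIcap}, are handled identically.

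The step I expect to be the main obstacle is the chain-map verification described in the first paragraph: one must go edge by edge through the two-crossing cube, track the signs $(-1)^{\epsilon_{u,v}}$, and plug in the precise forms of the MOY \rom{3} maps and of the isomorphisms $j_{ab},j_{ba}$ on every class of cycles, checking in particular that the anti-diagonal component $j$ (which moves between the $(1,0)$ and $(0,1)$ cube positions) corrects the failure of $f$ alone to be a chain map. This is exactly why Lemmas \ref{lem:propIII-1}--\ref{lem:propIII-3} were isolated beforehand, so that the verification reduces to routine bookkeeping rather than a fresh computation.
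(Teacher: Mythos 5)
Your proposal takes essentially the same route as the paper: it shows $f+j$ is a chain map via Lemmas \ref{lem:propIII-2} and \ref{lem:propIII-3}, observes that it is invertible (hence a chain isomorphism) using the MOY \rom{3} splittings, and then checks that $(\mathrm{id},f+j)$ commutes with $d^+\circ\imath$ and $d^+\circ\imath'$ so that it induces a homotopy equivalence of the mapping cones and hence of $C_{1\pm1}(D)$ and $C_{1\pm1}(D')$. The added detail—identifying $j$ as the correction between the $(1,0)$ and $(0,1)$ cube positions and verifying bijectivity summand by summand—is simply a fleshed-out version of the paper's terse argument.
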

\begin{proof}
Lemma's \ref{lem:propIII-2} and \ref{lem:propIII-2} imply that $f+j$ is a chain map, and it is clear from the definition that $f+j$ is invertible.  Additionally, it follows from the definition of $\imath$ and $\imath'$ that $(\mathrm{id}, f+j)$ is a chain map, and thus a chain homotopy equivalence. 

\end{proof}

%

%
%


\subsection{Cup and Cap Swaps}

The final move that we have to prove invariance under is cup and cap swaps. The cap swap moves are depicted in Figure \ref{R4} - the cup swap moves are similar, but they occur at the bottom of the braid instead of the top.

\begin{figure}[ht]
\centering
\def\svgwidth{3.5cm}
\input{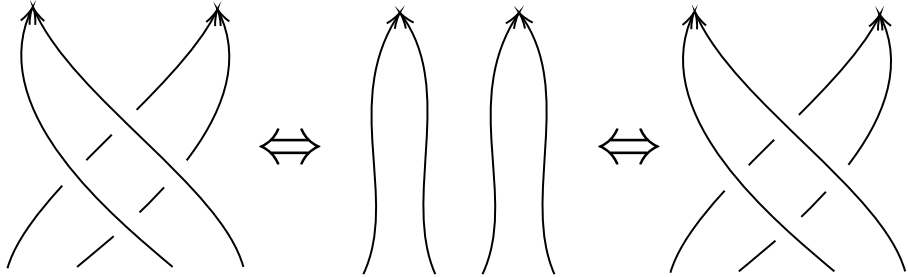}
\caption{The cap swap moves}\label{R4}
\end{figure}

Since the left and right diagrams have 4 crossings, they will have 16 vertices in the cube of resolutions, so proving invariance for these diagrams directly would be quite messy. Instead, since we already have Reidemeister II invariance, the moves pictured in Figure \ref{R4Simplified} imply invariance under the two cap swaps.

\begin{figure}[ht]
\centering
\def\svgwidth{3.5cm}
\input{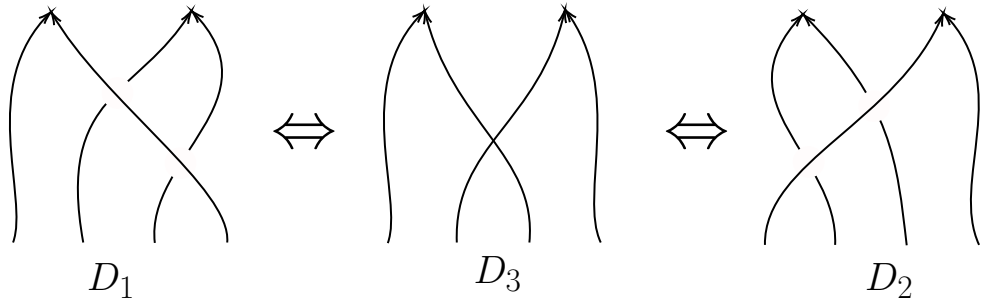}
\caption{An alternate version of the first cap swap}\label{R4Simplified}
\end{figure}

Let $D_{1}^{ij}$ denote the resolution of $D_{1}$ where the central crossing has resolution $i$ and the right-most crossing has resolution $j$. The four resolutions are shown in Figure \ref{R4Cube}. Note that $D_{1}^{10}$ is isotopic to $D_{3}$.

\begin{figure}[ht]
\centering
\def\svgwidth{3.5cm}
\input{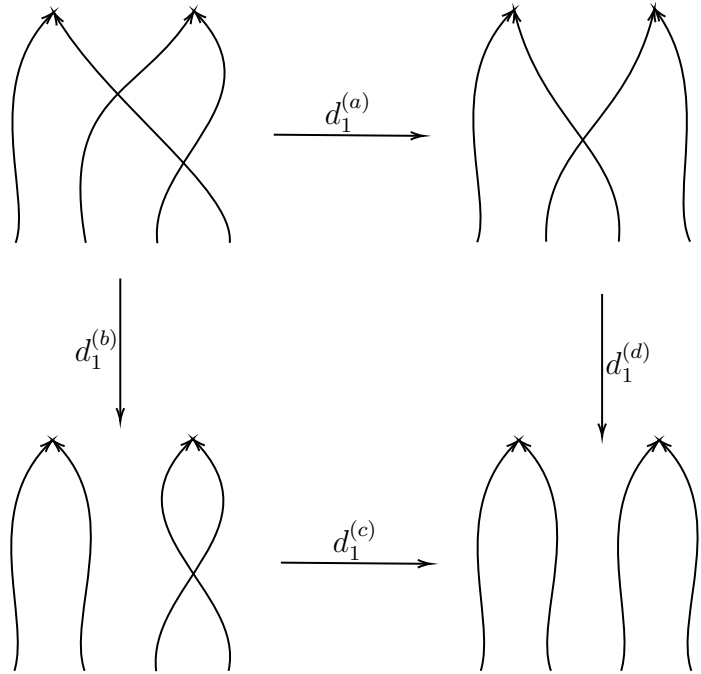}
\caption{The cube of resolutions for $D_{1}$}\label{R4Cube}
\end{figure}

We define $f$ by 
\[ f = (f_{00}, f_{01}, f_{10}, f_{11}): C_{ 1 \pm 1}(D_{3}) \to C_{1 \pm 1}(D_{1}) \]

\begin{equation*}
\begin{split}
f_{00} & = 0 \\
f_{10} & = \mathrm{id} \\
f_{01} & = -\imath_{\RN{2}} \circ d_{1}^{(d)} \circ f_{10} \\
f_{11} & = 0 \\
\end{split}
\end{equation*}

As with the twists and the Reidemeister I move, this map is a filtered chain map with respect to the cube filtrations on $C_{1 \pm 1}(D_{1})$ and $C_{1 \pm 1}(D_{3})$. To see that it is a chain map, note that $d_{1}^{(c)} \circ -\imath_{\RN{2}} = \mathrm{id}$.

Let $E_{k}(C_{1 \pm 1}(D))$ denote the spectral sequence induced by the cube filtration on $C_{1 \pm 1}(D)$. Then $f^{*}: E_{1}(D_{3}) \to E_{1}(D_{1})$ is the standard quasi-isomorphism on Khovanov homology corresponding to a Reidemeister II move. This can be seen by replacing each diagram $S$ in Figure \ref{R4Cube} with $\sm(S)$. Thus, $f^{*}$ induces an isomorphism on the $E_{2}$ pages, and therefore on the total homology as well.

The isomorphism $H_{1-1}(D_{2}) \cong H_{1-1}(D_{3})$ follows from a similar argument, as do the cup swaps.

\section{Relationship with Ozsv\'{a}th-Szab\'{o} Bordered Algebra}
Let $D_1$ and $D_2$ be braid diagrams with $n$-strands, and $D=D_1\circ D_2$. In this Section, we define a quotient $\mathcal{A}_n$ of  the algebra $A_n$ which is isomorphic to $\overline{\mathcal{B}}'(2n+1,n)$ and 
\[\mathscr{M}[D]\cong (\mathsf{M}[S_{cup}\circ D_1]\otimes\mathcal{A}_n)\otimes_{\mathcal{A}_n}(\mathsf{M}'[D_2\circ S_{cap}])\otimes\mathcal{A}_n.\]

First, we review the definition of the algebra $\mathcal{B}'(m,k)$. Given a set of $m$ points on a line identified with $\{1,2,...,m\}\subset\RR$, a \emph{local state} $\x$ is a choice of $k$ intervals $[i,i+1]$ where $1\le i\le m-1$. For each local state $\x$ there is an idempotent denoted by $I_{\x}$ in $\mathcal{B}'(m,k)$. 

We identify each interval $[i,i+1]$ by its midpoint $i+1/2$, and so each local state is a subset of $\{3/2,5/2,...,m-1/2\}$. If $\x\cap\{i-1/2,i+1/2\}=\{i-1/2\}$, then the local state $r_i(\x)=(\x\setminus\{i-1/2\})\cup\{i+1/2\}$. Similarly, if $\x\cap\{i-1/2,i+1/2\}=\{i+1/2\}$, then the local state $l_i(\x)=(\x\setminus\{i+1/2\})\cup\{i-1/2\}$. For each $2\le i\le m-1$, the algebra $\mathcal{B}'(m,k)$ contains elements $R'_i$ and $L'_i$ corresponding to shifting to the right and left, respectively. Specifically, $\mathcal{B}'(m,k)$ is the $\QQ[u_1,...,u_m]$-algebra generated by the idempotents $I_{\x}$ for all local states $\x$ and $R'_i$ and $L'_i$ for every $2\le i\le m-1$ modulo the following conditions:

\begin{itemize}
\item[B1)] For each local state $\x$, $I_{\x}R'_i=R'_iI_{r_i(\x)}=I_{\x}R'_iI_{r_i(\x)}$ and $I_{\x}L'_{i}=L'_iI_{l_i(\x)}=I_{\x}L'_iI_{r_i(\x)}$. Note that $I_{\x}R'_i$ and $I_{\x}L'_i$ vanish if $r_i(\x)$ and $l_i(\x)$ are not defined, respectively.  
\item[B2)] If $r_i(\x)$ is defined, then $I_{\x}R'_iL'_i=u_iI_{\x}$. Similarly, if $l_i(\x)$ is defined then $I_{\x}L'_iR'_i=u_iI_{\x}$.
\item[B3)] For every $i$, $R'_iR'_{i+1}=0$ and $L'_{i+1}L'_i=0$.
\item[B4)] If $\{i-1/2,i+1/2\}\cap\x=\emptyset$, then $u_iI_{\x}=0$.
\end{itemize}

Assume $m=2n+1$ and $k=n$. Each local state $\x$ determines a subset $S_{\x}\subset\{1,2,...,2n\}$ with $n$ elements defined as
\[S_{\x}=\{i\ |\ i+1/2\notin\x\}.\]
Therefore, each idempotent $I_{\x}\in\mathcal{B}'(2n+1,n)$ identifies an idempotent in $A_n$.

\begin{definition}
We define $\mathcal{A}_n$ to be the quotient of $A_n$ with the ideal generated by $R_{i}R_{i-1}$ and $L_{i-1}L_i$ for every $2\le i\le 2n$. 
\end{definition}

Let $\overline{\mathcal{B}}'(2n+1,n)$ be the quotient of $\mathcal{B}'(2n+1,n)$ with the ideal generated by $u_1$.
\begin{lemma}
There is an isomorphism 
\[h:\overline{\mathcal{B}}'(2n+1,n)\to \mathcal{A}_n\]
such that $h(I_{\x})=\iota_{S_{\x}}$ for every $\x$, and $h(R_{i}')=L_{i-1}$, $h(L_{i}')=R_{i-1}$ and $h(u_ia)=u_{i-1}a$ for every $i$. 
\end{lemma}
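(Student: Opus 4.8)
The plan is to exhibit the map $h$ explicitly on generators and check it is a well-defined algebra homomorphism, then produce an inverse. First I would define $h$ on the generating set of $\overline{\mathcal{B}}'(2n+1,n)$ by the formulas in the statement: $h(I_{\x})=\iota_{S_{\x}}$, $h(R'_i)=L_{i-1}$, $h(L'_i)=R_{i-1}$, and $h(u_i\cdot a)=u_{i-1}\cdot h(a)$ for $2\le i\le 2n+1$ (recall $u_1$ acts as $0$ after passing to $\overline{\mathcal{B}}'$). The key combinatorial observation making this sensible is the bijection $\x\mapsto S_{\x}$ between local states for $(2n+1,n)$ and $n$-element subsets of $[2n]$: a midpoint $i+1/2\in\x$ corresponds to $i\notin S_{\x}$, so the ``occupied interval'' picture in $\mathcal{B}'$ is the complement of the ``occupied slot'' picture in $A_n$. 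Under this bijection, shifting an interval to the right, $r_i(\x)$, corresponds to shifting the missing slot to the right, i.e.\ to $l_{i-1}(S_{\x})$ in the notation of the $A_n$ section, which is exactly why $R'_i\mapsto L_{i-1}$ (and dually $L'_i\mapsto R_{i-1}$). I would record this correspondence as a preliminary observation before checking relations.

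Next I would verify that $h$ respects the four defining relations B1--B4 of $\overline{\mathcal{B}}'(2n+1,n)$, matching each against the relations R1--R5 of $A_n$ together with the extra quotient relations defining $\mathcal{A}_n$. Relation B1 (compatibility of $R'_i,L'_i$ with idempotents, and vanishing when $r_i,l_i$ undefined) maps to R4 under the index shift. Relation B2, $I_{\x}R'_iL'_i=u_iI_{\x}$ and $I_{\x}L'_iR'_i=u_iI_{\x}$, maps to R5: $R_{i-1}L_{i-1}=\iota_{i-1}u_{i-1}$ and $L_{i-1}R_{i-1}=\iota_{i}u_{i-1}$, and one checks that $\iota_{i-1}$ (sum of idempotents $\iota_S$ with $i-1\in S$) hits exactly the idempotents $I_{\x}$ with $r_i(\x)$ defined, i.e.\ $i-1/2\in\x$ fails hence $i-1\in S_{\x}$ — so the idempotent bookkeeping matches. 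Relation B3, $R'_iR'_{i+1}=0$ and $L'_{i+1}L'_i=0$, maps to $L_{i-1}L_i=0$ and $R_iR_{i-1}=0$, which are precisely the relations we quotiented $A_n$ by to form $\mathcal{A}_n$ — this is the one place the quotient is essential. Relation B4, $u_iI_{\x}=0$ when $\{i-1/2,i+1/2\}\cap\x=\emptyset$: the hypothesis says both $i-1,i\in S_{\x}$, and since $u_1$ already acts as zero, I need $u_{i-1}\iota_{S_{\x}}=0$ whenever $i-1,i\in S_{\x}$; this follows in $\mathcal{A}_n$ from R5 applied at index $i-1$, because $R_{i-1}=0$ (as $i\in S_{\x}$ forces $r_{i-1}(S_{\x})$ undefined, so $\iota_{S_{\x}}R_{i-1}=0$) gives $\iota_{S_{\x}}u_{i-1}=\iota_{S_{\x}}L_{i-1}R_{i-1}=0$. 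I should double-check the boundary indices ($i=2$ corresponding to $u_1=0$) line up, which is the reason for modding out $u_1$ on the Ozsv\'ath--Szab\'o side.

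Then I would construct the inverse $h^{-1}:\mathcal{A}_n\to\overline{\mathcal{B}}'(2n+1,n)$ by the evident reverse formulas, $\iota_S\mapsto I_{\x(S)}$, $R_i\mapsto L'_{i+1}$, $L_i\mapsto R'_{i+1}$, $u_i a\mapsto u_{i+1}h^{-1}(a)$, check it is well-defined (the relations R1--R5 of $A_n$ and the two extra quotient relations all have preimages among B1--B4 by the same dictionary run backwards), and observe $h\circ h^{-1}$ and $h^{-1}\circ h$ are the identity on generators, hence $h$ is an isomorphism. I expect the main obstacle to be purely bookkeeping: getting the index shift $i\leftrightarrow i-1$ and the complementation $\x\leftrightarrow S_{\x}$ consistent everywhere, especially in the idempotent sums appearing in R5 versus B2, and confirming that the $u_1=0$ quotient on the $\mathcal{B}'$ side corresponds exactly to the fact that the ``slot $0$'' variable does not exist on the $A_n$ side. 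Once the dictionary is pinned down, each relation check is a one-line verification, so I would present the dictionary carefully and then dispatch B1--B4 in a short itemized (in prose, not bullets) paragraph.
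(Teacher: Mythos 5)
Your proposal is correct and follows essentially the same route as the paper's proof: define $h$ on generators via the complementation dictionary $\x\leftrightarrow S_{\x}$ with the index shift, then check B2 against R5, B3 against the quotient relations defining $\mathcal{A}_n$, and B4 via $\iota_{S_{\x}}R_{i-1}=0$. You are in fact slightly more thorough than the paper, which leaves the B1 check and the construction of the inverse implicit.
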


\begin{proof}
For every local state $\x$ if $r_i(\x)$ is defined, then $\{i-1,i\}\cap S_{\x}=\{i\}$ and 
\[h(I_{\x}R_i'L_i')=\iota_{S_{\x}}L_{i-1}R_{i-1}=u_{i-1}\iota_{S_{\x}}=h(u_iI_{\x}).\]
Similarly, if $l_i(\x)$ is defined, then $\{i-1,i\}\cap S_{\x}=\{i-1\}$ and 
\[h(I_{\x}L_i'R_i')=\iota_{S_{\x}}R_{i-1}L_{i-1}=u_{i-1}\iota_{S_{\x}}=h(u_iI_{\x}).\]
By the definition of $\mathcal{A}_n$ for every $i$, $h(R_i'R_{i+1'})=L_{i-1}L_{i}=0$ and $h(L_{i+1}'L_{i'})=R_{i}R_{i-1}=0$.

Finally, if $\{i-1/2,i+1/2\}\cap \x=\emptyset$, then $\{i-1,i\}\subset S_{\x}$ and \[h(u_iI_{\x})=u_{i-1}\iota_{S_{\x}}=\iota_{S_{\x}}R_{i-1}L_{i-1}=0\]
Thus, $h$ is an isomorphism of $\QQ[u_1,...,u_{2n}]$-modules.
\end{proof}

\begin{theorem}
For any braid $D=D_1\circ D_2$, \[\mathscr{M}[D]\cong (\mathsf{M}[S_{cup}\circ D_1]\otimes\mathcal{A}_n)\otimes_{\mathcal{A}_n}(\mathsf{M}'[D_2\circ S_{cap}])\otimes\mathcal{A}_n.\]
\end{theorem}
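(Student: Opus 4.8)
\emph{Plan.} The strategy is to reduce the statement to the two structure theorems already established in this section — the composition theorem $\mathsf{M}[S_1\circ S_2]\cong\mathsf{M}[S_1]\otimes_{A_n}\mathsf{M}[S_2]$ and the closing-off theorem $\mathscr{M}(S)\cong\mathsf{M}[S_{cup}]\otimes_{A_n}\mathsf{M}[S]\otimes_{A_n}\mathsf{M}[S_{cap}]$ (both applied to the mapping-cone bimodules $\mathsf{M}[D_i]$ of the single-crossing pieces of $D$) — together with one new ingredient: \emph{the right $A_n$-action carried by a cup boundary, and the left $A_n$-action carried by a cap boundary, each factor through the quotient $\mathcal{A}_n$.} Write $J\subset A_n$ for the two-sided ideal generated by all $R_iR_{i-1}$ and $L_{i-1}L_i$, so $\mathcal{A}_n=A_n/J$. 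The main step will be to show $\mathsf{M}[S_{cup}\circ D_1]\cdot J=0$ and $J\cdot\mathsf{M}'[D_2\circ S_{cap}]=0$; once this is known, each one-sided module coincides with its own base change ($\mathsf{M}[S_{cup}\circ D_1]\cong\mathsf{M}[S_{cup}\circ D_1]\otimes_{A_n}\mathcal{A}_n$, and symmetrically on the left), and the theorem follows by a change-of-rings identity.

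\emph{The main step.} Consider $\mathsf{M}[S_{cup}\circ D_1]\cong\mathsf{M}[S_{cup}]\otimes_{A_n}\mathsf{M}[D_1]$. By the proof of the composition theorem every element can be rewritten as $q\,x_Z$ for $q$ in the ground polynomial ring $\QQ[U_1,\dots,U_m]$ and $Z$ a cycle of $S_{cup}\circ D_1$, so by $\QQ[U_1,\dots,U_m]$-linearity it suffices to check $x_Z\cdot R_iR_{i-1}=0$ and $x_Z\cdot L_{i-1}L_i=0$ for every cycle $Z$ and every admissible $i$. The right action is governed by the behaviour of $x_Z$ at the top boundary, where the $n$ cups of $S_{cup}$ pair the strands $\{2j-1,2j\}$ and any cycle occupies exactly one strand of each such pair (relation C1 together with C2, C3). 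Hence $R_i$ with $i$ even annihilates $x_Z$ (relation C3), while $R_i$ with $i$ odd carries the occupied strand into the partner slot $i+1$, now even, so a subsequent rightward move is again of the annihilating type; either way $x_Z\cdot R_iR_{i-1}=0$, and $x_Z\cdot L_{i-1}L_i=0$ follows by the mirror argument. I expect this propagation to be the main obstacle: although crossings of $D_1$ permute strands internally, the plat pairing at the very top of $S_{cup}\circ D_1$ is fixed, and one should make this precise by induction on the number of crossings of $D_1$, peeling off one elementary factor $\mathsf{M}[\mathsf{X}_i]$ or $\mathsf{M}[S_{id}]$ at a time via the composition theorem and verifying that annihilation by $J$ is inherited across each factor.

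\emph{The cap side and conclusion.} The module $\mathsf{M}'[D_2\circ S_{cap}]$ is the left-module analogue assembled from singular caps, whose relations C'1--C'3 use the $L_i$ where the cups used the $R_i$; the identical argument with left multiplication and with the roles of $L$ and $R$ interchanged gives $J\cdot\mathsf{M}'[D_2\circ S_{cap}]=0$. This interchange is exactly the one realized by the isomorphism $h\colon\overline{\mathcal{B}}'(2n+1,n)\to\mathcal{A}_n$ of the preceding lemma ($R'_i\mapsto L_{i-1}$, $L'_i\mapsto R_{i-1}$, $u_i\mapsto u_{i-1}$), which is why the cap factor must be taken in the primed convention. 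To finish, set $M=\mathsf{M}[S_{cup}\circ D_1]$ and $N=\mathsf{M}'[D_2\circ S_{cap}]$; since $MJ=JN=0$, the natural surjection $M\otimes_{A_n}N\onto (M\otimes_{A_n}\mathcal{A}_n)\otimes_{\mathcal{A}_n}(\mathcal{A}_n\otimes_{A_n}N)$ is an isomorphism, both sides being $M\otimes_{\QQ[U_1,\dots,U_m]}N$ modulo the relations equating the two middle $A_n$-actions, which already descend to $\mathcal{A}_n$. Combining this with $\mathscr{M}[D]\cong M\otimes_{A_n}N$, itself obtained from the composition and closing-off theorems and associativity of $\otimes_{A_n}$, yields the stated identification; along the way one checks — routinely, since every map used is built from the chain maps $d^{\pm}$, the cup/cap relations and the associativity isomorphisms — that the differential, the $U_i$-actions and the $(\mathfrak{gr}_{q},\mathfrak{gr}_{\delta})$-bigrading are preserved, and the passage to $\overline{\mathcal{B}}'(2n+1,n)$-modules is then immediate from $h$.
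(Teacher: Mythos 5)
Your overall architecture (reduce to the generators $x_Z$, prove that the one-sided modules are annihilated by the two-sided ideal $J=\langle R_iR_{i-1},\,L_{i-1}L_i\rangle$, then conclude by change of rings) is the same as the paper's, and the final change-of-rings step is fine. The gap is in the mechanism you give for the main step. The right $A_n$-action on $\mathsf{M}[S_{cup}\circ D_1]$ lives at the \emph{top} boundary of $D_1$, i.e.\ the interface with $D_2$ in the middle of the braid; the relations C1--C3 and the even/odd pairing of slots by the cups govern only the action at the $S_{cup}$--$D_1$ interface, which is an internal interface of the composite and is tensored away. At the top of $D_1$ there is no parity structure: which slots a cycle occupies there depends on $D_1$, and $R_i$ for even $i$ need not annihilate $x_Z$ there. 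You acknowledge this and propose to propagate annihilation by $J$ upward through $D_1$ one elementary tangle at a time, but you do not carry that induction out, and it is not routine --- the relations for $\mathsf{M}[\mathsf{X}_i]$ (item (11) of Example~\ref{ex:sing}) show that consecutive rightward moves interact with a singular vertex by producing coefficients rather than vanishing, so ``annihilation is inherited across each factor'' is precisely what would need proof.

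The paper's proof instead runs a single global disc argument, the same one as case (b) of Section~\ref{welldef}: writing $x_ZR_i$ as the generator $x_{U_i(Z)}$ times the coefficient $U(D(Z,e_i))$, one shows that this coefficient annihilates $x_{U_{i-1}(U_i(Z))}$, because its variables, read from the top of $\bdy_LD(Z,e_i)$ downward, successively push the cycle $U_{i-1}(U_i(Z))$ farther to the right until it collides with $v_b(D(Z,e_i))$, which then lies on two strands of the resulting cycle; hence $x_ZR_iR_{i-1}=0$ for every cycle of $S_{cup}\circ S_1$ regardless of parity, and symmetrically $L_{i-1}L_ix_{Z_2}=0$ on the cap side. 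To repair your write-up, replace the parity argument with this disc-coefficient argument (or actually carry out the inductive propagation you only sketch).
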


\noindent
This theorem allows us to work over the quotient algebra $\mathcal{A}_n$ instead of $A_{n}$ 

\begin{proof}
Consider a complete resolution $S$ of $D$ and assume $S=S_1\circ S_2$ where $S_i$ is a complete resolution of $D_i$. We show that for any cycle $Z_1$ of $S_{cup}\circ S_1$
\[x_{Z_1}R_{i}R_{i-1}=0\quad\text{for all}\ i\]
and for every cycle $Z_2$ of $S_2\circ S_{cap}$ 
\[L_{i-1}L_ix_{Z_2}=0\quad\text{for all}\ i\]
This essentially follows from the argument in part (b) of Section \ref{welldef}, but we will give the argument here as well. 

Let $Z_1$ be a cycle in $S_{cup}\circ S_1$, and suppose the $n$ outgoing edges of $S_{1}$ are labeled $e_{1},...e_{n}$. Then
\[ x_{Z}R_{i} =  x_{U_{i}(Z)} U(D(Z,e_{i}))\]

\noindent
The variables in the coefficient $U(D(Z,e_{i}))=U_{j_{1}}\cdot ... \cdot U_{j_{m}}$ can be ordered from the top to the bottom based on where they come in to $\partial_{L}D(Z,e_{i}))$. We want to show that $ x_{U_{i}(Z)} U(D(Z,e_{i})) R_{i-1}=0$.

If $U(D(Z,e_{i}))=1$ then we are done, as the bottom vertex of the disc $v_{b}(D(Z,e_{i}))$ lies in both strands of $U_{i-1}(U_{i}(Z))$. Otherwise, 
\[  x_{U_{i}(Z)} U(D(Z,e_{i})) R_{i-1} = x_{U_{i-1}(U_{i}(Z))}U(D(Z,e_{i})) U(D(U_{i}(Z),e_{i-1})) \]

\noindent
We claim that $x_{U_{i-1}(U_{i}(Z))}U(D(Z,e_{i}))=0$. To see this, note that $e_{j_{1}}$ is an edge in $U_{i-1}(U_{i}(Z))$. Thus, the $U_{j_{1}}$ in $U(D(Z,e_{i}))$ maps the cycle farther to the right. But we can do this recursively, as for each $k$, the edge $e_{j_{k}}$ lies in $U_{j_{k-1}}(...(U_{j_{1}}(U_{i-1}(U_{i}(Z))))...)$. In the end, $v_{b}(D(Z,e_{i}))$ is a vertex in both strands of the cycle $U_{j_{m}}(...(U_{j_{1}}(U_{i-1}(U_{i}(Z))))...)$, making the product zero.

The product \[L_{i-1}L_ix_{Z_2}=0\quad\text{for all}\ i\] is similar.

The reason that Ozsv\'{a}th and Szab\'{o} work with $\mathcal{B}'(2n,n)$ while we work with $\overline{\mathcal{B}}'(2n+1,n)$ is that our planar Heegaard diagram has an extra reduced unknotted component `at infinity.' This extra component component can be viewed as a single strand to the left of the diagram in the Ozsv\'{a}th-zab\'{o} picture, which we reduce by setting $u_1=0$.

\end{proof}




\bibliographystyle{hamsalpha}
\bibliography{heegaardfloer}
\end{document}